\documentclass[12pt]{amsart}
\usepackage{amsmath,latexsym,amsfonts,amssymb,amsthm}
\usepackage{geometry}
\usepackage{hyperref}
\usepackage{mathrsfs}
\usepackage{graphicx,color}
\usepackage[alphabetic]{amsrefs}
\usepackage{tikz}
\usepackage{tikz-cd}
\usepackage{bbm}
\usepackage{comment}
\usepackage[new]{old-arrows}

\newtheorem{prop}{Proposition}[section]
\newtheorem{thm}[prop]{Theorem}
\newtheorem{cor}[prop]{Corollary}
\newtheorem{conj}[prop]{Conjecture}
\newtheorem{lem}[prop]{Lemma}

\theoremstyle{definition}

\newtheorem{defn}[prop]{Definition}
\newtheorem{expl}[prop]{Example}
\newtheorem{rem}[prop]{\it Remark}
\newtheorem{emp}[prop]{}

\newtheorem*{claim*}{Claim}

\newcommand{\bC}{\mathbb{C}}
\newcommand{\bR}{\mathbb{R}}
\newcommand{\bA}{\mathbb{A}}
\newcommand{\bQ}{\mathbb{Q}}
\newcommand{\bZ}{\mathbb{Z}}
\newcommand{\bN}{\mathbb{N}}

\newcommand{\bT}{\mathbb{T}}
\newcommand{\bG}{\mathbb{G}}
\newcommand{\bk}{\mathbbm{k}}

\newcommand{\tX}{\widetilde{X}}
\newcommand{\tY}{\widetilde{Y}}
\newcommand{\tE}{\widetilde{E}}

\newcommand{\tS}{\widetilde{S}}

\newcommand{\cX}{\mathcal{X}}
\newcommand{\cY}{\mathcal{Y}}

\newcommand{\cO}{\mathcal{O}}
\newcommand{\cL}{\mathcal{L}}
\newcommand{\cI}{\mathcal{I}}

\newcommand{\cN}{\mathcal{N}}
\newcommand{\cF}{\mathcal{F}}

\newcommand{\cE}{\mathcal{E}}
\newcommand{\cD}{\mathcal{D}}

\newcommand{\cR}{\mathcal{R}}

\newcommand{\scX}{\mathscr{X}}
\newcommand{\scY}{\mathscr{Y}}
\newcommand{\scE}{\mathscr{E}}
\newcommand{\scD}{\mathscr{D}}
\newcommand{\scG}{\mathscr{G}}

\newcommand{\fa}{\mathfrak{a}}

\newcommand{\fm}{\mathfrak{m}}
\newcommand{\ft}{\mathfrak{t}}
\newcommand{\fg}{\mathfrak g}

\newcommand{\rd}{\mathrm{d}}

\newcommand{\Spec}{\mathrm{Spec}}
\newcommand{\Proj}{\mathrm{Proj}}
\newcommand{\Supp}{\mathrm{Supp}}
\newcommand{\Hom}{\mathrm{Hom}}
\newcommand{\mult}{\mathrm{mult}}

\newcommand{\Ex}{\mathrm{Ex}}
\newcommand{\Pic}{\mathrm{Pic}}
\newcommand{\Aut}{\mathrm{Aut}}
\newcommand{\vol}{\mathrm{vol}}
\newcommand{\ord}{\mathrm{ord}}
\newcommand{\Val}{\mathrm{Val}}

\newcommand{\gr}{\mathrm{gr}}

\newcommand{\Lie}{\mathrm{Lie}}

\newcommand{\QM}{\mathrm{QM}}

\newcommand{\wt}{\mathrm{wt}}

\newcommand{\an}{\mathrm{an}}

\newcommand{\Cl}{\mathrm{Cl}}

\numberwithin{equation}{section}

\newcommand{\R}{\mathbb R}
\newcommand{\C}{\mathbb C}

\newcommand{\tF}{\widetilde{\mathcal{F}}}

\newcommand{\tDe}{\widetilde \Delta}
\newcommand{\Q}{\mathbb Q}
\newcommand{\ep}{\varepsilon}

\begin{document}

\title[Stable degeneration and Kaledin's Conjecture]{Stable Degeneration, Non-degenerate Forms, and Kaledin's Conjecture}

\date{}

\author{Chenyang Xu}
\author{Ziquan Zhuang}


\dedicatory{With an appendix by Henri Guenancia}

\address{Department of Mathematics, Princeton University, Princeton, NJ 08544, USA}
\email     {chenyang@princeton.edu}

\address{Department of Mathematics, Johns Hopkins University, Baltimore, MD 21218, USA}
\email{zzhuang@jhu.edu}

\address{Univ. Bordeaux, CNRS, Bordeaux INP, IMB, UMR 5251, F-33400 Talence, France}
\email     {henri.guenancia@math.cnrs.fr}

\begin{abstract}
We prove that stable degeneration, the canonical degeneration associated to the normalized volume minimizer of a Kawamata log terminal (klt) singularity, preserves non-degenerate reflexive differential forms. In particular, the stable degeneration of a symplectic singularity is again symplectic. Combining this with a deformation-theoretic rigidity result for symplectic degenerations, we confirm Kaledin's conjecture that the formal completion of any symplectic singularity is conical. As applications, we show that the natural base of any normalized nilpotent orbit closure is a K-semistable Fano variety, and that the normalized volume minimizer of a hypertoric singularity is induced by the standard dilation. 
\end{abstract}

\maketitle

\section{Introduction}

For any real valuation $v$ over a klt singularity $x \in X$, Chi Li \cite{Li-nv} introduced an invariant $\widehat{\mathrm{vol}}_X(v)$ called the normalized volume of the valuation. A fundamental result in local K-stability theory \cite{Blu-existence, LX-higher-rank, Xu-quasimonomial, XZ-uniqueness, XZ-SDC} (see also \cite{Z-survey-klt-stab}) states that up to rescaling, there exists a unique, quasi-monomial valuation $v_0$ over $x\in X$ that minimizes the normalized volume, and the associated graded ring $\mathrm{gr}_{v_0} \cO_{X,x}$ is finitely generated. The affine variety $X_0 := \mathrm{Spec}(\mathrm{gr}_{v_0} \cO_{X,x})$, called the \emph{stable degeneration} of $x \in X$, is a K-semistable Fano cone singularity equipped with a canonical torus action.

A natural question is which additional geometric structures on $X$ are inherited by $X_0$. In this paper, we answer this question affirmatively for non-degenerate differential forms.

\begin{thm}[see Theorem \ref{thm:sigma-admissible nbhd}]\label{main-thm:degenerate}
Let $x \in X$ be a klt singularity with stable degeneration $x_0 \in X_0$. Let $\sigma \in H^0(X, \Omega^{[p]}_X)$ be a non-degenerate (reflexive) $p$-form. Then $\sigma$ specializes to a non-degenerate $p$-form $\sigma_0 \in H^0(X_0, \Omega^{[p]}_{X_0})$ on $X_0$.
\end{thm}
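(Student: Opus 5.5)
The plan is to realize the specialization through the degeneration family and to reduce non-degeneracy of $\sigma_0$ to a statement about orders of vanishing along the minimizer $v_0$. Let $\cX=\Spec\bigl(\bigoplus_{\lambda}\mathfrak a_\lambda(v_0)t^{-\lambda}\bigr)\to\bA^1$ be the Rees-type degeneration attached to $v_0$, with general fibre $X$ and special fibre $X_0$. Extend the filtration by $v_0$ from functions to reflexive $p$-forms by setting $v_0(\sigma):=\min$ of $v_0$ of the coefficients of $\sigma$ in local coordinates on a log resolution, corrected by the appropriate log discrepancy term. The form $t^{-v_0(\sigma)}\sigma$ then extends to a relative reflexive form on $\cX$. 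Its restriction $\sigma_0$ to $X_0$ is the initial form of $\sigma$: a nonzero torus-eigenform of weight $v_0(\sigma)$ on $X_0$. This part is formal once finite generation of $\gr_{v_0}\cO_{X,x}$ is available. The content of Theorem \ref{main-thm:degenerate} is that $\sigma_0$ is non-degenerate.

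\textbf{Generic non-degeneracy.} I would prove this first. Encode non-degeneracy of $\sigma$ by a canonically attached nowhere vanishing section. The contraction map $\iota_\sigma\colon T_X\to\Omega^{[p-1]}_X$ is injective with the right rank exactly where $\sigma$ is non-degenerate. Taking a suitable exterior power or determinant of $\iota_\sigma$, together with the wedge structure (for instance $\sigma^{\wedge k}$ in the symplectic case), gives a section $s(\sigma)$ of a line bundle built from $\omega_X$; non-degeneracy is equivalent to $s(\sigma)$ being nowhere zero on $X_{\rm reg}$. Being a nowhere vanishing section of a power of $\omega_X$, $s(\sigma)$ has $v_0$-order computable from $A_X(v_0)$ alone, hence determined by $v_0(\sigma)$. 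The key inequality to prove is
\[
v_0\bigl(s(\sigma)\bigr)\ \ge\ c\cdot v_0(\sigma),
\]
where $c$ is the homogeneity degree of $s$; this inequality is automatic. We also need equality to hold. Equality says that the initial form of $s(\sigma)$ equals $s(\sigma_0)$, which is then nonzero, so $\sigma_0$ is generically non-degenerate. Strict inequality would mean that $\sigma_0$ is degenerate along a $v_0$-dominant direction. I would try to rule this out with the minimizing property of $v_0$. Perturb $v_0$ inside its quasi-monomial simplex, or consider the valuation induced by the torus action on $X_0$ twisted toward the kernel of $\iota_{\sigma_0}$. Since $\widehat{\vol}$ depends on $A_X$, while the volume of $s(\sigma)$ is pinned down, a strict gap should produce a valuation with smaller normalized volume. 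Equivalently, it should contradict K-semistability of the Fano cone $X_0$ via a Futaki-type computation with $\xi_{v_0}$. I expect this step to be the main obstacle. It is where the name ``$\sigma$-admissible neighbourhood'' in Theorem \ref{thm:sigma-admissible nbhd} suggests one first shrinks to a neighbourhood of $v_0$ in the valuation space on which the order of $\sigma$ behaves linearly.

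\textbf{Non-degeneracy everywhere.} Once $s(\sigma_0)\neq 0$, the locus $Z\subset\cX$ where $s(t^{-v_0(\sigma)}\sigma)$ vanishes is a divisor, or empty, in the normal variety $\cX$. It is disjoint from the general fibres, since $\sigma$ is non-degenerate there, so $Z$ is supported in $X_0=\{t=0\}$. Because $X_0$ is irreducible and $s(\sigma_0)\not\equiv 0$, $Z$ cannot contain $X_0$. Hence $Z=\emptyset$ in codimension one, and by reflexivity and normality $s(\sigma_0)$ is nowhere vanishing on $(X_0)_{\rm reg}$. Finally, the torus-equivariance of everything ensures the argument is global on the cone $X_0$, not just near $x_0$: any degeneracy locus would be a torus-invariant closed cone, hence would contain the vertex, where we have just excluded it.
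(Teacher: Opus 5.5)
Your reduction matches the paper's. With $s(\sigma)=\sigma^{n/p}$, which generates $\omega_X$, the automatic inequality is $A_X(w)\ge \frac{n}{p}A_\sigma(w)$ for every quasi-monomial $w$. Non-degeneracy of the specialization is equivalent to equality (Proposition \ref{prop:non-degenerate criterion}). Your codimension-one argument from $\sigma_0^{n/p}\neq 0$ to non-degeneracy is Proposition \ref{prop:closed+non-deg}(2).

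The gap is the step you flag as the main obstacle, the reverse inequality
\[
\frac{A_X(v_0)}{n}\le \frac{A_\sigma(v_0)}{p}.
\]
This carries the whole content of the theorem, you do not prove it, and the routes you sketch do not produce it as stated. The form $\sigma$ does not enter $\widehat{\vol}(w)=A_X(w)^n\vol(w)$ at all. You give no mechanism by which a gap between $A_X(v_0)/n$ and $A_\sigma(v_0)/p$ would yield a valuation of smaller normalized volume.

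The divisorial case $v_0=\ord_D$ makes this concrete. The quasi-monomial simplex of $v_0$ is a ray, and the torus coming from the degeneration is just $\bG_m$. So perturbing inside the simplex, or a Futaki computation along that torus, carries no information. Yet $\frac{A_X(D)}{n}\le\frac{A_\sigma(D)}{p}$ is still a genuine constraint on $p$-forms, reflecting the K-semistability of $(D,\Delta_D)$. Also, $\ker\iota_{\sigma_0}$ is a distribution, not a valuation, so ``twisting toward'' it is not yet a construction.

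The paper gets the inequality from a slope argument on Koll\'ar components.
\begin{enumerate}
\item Let $D$ be a Koll\'ar component with plt blowup $Y\to X$, index one cover $\cY\to Y$, and $\cD$ the preimage of $D$. Then $\sigma$ gives a nonzero section of $\Omega^{[p]}_{\cY}(\log\cD)(-A_\sigma(D)\cD)|_{\cD}$.
\item The sheaf $\Omega^{[1]}_{\cY}(\log\cD)|_{\cD}$ is the canonical extension $E_{D,\Delta_D}$ of $\cO$ by the orbifold cotangent sheaf of $(D,\Delta_D)$.
\item For K-semistable pairs with arbitrary coefficients, $E_{D,\Delta_D}$ is slope semistable (Theorem \ref{thm:can ext ss}, proved analytically in the appendix). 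A K-semistable complement handles the case $\delta(D,\Delta_D)<1$. Together these bound $\mu_{\max}(E_{D,\Delta_D})$ and give
\[
\min\{1,\delta(D,\Delta_D)\}\cdot\frac{A_X(D)}{n}\le\frac{A_\sigma(D)}{p}.
\]
\item By Theorem \ref{thm:delta->1}, divisorial valuations close to $v_0$, in the simplex of a log smooth model adapted to $v_0$, are Koll\'ar components with $\delta(D,\Delta_D)\ge 1-\varepsilon$. This uses higher-rank finite generation and the valuative criterion on the K-semistable cone $X_0$.
\item Linearity of $A_X$ and $A_\sigma$ near $v_0$ then passes the inequality to the limit.
\end{enumerate}

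A secondary point: when $v_0$ has rational rank greater than one, $\bigoplus_\lambda\fa_\lambda(v_0)t^{-\lambda}$ is not a test configuration over $\bA^1$, so the specialization step is not ``formal''. The paper instead specializes along the test configuration of a nearby Koll\'ar component with the same central fibre $X_0$. Equality there follows because the function $\frac{A_X}{n}-\frac{A_\sigma}{p}$ is nonnegative, linear near $v_0$, and zero at the interior point $v_0$, so it vanishes on a neighbourhood.
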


Here a reflexive $p$-form $\sigma$ on a normal variety $X$ of dimension $n$ is called \emph{non-degenerate} if $p \mid n$ and $\sigma^{\frac{n}{p}}$ is a nowhere vanishing section of $\omega_X=\Omega^{[n]}_X$ on the smooth locus. The specialization of $p$-forms in Theorem \ref{main-thm:degenerate} is induced by a test configuration, corresponding to some Koll\'ar component that approximates the normalized volume minimizer, see Theorem \ref{thm:sigma-admissible nbhd} for the more precise statement.  Moreover, one can easily see that closedness of forms is preserved under this specialization. 

Recall that a symplectic singularity is a klt singularity carrying a closed non-degenerate $2$-form \cite{Bea-symp-sing}. As a consequence of Theorem \ref{main-thm:degenerate}, symplectic singularities are preserved under stable degeneration. Combined with a rigidity result for symplectic degenerations (\cite{Namikawa-deformation-terminal,Namikawa-deformation,Namikawa-notes-on-deformation,NO-symp}, see also Theorem \ref{thm:symplectic rigid}), this yields a proof of a conjecture of Kaledin \cites{Kaledin-sym, Kaledin-survey} that the formal completion of any symplectic singularity is conical. Combined further with \cite{Namikawa-finite}, it also implies that there are only countably many formal isomorphism classes of symplectic singularities. 

\begin{thm}[Kaledin's Conjecture, see Corollary \ref{cor-Kaledin}]\label{main-thm:Kaledin}
Every symplectic singularity is formally isomorphic to a conical symplectic singularity, and every conical symplectic singularity is a K-semistable Fano cone.
\end{thm}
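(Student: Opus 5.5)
The plan is to combine Theorem \ref{main-thm:degenerate} with the rigidity of symplectic degenerations (Theorem \ref{thm:symplectic rigid}). Let $x\in X$ be a symplectic singularity with symplectic form $\sigma$, and let $x_0\in X_0$ be its stable degeneration. Since $\sigma$ is a closed non-degenerate $2$-form and $x\in X$ is klt (symplectic singularities are canonical, hence klt), Theorem \ref{main-thm:degenerate} produces a non-degenerate $2$-form $\sigma_0$ on $X_0$. As remarked after that theorem, closedness is preserved, so $\sigma_0$ is closed. The variety $X_0$ is a K-semistable Fano cone, so it is klt, and $(X_0,\sigma_0)$ is therefore a symplectic singularity. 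I would also check, from the construction in Theorem \ref{thm:sigma-admissible nbhd}, that $\sigma_0$ is homogeneous of positive weight for the torus action. This holds because $\sigma_0$ is the leading term of $\sigma$ with respect to the filtration induced by the approximating Kollár component or valuation. Choosing a one-parameter subgroup in the Reeb cone then makes $X_0$ a conical symplectic singularity, with the $\mathbb{G}_m$-action contracting to $x_0$ and $\sigma_0$ of positive weight.

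Next, I would use the specialization itself. The test configuration $\mathcal{X}\to\mathbb{A}^1$ given by the Kollár component degenerates $X$ to $X_0$, and by Theorem \ref{thm:sigma-admissible nbhd} the relative form specializes $\sigma$ to $\sigma_0$. This gives a $\mathbb{G}_m$-equivariant Poisson or symplectic deformation of the conical symplectic singularity $X_0$ whose general fiber is (the germ of) $X$. Passing to formal completions at $x$ and $x_0$, the formal germ $\widehat{X}_x$ becomes a formal symplectic deformation of $\widehat{X_0}_{x_0}$ that is trivial in the sense that it is an isotrivial degeneration coming from a filtration. Theorem \ref{thm:symplectic rigid}, the Namikawa-type rigidity, states that such a symplectic degeneration to a conical symplectic singularity, with the form of positive weight, is locally trivial, i.e.\ formally a product. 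Hence $\widehat{\mathcal{O}}_{X,x}\cong\widehat{\mathcal{O}}_{X_0,x_0}$, which proves the first assertion.

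For the second assertion, let $Y$ be a conical symplectic singularity with good $\mathbb{G}_m$-action. I would first show that its stable degeneration is $Y$ itself, with the minimizer lying in the Reeb cone of the maximal torus. By the first part, the stable degeneration $Y_0$ is formally isomorphic to $Y$, and both are cones. I would then argue that the minimizing valuation $v_0$ is $T$-invariant for a maximal torus $T$ of the automorphisms, by uniqueness of the minimizer. It therefore lies in the Reeb cone and induces the trivial degeneration, so $Y\cong Y_0$ is K-semistable. An alternative route is that the Reeb vector of a conical symplectic singularity is rigidly determined: the weight of $\sigma$ fixes a normalization, and the volume functional restricted to the Reeb cone, together with the formal isomorphism, identifies $v_0$ with a Reeb vector of $Y$.

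The main obstacle is this last identification, namely showing that the minimizer of $Y$ is a valuation in its own Reeb cone, so that the degeneration is trivial rather than merely formally trivial. The tool I would try first is uniqueness of the minimizer combined with the invariance of $v_0$ under the full automorphism group of the formal germ, which contains the maximal torus.
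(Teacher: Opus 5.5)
Your first part is the paper's argument: Theorem \ref{thm:sigma-admissible nbhd} with Proposition \ref{prop:closed+non-deg}(1), followed by Theorem \ref{thm:symplectic rigid}. One caution applies. $\sigma_0$ is only known to be homogeneous for the $\bG_m$ of the chosen Koll\'ar component, not a priori for the whole torus $\langle\xi_v\rangle$; the paper stresses this point. Also, positivity of its weight comes from Lemma \ref{lem:wt=log discrep} (because $\sigma_0^{n/2}$ generates $\omega_{X_0}$ and $A_{X_0}>0$), not from $\sigma_0$ being a leading term. That single $\bG_m$ already makes $(x_0\in X_0,\sigma_0)$ conical, so the first assertion is fine.

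The gap is in the second part, at exactly the step you flag. The inference ``$v_0$ is invariant under a maximal torus $T$, hence $v_0=\wt_\xi$ for a Reeb vector $\xi$ of $T$'' is false. $T$-invariant quasi-monomial valuations form a much larger set than the Reeb valuations; compare the valuations $w_t$ in the proof of Lemma \ref{lem:delta>? criterion}. Invariance of the minimizer under all automorphisms holds for every klt singularity by uniqueness, and this step never uses $\sigma$. So the same reasoning would show that every Fano cone singularity is a K-semistable Fano cone. That fails, for example, for the cone over a K-unstable Fano manifold with finite automorphism group. There the maximal torus is the cone $\bG_m$, and its canonical valuation is not the minimizer, by Theorem \ref{thm:minimizer is ss} and Lemma \ref{lem:delta>? criterion}. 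Your alternative route (normalizing by the weight of $\sigma$ and minimizing volume over the Reeb cone) has the same defect: it only locates the minimizer among Reeb valuations.

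The missing idea is algebraization of formal isomorphisms between cones. Maximal tori of the pro-algebraic group $\Aut(\widehat{R})$ exist and are conjugate \cite[8.4]{Kol-cone-auto}. Moreover, $R$ is recovered inside $\widehat{R}$ as the sum of weight spaces of a maximal torus extending a good action. Hence two formally isomorphic singularities with good torus actions are algebraically isomorphic, equivariantly for a torus extending both actions (Lemma \ref{lem:cone formal isom = alg isom}). Your first part shows that a conical $Y$ and its stable degeneration $Y_0$ are formally isomorphic. The lemma then gives $Y\cong Y_0$, so $Y$ is a K-semistable Fano cone, which is the second assertion as stated.

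The paper proves more, namely that the Reeb vector can be taken $\sigma$-dilating. Take a maximal torus $\bT$ of $\Aut(x_0\in X_0,[\sigma_0])$ and suppose $v_0$ is not a $\bT$-Reeb valuation. Theorem \ref{thm:sigma-admissible nbhd} on $X_0$ then gives a $\bT$-invariant Koll\'ar component near $v_0$. Its test configuration trivially degenerates $X_0$ and specializes $\sigma_0$ to some $\sigma_0'$ carrying a dilating $\bT\times\bG_m$-action. Theorem \ref{thm:symplectic rigid} and Proposition \ref{prop:cone+form formal isom = alg isom} give $(X_0,\sigma_0)\cong(X_0,\sigma'_0)$, contradicting maximality. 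Proposition \ref{prop:cone+form formal isom = alg isom} then yields $(Y,\sigma)\cong(X_0,\sigma_0)$ equivariantly.
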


We refer to Definition \ref{def:Reeb vector} and Paragraph \ref{say-higherrank} for the definition of K-semistable Fano cones, and to Theorem \ref{thm:Kaledin enhanced} for a stronger statement.  Roughly speaking, being a K-semistable Fano cone is an algebraic condition that is closely related to the existence of Ricci-flat K\"ahler cone metrics on singularities with good torus actions. The second part of Theorem \ref{main-thm:Kaledin} implies among other things that every conical symplectic singularity admits a canonical dilating action by a torus whose rank can a priori be larger than one, and further suggests that every conical symplectic singularity has some hidden hyperk\"ahler structure. 
In fact, combined with the Yau-Tian-Donaldson correspondence between K-polystable Fano cones and Ricci-flat K\"ahler cones \cite{Huang-thesis,Li-Fano-cone-YTD}, the following was essentially conjectured in \cite[Section 3.2]{XZ-open}.


\begin{conj}
Any conical symplectic singularity is  a hyperk\"ahler cone.
\end{conj}


By the Artin approximation theorem \cite[Corollary 2.6]{Artin-approx}, formally isomorphic algebraic singularities are also \'etale locally isomorphic. Moreover, the symplectic structure, if it exists, is unique up to analytic isomorphism on any singularity germ, see Appendix \ref{s:Darboux}. Thus Theorem \ref{main-thm:Kaledin} also implies that every symplectic singularity is analytically conical. 
We also show that any two formally isomorphic conical symplectic singularities are indeed algebraically isomorphic (see Proposition \ref{prop:cone+form formal isom = alg isom}). In particular, this answers the algebraicity question raised in \cite[Page 3]{Namikawa-torichyperkahler} affirmatively.

\medskip

We apply our general theory to explicit symplectic singularities and identify minimizers of their normalized volume function. Finding a minimizer for general klt singularities is a challenging question. However, for a symplectic singularity, Theorem \ref{main-thm:Kaledin} implies that the minimizer can be found in the Reeb cone of the maximal torus of its automorphism group, and the symmetry of the singularity allows us to identify the minimizer in several cases. 

First, we address part of \cite[Problem 23]{XZ-open} by showing that the natural base of the normalization of any nilpotent orbit closure is a K-semistable Fano variety. 
We expect these bases to be K-polystable (see Remark \ref{rem-Kronheimer} for related earlier analytic works). 

\begin{thm}[see Theorem \ref{t-nilpotent}] \label{main thm:nilpotent}
Let $\fg$ be a semisimple Lie algebra and let $\widetilde{O}$ be the normalization of a nilpotent orbit closure with vertex $\tilde{o}$. Then the $\bG_m$ scaling action on $\fg$ gives the minimizer of the normalized volume on $\tilde{o}\in \widetilde{O}$. Consequently, the quotient $\left(\widetilde{O} \setminus \{\tilde{o}\}\right)/\bG_m$ is a K-semistable Fano variety. 
\end{thm}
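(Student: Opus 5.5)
The plan is to combine Theorem \ref{main-thm:Kaledin} with the representation theory of the Hamiltonian torus acting on $\widetilde{O}$. Write $\widetilde{O}$ for the normalization. It is a conical symplectic singularity: the Kostant--Kirillov form $\sigma$ has weight $1$ under the scaling $\bG_m$, which acts with positive weights on $\bC[\widetilde{O}]$.

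\textbf{Step 1: the minimizer lies in a Reeb cone.} By Theorem \ref{main-thm:Kaledin}, in the enhanced form Theorem \ref{thm:Kaledin enhanced}, the minimizer $v_0$ of $\widehat{\vol}$ at $\tilde{o}$ is a toric valuation $\wt_\xi$. Here $\xi$ lies in the Reeb cone of a maximal torus $T\subset\Aut(\widetilde{O},\tilde o)$, and $\widetilde O$ with $\xi$ is a K-semistable Fano cone. By uniqueness of the minimizer up to scaling, $\xi$ is fixed, up to rescaling, by the normalizer of $T$ in $\Aut(\widetilde O)$. In particular it is invariant under the Weyl group action.

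\textbf{Step 2: identify the relevant torus.} The automorphism group contains $G\times\bG_m$, where $G$ is the adjoint group acting by conjugation and $\bG_m$ is the scaling. I expect the key structural input to be that the $G$-action is Hamiltonian and preserves $\sigma$, whereas the scaling rescales $\sigma$. So $\Aut(\widetilde O,\sigma)$ has identity component essentially $G$, up to a finite extension, and $\Aut(\widetilde O)^\circ$ is generated by it together with the scaling. I would argue this using the moment map: any automorphism preserving $\sigma$ up to scalar normalizes the Poisson algebra. The degree-$1$ part of $\bC[\widetilde O]$ (weight $1$ under scaling) is $\fg^*$, since the moment map generates. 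So such automorphisms act on $\fg$ by Lie algebra automorphisms.

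A subtlety is that a priori the maximal torus of $\Aut(\widetilde O)$ need not preserve $\sigma$ up to scaling. Here I would instead use uniqueness of symplectic forms (Appendix \ref{s:Darboux}) and the fact that the minimizer's degeneration is $\widetilde O$ itself carrying the specialized form $\sigma_0$ (Theorem \ref{main-thm:degenerate}). That specialized form is $\xi$-homogeneous. So we may take $T=T_G\times\bG_m$ with $T_G$ a maximal torus of $G$, and $\xi=(\eta,c)$ with $\eta\in\Lie(T_G)$.

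\textbf{Step 3: the Weyl group kills $\eta$.} Step 1 says $\xi$ is invariant under the normalizer of $T$. The Weyl group $W$ of $G$ acts on $\Lie(T_G)$ with no nonzero fixed vectors, since $\fg$ is semisimple. Hence $\eta=0$, and $\xi$ is a multiple of the scaling generator. This shows the scaling action gives the minimizer.

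\textbf{Step 4: the consequence.} The K-semistability of the Fano cone $(\widetilde O,\xi_{\mathrm{scal}})$ is equivalent to the K-semistability of the log Fano quotient $(\widetilde O\setminus\{\tilde o\})/\bG_m$, with the orbifold boundary coming from multiple fibers. Since the scaling weights on generators are $1$ (the degree-one part is $\fg^*$), I would check that codimension-one stabilizers are trivial. Then the boundary is empty and the quotient is a K-semistable Fano variety.

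The main obstacle is Step 2. One must justify that the relevant maximal torus is $T_G\times\bG_m$, which amounts to controlling automorphisms of $\widetilde O$ not a priori compatible with $\sigma$. Failing a general argument, I would restrict to the subtorus preserving the Poisson structure up to scaling and show that the minimizer lies in its Reeb cone via the enhanced Kaledin statement.
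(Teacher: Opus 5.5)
Step 2 has a genuine gap, and Step 3 rests on it. The moment-map argument applies only to automorphisms commuting with the scaling. A general symplectic automorphism is not graded: for instance $(x,y)\mapsto(x,y+x^3)$ on $\bC^2/\pm 1$. So $\Aut(\widetilde{O},\sigma)$ is not ``essentially $G$''. Choosing $T$ to contain the scaling handles that, but even for graded $\sigma$-dilating automorphisms your argument only shows that they act faithfully by Lie algebra automorphisms on $\bC[\widetilde{O}]_1$ with its Poisson bracket.

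For the \emph{normalization}, $\bC[\widetilde{O}]_1$ can be strictly larger than $\fg^*$. This happens for Brylinski--Kostant's shared orbits. For example (Levasseur--Smith, Brylinski--Kostant), the normalized closure of the $8$-dimensional nilpotent orbit of $\fg_2$ is the minimal orbit closure of $\mathfrak{so}_7$. The graded dilating symplectic automorphisms then contain $SO_7\times\bG_m$, whose maximal torus has rank $4>\operatorname{rk}(G_2)+1$. So $T_G\times\bG_m$ need not be a maximal torus. Once the maximal torus $T\ni\xi$ is larger, $N_G(T_G)$ need not normalize $T$, and your Weyl-group argument, which needs $N_G(T_G)\subseteq N(T)$, no longer applies. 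To run it you would have to determine the full graded dilating symplectic automorphism group and show it is semisimple modulo the scaling, which you have no general way to do.

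The paper never identifies a maximal torus. Its Lemma \ref{l-center} strengthens your Step 1: a reductive group containing $\langle\xi\rangle$ and preserving $\wt_\xi$ centralizes $\langle\xi\rangle$. Take a maximal reductive subgroup $H\supseteq G\times\bG_m$ of the scaling-equivariant dilating automorphisms. Conjugate the $\xi$ from Theorem \ref{thm:Kaledin enhanced} into a maximal torus of $H$. Then $\langle\xi\rangle\subseteq Z(H)$, so $\xi$ commutes with $G$. It remains only to show that maximal tori of $\Aut^{G}(\tilde o\in\widetilde{O},[\sigma])$ are one-dimensional; this is Lemma \ref{l-one dim}, deduced from Brylinski--Kostant. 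The whole argument is packaged as Lemma \ref{lem:quasi-regular symplectic}.

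That last input also has an elementary proof. By Lemma \ref{l-Sym=Hamilton}, a symplectic torus commuting with $G$ and the scaling has $G$-invariant Hamiltonians of degree one. These vanish because $O$ is a dense $G$-orbit, so the torus is trivial.

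In Step 4 your reason is wrong, since $\bC[\widetilde{O}]$ need not be generated in degree one, but the conclusion holds. The finite map $\widetilde{O}\to\fg$ is $\bG_m$-equivariant, $\tilde o$ is the only preimage of $0$, and $\bG_m$ acts freely on $\fg\setminus\{0\}$. Hence $\bG_m$ acts freely on $\widetilde{O}\setminus\{\tilde o\}$.
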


Another application is to \emph{hypertoric} singularities (see Section \ref{ss-hypertoric} for the definition of related notions). This is a special case of a more general conjecture we made for symplectic singularities arising from Hamiltonian reductions (see \cite[Problem 27]{XZ-open}).

\begin{thm}[= Theorem \ref{t-hypertoric}] \label{main thm:hypertoric}
For any hypertoric singularity $Y(A,0)$, the normalized volume minimizer is given by the descent of the standard $\bG_m$ action on $\mathbb C^{2N}$.
\end{thm}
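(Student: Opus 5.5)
The plan is to reduce the problem, via the results for symplectic singularities proved earlier in the paper (Theorem \ref{thm:Kaledin enhanced}), to a finite-dimensional minimization over a Reeb cone, and then use a symmetry of the hypertoric construction to pin down the minimizer. Write $Y=Y(A,0)=\mu^{-1}(0)/\!\!/K$, where $K\subset T^N$ is the subtorus determined by $A$ acting on $\bC^{2N}$ with coordinates $(z_i,w_i)$ and moment map $\mu$. The torus $T:=(T^N/K)\times\bG_m$ acts on $Y$: the first factor is the residual Hamiltonian torus preserving the symplectic form $\sigma$, and the second is the descent of the standard dilation. The dilation acts on $\sigma$ with weight $2$.

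\textbf{Step 1: the minimizer lies in $\Lie(T)$.} By uniqueness of the normalized volume minimizer $v_0$ up to scaling, $v_0$ is invariant under every automorphism of the germ, in particular under $T$. By Theorem \ref{thm:Kaledin enhanced}, $Y$ is a K-semistable Fano cone for its canonical torus and $v_0=v_{\xi_0}$ for a Reeb vector $\xi_0$ in the Lie algebra of a maximal torus $\bT\supseteq T$ of $\Aut(Y,\sigma)$ up to scaling of $\sigma$. I would then argue that $\bT=T$. The first ingredient is that any torus preserving $\sigma$ is Hamiltonian, since $Y$ is conical and $H^1$ vanishes on the regular part in the relevant sense. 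The second ingredient is the known description of Hamiltonian tori of hypertoric varieties (Bielawski, Konno), which says this Hamiltonian part is $T^N/K$. Since dilations change $\sigma$ by a character, $\bT$ is generated by $T^N/K$ together with one dilating direction, so $\bT=T$. Verifying this maximality cleanly, perhaps only after reducing to the case where $A$ has no repeated or zero columns, is the step I expect to be the main obstacle. As a fallback, I would instead show directly that the $T^N/K$-component of $\xi_0$ vanishes inside whatever $\bT$ is, using the argument of Step 3.

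\textbf{Step 2: normalize the log discrepancy.} For $\xi=\xi_{\rm std}+\eta$ with $\eta\in\Lie(T^N/K)$, the form $\sigma^{n/2}$ is a $T$-eigen-generator of $\omega_Y$. Hence $A_Y(v_\xi)$ is a fixed positive multiple of the weight of $\sigma^{n/2}$, and this weight is independent of $\eta$ because $\eta$ preserves $\sigma$. Thus, after normalizing $A_Y=1$, the minimization is of $\vol(v_\xi)$ over the slice $\{\xi_{\rm std}+\eta\}\cap$ Reeb cone. On this slice the function is strictly convex (by the convexity of volume on the Reeb cone) and has a unique minimizer.

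\textbf{Step 3: symmetry.} Consider the involution $\iota\colon (z_i,w_i)\mapsto(w_i,-z_i)$. It preserves $\mu$ up to sign, hence preserves $\mu^{-1}(0)$, and it normalizes $K$ by acting as inversion on $T^N$. It therefore descends to an automorphism of $Y$ fixing the cone point. Conjugation by $\iota$ fixes $\xi_{\rm std}$ and sends $\eta\mapsto-\eta$ on $\Lie(T^N/K)$. Since $\iota$ is an automorphism, it preserves normalized volumes, so $\vol(v_{\xi_{\rm std}+\eta})=\vol(v_{\xi_{\rm std}-\eta})$. Strict convexity on the slice then forces the unique minimizer to be $\eta=0$. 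Equivalently, by uniqueness the minimizer $\xi_0$ is $\iota$-fixed, which gives the same conclusion. Hence $v_0$ is induced by the standard $\bG_m$ action.
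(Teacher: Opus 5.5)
Your proposal has the same structure as the paper's proof: identify $\bG_m\times\bT_Y$ (your $T$) as a maximal torus of dilating symplectic automorphisms, use Theorem \ref{thm:Kaledin enhanced} to place the minimizer in its Reeb cone, then remove the $\Lie(\bT_Y)$-component with the involution exchanging $z_i$ and $w_i$.

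\textbf{The step you flag as the main obstacle.} It is elementary, and the paper closes it without any classification of Hamiltonian tori. Let $\bT\supseteq T$ be a dilating torus, and let $\bT^\sigma$ be the identity component of the kernel of the character by which $\bT$ scales $\sigma$.
\begin{enumerate}
\item $\bT^\sigma$ has dimension $\dim\bT-1$, acts symplectically, and commutes with the dilation.
\item By Lemma \ref{l-Sym=Hamilton} it is Hamiltonian. The reason is not a vanishing of $H^1$. Rather, $\alpha=\iota_{\xi_Y}\sigma$ is a closed $1$-form of weight $\ell>0$ under the Euler field $\eta$, so $\ell\alpha=\cL_\eta\alpha=\rd(\iota_\eta\alpha)$ is exact.
\item The Hamiltonians of a torus Poisson-commute, so $\sigma(\xi_Y,\eta_Y)=0$. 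Hence general orbits are isotropic, and effectiveness gives $\dim\bT^\sigma\le\frac12\dim Y=n$ (Lemma \ref{l-dimHamilton}).
\end{enumerate}
Therefore $\dim\bT\le n+1=\dim T$, and $\bT=T$. The only input is effectiveness of $\bT_Y$, which the paper also takes for granted; no reduction on the columns of $A$ is needed. Your fallback would not have worked as stated: if $\bT\supsetneq T$, then $\xi_0$ need not lie in $\Lie(T)$ and $\iota$ need not normalize $\bT$, so Step 3 has nothing to act on.

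\textbf{Your final step versus the paper's.} The paper uses the anti-symplectic swap $(z_i,w_i)\mapsto(w_i,z_i)$, which still lies in $\Aut(Y,[\sigma_Y])$. It forms the reductive group $G=\langle\iota\rangle\ltimes(\bG_m\times\bT_Y)$ and applies Lemma \ref{l-center}: the torus generated by the minimizing Reeb vector lies in $Z(G)^\circ$, which is the dilation $\bG_m$.

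Your argument is instead:
\begin{itemize}
\item $A_Y$ is constant on the slice $\xi_{\rm std}+\Lie(\bT_Y)$, by Lemma \ref{lem:wt=log discrep}, since $\sigma^{\dim Y/2}$ generates $\omega_Y$ and has $\bT_Y$-weight zero.
\item Every Reeb vector of $T$ rescales into this slice.
\item Your genuinely symplectic involution acts on the slice by $\eta\mapsto-\eta$.
\item Uniqueness of the minimizer up to scaling, together with injectivity of $\xi\mapsto\wt_\xi$ on the Reeb cone of an effectively acting torus, forces $\eta=0$.
\end{itemize}
This is a correct, more hands-on version of the same symmetry principle, and it makes strict convexity of the volume unnecessary. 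The paper's Lemma \ref{l-center} packages the argument so that it can be reused for nilpotent orbits, where the relevant group is not a finite extension of a torus.
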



Theorems \ref{main-thm:degenerate} and \ref{main-thm:Kaledin} are closely related to the recent analytic work of Namikawa-Odaka \cite{NO-symp}. In the setting of symplectic singularities lying on smoothable projective symplectic varieties, they use the Donaldson–Sun theory of Ricci-flat metric tangent cones \cite{DS-degeneration2}, together with Poisson deformation theory, to construct a canonical good torus action; in particular, they prove Kaledin’s conjecture in the global smoothable projective setting, and obtain additional metric information. Our approach is different in both scope and method. We replace the metric tangent cone by the stable degeneration associated to the normalized-volume minimizer, which is available for \emph{arbitrary} klt singularities. The main new point is to prove that this algebraic degeneration preserves non-degenerate reflexive forms. Consequently, when the original singularity is symplectic, its stable degeneration is again symplectic; combining this with the deformation-theoretic rigidity of symplectic degenerations gives Kaledin’s conjecture for arbitrary symplectic singularities.

The key algebraic input behind this non-degeneracy preserving statement is a relation between the vanishing order of a differential form along a Kollár component and the K-stability of the associated log Fano pair. 


More precisely, for any reflexive $p$-form $\sigma$ on a normal variety $X$ and any prime divisor $D$ on a log resolution $Y\to X$, we define the \emph{log discrepancy $A_\sigma (D)$} as the zero or pole order along $D$ of $\sigma$ as a rational section of $\Omega^p_Y(\log D)$. The definition also extends to quasi-monomial valuations, see Definition \ref{defn-log discrepancy with respect to a form}. By \cite{GKKP-extend-forms,KS-extend-forms}, every reflexive differential form $\sigma$ on a klt singularity extends holomorphically to any resolution and in particular $A_\sigma (D)\ge 0$ for any prime divisor $D$ over the singularity. We strengthen this result for divisors/valuations that we consider as follows.

\begin{thm}[see Theorem \ref{thm:estimate dis}] \label{main-thm:estimate dis}
Let $\sigma$ be a non-zero reflexive $p$-form on a klt singularity $x\in X$ of dimension $n$.
\begin{enumerate}
\item Let $D$ be a Koll\'ar component over $x$. Then
\[
\min\{\delta(D,\Delta_D),1\} \cdot \frac{A_X(D)}{n}\le \frac{A_{\sigma}(D)}{p} \, .
\]
\item Let $v$ be the minimizer of the normalized volume function. Then 
\[
\frac{A_X(v)}{n}\le \frac{A_{\sigma}(v)}{p} \, .
\]
\end{enumerate}
\end{thm}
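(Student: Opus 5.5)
The plan is to reduce both statements to a slope inequality on the Kollár component, or on the stable degeneration, coming from K-stability.

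\emph{Reduction for (1).} Let $\pi\colon Y\to X$ be the plt blowup extracting the Kollár component $D$, and write $(K_Y+D)|_D=K_D+\Delta_D$. Put $A=A_X(D)$ and $L=-D|_D$, which is an ample $\bQ$-Cartier divisor, so that $-(K_D+\Delta_D)\sim_{\bQ} A\,L$. Let $a=A_\sigma(D)$. By definition $\sigma$ is then a section of $\Omega^{[p]}_Y(\log D)(-aD)$ near $D$, nonzero along $D$. Passing to the associated graded with respect to $\ord_D$ amounts to degenerating $X$ to the orbifold cone $C=C(D,L)$. The leading term $\sigma_{\rm in}$ is then a nonzero $p$-form on $C$, homogeneous of weight $a$ for the $\bG_m$-action, and logarithmic along the vertex direction.

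\emph{Identification with a bundle on $D$.} Next I would identify weight-$a$ log $p$-forms on $C$ with sections of $\wedge^{[p]}\mathcal E\otimes\cO_D(aL)$. Here $\mathcal E$ is the orbifold Atiyah extension
\[
0\to \Omega^{[1]}_{(D,\Delta_D)}\to \mathcal E\to \cO_D\to 0
\]
with extension class $c_1(L)$. It is the restriction to the zero section of the log cotangent sheaf of the total space of $L^{-1}$, and the log form $dt/t$ gives the quotient $\cO_D$. This step should be routine on the orbifold/smooth locus, which has codimension $\ge 2$, using reflexivity. The outcome is a nonzero morphism $\cO_D(-aL)\to\wedge^{[p]}\mathcal E$. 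Since $\det\mathcal E=K_D+\Delta_D=-AL$, the $L$-slope of $\mathcal E$ is $\mu(\mathcal E)=-\frac{A}{n}(L^{n-1})$. If $\mathcal E$ were $L$-semistable, then so would be $\wedge^{[p]}\mathcal E$, in characteristic zero, and we would get $-a\le -\frac{pA}{n}$, i.e.\ $\frac{A}{n}\le\frac{a}{p}$.

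\emph{Main obstacle.} The key step is the inequality
\[
\mu_{\max}(\mathcal E)\le \min\{\delta(D,\Delta_D),1\}\cdot\mu(\mathcal E).
\]
This is a quantitative semistability of the Atiyah extension, equivalently of the cone's log tangent sheaf, in terms of $\delta$. For $\delta\ge1$ it is the statement that K-semistable Fano cones have semistable (log) tangent sheaves. I would try to prove it purely algebraically. A destabilizing subsheaf $\mathcal F\subset\mathcal E$, or directly the nonzero map $\cO_D(-aL)\to\wedge^{[p]}\mathcal E$, should induce a filtration of the section ring $R(D,L)$, via vanishing orders of the form or the foliation it defines. One then estimates the $\beta$-invariant of that filtration, i.e.\ $A$ minus the $S$-invariant, and uses $\beta\ge(\delta-1)$-type bounds from the definition of $\delta$ via basis-type divisors. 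Expected failure point: making the filtration's $S$-invariant computable from $a$ and $p$ alone. As a fallback I would compare with the foliated/tangent-sheaf semistability results of Guenancia--Păun type, in the analytic case, and then mimic them algebraically. Since the $\sigma_{\rm in}$ piece only sees $\wedge^p$, applying the bound $\mu_{\max}(\wedge^p\mathcal E)\le p\,\mu_{\max}(\mathcal E)$ will then give $a\ge \min\{\delta,1\}\,pA/n$, which is (1).

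\emph{Part (2).} Here $v$ is quasi-monomial, so I would argue in the same way on the stable degeneration $X_0$. The leading term of $\sigma$ with respect to $v$ is a nonzero form on the K-semistable Fano cone $X_0$, homogeneous of weight $A_\sigma(v)$ for the Reeb vector $\xi_v$, and $A_X(v)=A_{X_0}(\xi_v)$. The weight inequality is then the Fano-cone version of the slope bound with $\delta=1$. Alternatively, approximate $v$ by Kollár components $D_i$ with $\delta(D_i,\Delta_{D_i})\to1$, which is available from the theory of the minimizer. Then apply (1) to the rescaled valuations $\ord_{D_i}/A_X(D_i)$ and pass to the limit, using continuity of $A_\sigma$ on the relevant quasi-monomial simplex.
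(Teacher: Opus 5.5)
Your reduction in (1) is equivalent to the paper's. The paper restricts the strict transform of $\sigma$ on the index-one cover of the plt blowup directly to $D$, rather than passing through the cone. This yields a nonzero map $\cO_D(-aL)\to\wedge^{[p]}\mathcal{E}$, where $\mathcal{E}$ is the canonical extension of $\cO_D$ by the orbifold cotangent sheaf of $(D,\Delta_D)$. Then (1) follows from $\mu_{\max}(\wedge^{[p]}\mathcal{E})\le p\,\mu_{\max}(\mathcal{E})$ together with $\mu_{\max}(\mathcal{E})\le\min\{\delta,1\}\,\mu(\mathcal{E})$.

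That last inequality is the whole content of the theorem, and your proposal does not prove it. The filtration/$\beta$-invariant plan is not an argument: nothing explains how a destabilizing subsheaf of $\mathcal{E}$ (or the form $\sigma_{\mathrm{in}}$) produces a filtration of $R(D,L)$ with a computable $S$-invariant, and you expect it to fail yourself. Even the case $\delta\ge 1$ is not formal. Semistability of the canonical extension of a K-semistable log Fano pair is an analytic theorem (Tian, Li, Druel--Guenancia--P\u{a}un, Dailly) proved with K\"ahler--Einstein metrics. In the merely semistable case it also needs approximation by uniformly K-stable pairs $(D,\Delta_D+\epsilon H)$.

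The idea you are missing for $\delta<1$ is a \emph{K-semistable complement}. By \cite[Theorem 1.8]{LXZ-HRFG} there is a general $0\le G\sim_{\bQ}-(K_D+\Delta_D)$ such that $(D,\Gamma:=\Delta_D+(1-\delta)G)$ is K-semistable.
\begin{itemize}
\item $K_D+\Gamma\sim_{\bQ}\delta(K_D+\Delta_D)$ is proportional to $K_D+\Delta_D$.
\item On a common adapted cover $f$ one has $\Omega^{[1]}_{(D,\Delta_D,f)}\subseteq\Omega^{[1]}_{(D,\Gamma,f)}$.
\item Hence the canonical extension of $(D,\Delta_D)$ injects into that of $(D,\Gamma)$, which is semistable of slope $\delta\,\mu(\mathcal{E})$.
\end{itemize}
This gives $\mu_{\max}(\mathcal{E})\le\delta\,\mu(\mathcal{E})$ (Lemma \ref{lem:mu_max bound by delta}). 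The different $\Delta_D$ has standard coefficients, but $\Gamma$ does not. So this step needs semistability of the canonical extension for K-semistable log Fano pairs with arbitrary coefficients, which is exactly what Guenancia's appendix (Theorem \ref{thm:can ext appendix}) provides. The standard-coefficient results you propose to mimic would not suffice.

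For (2), your second route matches the paper's: take Koll\'ar components in $\QM(Y,E)$ near $v$ with $\delta(D_i,\Delta_{D_i})\to 1$, apply (1), and use linearity of $A_X$ and $A_\sigma$ near $v$ (Lemma \ref{lem:A linear}). However, the approximation is not off-the-shelf. It is Theorem \ref{thm:delta->1}, proved in two steps:
\begin{itemize}
\item Higher rank finite generation gives $\gr_wR\cong\gr_vR$ for $w$ near $v$, so the $D_i$ become quasi-regular Reeb vectors on $X_0$ with the same log Fano pairs.
\item A valuative criterion (Lemma \ref{lem:delta>? criterion}) then compares $S(\xi_v;\cdot)$ with $S(\eta;\cdot)$ for nearby Reeb vectors $\eta$.
\end{itemize}
Your first route does not work as stated: for an irregular $\xi_v$ there is no quotient log Fano pair on which to formulate a slope bound.
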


Here the log Fano pair $(D,\Delta_D)$ is obtained by adjunction along the Koll\'ar component (see Definition \ref{def:kc}), and $\delta(D,\Delta_D)$ is its stability threshold (see Definition \ref{def:Fano K-ss}). A log Fano pair $(D,\Delta_D)$ is K-semistable if and only if $\delta(D,\Delta_D)\ge 1$. In a way, Theorem \ref{main-thm:estimate dis} says that differential forms on a klt singularity tend to have higher vanishing order along Koll\'ar components that are closer to being K-stable.

\subsection*{Sketch of proof}

Let us sketch proofs of Theorems \ref{main-thm:degenerate} and \ref{main-thm:estimate dis}, and explain the relations between them. First assume for simplicity that the minimizer of the normalized volume function on the klt singularity $x\in X$ is a divisorial valuation $\ord_D$, given by some Koll\'ar component $D$.

The test configuration induced by a Koll\'ar component $D$ can be constructed by a deformation to normal cone process (see \cite{LX-Kol-comp-stab}). Using this explicit description, we first show that the specialization of a non-degenerate $p$-form $\sigma$ remains non-degenerate on the central fiber of the test configuration if and only if its log discrepancy satisfies 
\begin{eqnarray}\label{e-logdiscrepancy}
\frac{A_{\sigma}(D)}{p}=\frac{A_{X}(D)}{n} \, ,
\end{eqnarray}
where $n$ is the dimension of $X$. We refer to Section \ref{s-differential forms} for the details of the proof, but here is an example that illustrates the intuition behind this: let $X=\mathbb A^n$ for $n=2k$ with the standard symplectic form $\sigma = \rd x_1 \wedge \rd x_2+\cdots+ \rd x_{n-1}\wedge \rd x_{n}$ (thus $p=2$), and consider the $\bG_m$-action on $X$ with weight $\alpha_i\in \bN_{>0}$ on $x_i$. This gives a product test configuration whose corresponding Koll\'ar component $D_{\alpha}$ is the exceptional divisor of the weighted blow up with weights $(\alpha_1,\cdots,\alpha_{n})$. We have 
\[
A_{\sigma}(D_\alpha)=\min_{1\le j \le k}\{\alpha_{2j-1}+\alpha_{2j}\} \quad \mathrm{and}\quad A_X(D_\alpha) = \alpha_1+\dots+\alpha_n\,.
\]
The specialization $\sigma_0$ is the initial term of $\sigma$ with respect to the given weights. It remains non-degenerate if and only if
\[
\alpha_1+\alpha_2=\alpha_3+\alpha_4=\cdots=\alpha_{n-1}+\alpha_n \, ,
\]
and it is not hard to see that this is equivalent to \eqref{e-logdiscrepancy}.

Under our simplifying assumption that the minimizer is given by a Koll\'ar component $D$, it follows from the general local K-stability theory \cite{LX-Kol-comp-stab} that the associated log Fano pair $(D,\Delta_D)$ is K-semistable. For such a Koll\'ar component, Theorem \ref{main-thm:estimate dis}(1) gives 
\[
\frac{A_{\sigma}(D)}{p}\ge \frac{A_{X}(D)}{n},
\]
which holds for any $p$-form $\sigma$ (not necessarily non-degenerate) on $X$. On the other hand, if $\sigma$ is non-degenerate, then $\sigma^{\frac{n}{p}}$ is a nowhere vanishing volume form and by definition 
\[
\frac{A_X(D)}{n} = \frac{A_{\sigma^{n/p}}(D)}{n}\ge \frac{A_{\sigma}(D)}{p}
\]
holds for any Koll\'ar component $D$. Hence \eqref{e-logdiscrepancy} holds when the Koll\'ar component $D$ is K-semistable and the $p$-form is non-degenerate, and Theorem \ref{main-thm:degenerate} follows in this case. 

In general, the normalized volume minimizer is only quasi-monomial, but we can always approximate it using Koll\'ar components. Thanks to Theorem \ref{main-thm:estimate dis}(2) and the local linearity of the log discrepancy function, one can still show that \eqref{e-logdiscrepancy} holds for Koll\'ar components that are sufficiently close to the minimizer in its rational envelope. This gives the general case of Theorem \ref{main-thm:degenerate}. As we mentioned above, Theorem \ref{main-thm:Kaledin} is a direct consequence of Theorem \ref{main-thm:degenerate} and a rigidity property of symplectic singularities.

We next explain the proof of Theorem \ref{main-thm:estimate dis}. The idea is to exploit the (slope) semistability of the canonical extension of the orbifold cotangent sheaf of the log Fano pair $(D,\Delta_D)$. For simplicity, first assume that the Koll\'ar component in Theorem \ref{main-thm:estimate dis}(1) is the normalized volume minimizer; in particular, it is K-semistable and $\delta(D,\Delta_D)\ge 1$. Let $Y\to X$ be its plt blowup. On the index one covering stack $\cY\to Y$ with respect to $D$, with $\cD$ the pullback of $D$, we have an exact sequence
\[
0\to  \Omega^{[1]}_{\cD} \to \Omega^{[1]}_{\cY}(\log \cD)|_{\cD} \to  \mathcal{O}_{\cD} \to 0\,
\]
whose extension class is given by the pullback of $c_1(-(K_D+\Delta_D))$. The sheaf in the middle is known as the canonical extension of the orbifold cotangent sheaf. 

By \cite{DGP-Q-Fano-decomp,Dai-stability} (see also \cite{Tian-extension,Li-extension-stab}), this extension sheaf is semistable if the log Fano pair has standard coefficients and is K-semistable. Therefore, if $(D,\Delta_D)$ is K-semistable, then the sheaves $\Omega^{[p]}_{\cY}(\log \cD)|_{\cD}$, as well as their twists by line bundles, are semistable, and they have non-zero global sections only when the slope is non-negative. On the other hand, if $r:=A_{\sigma}(D)$, then it is not hard to see that the non-zero $p$-form $\sigma$ yields a nonzero section of $\Omega^{[p]}_{\cY}(\log \cD)(-r\cD)|_{\cD}$. Theorem \ref{main-thm:estimate dis} then follows from a calculation of slopes in this case. 

In general, the pair $(D,\Delta_D)$ does not need to be K-semistable, but we can still estimate the maximal slope of the canonical extension in terms of the stability threshold, using the existence of a ``K-semistable complement'' (i.e. a general effective $\bQ$-divisor $G\sim_{\mathbb Q}-(1-\delta)(K_D+\Delta_D)$ on $D$, where $\delta=\delta(D,\Delta_D)$, such that $(D,\Delta_D+G)$ is K-semistable, see \cite[Theorem 1.8]{LXZ-HRFG}). For this to work, we also need a generalization of the semistability of the canonical extension to K-semistable log Fano pairs with general coefficients, proved by Guenancia in Appendix \ref{s:can ext ss} by an analytic argument.

Finally, to tackle Theorem \ref{main-thm:estimate dis}(2), we again approximate the normalized volume minimizer by Koll\'ar components. The key is to show that the approximating Koll\'ar components are close to being K-semistable, in the sense that $\delta(D,\Delta_D)$ can be arbitrarily close to $1$ (see Theorem \ref{thm:delta->1}). This is a consequence of the stable degeneration theory, and ultimately relies on the higher rank finite generation of the minimizer \cite{XZ-SDC} and some tools developed in \cite{XZ-uniqueness}.

\medskip

This paper is organized as follows. In Section \ref{s-stability theory}, we review the local K-stability theory for klt singularities. We also show the approximation result Theorem \ref{thm:delta->1}. In Section \ref{s-differential forms}, we study how differential forms degenerate under a special test configuration. We introduce  the log discrepancy of a quasi-monomial valuation with respect to a reflexive differential form (see Definition \ref{defn-log discrepancy with respect to a form}), and use it to characterize whether the specialization of a non-degenerate $p$-form remains non-degenerate (see Proposition \ref{prop:non-degenerate criterion}). In Section \ref{s-deformation}, we give a self-contained proof of Theorem \ref{thm:symplectic rigid} first established in \cite{NO-symp}, which roughly says that any symplectic special degeneration of a symplectic singularity is analytically trivial. In Section \ref{s-canonical}, we discuss the orbifold cotangent sheaf and its canonical extension. We use the semistability of the canonical extension to relate log discrepancy with stability thresholds and prove Theorems \ref{main-thm:degenerate} and \ref{main-thm:estimate dis}. In Section \ref{s-Kaledin}, we prove a stronger form of Kaledin's conjecture and deduce Theorem \ref{main-thm:Kaledin}. We also establish some structural results for conical symplectic singularities, which can be used to answer the algebraicity question in \cite[Page 3]{Namikawa-torichyperkahler}. In Section \ref{s-open}, we apply our results to study (normalized) nilpotent orbit closures and hypertoric singularities, and obtain Theorems \ref{main thm:nilpotent} and \ref{main thm:hypertoric}. 

\subsection*{Notation and Conventions}

Throughout we work over the field $\bC$ of complex numbers. A singularity $x\in (X,\Delta)$ consists of an affine variety $X$, an effective $\bQ$-divisor $\Delta$ on $X$, and a closed point $x\in X$\footnote{We work in this generality only for possible future reference. Readers are free to assume $\Delta=0$ throughout this paper.}. Symplectic singularities are denoted by $(x\in X,\sigma)$ where $\sigma$ is the symplectic form. Denote by $\Val_{X}$ the set of real valuations of the function field $\bC(X)$ that have a center on $X$, and by $\Val_{X,x}$ the set of those valuations that are centered at the closed point $x\in X$. We follow the standard terminology from \cites{KM98, Kol13, Xu-book}. 

\subsection*{Acknowledgements}

We would like to thank Dori Bejleri and Chi Li for helpful discussions and comments. We are especially grateful to Henri Guenancia for providing the proof of Theorem \ref{thm:can ext ss}. CX wants to thank the organizers of the conference {\it Deformations and Birational Geometry of Algebraic Varieties -- in celebration of the 60th birthday of Professor Yoshinori Namikawa} held at RIMS (Kyoto), from which he learned about recent progress on symplectic singularities. CX is partially supported by NSF Grant DMS-2201349 and a Simons Investigator grant.
ZZ is partially supported by the NSF Grant DMS-2234736, a Sloan research fellowship and a Packard fellowship.
Both authors are also partially supported by the Simons Collaboration Grant on Moduli of Varieties.

\section{Local K-stability theory}\label{s-stability theory}

\subsection{Stable degeneration theorem}

In this subsection, we give a brief overview of the stable degeneration theory of klt singularities. The starting point of the local K-stability theory is the normalized volume of valuations over a klt singularity, defined in \cite{Li-nv}. A deep and surprising phenomenon is that the minimizer of the normalized volume function carries remarkable geometric information. The most important part for us is the canonical K-semistable degeneration of a klt singularity induced by the minimizer. 

The following is the more precise statement; see \cites{LLX-nvsurvey, Z-survey-klt-stab, Xu-book} for more background. For any singularity $x\in X=\Spec(R)$, any valuation $v\in \Val_{X,x}$ and any $\lambda\in \bR$, let $$\fa_{\lambda}(v):=\{f\in R\mid v(f)\geq \lambda\}$$ be the valuation ideal, and let $$\gr_v R: = \bigoplus_{\lambda} \fa_{\lambda}(v)/\fa_{>\lambda}(v)$$ be the associated graded ring (it is not hard to see that this is an integral domain).

\begin{thm}[Stable degeneration]\label{thm-SDC}  
For any klt singularity $x\in (X=\Spec(R),\Delta)$, we have the following.
\begin{enumerate}
\item Up to rescaling, there exists a unique valuation $v\in \Val_{X,x}$ that minimizes the normalized volume, and the minimizer is quasi-monomial. 
\item The graded ring ${\gr}_v R$ is finitely generated.
\item Let $X_0={\rm Spec}({\gr}_v R)$, $\Delta_0$ the degeneration of $\Delta$, and $\xi_v$ the Reeb vector induced by $v$, then $(X_0,\Delta_0;\xi_v)$ is a K-semistable log Fano cone.
\end{enumerate}
\end{thm}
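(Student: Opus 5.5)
This theorem collects the main results of the stable degeneration theory. I do not plan to reprove it from scratch. Instead, I would assemble it from the literature cited in the introduction, checking that each input applies to a klt singularity $x\in(X,\Delta)$ with an effective $\bQ$-boundary. The natural order follows the three items. Existence of a minimizer comes first, then the structure of minimizers (quasi-monomiality and uniqueness), then finite generation, and finally K-semistability of the degeneration. The last depends on the finite generation.

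For (1), existence of a minimizer of $\widehat{\vol}_{(X,\Delta),x}$ on $\Val_{X,x}$ is \cite{Blu-existence}. The argument rewrites the normalized volume as an infimum of $\mathrm{lct}^n\cdot\mathrm{mult}$ over $\fm_x$-primary ideals, and uses a compactness argument with generic limits of graded sequences of ideals. That any minimizer is quasi-monomial is \cite{Xu-quasimonomial}. There one shows that a minimizer is an lc place of a $\bQ$-complement, and lc places of complements are quasi-monomial by boundedness of complements. Uniqueness up to rescaling is \cite{XZ-uniqueness}. It compares two minimizers through their associated filtrations, and uses the convexity of the volume along geodesics between filtrations, together with a strict inequality unless the filtrations agree up to scaling.

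For (2), finite generation of $\gr_v R$ for the (possibly higher-rank) quasi-monomial minimizer $v$ is the main theorem of \cite{XZ-SDC}. This is the step I expect to be the real obstacle, and I would simply invoke it. The divisorial case is easier: there the minimizer is given by a Koll\'ar component \cite{LX-Kol-comp-stab}. In the higher-rank case, one shows that Koll\'ar components $D$ in a neighborhood of $v$ in its rational envelope can be chosen to be lc places of a common special complement. One then proves that the associated graded ring with respect to the whole simplicial cone of such valuations is finitely generated, by a relative MMP argument.

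For (3), once $\gr_v R$ is finitely generated, $X_0$ is a normal affine variety with a torus action by $\bT=\Spec\bC[M]$, where $M$ is the value group of $v$. The valuation $v$ induces a Reeb vector $\xi_v$ in the Reeb cone, and $\Delta_0$ is defined as the degeneration of $\Delta$. The pair $(X_0,\Delta_0)$ is klt by \cite{LX-higher-rank}. The same paper shows that $\xi_v$ minimizes the normalized volume on $(X_0,\Delta_0)$, by comparing volumes under degeneration. It also shows that minimality of $\xi_v$ among the valuations induced by $\bT$-equivariant special test configurations is equivalent to K-semistability of the log Fano cone $(X_0,\Delta_0;\xi_v)$. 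This completes the proof, so the statement is a compilation of \cite{Blu-existence, Xu-quasimonomial, XZ-uniqueness, XZ-SDC, LX-higher-rank}.
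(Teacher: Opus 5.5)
Your proposal is correct and follows the same route as the paper, which likewise proves the theorem purely by citing \cite{Blu-existence, Xu-quasimonomial, XZ-uniqueness, XZ-SDC, LX-higher-rank}, and mentions \cite{BLQ-convexity, Che-HRFG} only as alternative proofs of some parts. Your informal summaries of the internal arguments of those references play no logical role, since the proof rests entirely on the cited results.
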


\begin{proof}
These statements are conjectured in \cite{Li-nv}, and proved in \cite{Blu-existence, LX-higher-rank, Xu-quasimonomial, XZ-uniqueness, XZ-SDC}. See also \cite{BLQ-convexity,Che-HRFG} for different proofs of certain parts of the statements. 
\end{proof}

\begin{rem}
Together with \cite{LWX-tangent-cone}, this produces a two-step canonical degeneration of any klt singularity to a K-polystable log Fano cone singularity. Such a two-step degeneration process, noticed first in \cite{DS-degeneration2}, can be constructed using metric geometry if $x\in X$ is contained in the Gromov-Hausdorff limit of a sequence of K\"ahler-Einstein manifolds.  
\end{rem}

We have stated Theorem \ref{thm-SDC} in a compact form. In the remaining part of this subsection, we will unpack the parts needed in our argument.

In general the minimizer $v$ in Theorem \ref{thm-SDC} could have rational rank (see \cite[2.13 and 2.16]{Xu-book} for the definition) larger than one,  but when its rational rank is one, the picture is simpler: the minimizer is given by a Koll\'ar component whose underlying log Fano pair is K-semistable. We first recall the relevant definitions.

\begin{defn}[\cite{Xu-pi_1-finite}] \label{def:kc}
Let $x\in (X,\Delta)$ be a klt singularity. We say that a prime divisor $D$ over $x\in X$ is a \emph{Koll\'ar component} if there exists a proper birational morphism $\pi\colon Y\to X$ such that $D$ is the unique exceptional divisor, $-(K_Y+D+\pi_*^{-1}\Delta)$ is $\bQ$-Cartier and ample over $X$, and $(Y,D+\pi_*^{-1}\Delta)$ is plt. The morphism $\pi\colon Y\to X$ is called the plt blowup of the Koll\'ar component $D$. As $K_Y+D+\pi_*^{-1}\Delta\sim_{\bQ,X} A_{X,\Delta}(D)\cdot D$, these conditions also imply that $-D$ is ample over $X$.
\end{defn}

By adjunction we may write
\begin{equation} \label{eq:adjunction on kc}
(K_Y + D + \pi_*^{-1}\Delta)|_D = K_D + \Delta_D
\end{equation}
for some $\bQ$-divisor on $D$. Then $(D,\Delta_D)$ is a klt log Fano pair.


\begin{defn}[\cite{FO-delta,BJ-delta}] \label{def:Fano K-ss}
Let $(X, \Delta)$ be a log Fano pair, i.e. $(X,\Delta)$ is projective klt and $H:=-(K_X+\Delta)$ is ample. For any valuation $v\in \Val_X$ and any positive integer $m$ such that $mH$ is Cartier and $N_m:=h^0(X,mH)>0$, let 
\[
\cF_v^\lambda H^0(X, mH) := \{s \in H^0(X, mH) \,|\, v(s) \geq \lambda\}
\]
be the filtration induced by $v$.
The corresponding \emph{$S$-invariant} (or expected vanishing order) is defined as
\[
S_{X,\Delta}(v) := \lim_{m \to \infty} \frac{\sum_{\lambda\in\bR} \lambda \cdot \dim \gr_v^\lambda H^0(X, mH)}{m \cdot N_m},
\]
where $\gr_v^\lambda := \cF_v^\lambda / \cF_v^{>\lambda}$ denotes the associated graded pieces. For any prime divisor $E$ over $X$, set $S_{X,\Delta}(E):=S_{X,\Delta}(\ord_E)$.

The \emph{stability threshold} (or \emph{$\delta$-invariant}) of the log Fano pair $(X, \Delta)$ is defined as
\[
\delta(X, \Delta) := \inf_{E} \frac{A_{X,\Delta}(E)}{S_{X,\Delta}(E)},
\]
where the infimum is taken over all prime divisors $E$ over $X$, and $A_{X,\Delta}(E)$ denotes the log discrepancy. We say that $(X, \Delta)$ is K-semistable if $\delta(X, \Delta) \geq 1$.
\end{defn}

For normalized volume minimizers given by Koll\'ar components, Theorem \ref{thm-SDC}(3) can be restated as follows.

\begin{thm} \label{thm:kc K-ss}
Let $x \in (X, \Delta)$ be a klt singularity. Suppose that the normalized volume minimizer is a divisorial valuation $\ord_D$ for some prime divisor $D$ over $x$. Then $D$ is a Koll\'ar component. Moreover, let $\pi\colon Y \to X$ be the plt blowup of $D$, and write 
\[
(K_Y + D + \pi_*^{-1}\Delta)|_D = K_D + \Delta_D
\]
by adjunction, then $(D, \Delta_D)$ is a K-semistable log Fano pair.
\end{thm}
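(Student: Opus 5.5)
This statement is a repackaging of Theorem \ref{thm-SDC} in the divisorial case, together with the standard dictionary between Koll\'ar components and K-semistability from \cite{LX-Kol-comp-stab}. The proof has two steps: first show that the minimizing divisor $D$ is a Koll\'ar component, then identify the K-semistability of $(D,\Delta_D)$ with that of the Fano cone.

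\textbf{Step 1: $D$ is a Koll\'ar component.} By Theorem \ref{thm-SDC}(2), the ring $\gr_{\ord_D} R=\bigoplus_{m}\fa_m(\ord_D)/\fa_{m+1}(\ord_D)$ is finitely generated. Choose $k$ divisible enough and set
\[
Y:=\Proj_R\bigoplus_{m\ge 0}\fa_{mk}(\ord_D).
\]
\begin{enumerate}
\item Finite generation of the extended Rees algebra shows that $\bigoplus_m \fa_m(\ord_D)$ is finitely generated. Then $Y\to X$ is a projective birational morphism whose only exceptional divisor is $D$, and $-D$ is $\bQ$-Cartier and ample over $X$.
\item The central fiber $X_0$ of the degeneration to the normal cone is the affine cone $\Spec\bigoplus_m H^0(D,\cO(-mD)|_D)$, suitably interpreted with the orbifold structure.
\item By Theorem \ref{thm-SDC}(3), $(X_0,\Delta_0)$ is klt, being a log Fano cone.
\item By inversion of adjunction on the cone, klt-ness of the cone over $(D,\Delta_D)$ with polarization $-D|_D$ is equivalent to $(Y,D+\pi_*^{-1}\Delta)$ being plt near $D$, i.e.\ to $(D,\Delta_D)$ being klt.
\item Since $K_Y+D+\pi^{-1}_*\Delta\sim_{\bQ,X}A_{X,\Delta}(D)\,D$ with $A_{X,\Delta}(D)>0$, the divisor $-(K_Y+D+\pi_*^{-1}\Delta)$ is ample over $X$.
\end{enumerate}
Hence $D$ is a Koll\'ar component with plt blowup $\pi\colon Y\to X$. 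Alternatively, one can cite the known result \cite{LX-Kol-comp-stab, Blu-existence} that a divisorial minimizer is a Koll\'ar component.

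\textbf{Step 2: K-semistability of $(D,\Delta_D)$.} The Reeb vector $\xi_v$ on $X_0$ is the rank one vector generating the $\bG_m$-action whose quotient is $(D,\Delta_D)$, with polarization proportional to $-(K_D+\Delta_D)$ by adjunction. For a quasi-regular Reeb vector, the K-semistability of the log Fano cone $(X_0,\Delta_0;\xi_v)$ is equivalent to the K-semistability of the quotient log Fano pair $(D,\Delta_D)$ \cite{LX-Kol-comp-stab}, \cite{Li-nv}. Conceptually, this equivalence compares the derivative of $\widehat{\vol}$ along $\bG_m$-equivariant deformations of $\ord_{D}$ with the $\beta$-invariant $A-S$ on $D$. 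Applying it yields the claim.

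\textbf{Main obstacle.} The delicate step is the cone/quotient dictionary in Step 2. For $\delta(D,\Delta_D)\ge1$, one must check the following:
\begin{itemize}
\item Every divisor $E$ over $D$ induces a $\bG_m$-invariant valuation on the cone, and hence a path of quasi-monomial valuations $w_t$ on $X$ interpolating $\ord_D$ and the pullback of $E$.
\item The derivative of $\widehat{\vol}(w_t)$ at $t=0$ equals a positive multiple of $A_{D,\Delta_D}(E)-S_{D,\Delta_D}(E)$.
\end{itemize}
Minimality of $\ord_D$ then forces $A\ge S$ for all $E$, i.e.\ $\delta(D,\Delta_D)\ge 1$.
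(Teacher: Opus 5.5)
Your proposal is correct and is essentially the paper's argument, which consists only of citing \cite[Theorem 1.2]{LX-Kol-comp-stab} or \cite[Theorems 3.6 and 3.10]{XZ-uniqueness}; your Step 2 is exactly the second route, where the quasi-regular cone $(X_0,\Delta_0;\xi_v)$ is K-semistable by Theorem \ref{thm-SDC}(3) and this is transferred to its quotient. The one point you assert rather than prove is that this quotient, with its orbifold boundary, is the adjunction pair $(D,\Delta_D)$: in Step 2 this is Lemma \ref{lem:tc trivial on kollar comp} once $D$ is known to be a Koll\'ar component, but in Step 1(4) the same identification is the delicate step (it must be made before plt-ness of $(Y,D+\pi_*^{-1}\Delta)$ is known), so there it is cleaner to invoke \cite{LX-Kol-comp-stab} directly, as you suggest.
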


\begin{proof}
See \cite[Theorem 1.2]{LX-Kol-comp-stab} or \cite[Theorems 3.6 and 3.10]{XZ-uniqueness}.
\end{proof}

Next we review the log Fano cone degeneration construction in Theorem \ref{thm-SDC}, which is induced by a test configuration.

\begin{defn}\label{defn-testconfiguration}
A \emph{test configuration} of a variety $X$ is a $\bG_m$-equivariant flat morphism $\scX\to \bA^1$ (for the canonical $\bG_m$-action on $\bA^1$) together with a $\bG_m$-equivariant isomorphism $\scX\setminus \scX_0\cong X \times (\bA^1\setminus\{0\})$. 

A test configuration of a singularity $x\in X$ is a test configuration $\scX\to \bA^1$ of $X$, where $\scX$ is affine, and a $\mathbb G_m$-equivariant section $\bA^1\ni t\mapsto x_t\in \scX_t$ such that the isomorphism $\scX\setminus \scX_0\cong X \times (\bA^1\setminus\{0\})$ identifies $(x_t\in \scX_t)$ with $(x\in X)$ when $t\neq 0$.

\end{defn}

Note that the $\bG_m$-equivariant section, once existence is known, is uniquely determined by the test configuration of $X$. We are mostly concerned with test configurations induced by a Koll\'ar component. The general construction is as follows.

\begin{defn} \label{def:TC}
Let $D$ be a prime divisor over a singularity $x\in X = \Spec(R)$. Let $\fa_m:=\fa_m(\ord_D)$ be the corresponding valuation ideals, and let
\begin{equation} \label{eq:extended Rees}
\cR:=\bigoplus_{m\in\bZ} t^{-m} \fa_m \subseteq R[t,t^{-1}]
\end{equation}
be the extended Rees algebra. Assume that $\cR$ is finitely generated (e.g. if $D$ is a Koll\'ar component). Then
\[
f\colon \scX:=\Spec(\cR)\to \bA^1_t
\]
is called the \emph{test configuration of $x\in X$ corresponding to $D$}. If $\Delta$ is a $\bQ$-divisor on $X$, we may define a $\bQ$-divisor $\Delta_{\scX}$ on $\scX$ as the closure of $\Delta\times (\bA^1\setminus\{0\})$ through the $\bG_m$-equivariant isomorphism $X\times (\bA^1\setminus\{0\})\cong \scX\setminus \scX_0$. Then the test configuration degenerates $x\in (X,\Delta)$ to the singularity
\begin{equation} \label{eq:central fiber}
x_0\in (\scX_0 = \Spec(\gr_D R),\Delta_0=\Delta_{\scX}|_{\scX_0})
\end{equation}
where we use the shorthand notation $\gr_D R$ for $\gr_{\ord_D} R$. When $x\in (X,\Delta)$ is klt and $D$ is a Koll\'ar component over it, we call the corresponding test configuration a \emph{special} test configuration. In this case, the singularity \eqref{eq:central fiber} is klt by \cite[Theorem 4.1]{XZ-SDC} (see also \cite[Section 2.4]{LX-Kol-comp-stab}).
\end{defn}

When $D$ gives the normalized volume minimizer, the klt singularity \eqref{eq:central fiber} is the one that appears in Theorem \ref{thm-SDC}(3).

The following fact is not needed in the rest of the paper but may be of independent interest. It is proved in \cite[Proposition 4.1]{NO-symp} for the stable degeneration. 

\begin{lem}
Let $x\in (X,\Delta)$ be a singularity and let $\scX\to \bA^1$ be a test configuration of $x\in X$ corresponding to some divisor $D$ over $x\in X$. Let $N$ be an integer. Assume that $N(K_X+\Delta)$ is Cartier at $x$. Then $N(K_{\scX}+\Delta_{\scX})$ is Cartier at $x_0$. 
\end{lem}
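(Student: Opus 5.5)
Since $X$ is affine we may shrink $X$ around $x$ and choose a generator $\omega$ of $\cO_X(N(K_X+\Delta))$ near $x$, i.e. a rational section of $\omega_X^{[N]}$ with $\mathrm{div}(\omega)+N\Delta=0$. The plan is to show that a suitable twist of $\omega$ by a power of $t$ extends to a generator of $\cO_{\scX}(N(K_{\scX}+\Delta_{\scX}))$ near $x_0$.

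\textbf{Weight of $\omega$ and the candidate generator.} Set $a:=\ord_D(\omega)$, computed on a model where $D$ appears by pulling $\omega$ back as a rational pluri-canonical form. Equivalently, $a=N(A_{X,\Delta}(D)-1)$ up to the normalization conventions. Pull $\omega$ back along $X\times\bG_m\to X$ and set
\[
\Omega:=t^{-c}\,\omega\wedge (\rd t/t)^{\otimes N}
\]
for a suitable integer $c$, chosen so that $\Omega$ is a $\bG_m$-invariant rational section of $\omega^{[N]}_{\scX}$.

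\textbf{Divisor of $\Omega$ away from $\scX_0$.} On $\scX\setminus\scX_0\cong X\times\bG_m$ we have $\mathrm{div}(\Omega)+N\Delta_{\scX}=0$. The central fibre $\scX_0$ is irreducible, being $\Spec$ of the domain $\gr_D R$. It is reduced, since $t$ generates a prime ideal in $\cR$. Hence $\mathrm{div}(\Omega)+N\Delta_{\scX}=e\,\scX_0$ for a single integer $e$, and the key step is to compute $e$.

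\textbf{Computing $e$.} The divisor $\scX_0$ corresponds to the valuation $w$ on $\bC(X)(t)$ determined by $w(t)=1$ and $w|_{\bC(X)}=\ord_D$, twisted in the standard way (Gauss extension). I would compute $\ord_{\scX_0}(\omega\wedge(\rd t/t)^N)$ using a log resolution $Y$ of $X$ containing $D$. Then $Y\times\bA^1$ with the valuation $w$ is monomial in $(t,\text{local equation of }D)$ and corresponds to a toric-type weighted blowup of $Y\times\bA^1$ along $D\times\{0\}$. On such a blowup, forms in the log frame $\rd z/z,\rd t/t$ pick up only $z^a$, and $z^a$ has order $a$ along the exceptional divisor; log forms have no discrepancy along it. This gives $\ord_{\scX_0}(\omega\wedge(\rd t/t)^{N})=a$, so choosing $c=-a$ ($t^{a}$ has weight compensating) yields $e=0$. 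In other words, $\mathrm{div}(\Omega)+N\Delta_{\scX}=0$ on $\scX$.

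\textbf{Conclusion.} Thus $N(K_{\scX}+\Delta_{\scX})\sim 0$ on $\scX$ near $x_0$, and in particular is Cartier at $x_0$. Care is needed with $N\Delta$ and integrality of $a$; $a$ is an integer because $\omega$ is a genuine rational section of a line bundle and $\ord_D$ is integral.

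The main obstacle is the discrepancy computation along $\scX_0$, namely justifying that $\scX_0$ as a divisor of $\scX$ is the weighted blowup exceptional divisor with log discrepancy $1$ relative to $(\rd t/t)$. I would argue this by identifying the divisorial valuation $\ord_{\scX_0}$ with the quasi-monomial valuation of weights $(1,1)$ in $(z,t)$ on $Y\times\bA^1$, where $z$ is a local equation of $D$. Log discrepancies are linear on the monomial cone of the snc pair $(Y\times\bA^1, D\times\bA^1+Y\times 0)$. Hence $A_{(Y\times\bA^1,\,D\times\bA^1+Y\times 0)}=0$ there, which is exactly the statement that log forms acquire no extra zeros.
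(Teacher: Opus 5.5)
Your argument is correct in substance, but its decisive step is already in your second paragraph. Once you know $\mathrm{div}(\Omega)+N\Delta_{\scX}=e\,\scX_0$, and that $t\cR$ is prime so that $\mathrm{div}(t)=\scX_0$, the rational section $t^{-e}\Omega$ generates $\cO_{\scX}(N(K_{\scX}+\Delta_{\scX}))$ near $x_0$ whatever the value of $e$. This is precisely the paper's proof. There it is phrased as: the restriction $\Cl(\scX)\to\Cl(\scX\setminus\scX_0)$ is an isomorphism, because its kernel is generated by the class of the integral principal divisor $\scX_0$. Hence $N(K_{\scX}+\Delta_{\scX})\sim 0$, since this holds on $\scX\setminus\scX_0\cong X\times(\bA^1\setminus\{0\})$.

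The valuative computation you single out as the main obstacle is therefore unnecessary. All it buys is the explicit generator $t^{-a}\omega\wedge(\rd t/t)^{\otimes N}$ with $a=N(A_{X,\Delta}(D)-1)$. Your blowup computation along $D\times\{0\}$, using Lemma~\ref{lem:tc bir construction}, does give $\ord_{\scX_0}\big(\omega\wedge(\rd t/t)^{\otimes N}\big)=a$ correctly.

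There are two slips in that part:
\begin{enumerate}
\item Since $\ord_{\scX_0}(t)=1$, the order of $t^{-c}\omega\wedge(\rd t/t)^{\otimes N}$ along $\scX_0$ is $a-c$. So you need $c=a$, not $c=-a$.
\item $\omega$ and $\rd t/t$ are $\bG_m$-invariant, while $t$ has nonzero weight. So $\Omega$ is $\bG_m$-invariant only when $c=0$, and the invariance remark should be dropped; it is never used anyway.
\end{enumerate}
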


\begin{proof}
Since $N(K_X+\Delta)$ is Cartier at $x$, after replacing $X$ with a smaller open neighborhood of $x$ we may assume that $N(K_X+\Delta)\sim 0$. This does not change $\scX_0$, hence does not affect the statement we want to prove. Since $\scX\setminus \scX_0 \cong X\times (\bA^1\setminus\{0\})$, on the total space $\scX$ we get a Weil divisor $L:=N(K_{\scX}+\Delta_{\scX})$ such that $L|_{\scX\setminus \scX_0} \sim 0$. Since $\scX_0$ is integral and $\scX_0\sim 0$, the natural restriction $\Cl(\scX)\to \Cl(\scX\setminus \scX_0)$ is an isomorphism. It follows that $L\sim 0$. In particular, $N(K_{\scX}+\Delta_{\scX})$ is Cartier at $x_0$.
\end{proof}




Geometrically, test configurations induced by prime divisors over a singularity have the following birational description. In the setting of Definition \ref{def:TC}, let $Y\to X$ be a proper birational map such that $Y$ is normal and $D$ appears as a divisor on $Y$. Let $\scY=Y\times \bA^1$, let $\varphi\colon \scY'\to \scY$ be the normalized blowup along $D\times \{0\}\subseteq \scY$, and let $\mathscr{G}$ be the (unique) exceptional divisor over the generic point of $D\times \{0\}$. Then



\begin{lem} \label{lem:tc bir construction}
The induced birational map $\scX\dashrightarrow \scY'$ is an isomorphism at the generic point of $\scX_0$, and $\mathscr{G}$ is the strict transform of $\scX_0$.
\end{lem}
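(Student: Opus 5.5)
The plan is to compare valuations on the common function field $\bC(\scX)=\bC(X)(t)$. Both $\scX$ and $\scY'$ are normal, and they are birational over $\bA^1$ via $\scX\setminus\scX_0\cong X\times(\bA^1\setminus\{0\})\cong \scY\setminus\scY_0$. It therefore suffices to identify the divisorial valuation $\ord_{\scX_0}$ with $\ord_{\mathscr{G}}$ up to the natural normalization. Once that is known, the induced birational map $\scX\dashrightarrow\scY'$ is an isomorphism between the local rings at the generic points. Indeed, both local rings are DVRs in the same field, and a DVR is determined by its valuation.

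First, I compute $\ord_{\scX_0}$ on $\bC(X)(t)$. From \eqref{eq:extended Rees}, $\cR\subseteq R[t,t^{-1}]$ and $\scX_0=\Spec(\cR/t\cR)=\Spec(\gr_D R)$, which is integral. I will use the standard fact that for $f\in R$ with $\ord_D(f)=m$, the element $t^{-m}f\in\cR$ has nonzero image in $\gr_D^m R$. Since $\gr_D R$ is a domain, the $t$-adic valuation on $\cR$ is then $\ord_{\scX_0}(t^{-m}f)=0$ exactly when $m=\ord_D(f)$. Since $\ord_{\scX_0}(t)=1$, we get the formula
\[
\ord_{\scX_0}(f t^k)=\ord_D(f)+k \quad\text{for } f\in R,\ k\in\bZ,
\]
extended to $\bC(X)(t)$ by the Gauss-extension property. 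This is the standard identification of $\ord_{\scX_0}$ with the quasi-monomial valuation of weight $(1,1)$ along $(D,\{t=0\})$ on $Y\times\bA^1$.

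Second, I compute $\ord_{\mathscr{G}}$. Near the generic point of $D\times\{0\}$, the scheme $\scY$ is smooth. Let $z$ be a local equation of $D$; then $z,t$ form a regular system of parameters there. Blowing up the codimension-two locus $D\times\{0\}$ gives an exceptional divisor $\mathscr{G}$ whose valuation is monomial with weights $1$ on $z$ and $1$ on $t$. Hence $\ord_{\mathscr{G}}(f t^k)=\ord_D(f)+k$ as well. Normalization does not affect anything at this generic point. So $\ord_{\mathscr{G}}=\ord_{\scX_0}$.

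Third, I conclude. Since both valuations coincide, the map $\scX\dashrightarrow\scY'$ identifies their valuation rings, so it is an isomorphism at the generic point of $\scX_0$, and the center of $\ord_{\scX_0}$ on $\scY'$ is $\mathscr{G}$. The strict transform of $\scX_0$ is the closure of that center, namely $\mathscr{G}$. The main point needing care is the first step: justifying that $\ord_{\scX_0}$ equals the Gauss-type extension on all of $\bC(X)(t)$, not just on monomials $ft^k$. I would handle this by noting that both valuations are determined by their values on the generators $t^{-m}f$ of $\cR$, with $\cR$ generating the fraction field, together with the domain property of $\gr_D R$. The domain property guarantees that the initial forms multiply, so the valuation of a sum is the minimum over its homogeneous components.
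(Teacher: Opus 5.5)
Your proposal is correct and follows essentially the same route as the paper: both reduce the lemma to the equality $\ord_{\scX_0}=\ord_{\scG}$ on $\bC(X)(t)$ and check it on the elements $f t^k$. There both sides equal $\ord_D(f)+k$, using that $\ord_{\scX_0}(s)$ is the largest $r$ with $t^{-r}s\in\cR$ and that the exceptional divisor of blowing up $(z,t)$ is monomial of weight $(1,1)$.

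The only difference is how general elements are reduced to these homogeneous ones. The paper uses the $\bG_m$-invariance of both valuations, while you use an explicit initial-form computation. In that computation, the minimum formula for sums comes from the direct-sum grading of $\gr_D R$: the nonzero components of $t^{-\mu}s$ land in distinct degrees. The domain property is not what gives it; that property is what makes $\ord_{\scX_0}$ a valuation in the first place.
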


\begin{proof}
This is well known to the experts. We include a proof for the reader's convenience. All we need to prove is that $\scX_0$ and $\scG$ give the same valuation of $\bC(X\times \bA^1)$. Since both divisors are invariant under the $\bG_m$-action, we only need to check that every $\bG_m$-equivariant function has the same vanishing order along $\scX_0$ and $\scG$. In other words, it suffices to show that $\ord_{\scX_0}(a\cdot t^m)=\ord_{\scG}(a\cdot t^m)$ for all $a\in R$ and $m\in\bN$. Let $l=\ord_D(a)$. Then by definition $\ord_{\scG}(a\cdot t^m) = l+m$. On the other hand, for any $s\in R[t]$, $\ord_{\scX_0}(s)$ is (by definition) the largest integer $r$ such that $t^{-r}s\in \cR$, thus from \eqref{eq:extended Rees} we get $\ord_{\scX_0}(a\cdot t^m) = l+m$ as well.
\end{proof}



Later we will need another geometric property of special test configurations, namely, the plt blowup extends to the test configuration, inducing a trivial degeneration of the Koll\'ar component.

\begin{lem} \label{lem:tc trivial on kollar comp}
Let $D$ be a Koll\'ar component over a klt singularity $x\in (X,\Delta)$ and let $\pi\colon Y\to X$ be its plt blowup. Let $\scX\to \bA^1$ be the corresponding test configuration (Definition \ref{def:TC}). Then there exists a projective birational morphism $\psi\colon \scY\to \scX$ with a unique exceptional divisor $\scD$ such that 
\begin{enumerate}
    \item $\psi$ is given by $\pi \times \mathrm{id}\colon Y\times (\bA^1\setminus\{0\})\to X \times (\bA^1\setminus\{0\})$ over $\bA^1\setminus\{0\}$, and
    \item if we write $(K_{\scY} + \scD + \psi_*^{-1}\Delta_{\scX})|_{\scD} = K_{\scD} + \Delta_{\scD}$ and $(K_Y + D + \pi_*^{-1}\Delta)|_D = K_D + \Delta_D$ by adjunction, then $(\scD, \Delta_{\scD})\cong (D, \Delta_D) \times \bA^1$.
\end{enumerate}
\end{lem}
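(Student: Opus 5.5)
We will construct $\scY$ as a relative Proj over $\scX$ of a bigraded Rees algebra. Concretely, let $\fa_m=\fa_m(\ord_D)$. Since $-D$ is $\bQ$-Cartier and ample over $X$, we have
\[
Y=\Proj_X\bigoplus_{k\ge0}\fa_{k}
\]
after passing to a Veronese subring: replace $D$ by $rD$ with $rD$ Cartier, so that $\pi_*\cO_Y(-krD)=\fa_{kr}$. The test configuration is $\scX=\Spec\cR$ with $\cR=\bigoplus_m t^{-m}\fa_m$. We set
\[
\scY:=\Proj_{\scX}\bigoplus_{k\ge0}\cI_k,
\]
where $\cI_k\subseteq\cR$ is the ideal $\bigoplus_m t^{-m}(\fa_m\cap\fa_{kr})$, i.e.\ the $\bG_m$-equivariant ideal corresponding to ``$\ord_D\ge kr$''. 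Equivalently, $\cI_k$ is the $\cR$-module of sections of the $kr$-th power of the divisorial ideal of the divisor $\scD$ that we want to extract; $\scD$ should be the $\bG_m$-invariant divisor over $\scX$ whose restriction to $t\ne0$ is $D\times(\bA^1\setminus\{0\})$. Away from $t=0$ the ideal $\cI_k$ becomes $\fa_{kr}[t,t^{-1}]$, which gives (1).

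The central issue is to identify $\scY$ near $t=0$ and prove (2). Inverting $t$ and then taking $t=0$, the fiber over $0$ of the algebra $\bigoplus_k\cI_k$ should be the bigraded ring
\[
\bigoplus_{k,m}\fa_m\cap\fa_{kr}/(\fa_{m+1}\cap\fa_{kr}).
\]
Flatness over $\bA^1$ needs checking. This ring is simply $\gr_D R$ with an extra grading by $k\le m/r$. Hence $\scY_0=\Proj_{X_0}\bigoplus_k(\gr_DR)_{\ge kr}$, which is the weighted blowup of the cone $X_0$ at its vertex, i.e.\ the orbifold line bundle over $D$ compactified along the zero section. Its exceptional divisor is $\Proj(\gr_DR)\cong D$; this is the standard fact that the Kollár component is recovered as $\Proj\gr_DR$, and the boundary $\Delta_D$ (the orbifold structure from adjunction) is recovered from the grading. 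The upshot is that $\scD$ is isomorphic to $\Proj$ of the ring $\bigoplus_k \fa_{kr}/\fa_{kr+1}\otimes\bC[t]$ over $\bA^1$, i.e.\ $D\times\bA^1$.

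A cleaner way to realize all of this, which I would actually use, is via Lemma \ref{lem:tc bir construction}. Take $\scY'\to Y\times\bA^1$, the normalized blowup of $D\times\{0\}$, with exceptional divisor $\scG$, the strict transform of $\scX_0$. Then run the relative model $\scY:=\Proj_{\scX}$ of the section ring of $-\widetilde{\scD}$ over $\scX$, where $\widetilde{\scD}$ is the strict transform of $D\times\bA^1$. This contracts the strict transform of $D\times\{0\}$-type components other than $\widetilde{\scD}$, $\scG$ and keeps $\widetilde{\scD}$. Finite generation comes from the explicit ring above.

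For (2), the divisor $\widetilde{\scD}\subset\scY'$ maps isomorphically onto $D\times\bA^1$, since the blowup center $D\times\{0\}$ is a Cartier divisor in $D\times\bA^1$. By the previous description, the plt property of $(\scY,\scD+\psi^{-1}_*\Delta_\scX)$ near $\scD$ follows from inversion of adjunction once the different $\Delta_{\scD}$ is computed. The different can be computed along the generic points of codimension-one loci of $\scD$. These either lie over $t\ne0$, where the different is $\Delta_D\times(\bA^1\setminus\{0\})$, or are $\bG_m$-invariant. There are no invariant vertical ones, since $\scD_0$ is irreducible and equal to $D\times\{0\}$. Therefore $\Delta_\scD$ is the closure of $\Delta_D\times(\bA^1\setminus\{0\})$, namely $\Delta_D\times\bA^1$.

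I expect the main obstacle to be verifying that $\scD\to\bA^1$ is the trivial family. This amounts to the flatness and reducedness of the central fiber $\scD_0\cong\Proj(\gr_DR)\cong D$, i.e.\ identifying $\Proj(\gr_D R)$ with $D$. That identification uses that $\gr_DR=\bigoplus_k H^0(D,\cO_D(-kD))$, which in turn relies on the vanishing $R^1\pi_*\cO_Y(-(k+1)D)=0$ coming from Kawamata–Viehweg vanishing for the plt blowup.
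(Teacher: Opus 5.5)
Your construction is the paper's: $\cI_k=\cR\cap\fa_{kr}[t,t^{-1}]$ is exactly the flat extension $\widetilde{\fa}_{kr}=\bigoplus_{l}t^{-l}\fa_{\max\{l,kr\}}$, and (1) holds by construction. However, you only assert the identification $\scD\cong D\times\bA^1$ (``the upshot is\ldots''); you do not derive it.

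The paper derives it from the computation $\widetilde{\fa}_m/\widetilde{\fa}_{m+1}\cong\bigoplus_{l\le m}t^{-l}(\fa_m/\fa_{m+1})$. This gives $\scD=\Proj\big(\bigoplus_m\widetilde{\fa}_m/\widetilde{\fa}_{m+1}\big)\cong\Proj\big((\gr_DR)[t]\big)\cong D\times\bA^1$. Drop the Veronese here: with your indexing, $\bigoplus_k\cI_k/\cI_{k+1}$ cuts out $r\scD$, not $\scD$. The same computation shows that $\bG_m$ acts trivially on the $D$-factor, since rescaling $\gr^m_DR$ by $c^{-m}$ is the identity on $\Proj$. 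This is what makes the product structure agree with (1) over $t\neq 0$, and you use it implicitly when you say the closure of $\Delta_D\times(\bA^1\setminus\{0\})$ is $\Delta_D\times\bA^1$.

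Your ``cleaner'' route through $\scY'$ does not give this by itself. Knowing $\widetilde{\scD}\cong D\times\bA^1$ inside $\scY'$ says nothing about $\scD\subset\scY$ until you show $\scY'\dashrightarrow\scY$ is an isomorphism near $\widetilde{\scD}$. Also, the strict transform of $Y\times\{0\}$ is not contracted: $\ord_{Y\times\{0\}}(t^{-m}a)=-m<0$ for $0\neq a\in\fa_m$, so it has no center on $\scX$. Once $\scD\cong\Proj((\gr_DR)[t])$ is known, the ``main obstacle'' you flag (flatness and reducedness of $\scD_0$) is automatic. The only real input is $\Proj(\gr_DR)\cong D$, which you sketch correctly.

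The genuine gap is in the boundary. The claim ``there are no invariant vertical ones, since $\scD_0$ is irreducible'' does not follow. Irreducibility only says that the unique vertical prime divisor on $\scD$ is $\scD_0$. What you must show is that the coefficient of $\scD_0$ in the different $\Delta_{\scD}$ is zero, and a priori it could be positive for two reasons.
\begin{enumerate}
\item $\scY$ might be singular, or $\scD$ non-Cartier, along $\scD_0$. This is excluded because $\scY_0=(t=0)$ is $\Proj_{X_0}\bigoplus_k(\gr_DR)_{\ge k}$, the plt blowup of the cone $X_0$. Near the generic point of its zero section $\scD_0$ this is a line bundle over $D$, so $\scY$ is regular of dimension two there.
\item $\psi^{-1}_*\Delta_{\scX}$ might contain $\scD_0$. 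This is excluded by the explicit presentation, as in the paper. For $0\neq f\in R$ with $\ord_D(f)=k$, one has $H_{\scX}=(t^{-k}f=0)$. Then $\psi^{-1}_*H_{\scX}$ is the zero divisor of $t^{-k}f$ viewed as a section of $\cO_{\scY}(-k\scD)$. Its restriction to $\scD$ is the class $t^{-k}\bar f\in\widetilde{\fa}_k/\widetilde{\fa}_{k+1}$, which under the identification is $\bar f\in\gr^k_DR$, constant in $t$. Hence $\psi^{-1}_*H_{\scX}\cap\scD=(\bar f=0)\times\bA^1$ has no vertical component. Applying this to an $f$ vanishing along each component of $\Delta$ gives the claim.
\end{enumerate}
The plt and inversion-of-adjunction remark is not needed for the statement.
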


\begin{proof}
This mostly follows from \cite{LX-Kol-comp-stab} or \cite[Proposition 4.6]{XZ-SDC}, but we also give a direct proof for the reader's convenience. As $-D$ is $\pi$-ample, we have $Y = \Proj_X \left( \bigoplus_{m\in \bN} \fa_m \right)$ where $\fa_m := \fa_m(\ord_D) = \pi_* \cO_Y(-mD)$ as before, and $$D\cong \Proj(\bigoplus_{m\in\bN} \fa_m/\fa_{m+1})=\Proj(\gr_D R)$$ by \cite[Proposition 2.10]{LZ-Tian-sharpness}. The flat extension $\widetilde{\fa}_m$ of $\fa_m$ to $\scX$ is given by
\[
\widetilde{\fa}_m = \cR\cap \fa_m[t,t^{-1}] = \bigoplus_{l\in\bZ} t^{-l} \fa_{\max\{l,m\}} \subseteq \cR.
\]
Finite generation of $\cR$ implies that $\bigoplus_{m\in \bN} \widetilde{\fa}_m\cong \bigoplus_{m\in\bN,\,l\le m} t^{-l}\fa_m$ is also finitely generated. Let $\scY:=\Proj_{\scX} \left( \bigoplus_{m\in \bN} \widetilde{\fa}_m \right)$ with induced map $\psi\colon \scY\to \scX$. Then (1) is satisfied by construction. Note that $\widetilde{\fa}_m$ is invariant under the $\bG_m$-action, hence the $\bG_m$-action lifts to $\scY$. We have
\[
\widetilde{\fa}_m/\widetilde{\fa}_{m+1} \cong \bigoplus_{l\in \bZ,\,l\le m} t^{-l}\cdot (\fa_m/\fa_{m+1}),
\]
thus the $\psi$-exceptional locus $\scD$ is 
$\Proj(\bigoplus_{m\in \bN} \widetilde{\fa}_m/\widetilde{\fa}_{m+1})\cong \Proj((\gr_D R)[t])\cong D\times \bA^1$. Under this isomorphism, the induced $\bG_m$-action on $\scD$ is trivial on the factor $D$. It is also not hard to see that $\Delta_{\scD}$ does not have any vertical component over $\bA^1$ (more generally, for any $\bG_m$-invariant Cartier divisor $H=(f=0)$ on $X$, where $0\neq f\in R$, the intersection $\psi_*^{-1}H_{\scX}\cap \scD$ does not have any vertical component; this can be checked using the explicit presentation of $\scY$ and $\scD$ above). Thus as  $\Delta_{\scD}$ is $\bG_m$-invariant, we see that $(\scD, \Delta_{\scD})$ is a trivial family over $\bA^1$. It follows that $(\scD, \Delta_{\scD})\cong (D, \Delta_D) \times \bA^1$ as its fiber over $0\neq t\in \bA^1$ is $(D,\Delta_D)$ by (1).
\end{proof}

To unwrap the higher rank case of Theorem \ref{thm-SDC}, we need some additional definitions.


\begin{defn}
Let $(Y,E)$ be a simple normal crossing (SNC) pair, let $E_1,\dots,E_r$ be irreducible components of $E$, and let $\eta$ be a generic point of $\cap_{i=1}^r E_i$. Then we have local coordinates $y_1,\dots,y_r\in \cO_{Y,\eta}$ such that $E_i=(y_i=0)$ around $\eta\in Y$. Any $u\in \cO_{Y,\eta}$ has a Taylor expansion
\[
u=\sum c_{\beta}y^{\beta} \in \widehat{\mathcal{O}}_{Y,\eta}\cong \bk(\eta)[\![y_1,\dots,y_r ]\!].
\]
For any $\alpha=(\alpha_1,\dots,\alpha_r)\in \bR_{\ge 0}^r\setminus\{0\}$, we can thus define a valuation $v_{\alpha}$ by setting
\[
v_{\alpha}(u)=\min \left\{\langle \alpha,\beta \rangle \,|\, c_{\beta}\neq 0\right\}.
\]
We denote the set of such valuations (for varying $\eta$ and $\alpha$) by $\QM(Y,E)$. 

A valuation $v\in \Val_X$ is called \emph{quasi-monomial} if there exists a log resolution $\pi\colon Y\to X$ and an SNC divisor $E\subseteq Y$ such that $v\in \QM(Y,E)$. If $(X,\Delta)$ is a klt pair, $\pi\colon (Y,E)\to (X,\Delta)$ is a log smooth model of the pair (i.e. $(Y,\Supp(E+\pi_*^{-1}\Delta))$ is SNC), and $v=v_\alpha$ as above, then the \emph{log discrepancy} of $v$ with respect to $(X,\Delta)$ is defined as (see \cite[Section 5]{JM-val-ideal-seq} and \cite[Section 3]{BdFFU-log-discrepancy} for more details) 
\[
A_{X,\Delta}(v) := \sum_{i=1}^r \alpha_i A_{X,\Delta}(E_i).
\]
\end{defn}

\begin{defn} \label{def:Reeb vector}
We say a torus $\bT=\bG_m^r$-action on a singularity $x\in (X=\Spec(R),\Delta)$ is {\it good} if it is effective and $x$ is in the closure of any $\bT$-orbit. A singularity with a good $\bG_m$-action is also called an \emph{orbifold cone} and the unique $\bG_m$-fixed point is called the \emph{vertex}.

Let $N:=N(\bT):=\Hom(\bG_m, \bT)$ be the co-weight lattice and $M:=M(\bT):=\Hom(\bT,\bG_m)$ the weight lattice. We have a weight decomposition 
\[
R=\bigoplus_{\alpha\in M} R_\alpha.
\]
The $\bT$-action gives rise to a natural homomorphism $N_\bR \subseteq \ft:=\Lie(\bT)\to H^0(X,T_X)$. A \emph{Reeb vector} on $X$ is an element $\xi\in N_\bR$ such that $\langle \xi, \alpha \rangle>0$ for all $0\neq \alpha\in M$ with $R_{\alpha}\neq 0$. The torus generated by $\xi$, denoted by $\langle\xi\rangle$, is the (unique) smallest torus of $\Aut(x\in (X,\Delta))$ such that $\xi\in N_\bR$. The set $\ft^+_{\bR}$ of Reeb vectors is called the \emph{Reeb cone}. 

Any Reeb vector $\xi\in \ft^+_{\bR}$ corresponds to a $\bT$-invariant valuation $\wt_\xi\in \Val_{X,x}$ defined by
\[
\wt_\xi (u):=\langle \xi, \alpha \rangle
\]
whenever $0\neq u\in R_\alpha$. A Reeb vector $\xi$ is said to be \emph{quasi-regular} if $\wt_\xi$ is a divisorial valuation; equivalently, there exists some $0\neq \lambda\in \bR$ such that $\lambda\cdot \xi\in N$. Finally, a \emph{log Fano cone singularity} is a klt singularity $x\in (X,\Delta)$ with a good torus action. It is also called a Fano cone (singularity) when $\Delta=0$.
\end{defn}

\begin{emp}[Rank one]\label{para-quasi-kollar}
Every quasi-regular Reeb vector $\xi$ generates a $\bG_m$-action, and every $\bG_m$-action on a log Fano cone singularity $x\in (X,\Delta)$ corresponds to a Koll\'ar component. Indeed, the induced morphism $\pi\colon X\setminus\{x\}\to (X\setminus\{x\})/\bG_m$ is a Seifert $\bG_m$-bundle in the sense of \cite{Kol-Seifert-bundle}. The divisorial valuation $\wt_\xi$ corresponds to the zero section $D$ of this Seifert bundle (in particular, we may and shall identify $D$ with $(X\setminus\{x\})/\bG_m$) and it follows from the arguments in \cite[Section 4]{Kol-Seifert-bundle} that this is a Koll\'ar component. Moreover, by {\it loc. cit.}, the log Fano pair $(D,\Delta_D)$ obtained from adjunction satisfies 
\begin{equation} \label{eq:orbifold cone crepant pullback}
\pi^*(K_D+\Delta_D) = (K_X+\Delta)|_{X\setminus \{x\}}.
\end{equation}
By \cite[Theorem 7]{Kol-Seifert-bundle} (see also \cite[Section 2.4]{LX-Kol-comp-stab} and \cite[Proposition 2.10]{LZ-Tian-sharpness}), there exists an ample $\bQ$-divisor $L\sim_\bQ -D|_D$ on $D$ such that
\begin{align*}
X\setminus\{x\} & \cong \Spec_D \left(\bigoplus_{m\in\bZ} \cO_D(mL)\right) \\
R & \cong \bigoplus_{m\in\bN} H^0(D,mL),
\end{align*}
where by convention $\cO_D(mL):=\cO_D(\lfloor mL\rfloor)$ and $H^0(D,mL):=H^0(D,\lfloor mL\rfloor)$. The plt blowup of $D$ is obtained by adding the zero section of the Seifert $\bG_m$-bundle:
\[
Y:= \Spec_D \left(\bigoplus_{m\in\bN} \cO_D(mL)\right) \to X = \Spec (R).
\]
\end{emp}

\begin{emp}[Higher rank minimizers]\label{say-higherrank}

The normalized volume minimizer $v$ is always quasi-monomial by \cite{Xu-quasimonomial}, but its rational rank $r$ could be larger than one. By \cite{XZ-SDC}, the associated graded ring $\gr_v R$ is finitely generated. Let $X_0 := \Spec(\gr_v R)$ be the degeneration of $X$, and let $\Delta_0$ be the degeneration of $\Delta$ (see the paragraph above \cite[Lemma 2.58]{LX-higher-rank} for the precise construction in this more general setting). The grading on $\gr_v R$ defines a good $\mathbb{T}\cong \mathbb{G}_m^r$-action with fixed point $x_0$ whose weight lattice is the value group $\Gamma_v := v(\bC(X)^*)$ of $v$. The natural inclusion $\Gamma_v\subseteq \bR$ gives an element $\xi_v$ of $N_\bR\cong M^*_\bR$. 

Theorem \ref{thm-SDC}(3) says that $x_0 \in (X_0, \Delta_0)$ is a log Fano cone singularity with Reeb vector $\xi_v$, and it is \emph{K-semistable} in the sense that $\wt_{\xi_v}$ minimizes the normalized volume on $x_0 \in (X_0, \Delta_0)$ (see \cite[Theorem 2.34]{LX-higher-rank} for the equivalence to the original definition using test configuration as in \cite{CS-cone}). We usually denote a K-semistable log Fano cone by $x\in (X,\Delta;\xi)$ to indicate the Reeb vector $\xi$ that minimizes the normalized volume. 

Since the minimizer is unique up to rescaling \cite{XZ-uniqueness}, we can also say a singularity $x\in (X,\Delta)$ is a \emph{K-semistable log Fano cone} if it is a log Fano cone (see Definition \ref{def:Reeb vector}), and the minimizer is given by a Reeb vector with respect to the given torus action.

\end{emp}

\begin{emp}

The K-semistability condition has a valuative description analogous to Definition \ref{def:Fano K-ss}. To state it, we first recall some definitions from \cite{XZ-uniqueness}.

Let $x\in (X=\Spec(R),\Delta)$ be a log Fano cone singularity of dimension $n$ and let $\bT$ be the torus acting on $X$. For any positive integer $m$, any Reeb vector $\xi\in \ft^+_\bR$, and any $\bT$-invariant valuation $v\in \Val_{X,x}$, we set (note that the associated graded ring $\gr_v R$ has a natural weight decomposition $\gr_v R = \bigoplus_{\alpha\in M,\lambda\in \bR} \gr_v^\lambda R_\alpha$)
\[
\tS_m(\xi;v):=\sum_{\lambda\in\bR,\,\alpha\in M,\langle\alpha,\xi\rangle< m} \lambda\cdot \dim \gr_v^\lambda R_\alpha.
\]
When $v=\wt_\eta$ for some Reeb vector $\eta$ on $X$, we will also denote the corresponding $\tS_m(\xi;v)$ by $\tS_m(\xi;\eta)$ (the same rules apply to other invariants of valuations such as log discrepancy). We then define ({\it cf.} \cite[Section 3.1]{XZ-uniqueness})
\begin{align*}
\tS(\xi;v)& : = \lim_{m\to \infty} \frac{\tS_m(\xi;v)}{m^{n+1}/(n+1)!}\\ 
S(\xi;v) & :=\frac{A_{X,\Delta}(\xi)}{\tS(\xi;\xi)}\cdot \tS(\xi;v),
\end{align*}
where the first limit exists and is positive by \cite[(3.4)]{XZ-uniqueness}. From the definition, we have 
\begin{equation} \label{eq:S tilde homogeneous}
\tS(\xi;\lambda v)=\lambda\cdot \tS(\xi;v)\mbox{ \ \ and \ \ }\tS(\lambda\xi; v)=\lambda^{-n-1}\cdot \tS(\xi;v)
\end{equation}
for any $\lambda>0$. It follows that 
\begin{equation} \label{eq:S scale inv}
S(\xi;v)=S(\lambda\xi;v)
\end{equation}
for any $\lambda>0$. We now have the following.

\begin{thm} \label{thm:minimizer is ss}
Let $x\in (X,\Delta;\xi)$ be a K-semistable log Fano cone singularity. Then $A_{X,\Delta}(v)\ge S(\xi;v)$ for any $\bT$-invariant quasi-monomial $v\in \Val_{X,x}$.
\end{thm}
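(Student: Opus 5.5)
The inequality $A_{X,\Delta}(v)\ge S(\xi;v)$ is the derivative at $t=0$ of the normalized volume along a path from $\wt_\xi$ toward $v$. I would prove it by computing that derivative and using that $\wt_\xi$ is a minimizer, as in \cite[Section 3]{XZ-uniqueness}. Fix a $\bT$-invariant quasi-monomial $v\in\Val_{X,x}$.

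The first step is to deform $\wt_\xi$ in the direction of $v$. Since $v$ is $\bT$-invariant, $\gr_v R$ carries a weight decomposition, and I would form the family of filtrations
\[
\cF_t^\lambda R:=\bigoplus_{\alpha\in M}\ \sum_{\mu}\ \fa_{\mu}(v)\cap R_\alpha, \qquad \langle\alpha,\xi\rangle+t\mu\ge\lambda,
\]
for $t\ge 0$ small. For quasi-monomial $v$ this is the filtration of a valuation $v_t$, obtained by taking $\xi+tv$ in the sense of \cite[Lemma 2.58]{LX-higher-rank} or \cite{XZ-uniqueness}. One has $v_0=\wt_\xi$. The log discrepancy is linear in $t$:
\[
A_{X,\Delta}(v_t)=A_{X,\Delta}(\xi)+t\,A_{X,\Delta}(v).
\]
To see this, choose a $\bT$-equivariant log resolution on which both $\wt_\xi$ and $v$ are monomial in a common simplicial cone. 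This compatibility is where I expect the most care: I would degenerate to $\gr_v R$, where $\xi$ and $v$ both become Reeb-type weights of a larger torus, and check there that $A$ is linear on that cone.

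The second step is the volume expansion. Since $\vol(v_t)=\lim_m \dim(R/\cF_t^{m})/(m^n/n!)$, splitting by weight $\alpha$ and counting the $\lambda$-jumps of $v$ on each $R_\alpha$ gives
\[
\vol(v_t)=\vol(\xi)-t\,(n+1)\,\frac{\tS(\xi;v)}{?}+O(t^2),
\]
with a normalizing constant. I would pin down the normalization by plugging in $v=\wt_\xi$: then $v_t=(1+t)\xi$, so $\vol(v_t)=(1+t)^{-n}\vol(\xi)$. Combined with the definition of $S(\xi;\cdot)$, this should yield
\[
\frac{d}{dt}\Big|_{t=0}\log\vol(v_t)=-n\,\frac{S(\xi;v)}{A_{X,\Delta}(\xi)}.
\]
Check: for $v=\xi$ we get $S(\xi;\xi)=A_{X,\Delta}(\xi)$, which matches. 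Rigorous differentiability would come from concavity or convexity of the relevant volume function in $t$, as in \cite{XZ-uniqueness}, or from a direct Riemann-sum limit using uniform Ehrhart-type bounds.

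The third step concludes. Since $\widehat{\vol}(v_t)=A_{X,\Delta}(v_t)^n\vol(v_t)$ is minimized at $t=0$, the right derivative is nonnegative:
\[
0\le n\frac{A_{X,\Delta}(v)}{A_{X,\Delta}(\xi)}-n\frac{S(\xi;v)}{A_{X,\Delta}(\xi)},
\]
which gives $A_{X,\Delta}(v)\ge S(\xi;v)$.

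The main obstacle is justifying the first-order volume expansion, and in particular the interchange of the limit $m\to\infty$ with the derivative in $t$. Here I would cite the corresponding argument of \cite[Section 3]{XZ-uniqueness}, possibly after first reducing to quasi-regular $\xi$ by perturbing $\xi$ within the Reeb cone, using the continuity of $S$ in $\xi$.
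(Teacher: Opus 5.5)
Your argument is correct and is essentially the first-variation computation underlying the result that the paper simply quotes (\cite[Theorem 3.10]{XZ-uniqueness}, see also \cite[Theorem 1.1]{LX-higher-rank}). It uses three ingredients: minimality of $\wt_\xi$ along the twisted path $v_t$, the linearity $A_{X,\Delta}(v_t)=A_{X,\Delta}(\xi)+t\,A_{X,\Delta}(v)$, and the expansion $\vol(v_t)=\vol(\wt_\xi)-t\,\tS(\xi;v)+o(t)$, where the first-order coefficient is exactly $-\tS(\xi;v)$ with no extra factor (consistent with $\tS(\xi;\xi)=n\cdot\vol(\wt_\xi)$). The one technical input you defer, this first-order volume expansion, is precisely what that reference supplies, and the reduction to quasi-regular $\xi$ you mention is not needed.
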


\begin{proof}
This is \cite[Theorem 3.10]{XZ-uniqueness} (see also \cite[Theorem 1.1]{LX-higher-rank}).
\end{proof}

The converse is also true, but we will not need it in this paper.
\end{emp}

\subsection{Koll\'ar components near the minimizer}

Unlike the case of Koll\'ar components, higher rank minimizers do not come with a natural choice of test configurations as in Definition \ref{defn-testconfiguration}. In order to study the specialization of differential forms, we will use a sequence of Koll\'ar components to approximate the minimizer. Existence of such approximations follows easily from the higher rank finite generation, Theorem \ref{thm-SDC}(2). In this subsection, we show that the approximating Koll\'ar components are also close to being K-semistable. 

Before we state the precise result, observe that for any quasi-monomial valuation $v$ of rational rank $r$, we can always find a log smooth model $(Y,E)$ such that $E$ has exactly $r$ irreducible components and $v\in \QM(Y,E)$, see \cite[Lemma 3.6]{JM-val-ideal-seq}. Such a log smooth model is said to be \emph{adapted to} $v$\footnote{It is called a good pair adapted to $v$ in \cite{JM-val-ideal-seq}.}. In this case $v\in \QM(Y,E)^\circ\cong \bR_{> 0}^r$ and has $\bQ$-linearly independent coordinates. 

\begin{thm}\label{thm:delta->1}
Let $x\in (X=\Spec(R),\Delta)$ be a klt singularity and let $v$ be the normalized volume minimizer. Let $(Y,E)\to (X,\Delta)$ be a log smooth model adapted to $v$. Then for any $\varepsilon>0$, there exists an open neighborhood $U\subseteq \QM(Y,E)$ of $v$ such that every divisorial valuation in $U$ corresponds to a Koll\'ar component $D$ with $\delta(D,\Delta_D)\ge 1-\varepsilon$.
\end{thm}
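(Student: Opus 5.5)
Our plan is to pass to the stable degeneration and reduce to a statement about Reeb vectors on the K-semistable log Fano cone $x_0\in(X_0,\Delta_0;\xi_v)$.

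\emph{Step 1: Koll\'ar components near $v$ become weight valuations on $X_0$.} By the higher rank finite generation (Theorem \ref{thm-SDC}(2)) and the arguments of \cite{XZ-SDC} (see also \cite[Section 2]{LX-higher-rank}), there is a neighborhood $U$ of $v$ in $\QM(Y,E)$ with the following property. Every $w\in U$ induces, via $\gr_v R$, a Reeb vector $\xi_w\in\ft^+_\bR$ on $X_0$ such that
\[
\gr_w R\cong \gr_{\wt_{\xi_w}}(\gr_v R) \cong \gr_v R ,
\]
and $A_{X,\Delta}(w)=A_{X_0,\Delta_0}(\xi_w)$. In particular, when $w=\ord_D$ is divisorial (up to scaling), $\xi_w$ is quasi-regular. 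Its Koll\'ar component on $X_0$ is $D_0=(X_0\setminus\{x_0\})/\bG_m$, with $(D_0,\Delta_{D_0})$ given by \ref{para-quasi-kollar}. The degeneration $\gr_D R$ is isomorphic to $X_0$, so $D$ is a Koll\'ar component over $x$. Moreover, Lemma \ref{lem:tc trivial on kollar comp}, applied to the test configuration of $D$ (whose central fiber is $X_0$ with the $\bG_m$-action of $\xi_w$), gives $(D,\Delta_D)\cong(D_0,\Delta_{D_0})$. It therefore suffices to prove $\delta(D_0,\Delta_{D_0})\ge 1-\varepsilon$ for quasi-regular $\xi$ near $\xi_v$.

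\emph{Step 2: Compare $\delta$ of the quotient with cone invariants.} For a quasi-regular Reeb vector $\xi$ with quotient $(D_0,\Delta_{D_0})$, every prime divisor $F$ over $D_0$ pulls back to a $\bT'$-invariant valuation $v_F$ on $X_0$, where $\bT'=\langle\xi\rangle$. Here $v_F$ is the quasi-monomial valuation that is trivial along the fibers of the Seifert bundle. Using \eqref{eq:orbifold cone crepant pullback} and $R\cong\bigoplus H^0(D_0,mL)$, we get
\[
A_{X_0,\Delta_0}(v_F)=A_{D_0,\Delta_{D_0}}(F).
\]
A graded computation (as in \cite[Section 3]{XZ-uniqueness}) should give
\[
S(\xi;v_F)=S_{D_0,\Delta_{D_0}}(F)
\]
after the normalization built into $S$, using $-(K_{D_0}+\Delta_{D_0})\sim_\bQ A(\xi)L$. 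Hence
\[
\delta(D_0,\Delta_{D_0})\ \ge\ \inf_{w}\frac{A_{X_0,\Delta_0}(w)}{S(\xi;w)},
\]
where the infimum runs over all $\bT$-invariant quasi-monomial valuations $w$. The quasi-regular part of the torus is not all of $\bT$, but since $F$ may be taken $\bT$-invariant when computing $\delta$ (the torus acts on $D_0$), we may restrict to $\bT$-invariant $w$. This restriction needs care.

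\emph{Step 3: Continuity in $\xi$, the main obstacle.} Theorem \ref{thm:minimizer is ss} gives $A(w)\ge S(\xi_v;w)$. It then suffices to show that
\[
S(\xi;w)\le (1-\varepsilon)^{-1}S(\xi_v;w)
\]
holds uniformly in $w$ for $\xi$ close to $\xi_v$. Our approach is to write $\tS(\xi;w)$ as an integral over the Okounkov-type body, or moment cone, of $w$-weights cut out by $\langle\alpha,\xi\rangle\le 1$, following \cite[(3.4)]{XZ-uniqueness}. The factor $A(\xi)/\tS(\xi;\xi)$ is continuous in $\xi$. Comparing the truncation regions $\{\langle\alpha,\xi\rangle<1\}$ and $\{\langle\alpha,\xi_v\rangle<1\}$, we expect to show that $\{\langle\alpha,\xi\rangle<1\}\subseteq\{\langle\alpha,\xi_v\rangle<1+c|\xi-\xi_v|\}$ on the weight cone. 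The sum defining $\tS$ is monotone in the region, and \eqref{eq:S tilde homogeneous} controls the rescaling. Together these give $\tS(\xi;w)\le(1+c'|\xi-\xi_v|)\tS(\xi_v;w)$ with $c'$ independent of $w$. This uniform estimate is the hardest point. The key is that the weights $\alpha$ lie in a fixed strongly convex cone, on which $\xi$ and $\xi_v$ are uniformly comparable as linear functionals. Shrinking $U$ so that $c'|\xi-\xi_v|$ is small then concludes the proof.
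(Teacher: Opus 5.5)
Your three steps follow the paper's proof. Step 1 is its reduction to $X_0$ via \cite[Lemma 2.10]{LX-higher-rank} and Lemma \ref{lem:tc trivial on kollar comp}. For the boundary you also need that the test configuration of $D$ degenerates $\Delta$ to $\Delta_0$; this is \cite[Proposition 2.59]{LX-higher-rank}. Step 2 is Lemma \ref{lem:delta>? criterion}, and Step 3 is Lemma \ref{lem:delta->1 Fano cone}.

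However, you have put the difficulty in the wrong place. Step 3 is routine, and your sketch of it is already the paper's argument. Choose $t$ with $(1-t)\langle\alpha,\xi_v\rangle\le\langle\alpha,\xi\rangle$ on the strongly convex weight cone. Then $\tS_m$ is monotone in the truncation region, because all $\lambda$ occurring in $\gr_w R$ are $\ge 0$. Apply \eqref{eq:S tilde homogeneous}, together with continuity on the Reeb cone of $\eta\mapsto A_{X_0,\Delta_0}(\eta)$ and of $\tS(\eta;\eta)=n\cdot\vol(\wt_\eta)$. This gives $S(\xi_v;w)\ge(1-t)^{n+3}S(\xi;w)$ uniformly in $w$. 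No Okounkov bodies are needed.

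The genuine gaps are both in Step 2.

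First, $\delta(D_0,\Delta_{D_0})$ is an infimum over \emph{all} divisors over $D_0$. Theorem \ref{thm:minimizer is ss}, by contrast, only controls $\bT$-invariant valuations, and $S(\xi_v;\cdot)$ is only set up for those, since $\langle\xi_v\rangle$ may have rank $>1$. That the torus acts on $D_0$ does not by itself let you restrict to $\bT$-invariant $F$. The statement you need is: if $A_{D_0,\Delta_{D_0}}\ge(1-\varepsilon)S_{D_0,\Delta_{D_0}}$ on all $\bT$-invariant quasi-monomial valuations, then $\delta(D_0,\Delta_{D_0})\ge1-\varepsilon$. This is a nontrivial result, and the paper invokes \cite[Theorem 1.2]{Z-equivariant-K} exactly at this point.

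Second, your $v_F$ is trivial along the fibers of the Seifert bundle. So it is centered on the cone over $c_{D_0}(F)$, not at $x_0$, and neither Theorem \ref{thm:minimizer is ss} nor your Step 3 estimate applies to it as stated. The paper instead works with $w_t:=v_F+t\cdot\wt_\xi$ for $t>0$, which lies in $\Val_{X_0,x_0}$. It proves two affine identities in $t$:
\begin{itemize}
\item $A_{X_0,\Delta_0}(w_t)=A_{D_0,\Delta_{D_0}}(F)+t\,A_{X_0,\Delta_0}(\xi)$, passing to the $\mu_r$-quotient when $L$ is not Cartier;
\item $S(\xi;w_t)=S_{D_0,\Delta_{D_0}}(F)+t\,A_{X_0,\Delta_0}(\xi)$, from \cite[(3.7)]{XZ-uniqueness} and linearity of $\tS$ in $t$.
\end{itemize}
It then lets $t\to0^+$. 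With these two repairs your argument coincides with the paper's.
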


Recall that the pair $(D,\Delta_D)$ is defined by adjunction on the plt blowup as in \eqref{eq:adjunction on kc}.

We will first prove Theorem \ref{thm:delta->1} in the case of K-semistable log Fano cones (see Lemma \ref{lem:delta->1 Fano cone}); the rest of the proof goes by reducing to this special case. We need the following criterion to estimate the stability thresholds of Koll\'ar components given by quasi-regular Reeb vectors.

\begin{lem} \label{lem:delta>? criterion}
Let $x\in (X=\Spec(R),\Delta)$ be a log Fano cone singularity and let $\bT$ be a torus acting on it. Let $\xi$ be a quasi-regular Reeb vector on $X$ and let $D$ be the corresponding Koll\'ar component. Let $\varepsilon\ge 0$. Then $\delta(D,\Delta_D)\ge 1-\varepsilon$ if and only if 
\begin{equation} \label{eq:A>=(1-epsilon)S}
A_{X,\Delta}(v)\ge (1-\varepsilon)S(\xi;v)  
\end{equation}
for all $\bT$-invariant quasi-monomial valuations $v\in \Val_{X,x}$.
\end{lem}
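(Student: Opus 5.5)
The plan is to reduce both sides of the equivalence to valuations on the quotient $D=(X\setminus\{x\})/\bG_m$, using the Seifert bundle description from Paragraph \ref{para-quasi-kollar}. Let $\bG_m=\langle\xi\rangle\subseteq\bT$ be the subtorus generated by the quasi-regular $\xi$, rescaled so that $\wt_\xi=\ord_D$. Write $R\cong\bigoplus_{m\in\bN}H^0(D,mL)$ with $L\sim_\bQ -D|_D$ ample. Set $r:=A_{X,\Delta}(\xi)$. By \eqref{eq:orbifold cone crepant pullback}, $-(K_D+\Delta_D)\sim_\bQ rL$.

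\emph{Step 1 (parametrizing invariant valuations).} Every $\langle\xi\rangle$-invariant quasi-monomial valuation $v$ on $X$ is determined by two pieces of data: its restriction $w$ to $\bC(D)$, and a real number $t$. The valuation is then
\[
v_{w,t}\Big(\sum_m s_m\Big)=\min_m\{w(s_m)+tm\}, \qquad s_m\in H^0(D,mL).
\]
Here $w(s_m)$ means the value of $w$ on the divisor of $s_m$ as a section of $\lfloor mL\rfloor$. The valuation $v_{w,t}$ is centered at $x$ exactly when $t>0$, or more generally when $t$ is large compared to $w$. Under this correspondence, $\bT$-invariant $v$ correspond to $\bT/\langle\xi\rangle$-invariant $w$ on $D$.

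\emph{Step 2 (log discrepancy and $S$-invariant).} First, $A_{X,\Delta}(v_{w,t})=A_{D,\Delta_D}(w)+t\,r$. This is the standard formula for valuations on a Seifert $\bG_m$-bundle, following from \eqref{eq:orbifold cone crepant pullback} and a check on the log smooth model obtained from a log resolution of $(D,\Delta_D)$ times the fiber coordinate. Second, I will compute $\tS_m(\xi;v_{w,t})$ directly. The $\xi$-weight-$k$ piece is $H^0(D,kL)$, and $v_{w,t}$ induces on it the filtration by $w$, shifted by $tk$. Summing over $k<m$ and using $S_{kL}(w)=k\,S_L(w)=k\,S_{D,\Delta_D}(w)/r$, the term of order $m^{n+1}$ is a fixed multiple of $\vol(L)\bigl(S_{D,\Delta_D}(w)/r+t\bigr)$. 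The normalization by $A_{X,\Delta}(\xi)/\tS(\xi;\xi)$ in the definition of $S(\xi;\cdot)$ is exactly the case $w=0$, $t=1$. This gives
\[
S(\xi;v_{w,t})=S_{D,\Delta_D}(w)+t\,r.
\]

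\emph{Step 3 (comparison).} The condition \eqref{eq:A>=(1-epsilon)S} becomes
\[
A_{D,\Delta_D}(w)+tr\ \ge\ (1-\varepsilon)S_{D,\Delta_D}(w)+(1-\varepsilon)tr .
\]
Since $\varepsilon\ge0$ and $t\ge0$, this is implied by $A_{D,\Delta_D}(w)\ge(1-\varepsilon)S_{D,\Delta_D}(w)$, which in turn follows from $\delta(D,\Delta_D)\ge 1-\varepsilon$. One caveat: for $v$ centered at $x$ with $t<0$, I expect to rewrite $v_{w,t}$ as $v_{w',t'}$ with $t'\ge0$ after changing the model by a twist, or to check directly that the extra term is harmless.

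For the converse, take a $\bT/\langle\xi\rangle$-invariant $w$ and apply \eqref{eq:A>=(1-epsilon)S} to $v_{w,t}$ with $t\to0^+$; continuity of both sides in $t$ gives $A_{D,\Delta_D}(w)\ge(1-\varepsilon)S_{D,\Delta_D}(w)$. One then needs $\delta(D,\Delta_D)$ to be computed by torus-invariant quasi-monomial valuations. This is the equivariant version of the $\delta$-invariant for torus actions, obtained from the existence of $\delta$-minimizers together with degenerating along the torus, or from Zhuang's equivariance result.

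The main obstacle is Step 2: carefully justifying the formula for $S(\xi;v_{w,t})$, in particular the rounding $\lfloor kL\rfloor$ and uniformity in the limit. A secondary obstacle is the bookkeeping of which pairs $(w,t)$ give valuations centered at $x$.
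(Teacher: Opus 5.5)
Your outline is the paper's proof: parametrize the invariant valuations as $w_t$ with $w_t(s)=w(s)+t\,\wt_\xi(s)$ on $H^0(D,mL)$, show $A_{X,\Delta}(w_t)=A_{D,\Delta_D}(w)+t\,A_{X,\Delta}(\xi)$ and $S(\xi;w_t)=S_{D,\Delta_D}(w)+t\,A_{X,\Delta}(\xi)$, compare, and for the converse let $t\to0^+$ and apply \cite[Theorem 1.2]{Z-equivariant-K}. The gap is that Step 1 and the first half of Step 2, as written, fail in the one case that needs care: when $L$ is not Cartier, so the Seifert bundle has multiple fibres.

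\textbf{Step 1.} With your convention ($w(s_m)$ is $w$ of the divisor of $s_m$ as a section of $\lfloor mL\rfloor$), $v_{w,t}$ is not multiplicative. For $s\in H^0(D,mL)$ and $s'\in H^0(D,m'L)$, the defect $v_{w,t}(ss')-v_{w,t}(s)-v_{w,t}(s')$ equals $w$ of the effective divisor $\lfloor (m+m')L\rfloor-\lfloor mL\rfloor-\lfloor m'L\rfloor$, whenever this is defined at all. Concretely, take $\bA^2$ with weights $(1,2)$. Then $D=\mathbb{P}(1,2)\cong\mathbb{P}^1$ and $L=\tfrac12[p]$, with $p$ the orbifold point. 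For $w=\ord_p$ your formula gives $v_{w,t}(x)=t$ but $v_{w,t}(x^2)=1+2t$. The paper's fix is to choose $q$ with $qL$ Cartier and set $w(s):=\tfrac1q w(s^q)$, i.e.\ evaluate $w$ on the $\bQ$-divisor $\mathrm{div}(s)+mL$; then $v_{w,t}$ is a valuation.

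\textbf{Step 2, log discrepancy.} The log discrepancy formula has the same problem. A log resolution of $(D,\Delta_D)$ ``times the fibre coordinate'' is a log smooth model only where $X\setminus\{x\}\to D$ is an honest $\bG_m$-bundle. Over components of $\Supp(\Delta_D)$ carrying multiple fibres it is not locally a product, and there is no fibre coordinate. Moreover, \eqref{eq:orbifold cone crepant pullback} lives on $X\setminus\{x\}$, so it does not by itself give the formula for $v_{w,t}$ with $t>0$, which is centred at $x$. The paper passes to $X'=X/\mu_q=\Spec\bigoplus_m H^0(D,mqL)$. This is a genuine cone over $(D,\Delta_D)$, with $K_X+\Delta=f^*(K_{X'}+\Delta')$. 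There the formula follows from $A_{X',\Delta'}(w'_0)=A_{D,\Delta_D}(w)$ together with linearity of the log discrepancy on a simplex containing $w'_0$ and $\wt_{\xi'}$. It transfers back because $A_{X,\Delta}(w_t)=A_{X',\Delta'}(w'_t)$ and $A_{X,\Delta}(\xi)=A_{X',\Delta'}(\xi')$ \cite[Proposition 5.20]{KM98}. A local cyclic-cover computation at the multiple fibres would also work.

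\textbf{Step 2, $S$-invariant.} The rounding you flag here is the benign part. With the correct $w(s)$, you only need the standard fact that all residue classes of degrees mod $q$ have the same asymptotics. The paper simply cites \cite[(3.7)]{XZ-uniqueness} for $S(\xi;w_0)=S_{D,\Delta_D}(w)$ and then uses linearity in $t$.

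\textbf{Smaller points.} Your worry about $t<0$ is vacuous. Since $w$ has a centre on the projective $D$ and $L$ is ample, some section has $w(s)=0$, so $v_{w,t}\in\Val_{X,x}$ if and only if $t>0$. Also, the forward direction needs $A_{D,\Delta_D}(w)\ge\delta(D,\Delta_D)\,S_{D,\Delta_D}(w)$ for quasi-monomial $w$, not only for divisors; this is \cite[Theorem 4.8]{Xu-book}.
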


\begin{proof}
We largely follow the proof of \cite[Theorem 3.6]{XZ-uniqueness}. 
By the discussion in Paragraph \ref{para-quasi-kollar}, the Reeb vector $\xi$ generates a $\bG_m$-action on $(X,\Delta)$, the Seifert $\bG_m$-bundle $\varphi\colon X\setminus\{x\}\to D$ is given by some $\bQ$-divisor $L$ on $D$ and we have 
\[
R=\bigoplus_{m\in\bN} H^0(D,mL).
\]
Note that the torus $\bT$ also acts on the log Fano pair $(D,\Delta_D)$. Fix a positive integer $r$ such that $rL$ is Cartier. For any quasi-monomial valuation $w\in \Val_D$ and any $t\ge 0$, let $w_t\in \Val_{X,x}$ be the quasi-monomial valuation on $X$ defined by
\begin{eqnarray}\label{eq-wt}
w_t(s) = w(s)+ t\cdot \wt_\xi(s) 
\end{eqnarray}
for any $m\in\bN$ and any $0\neq s\in H^0(D,mL)$, where we set $w(s):=\frac{1}{r}w(s^r)$. Note that $w_t\in \Val_{X,x}$ if $t>0$ and it is $\bT$-invariant if and only if $w$ is $\bT$-invariant. We claim that 
\begin{equation}\label{eq-compare A}
A_{X,\Delta}(w_t) = A_{D,\Delta_D}(w)+t\cdot A_{X,\Delta}(\xi)\, .
\end{equation}
Indeed, if $L$ is Cartier, then $A_{X,\Delta}(w_0)=A_{D,\Delta_D}(w)$ as $(X,\Delta)$ is a cone over $(D,\Delta_D)$, and \eqref{eq-compare A} follows from the observation that $w_t$ is a monomial combination of $w_0$ and $\wt_\xi$ (i.e. on some log smooth model $(Y,E)$ of $(X,\Delta)$ the valuations $w_t$'s and $\wt_\xi$ belong to the same simplex of $\QM(Y,E)$ and the equality $w_t=w_0+t\cdot \wt_\xi$ holds in this simplex) and the fact that the log discrepancy function is linear on each simplex of $\QM(Y,E)$. In the general case, using the $\mu_r\subseteq \bG_m$ action on $x\in (X,\Delta)$ we let 
\[
\big(x'\in (X',\Delta')\big) = \big(x\in (X,\Delta)\big)/\mu_r,
\]
i.e. $X'=X/\mu_r$, and if $f\colon X\to X'$ is the induced map, then $x'=f(x)$ and 
\[
K_X+\Delta = f^*(K_{X'}+\Delta').
\]
Let $\xi'$ be the induced Reeb vector on $X'$ so that $\wt_{\xi'}\in \Val_{X'}$ is the restriction of $\wt_\xi$, and let $w'_t\in \Val_{X'}$ be the restriction of $w_t$. Note that $X'=\Spec(R')$ where $R':=\bigoplus_{m\in\bN} H^0(D,mrL)$, hence as $rL$ is Cartier and combined with \eqref{eq:orbifold cone crepant pullback} we know that $(X',\Delta')$ is a cone over $(D,\Delta_D)$. By construction, $w'_t(s) = w(s)+ t\cdot \wt_{\xi'}(s)$ for all $s\in H^0(D,mrL)$, thus from the special case of cones treated above we deduce that 
\[
A_{X',\Delta'}(w'_t) = A_{D,\Delta_D}(w)+t\cdot A_{X',\Delta'}(\xi').
\]
By \cite[Proof of Proposition 5.20(2)]{KM98}, we also have $A_{X,\Delta}(w_t)=A_{X',\Delta'}(w'_t)$ and $A_{X,\Delta}(\xi)=A_{X',\Delta'}(\xi')$. This gives \eqref{eq-compare A}.  



On the other hand, by \cite[(3.7)]{XZ-uniqueness}, we have $S(\xi;w_0) = S_{D,\Delta_D}(w)$. By \eqref{eq-wt}, we also see that $\tS(\xi;w_t) = \tS(\xi;w_0)+t\cdot \tS(\xi;\xi)$, hence 
\begin{equation}\label{eq-compare S}
S(\xi;w_t) = S_{D,\Delta_D}(w) + t\cdot A_{X,\Delta}(\xi)\, .
\end{equation}
Therefore, if $\delta(D,\Delta_D)\ge 1-\varepsilon$, then by combining  Equations \eqref{eq-compare A} and \eqref{eq-compare S}, we have
\begin{eqnarray*}
A_{X,\Delta}(w_t)&=&A_{D,\Delta_D}(w)+t\cdot A_{X,\Delta}(\xi) \\
                           &\ge& (1-\varepsilon)\cdot S_{D,\Delta_D}(w)+t\cdot A_{X,\Delta}(\xi) \\
                           &\ge &(1-\varepsilon)S(\xi;w_t)
\end{eqnarray*}
for any quasi-monomial valuation $w\in \Val_D$ and any $t>0$, where the first inequality is \cite[Theorem 4.8]{Xu-book}. Since $A_{X,\Delta}(\xi) = S(\xi;\xi)$ by definition, and every $\bT$-invariant quasi-monomial valuation in $\Val_{X}$ other than $\lambda\cdot \wt_\xi$ is of the form $w_t$, we conclude that \eqref{eq:A>=(1-epsilon)S} holds for all $\bT$-invariant quasi-monomial valuations $v\in \Val_{X,x}$.

Conversely, if $A_{X,\Delta}(w_t) \ge (1-\varepsilon)S(\xi;w_t)$ holds for all $\bT$-invariant quasi-monomial valuations $v\in \Val_{X,x}$, then letting $t\to 0^+$ we get (again by \eqref{eq-compare A} and \eqref{eq-compare S}) that
\[
A_{D,\Delta_D}(w)\ge (1-\varepsilon)\cdot S_{D,\Delta_D}(w)
\]
for all $\bT$-invariant quasi-monomial valuations $w\in \Val_D$. By \cite[Theorem 1.2]{Z-equivariant-K}, this implies $\delta(D,\Delta_D)\ge 1-\varepsilon$.
\end{proof}

The following lemma is the K-semistable log Fano cone case of Theorem \ref{thm:delta->1}.

\begin{lem} \label{lem:delta->1 Fano cone}
Let $x\in (X,\Delta;\xi)$ be a K-semistable log Fano cone singularity of dimension $n$. Then for any $\varepsilon>0$, there exists an open neighborhood $U\subseteq \ft_\bR^+$ of $\xi$ in the Reeb cone such that for any quasi-regular Reeb vector $\eta\in U$, the log Fano pair $(D,\Delta_D)$ on the corresponding Koll\'ar component satisfies $\delta(D,\Delta_D)\ge 1-\varepsilon$.
\end{lem}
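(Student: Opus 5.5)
By Lemma \ref{lem:delta>? criterion}, it suffices to find a neighborhood $U$ of $\xi$ such that for every quasi-regular $\eta\in U$,
\[
A_{X,\Delta}(v)\ge (1-\varepsilon)S(\eta;v)
\]
for all $\bT$-invariant quasi-monomial $v\in\Val_{X,x}$. Theorem \ref{thm:minimizer is ss} already gives $A_{X,\Delta}(v)\ge S(\xi;v)$ for every such $v$. So the whole proof reduces to comparing $S(\eta;v)$ with $S(\xi;v)$ uniformly in $v$. The goal is a bound $S(\eta;v)\le (1+\varepsilon')S(\xi;v)$ for all $v$, where $\varepsilon'\to 0$ as $\eta\to\xi$. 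Then $1-\varepsilon=(1+\varepsilon')^{-1}$ does the job.

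For the comparison, first normalize. By \eqref{eq:S scale inv} we may rescale, and by linearity of $A_{X,\Delta}$ on the Reeb cone (log discrepancy of $\wt_\eta$ is linear in $\eta$) the factor $A_{X,\Delta}(\eta)/\tS(\eta;\eta)$ is continuous in $\eta$. So the key point is a uniform comparison of $\tS(\eta;v)$ with $\tS(\xi;v)$. Here $\tS(\eta;v)$ weights the pieces $\gr_v^\lambda R_\alpha$ over the region $\langle\alpha,\eta\rangle<m$. If $\eta$ is close to $\xi$, say $|\langle\alpha,\eta-\xi\rangle|\le \delta\langle\alpha,\xi\rangle$ for all $\alpha$ in the weight cone, then
\[
\{\langle\alpha,\xi\rangle<m/(1+\delta)\}\subseteq\{\langle\alpha,\eta\rangle<m\}\subseteq\{\langle\alpha,\xi\rangle<m/(1-\delta)\}.
\]
This holds because $\xi$ is strictly positive on the closed weight cone minus the origin, which is where the compactness enters. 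Since every term $\lambda\dim\gr^\lambda_vR_\alpha$ is nonnegative ($v$ is centered at $x$, so $\lambda\ge0$), monotonicity gives
\[
\tS_m(\eta;v)\le \tS_{m/(1-\delta)}(\xi;v).
\]
Passing to the limit with the homogeneity \eqref{eq:S tilde homogeneous} yields $\tS(\eta;v)\le (1-\delta)^{-n-1}\tS(\xi;v)$, and in the same way $\tS(\eta;\eta)\ge(1+\delta)^{-n-1}\tS(\xi;\eta)$.

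It remains to compare $\tS(\xi;\eta)$ with $\tS(\xi;\xi)$. The valuation $\wt_\eta$ satisfies $(1-\delta)\wt_\xi\le \wt_\eta\le(1+\delta)\wt_\xi$ on each $R_\alpha$, so $\tS(\xi;\eta)\ge(1-\delta)\tS(\xi;\xi)$. Combining these,
\[
S(\eta;v)\le \frac{A_{X,\Delta}(\eta)}{A_{X,\Delta}(\xi)}\cdot\frac{(1+\delta)^{n+1}}{(1-\delta)^{n+2}}\, S(\xi;v)\le (1+\varepsilon')S(\xi;v),
\]
with $\varepsilon'\to0$ as $\delta\to0$.

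The main obstacle is making the bound on $\langle\alpha,\eta-\xi\rangle$ relative to $\langle\alpha,\xi\rangle$ uniform over all weights $\alpha$ with $R_\alpha\ne0$, so that the constant does not depend on $v$. I would handle this by choosing a norm and using that $\xi$ is bounded below by $c|\alpha|$ on the finitely generated weight cone of $R$. Once this is in place, $U=\{\eta:|\eta-\xi|<c\delta\}$ works.
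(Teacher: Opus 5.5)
Your proof is correct and follows the paper's argument. You reduce via Lemma \ref{lem:delta>? criterion} and Theorem \ref{thm:minimizer is ss} to a uniform bound $S(\eta;v)\le(1+\varepsilon')S(\xi;v)$, and you get that bound from containment of weight regions, nonnegativity of the summands, and the homogeneity \eqref{eq:S tilde homogeneous}. The only difference is how you handle the normalizing factor $\tS(\eta;\eta)$: you use the same region comparison together with the pointwise bound $\wt_\eta\ge(1-\delta)\wt_\xi$, whereas the paper uses $\tS(\eta;\eta)=n\cdot\vol(\wt_\eta)$ and continuity of the volume on the Reeb cone from \cite{LX-higher-rank}, so your route is slightly more self-contained.
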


\begin{proof}
Let $\bT$ be the torus acting on $x\in (X,\Delta)$. By Theorem \ref{thm:minimizer is ss}, we have $A_{X,\Delta}(v)\ge S(\xi;v)$ for any $\bT$-invariant quasi-monomial valuation $v\in \Val_{X,x}$. We claim that there exists an open neighborhood $U\subseteq \ft_\bR^+$ of $\xi$ such that 
\begin{equation} \label{eq:S perturbed}
S(\xi;v)\ge (1-\varepsilon)S(\eta;v)
\end{equation}
for any $\eta\in U$ and any $\bT$-invariant quasi-monomial valuation $v\in \Val_{X,x}$. The lemma follows immediately from Lemma \ref{lem:delta>? criterion} and this claim. 

Since the $\bT$-action is good, its weight cone $\sigma\subseteq M_\bR$ (generated by those $\alpha\in M(\bT)$ with $R_\alpha\neq 0$) is strongly convex, thus we can pick some $0<t\ll 1$ and some open neighborhood $U\subseteq \ft_\bR^+$ of $\xi$ such that $(1-t)\langle \alpha, \xi\rangle\le \langle \alpha, \eta\rangle$ for any $\alpha\in \sigma$ and $\eta\in U$. From the definition of the $\tS_m$ and $\tS$-invariant, this gives 
\[
\tS_m ((1-t)\xi;v)\ge \tS_m(\eta;v) \, .
\] 
Hence, for any $\bT$-invariant quasi-monomial valuation $v\in \Val_{X,x}$, we get
\[
\tS(\xi;v) = (1-t)^{n+1}\tS((1-t)\xi;v)\ge (1-t)^{n+1}\tS(\eta;v)
\]
by \eqref{eq:S tilde homogeneous}. By the first displayed formula on \cite[pp. 160]{XZ-uniqueness}, we have $\tS(\eta;\eta) = n\cdot \vol(\wt_\eta)$ for any $\eta\in \ft^+_\bR$ (see e.g. \cite[page 411]{ELS03} for the definition of the volume of a valuation). Since both the log discrepancy $\eta\mapsto A_{X,\Delta}(\eta)$ and the volume function $\eta\mapsto \vol(\eta)$ are continuous on the Reeb cone by \cite[Theorem 2.15(3) and Proposition 2.39]{LX-higher-rank}, after possibly shrinking $U$, we may assume that 
\[
A_{X,\Delta}(\xi)\ge (1-t)A_{X,\Delta}(\eta)\quad \mathrm{and}\quad \tS(\xi;\xi)\le (1-t)^{-1} \tS(\eta;\eta)
\]
for any $\eta\in U$. Putting these estimates together we obtain
\[
S(\xi;v) \ge (1-t)^{n+3} S(\eta;v),
\]
thus for sufficiently small $t$ this yields \eqref{eq:S perturbed}.
\end{proof}

We now prove the general case of Theorem \ref{thm:delta->1}.

\begin{proof}[Proof of Theorem \ref{thm:delta->1}]
Let $x_0\in (X_0,\Delta_0;\xi_v)$ be the K-semistable log Fano cone degeneration of $x\in (X,\Delta)$ given by Theorem \ref{thm-SDC}. Since $\gr_{v} R$ is finitely generated, by \cite[Lemma 2.10]{LX-higher-rank} we know that there exists an open neighborhood $U\subseteq \QM(Y,E)$ of $v$ such that $\gr_w R\cong \gr_{v} R$ for every quasi-monomial valuation $w\in U$. The grading on $\gr_w R$ induces a Reeb vector $\xi_w$ on $X_0=\Spec(\gr_v R)$ through this isomorphism. Moreover, $w\in U$ is a divisorial valuation if and only if $\xi_w$ is quasi-regular. 

The assignment $w\mapsto \xi_w$ then identifies $U$ with an open neighborhood of $\xi_v$ in the Reeb cone $\ft_\bR^+$ of $X_0$. By Lemma \ref{lem:delta->1 Fano cone}, the theorem holds for some open neighborhood $U_0$ of $\xi_v$ in the Reeb cone. Shrinking $U$ if necessary, we may assume that  $\xi_w\in U_0$ for all $w\in U$. We may further assume by \cite[Proposition 2.59]{LX-higher-rank} that every divisorial valuation $w\in U$ is a Koll\'ar component and the corresponding test configuration degenerates $\Delta$ to $\Delta_0$. The theorem now follows as the log Fano pairs on the Koll\'ar components corresponding to $w$ and $\xi_w$ are isomorphic by Lemma \ref{lem:tc trivial on kollar comp}.
\end{proof}

\section{Specializations of differential forms}\label{s-differential forms}

In this section, we define the specialization of differential forms along test configurations and establish a criterion for when the specialization of a non-degenerate differential form remains non-degenerate.

\begin{defn}\label{defn:p-form}
Let $X$ be a normal variety of dimension $n$, and let $i\colon X^{\mathrm{sm}} \hookrightarrow X$ be the inclusion of the smooth locus. We denote by $\Omega^{[p]}_X = i_*\Omega^p_{X^{\mathrm{sm}}}$ the sheaf of reflexive differential $p$-forms. A section $\sigma \in H^0(X, \Omega^{[p]}_X)$ is called a (reflexive) $p$-form on $X$. Moreover, $\sigma$ is called \emph{non-degenerate} if $p \mid n$ and $\sigma^{\frac{n}{p}} \in H^0(X, \Omega^{[n]}_X)$ is nowhere vanishing on $X^{\mathrm{sm}}$.

More generally, let $f\colon X \to S$ be a flat morphism of normal varieties and let $U \subseteq X$ be the smooth locus of $f$. Assume that $X\setminus U$ has codimension at least $2$ in $X$ (e.g. if $f$ is flat with normal fibers). Then we denote by $\Omega^{[p]}_{X/S} = i_*\Omega^p_{U/S}$ the sheaf of relative reflexive differential $p$-forms, where $i\colon U \hookrightarrow X$ is the inclusion.

Suppose further that $S$ is smooth. For any closed point $s \in S$ with normal fiber $X_s$, we have $U \cap X_s = X_s^{\mathrm{sm}}$. The natural morphism $\Omega^p_{U/S} \to \Omega^p_{X_s^{\mathrm{sm}}}$ then induces, by pushing forward, a restriction morphism 
\begin{equation}\label{eq:forms restriction}
\Omega^{[p]}_{X/S} \to \Omega^{[p]}_{X_s}\,.
\end{equation}

The definition naturally extends to the logarithmic setting. For any reduced divisor $D$ on a normal variety $X$, we set $\Omega^{[p]}_X (\log D) := i_*(\Omega^p_{U}(\log D))$ where $i\colon U\hookrightarrow X$ is the inclusion of the SNC locus of $(X,D)$. Moreover, if $f\colon X\to S$ is a morphism such that $(X,D)$ is SNC and log smooth over $S$ on some big open set $i\colon U \hookrightarrow X$, then we set $\Omega^{[p]}_{X/S}(\log D):=i_*(\Omega^p_{U/S}(\log D))$.
\end{defn}

\begin{rem}\label{rem:trivial-canonical}
If $X$ admits a non-degenerate $p$-form $\sigma$, then $\omega_X\cong \cO_X$. Indeed, the nowhere vanishing section $\sigma^{\frac{n}{p}}$ yields an isomorphism $\cO_{X^{\mathrm{sm}}}\cong \omega_{X^{\mathrm{sm}}}$, and pushing forward to $X$ gives
\[
\omega_X = i_*(\omega_{X^{\mathrm{sm}}}) \cong i_*\cO_{X^{\mathrm{sm}}} \cong \cO_X \, .
\]
In particular, such $X$ has rational singularities if and only if it is canonical.
\end{rem}

Next, we define the specialization of differential forms along test configurations with normal fibers.

\begin{defn}\label{def:degeneration of form}
Let $\scX\to \bA^1$ be a test configuration of a normal variety $X$ such that the central fiber $\scX_0$ is also normal. Let $p$ be a positive integer and $0\neq \sigma\in H^0(X,\Omega^{[p]}_X)$. Pulling back, we get a section $\mathrm{pr}_X^*\sigma$ of $\Omega^{[p]}_{X\times \bA^1 /\bA^1}$, and through the isomorphism $\scX\setminus \scX_0 \cong X\times (\bA^1\setminus \{0\})$, this gives a rational section $\sigma'$ of $\Omega^{[p]}_{\scX/\bA^1}$. Set $\sigma_{\scX}:=t^m \sigma'$, where $m$ is the smallest integer such that $t^m \sigma'$ extends to a section of $\Omega^{[p]}_{\scX/\bA^1}$. This is equivalent to saying that $t^m \sigma'$ is a non-vanishing section of $\Omega^{p}_{\scX/\bA^1,\eta}$ at the generic point $\eta$ of $\scX_0$.

Since $\scX_0$ is normal, we have a restriction map $\Omega^{[p]}_{\scX/\bA^1}\to \Omega^{[p]}_{\scX_0}$ as in \eqref{eq:forms restriction}. We call the (nonzero) $p$-form 
\[
\sigma_0:=\sigma_{\scX}|_{\scX_0}\in H^0(\scX_0,\Omega^{[p]}_{\scX_0})
\]
on $\scX_0$ the \emph{specialization} of $\sigma$ along the test configuration $\scX$.
\end{defn}

\begin{rem}
In general, for a morphism $X\to Y$ between varieties with rational singularities, one can define a functorial  pullback morphism
$\Omega^{[p]}_Y \to \Omega^{[p]}_X$ as in \cite{KS-extend-forms}. We will not need this generality. 
\end{rem}

\begin{prop}\label{prop:closed+non-deg}
Let $\scX\to \bA^1$ be a test configuration of a normal variety $X$ of dimension $n$ such that $\scX_0$ is also normal, and let $\sigma_0$ be the specialization of a $p$-form $\sigma$ on $X$ as in Definition \ref{def:degeneration of form}. Then:
\begin{enumerate}
\item If $\sigma$ is closed, then so is $\sigma_0$.
\item If $p\mid n$ and $\sigma$ is non-degenerate, then $\sigma_0$ is non-degenerate if and only if $\sigma_0^{n/p} \neq 0$, or equivalently, $\sigma_{\scX}^{n/p}$ does not vanish at the generic point of $\scX_0$.
\end{enumerate}
\end{prop}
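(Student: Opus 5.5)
For part (1), I would argue on the relative smooth locus $U\subseteq\scX$ of $\scX\to\bA^1$, which is big in $\scX$ and satisfies $U\cap\scX_0=\scX_0^{\mathrm{sm}}$ because $\scX_0$ is normal. Closedness of reflexive forms can be checked on any big open set, so it is enough to show that $d\sigma_0=0$ on $\scX_0^{\mathrm{sm}}$. Over $\bA^1\setminus\{0\}$ the form $\sigma'$ is the pullback of $\sigma$, so its relative differential $d_{\scX/\bA^1}\sigma'$ vanishes. Multiplication by $t^m$ commutes with the relative differential, since $dt=0$ in relative forms. Hence $d_{\scX/\bA^1}\sigma_{\scX}$ is a section of the torsion-free sheaf $\Omega^{p+1}_{U/\bA^1}$ that vanishes generically, so it vanishes identically. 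Restriction to the fibre commutes with $d$, so $d\sigma_0=(d_{\scX/\bA^1}\sigma_{\scX})|_{\scX_0}=0$ on $\scX_0^{\mathrm{sm}}$.

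For part (2), write $k=n/p$ and $\tau:=\sigma_{\scX}^{k}$, a section of $\Omega^{[n]}_{\scX/\bA^1}$. Restriction is multiplicative, so $\tau|_{\scX_0}=\sigma_0^{k}$ on $\scX_0^{\mathrm{sm}}$. One direction is immediate: if $\sigma_0$ is non-degenerate then $\sigma_0^k\neq 0$.

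For the converse I would proceed in four steps.

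First, on $U$ the relative dualizing sheaf $\Omega^n_{U/\bA^1}$ is a line bundle. Since $\scX_0=(t=0)$ is a reduced Cartier divisor, $\tau$ does not vanish at the generic point of $\scX_0$ exactly when $\tau|_{\scX_0}\neq 0$. This gives the stated equivalence of the two reformulations.

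Second, I would use the $\bG_m$-action. On $\scX\setminus\scX_0\cong X\times\bG_m$ the form $\sigma'$ is $\bG_m$-invariant, so $\sigma_{\scX}=t^m\sigma'$ is semi-invariant of weight $m$, and $\tau$ is semi-invariant of weight $km$.

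Third, I would identify the divisor of $\tau$. Because $\sigma$ is non-degenerate, $\sigma^k$ trivializes $\omega_X$ on $X^{\mathrm{sm}}$, so $\tau=t^{km}\,\mathrm{pr}_X^*\sigma^k$ is nowhere vanishing on $U\setminus\scX_0$. Thus $\mathrm{div}(\tau)$ on $U$ is supported on $\scX_0\cap U$. If $\tau$ does not vanish generically along the irreducible divisor $\scX_0$, then $\mathrm{div}(\tau)$ is an effective divisor with no components at all, so $\tau$ is a nowhere vanishing section of the line bundle $\Omega^n_{U/\bA^1}$.

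Fourth, restricting to $\scX_0^{\mathrm{sm}}\subseteq U$ shows that $\sigma_0^k=\tau|_{\scX_0^{\mathrm{sm}}}$ is nowhere vanishing there. That is precisely non-degeneracy of $\sigma_0$.

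The main point to check is the irreducibility of $\scX_0$. It follows from normality, since $\scX_0$ is normal and connected; connectedness holds because $\scX\to\bA^1$ is flat and $\bG_m$-equivariant with $\scX$ irreducible. A second point to check is that $\Omega^n_{U/\bA^1}$ really is invertible and compatible with base change to the fibre over $0$ on $U$. This holds because $U\to\bA^1$ is smooth there. Beyond these two checks the argument is formal, so the weight computation in the second step is not actually needed.
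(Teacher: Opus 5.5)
Your proof follows the paper's. Part (1) is the same computation. In part (2) the paper likewise notes that $\sigma_{\scX}^{n/p}$ can vanish only along $\scX_0$ or in codimension two, and then uses that $\Omega^{[n]}_{\scX/\bA^1}$ is reflexive of rank one to trivialize it. Your restriction to the relative smooth locus $U$, where $\Omega^n_{U/\bA^1}$ is already invertible, is an equivalent and slightly more direct way of saying the same thing.

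The one incorrect step is your justification that $\scX_0$ is irreducible. Flatness, $\bG_m$-equivariance and irreducibility of $\scX$ do not force $\scX_0$ to be connected. Take $\scX=\{x^2-1=ty\}\subseteq\bA^3$ with $\lambda\cdot(x,y,t)=(x,\lambda^{-1}y,\lambda t)$. This is a flat, affine test configuration of $X=\bA^1_x$ with integral total space, and it even carries the equivariant section $t\mapsto(1,0,t)$. Yet $\scX_0=\{x=1\}\sqcup\{x=-1\}$ is normal and disconnected.

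You should not try to derive irreducibility of $\scX_0$ here; it is an implicit standing hypothesis.
\begin{enumerate}
\item It is already built into Definition \ref{def:degeneration of form}, which defines $\sigma_{\scX}$ via ``the generic point of $\scX_0$'', and into statement (2).
\item The paper's own proof uses it at exactly the point where you do.
\item It holds in all applications, because there $\scX_0=\Spec(\gr_D R)$ and $\gr_D R$ is a domain.
\end{enumerate}
If you replace your connectedness argument by this hypothesis, the rest of your proof goes through as written.
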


\begin{proof}
We follow the notation of Definition \ref{def:degeneration of form}. 
For (1), note that if $\rd\sigma=0$ then $\rd\sigma'=0$ (it suffices to check this over $\scX\setminus \scX_0\cong X\times (\bA^1\setminus \{0\})$, since the sheaf in question is torsion-free). Thus $\rd(t^m\sigma') = mt^{m-1}\rd t\wedge\sigma'=0$ in $\Omega_{\scX/\bA^1}^{[p+1]}$ (again it is enough to verify this equality over $\scX\setminus \scX_0$). Restricting to the special fiber gives $\rd \sigma_0 = 0$.

For (2), the forward direction is clear. Suppose that $\sigma_0^{n/p}\neq 0$. Then since $\sigma$ is non-degenerate, the zero locus of $\sigma_{\scX}^{n/p} \in H^0(\scX,\Omega_{\scX/\bA^1}^{[n]})$ has codimension at least two in $\scX$. Since $\Omega_{\scX/\bA^1}^{[n]}$ is reflexive of rank one, this implies that $\Omega_{\scX/\bA^1}^{[n]}\cong \cO_{\scX}$ and $\sigma_{\scX}^{n/p}$ extends to a nowhere vanishing section of $\Omega_{\scX/\bA^1}^{[n]}$. In particular, its restriction yields a nowhere vanishing $n$-form on $\scX_0^{\mathrm{sm}}$, i.e.\ $\sigma_{0}$ is non-degenerate.
\end{proof}

To state our criterion for non-degeneracy of differential forms under specialization, we need to introduce a generalization of log discrepancy for differential forms. To this end, let $\cF$ be a locally free sheaf on a variety $X$, and let $s$ be a rational section of $\cF$. For any valuation $v\in \Val_X$, locally around its center $\eta$ on $X$ we can write $s=\sum_i a_is_i$ for an $\cO_{X,\eta}$-basis $\{s_i\}$ of $\cF_{\eta}$ with $a_i\in \bC(X)$. We define $v(s)=\min_i v(a_i)$, and it is clear that the definition does not depend on the choice of basis.

\begin{defn}\label{defn-log discrepancy with respect to a form}
Let $\sigma$ be a (not necessarily non-degenerate) $p$-form on a normal variety $X$. For any prime divisor $E$ over $X$ realized by a proper birational morphism $\pi\colon Y\to X$, we define the \emph{log discrepancy of the divisor $E$ with respect to $\sigma$} as
\[
A_{\sigma}(E):=\ord_E(\pi^*\sigma),
\]
where we view $\pi^*\sigma$ as a rational section of $\Omega^p_Y(\log E)$ (which is locally free at the generic point of $E$). Note that $A_\sigma(E)$ is always an integer.

More generally, for any quasi-monomial valuation $v\in \Val_X$ and any log smooth model $\pi\colon (Y,E)\to X$ such that $v\in \QM(Y,E)$, we define the \emph{log discrepancy of the valuation $v$ with respect to $\sigma$} as
\[
A_{\sigma}(v) := v(\pi^*\sigma),
\]
where $\pi^*\sigma$ is again viewed as a rational section of $\Omega^p_Y(\log E)$.
\end{defn}

\begin{lem}
The log discrepancy $A_{\sigma}(v)$ is well-defined, i.e. it does not depend on the choice of the log smooth model $(Y,E)$.
\end{lem}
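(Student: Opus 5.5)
The plan is to reduce to comparing two log smooth models where one dominates the other. Given two log smooth models $(Y_1,E_1)$ and $(Y_2,E_2)$ with $v\in \QM(Y_1,E_1)\cap\QM(Y_2,E_2)$, we can find a third log smooth model $(Y_3,E_3)$ dominating both. By \cite[Lemma 3.6]{JM-val-ideal-seq}, or by a sequence of blowups of strata, we may also arrange that $v\in \QM(Y_3,E_3)$, where $E_3$ is the reduced preimage of $E_1$ (resp.\ $E_2$) near the center of $v$. So it suffices to treat a proper birational morphism $g\colon (Y',E')\to (Y,E)$ of log smooth models with $E'=(g^{-1}E)_{\rm red}$ near the center of $v$ and $v\in\QM(Y,E)\cap\QM(Y',E')$. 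For such $g$ we must show that $v(\pi^*\sigma)$, computed in $\Omega^p_Y(\log E)$, equals $v(g^*\pi^*\sigma)$, computed in $\Omega^p_{Y'}(\log E')$.

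\emph{Step 1.} The key input is that $g^*\Omega^p_Y(\log E)\cong \Omega^p_{Y'}(\log E')$ in a neighborhood of the center of $v$ on $Y'$, whenever $g$ is a toroidal morphism between the toroidal structures given by $E$ and $E'$. Locally, $\Omega^1_Y(\log E)$ has basis $\rd y_i/y_i$ for $i\le r$, together with $\rd z_j$ for the remaining coordinates. The quasi-monomial condition lets us write $y_i=u_i\prod_k (y'_k)^{m_{ik}}$ with units $u_i$, where the matrix $(m_{ik})$ is invertible over $\bQ$ because $v$ lies in the interior of both simplices. Then $\rd y_i/y_i=\sum_k m_{ik}\,\rd y'_k/y'_k+\rd u_i/u_i$. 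Together with the $\rd z_j$, these forms span $\Omega^1_{Y'}(\log E')$ generically along the center of $v$, provided $(m_{ik})$ is unimodular, which we may arrange by adapting $Y'$ as a toroidal blowup. In general we instead argue via valuations directly: we only need the two bases to differ by a matrix of $v$-value $0$ together with its inverse.

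\emph{Step 2.} Given Step 1, write $\pi^*\sigma=\sum a_I\omega_I$ in the $Y$-basis. Pulling back gives $g^*\pi^*\sigma=\sum a_I\, g^*\omega_I$, and re-expressing the $g^*\omega_I$ in the $Y'$-basis uses a transition matrix with entries in $\cO_{Y',\eta'}$ whose inverse is also regular at the center $\eta'$ of $v$. A base change by a matrix in $\mathrm{GL}(\cO_{Y',\eta'})$ preserves $\min_i v(a_i)$, since $v\ge0$ on $\cO_{Y',\eta'}$. Therefore the two quantities agree.

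The main obstacle is Step 1 when the models are not related torically, because an arbitrary common refinement need not keep $v$ in the interior of a simplex with a log-étale map. I would handle this by using the structure of quasi-monomial valuations: $v$ determines the minimal log smooth stratum $\eta$, and $v$ is monomial in any adapted coordinates. Passing to completions $\widehat{\cO}_{Y,\eta}\cong \bk(\eta)[\![y_1,\dots,y_r]\!]$, both models become toroidal charts of the same monomial valuation, and the log cotangent bases are related by a matrix of units. If this becomes too technical, an alternative is to show that $A_\sigma(v)=\inf\{v(\fa)\}$ for a suitable intrinsic ideal-theoretic description, for example via the Fitting-type ideal generated by the coefficients of $\sigma$ against $\rd\log$ of monomials. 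That description would make independence of the model immediate.
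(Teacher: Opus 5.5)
There is a genuine gap in Step 1. Your reduction to a dominating morphism $g\colon (Y',E')\to (Y,E)$ and your Step 2 are fine, and they match the paper. Step 2 works because changing basis by a matrix in $\mathrm{GL}(\cO_{Y',\eta'})$ does not change $\min_i v(a_i)$, since $v\ge 0$ on $\cO_{Y',\eta'}$.

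The whole content of the lemma, however, is that $g^*\Omega^1_Y(\log E)\to \Omega^1_{Y'}(\log E')$ is an isomorphism near the center of $v$ on $Y'$, and Step 1 does not prove this.

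\begin{itemize}
\item Your toroidal computation rests on a false claim. The exponent matrix $(m_{ik})$ need not be square, because the number of components of $E$ through $c_Y(v)$ and of $E'$ through $c_{Y'}(v)$ can differ. For example, take $v=\ord_0=v_{(1,1)}$ on $(\bA^2,\{xy=0\})$ and let $Y'$ be the blowup of the origin. Then $v=\ord_G$ and only one component passes through its center.
\item The lemma must hold for every model with $v\in\QM(Y,E)$, not only adapted ones. Comparing an arbitrary model with an adapted one already produces this non-square case, so it cannot be avoided.
\item Even when $(m_{ik})$ is square, you do not control the terms $\rd u_i/u_i$. You also do not relate the $g^*\rd z_j$ to the non-boundary coordinates on $Y'$. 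So the assertion that ``these forms span'' is unjustified.
\item Unimodularity is a red herring: over $\bC$, an integer matrix with nonzero determinant is already invertible.
\item Your paragraph on the ``main obstacle'' restates the needed claim (``related by a matrix of units'') rather than proving it. The Fitting-ideal alternative is not developed.
\item The extra condition $E_3=(g^{-1}E_1)_{\rm red}$ in your reduction is asserted without justification. It is true a posteriori, as noted below, so you need not arrange it.
\end{itemize}

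The missing idea is a discrepancy argument that avoids coordinates entirely; this is what the paper does. Let $E'_1,\dots,E'_r$ be the components of $E'$ through $c_{Y'}(v)$, and set $a_i=v(E'_i)>0$.
\begin{itemize}
\item Since $v\in\QM(Y,E)$, we have $A_{Y,E}(v)=0$.
\item Since $v\in\QM(Y',E')$ and $\Supp(g^*E+\Ex(g))\subseteq E'$, we have $A_{Y,E}(v)=\sum_i a_i A_{Y,E}(E'_i)$.
\item Each $A_{Y,E}(E'_i)\ge 0$ because $(Y,E)$ is log canonical. Hence every $A_{Y,E}(E'_i)=0$, that is, $g^*(K_Y+E)=K_{Y'}+E'$ near $c_{Y'}(v)$.
\item The natural map $g^*\Omega^1_Y(\log E)\to \Omega^1_{Y'}(\log E')$ is an injective map of vector bundles of the same rank. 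Its determinant is a section of $\cO_{Y'}(K_{Y'}+E'-g^*(K_Y+E))$ that does not vanish near $c_{Y'}(v)$, so the map is an isomorphism there.
\end{itemize}
Your Step 2 then finishes the proof. As a byproduct, no component through the center has positive discrepancy, so $E'=(g^{-1}E)_{\rm red}$ near the center automatically.
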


\begin{proof}
Let $(Y',E')$ and $(Y,E)$ be two log smooth models such that
\[
v\in \QM(Y,E) \cap \QM(Y',E').
\]
By taking a common log resolution, we may assume there is a morphism $\varphi\colon Y'\to Y$ and $\Supp(\varphi^*E+\Ex(\varphi))\subseteq E'$. In particular, there is a natural map 
\begin{equation} \label{eq:pullback log differential}
\varphi^*(\Omega_Y(\log E))\to \Omega_{Y'}(\log E').    
\end{equation}
Suppose the center $c_{Y'}(v)$ of $v$ on $Y'$ is contained in the intersection of components $E'_1,\ldots, E'_r\subseteq E'$, and let $a_i:=v(E'_i)>0$ be the coordinate of $v$ along $E'_i$ for $i=1,\ldots,r$. Since $A_{Y,E}(E'_i)\ge 0$ and
\[
0=A_{Y,E}(v)=\sum_{i=1}^r a_i A_{Y,E}(E'_i)
\]
by definition, we have $A_{Y,E}(E'_i)=0$ for all $i$, hence as $\Supp(\varphi^*E+\Ex(\varphi))\subseteq E'$ we get $\varphi^*(K_Y+E)=K_{Y'}+E'$ near $c_{Y'}(v)$. This implies that \eqref{eq:pullback log differential} is an isomorphism near $c_{Y'}(v)$. Therefore, $v(\pi^*\sigma) = v(\varphi^*\pi^*\sigma)$ and the lemma follows.
\end{proof}

\begin{expl}
Let $X=\bA^n$ and $\sigma = \rd x_1 \wedge\dots\wedge \rd x_p$. Let $\alpha\in \bR_{\ge 0}^n$ and let $v_\alpha$ be the corresponding monomial valuation with $v_\alpha(x_i)=\alpha_i$. Then 
\[
A_{\sigma}(v_\alpha) = \alpha_1+\dots+\alpha_p \mbox{ \ \ and \ \ }A_X(v_{\alpha})=\alpha_1+\dots+\alpha_n \, .
\]
\end{expl}

\begin{lem}\label{lem:A linear}
Let $\cF$ be a vector bundle on a normal variety $X$, let $s$ be a rational section of $\cF$, and let $v\in \Val_X$ be a quasi-monomial valuation of rational rank $r$. Then there exists a log smooth model $(Y,E=E_1+\cdots+E_r)$ adapted to $v$ such that
\[
v(s) = \sum_{i=1}^r \alpha_i \ord_{E_i}(s)\, ,
\]
where $(\alpha_1,\ldots,\alpha_r)$ are the coordinates of $v$ in $\QM(Y,E)$. In particular, for any differential form $\sigma$ on $X$ and any log smooth model $(Y,E)$ adapted to $v$, the log discrepancy function $A_\sigma(\cdot)$ is linear in some neighborhood of $v\in \QM(Y,E)$.
\end{lem}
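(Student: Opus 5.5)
The plan is to reduce to a local computation on a log smooth model where $v$ is monomial in coordinates cutting out the $E_i$, and where $s$ is expressed in a basis of $\cF$. Choose a log resolution $\pi\colon Y\to X$ with SNC divisor $E$ adapted to $v$ (by \cite[Lemma 3.6]{JM-val-ideal-seq}), so that $v\in\QM(Y,E)^\circ$ has $\bQ$-linearly independent coordinates $(\alpha_1,\dots,\alpha_r)$. Let $\eta$ be the generic point of $E_1\cap\dots\cap E_r$ and fix an $\cO_{Y,\eta}$-basis $\{s_j\}$ of $(\pi^*\cF)_\eta$. Write $s=\sum_j a_j s_j$ with $a_j\in\bC(Y)$. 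By definition $v(s)=\min_j v(a_j)$ and $\ord_{E_i}(s)=\min_j\ord_{E_i}(a_j)$. So the whole problem is to make each relevant $a_j$ a monomial in $y_1,\dots,y_r$ times a unit at $\eta$, and to make the minimizing index simultaneously achieve all the minima.

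First I will principalize. After replacing $Y$ by a further log resolution $Y'\to Y$ obtained by toroidal blowups (blowups of strata of $E$, which keep $v$ quasi-monomial with the same rational rank and still adapted), I want the ideal generated by the coefficients $a_j$ (after clearing denominators) to become locally monomial at the center of $v$. Concretely, I apply a log resolution of the fractional ideal $\fa=(a_j)_j\cdot\cO_Y$ that is an isomorphism over the generic point of the center of $v$ only when possible. In general I need the standard fact used in \cite{JM-val-ideal-seq}: for any ideal $\fa$ there is a log smooth model $(Y',E')$ adapted to $v$ on which $\fa\cdot\cO_{Y'}$ is locally monomial at $c_{Y'}(v)$, i.e. equals $(y^{\beta})$ near the center. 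This is the main obstacle. I would first try to get it by taking a log resolution of $(Y,E+\fa)$, and then using that, since $v$ has maximal rational rank $r$ on its center, one can pick the adapted model on the resolution (again \cite[Lemma 3.6]{JM-val-ideal-seq}) among the exceptional divisors through the center.

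Once $\fa\cdot\cO_{Y'}=(y^\beta)$ near the center $\eta'$, the ideal is generated by a single monomial, so $v(\fa)=\langle\alpha,\beta\rangle$. Also $\ord_{E'_i}(\fa)=\beta_i$. Since $v(s)=\min_j v(a_j)=v(\fa)$, and likewise $\ord_{E'_i}(s)=\ord_{E'_i}(\fa)$ (the pulled-back basis stays a basis), we get $v(s)=\sum_i\alpha_i\ord_{E'_i}(s)$, which is the first claim.

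For the \lq\lq in particular\rq\rq, apply this with $\cF=\Omega^p_Y(\log E)$ and $s=\pi^*\sigma$. The identity $v(s)=\langle\alpha,\beta\rangle$ holds for every $w\in\QM(Y',E')$ near $v$ whose center is still $\eta'$, so $A_\sigma$ is linear on a neighborhood of $v$ in $\QM(Y',E')$. The well-definedness lemma above shows that near $v$ the models $(Y,E)$ and $(Y',E')$ induce the same simplex with compatible linear coordinates (the log differentials agree, as in that proof). Hence linearity transfers to any adapted model $(Y,E)$.
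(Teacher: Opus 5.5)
Your argument is correct in outline, and for the first assertion it is essentially the paper's. Clearing denominators gives $v(s)=v(\cI)-v(a)$ for the coefficient ideal $\cI$ and a function $a$. On an adapted model where $\cI$ and $a$ become monomial near the center (the paper asks that $E+\pi^{-1}(Z)+\{\pi^*a=0\}$ have SNC support), one gets $w(s)=\sum_i w_i\ord_{E_i}(s)$ on the whole cone. Your way of producing such a model also works. On a log resolution $\mu\colon Y'\to Y$ of $(Y,E+\fa)$, with $v\in\QM(Y',E')$, the center $\eta'$ of $v$ dominates $\eta$. Since $v$ has transcendence degree $n-r$, $\eta'$ has codimension exactly $r$. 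So it lies on exactly $r$ boundary components, and these give an adapted model.

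The divergence is in the ``in particular'', and there one step is not justified by what you cite. You prove linearity on the auxiliary model $(Y',E')$ and transfer it to the given adapted model $(Y,E)$. For this you assert that the cones of $\QM(Y,E)$ and $\QM(Y',E')$ containing $v$ coincide near $v$, with linearly related coordinates. The well-definedness lemma does not give this. It only shows that $\mu^*\Omega^1_Y(\log E)\cong\Omega^1_{Y'}(\log E')$ near $\eta'$, i.e.\ that both models compute the same value at valuations lying in both; it does not identify the two cones.

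The identification is true but needs an argument, for instance:
write $\mu^*y_j=u_j\,(y')^{m_j}$ near $\eta'$ with $u_j$ units. Then $\alpha=M^{T}\alpha'$ for the exponent matrix $M$ with columns $m_j$, and $M\in\mathrm{GL}_r(\bZ)$ because $\alpha$ and $\alpha'$ are both $\bZ$-bases of the value group. Finally $v_a=v'_{M^{-T}a}$ for $a$ near $\alpha$: one always has $v'_{M^{-T}a}(f)\ge v_a(f)$, with equality whenever the $v_a$-minimizing exponent of $f$ is unique, which holds for a dense set of $a$; both sides are continuous in $a$.

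The paper avoids the change of model altogether. On the given adapted model, $\bQ$-linear independence of the coordinates of $v$ has a direct consequence for each of the finitely many functions involved. The minimum $\min_\beta\langle w,\beta\rangle$ over the exponents of its expansion at $\eta$ is attained at one and the same exponent for all $w$ near $v$. Hence $w\mapsto w(\cI)-w(a)$ is linear there directly. The same observation shows that toroidal blowups of $(Y,E)$ already suffice in your first step. In that case the cone of $(Y,E)$ is literally subdivided, and your transfer becomes immediate.
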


\begin{proof}
Since the question is local, we may assume that $X$ is affine and $\cF$ is free with basis $\{s_i\}$. Since $s$ is a rational section, there exists some $a\in H^0(\cO_X)$ such that $a s\in H^0(\cF)$. Write $as=\sum_i a_i s_i$ and let $\cI\subseteq \cO_X$ be the ideal generated by the $a_i$'s so that $Z=V(\cI)\subseteq X$ is the zero scheme of $(as)$. Then by definition $v(s) = v(\cI)-v(a)$, hence it is linear in some neighborhood of $v\in \QM(Y,E)$ if $(Y,E)$ is adapted to $v$. Moreover, $v\mapsto v(s)$ is linear on any log smooth model $\pi\colon(Y,E)\to X$ adapted to $v$ such that $E+\pi^{-1}(Z)+\{\pi^*a=0\}$ is a divisor with SNC support.
\end{proof}





\begin{thm} \label{thm:log disc>=0}
Let $X$ be a variety with klt singularities and let $\sigma$ be a reflexive differential form on $X$. Then $A_{\sigma}(v)\ge 0$ for any quasi-monomial valuation $v\in \Val_X$.
\end{thm}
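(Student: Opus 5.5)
The proof reduces to the extension theorem for reflexive forms on klt spaces \cite{GKKP-extend-forms,KS-extend-forms}. Fix a quasi-monomial valuation $v$ and a log smooth model $\pi\colon (Y,E)\to X$ with $v\in \QM(Y,E)$. By the well-definedness lemma we may replace $(Y,E)$ by any higher log smooth model on which $v$ is still quasi-monomial. So we assume $\pi$ is a log resolution of $X$, and $E$ together with the exceptional locus of $\pi$ is SNC. (Passing to such a model keeps $v$ in the corresponding $\QM$ set, as in the proof of that lemma.)

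First, by \cite{GKKP-extend-forms} (klt case) or \cite{KS-extend-forms}, the pullback $\pi^*\sigma$, defined on the open set where $\pi$ is an isomorphism onto $X^{\mathrm{sm}}$, extends to a regular section of $\Omega^p_Y$. Composing with the natural inclusion $\Omega^p_Y\hookrightarrow \Omega^p_Y(\log E)$ shows that $\pi^*\sigma$ is a regular section of the locally free sheaf $\Omega^p_Y(\log E)$ near the center $\eta=c_Y(v)$.

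Second, write $\pi^*\sigma=\sum_i a_i s_i$ in a local $\cO_{Y,\eta}$-basis $\{s_i\}$ of $\Omega^p_Y(\log E)_\eta$; regularity means every $a_i\in \cO_{Y,\eta}$. Since $v$ is centered at $\eta$, we have $v(a)\ge 0$ for every $a\in\cO_{Y,\eta}$. Hence $A_\sigma(v)=\min_i v(a_i)\ge 0$.

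The only substantive input is the extension theorem, which needs $X$ klt; everything else is formal. The one subtlety is that the extension theorem is stated for a resolution, whereas $(Y,E)$ is an arbitrary log smooth model. We handle this through the refinement step above. Alternatively, any proper birational $Y\to X$ with $Y$ smooth factors through a log resolution after further blowup, and regular forms pull back to regular forms, so regularity on $Y$ follows either way.
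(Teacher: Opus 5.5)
Your proposal is correct and is the same argument as the paper. The paper's proof simply cites the extension theorems of \cite{GKKP-extend-forms} and \cite{KS-extend-forms}; you spell out the formal reduction around them: pass to a log resolution dominating $(Y,E)$, use $\Omega^p_Y\subseteq\Omega^p_Y(\log E)$, and note that $v\ge 0$ on $\cO_{Y,\eta}$. One small slip in your closing aside, which your main argument does not need: regularity on an arbitrary smooth $Y$ does not follow because regular forms pull back; it follows by restricting the regular extension on a dominating log resolution to the complement of the codimension-$\ge 2$ locus where that resolution fails to be an isomorphism over $Y$, and then applying Hartogs for the locally free sheaf $\Omega^p_Y$.
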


\begin{proof}
This follows from \cite[Theorem 1.4]{GKKP-extend-forms} or \cite[Corollary 1.7]{KS-extend-forms}.
\end{proof}

We now formulate the criterion for non-degeneracy of differential forms under special test configurations of klt singularities.

\begin{defn} \label{def:sigma-admissible}
Let $D$ be a Koll\'ar component over a klt singularity $x\in X$ of dimension $n$, and $\pi\colon Y\to X$ the corresponding plt blowup.  Let $\sigma$ be a non-degenerate $p$-form on $X$. We define the \emph{strict transform} of $\sigma$ on the plt blowup as the induced section
\[
\tilde{\sigma}\in H^0(Y,\Omega^{[p]}_{Y}(\log D)(-rD)),
\]
where $r=A_{\sigma}(D)$. 

We call $D$ a \emph{$\sigma$-admissible} Koll\'ar component  if the specialization of $\sigma$ along the test configuration induced by $D$ is still non-degenerate.
\end{defn}

\begin{prop} \label{prop:non-degenerate criterion}
Notation as above. Then the following are equivalent:
\begin{enumerate}
    \item $D$ is $\sigma$-admissible.
    \item $\frac{A_X(D)}{n} = \frac{A_{\sigma}(D)}{p} $.
    \item $\tilde{\sigma}^{n/p} \in H^0(Y,\Omega^{[n]}_{Y}(\log D)(-\frac{nr}{p}D))$ does not vanish at the generic point of $D$. 
\end{enumerate}
\end{prop}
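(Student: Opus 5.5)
We will prove (2)$\Leftrightarrow$(3) by a local computation at the generic point of $D$ on $Y$, and (1)$\Leftrightarrow$(3) via the explicit model of the test configuration from Lemmas \ref{lem:tc bir construction} and \ref{lem:tc trivial on kollar comp}.

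For (2)$\Leftrightarrow$(3), we work at the generic point $\eta$ of $D$, where $Y$ is smooth, $D$ is a smooth divisor and $\Omega^p_Y(\log D)$ is free. Since $\sigma$ is non-degenerate, $\omega_X\cong\cO_X$ via $\sigma^{n/p}$ (Remark \ref{rem:trivial-canonical}). The $n$-form $\pi^*\sigma^{n/p}$, viewed as a rational section of $\Omega^n_Y(\log D)=\cO_Y(K_Y+D)$ near $\eta$, therefore has order exactly $A_X(D)$ along $D$. On the other hand, writing $\pi^*\sigma = y^r\tilde\sigma$ with $y$ a local equation of $D$ and $\tilde\sigma$ a section of $\Omega^p_Y(\log D)$ not vanishing at $\eta$, we get $\pi^*\sigma^{n/p}=y^{nr/p}\tilde\sigma^{n/p}$. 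Hence
\[
A_X(D)=\frac{n}{p}A_\sigma(D)+\ord_D(\tilde\sigma^{n/p}),
\]
with $\ord_D(\tilde\sigma^{n/p})\ge 0$. So (2) holds if and only if $\tilde\sigma^{n/p}$ does not vanish at $\eta$, which is (3). This also recovers the general inequality $A_X(D)/n\ge A_\sigma(D)/p$.

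For (1)$\Leftrightarrow$(3), Proposition \ref{prop:closed+non-deg}(2) reduces (1) to checking whether $\sigma_{\scX}^{n/p}$ vanishes at the generic point of $\scX_0$. By Lemma \ref{lem:tc bir construction}, the generic point of $\scX_0$ is the generic point of the divisor $\scG$ on the normalized blowup $\scY'$ of $Y\times\bA^1$ along $D\times\{0\}$. At the generic point of $D\times\{0\}$ we have coordinates $y,t$ and further coordinates $z$ along $D$. The blowup chart containing the generic point of $\scG$ is $y=ut$, with $\scG=(t=0)$ and $u$ a unit at its generic point. In this chart $\scG$ is $\Spec\bC(D)[u^{\pm1}]$, i.e. the normal bundle torsor of $D$, matching the cone description of $\gr_DR$.

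Write $\pi^*\sigma = y^r(\tilde\sigma_0 + \frac{dy}{y}\wedge\tilde\sigma_1)$ with $\tilde\sigma_0,\tilde\sigma_1$ forms in the $z$-directions. Relative to $\bA^1$ we have $dy/y=du/u$ and $y^r=u^rt^r$. Hence $t^{-r}\sigma'$ is regular at the generic point of $\scG$, and its restriction to $t=0$ is $u^r(\tilde\sigma_0|_D+\frac{du}{u}\wedge\tilde\sigma_1|_D)$. This is not identically zero, since $\tilde\sigma$ does not vanish along $D$; so $m=-r$ and $\sigma_0$ is this restricted form. Its $(n/p)$-th power is $u^{nr/p}$ times the restriction of $\tilde\sigma^{n/p}$ to $D$, under the identification $\Omega^n_Y(\log D)|_D\cong\omega_D\otimes\frac{du}{u}$ near $\eta$. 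Thus $\sigma_0^{n/p}\neq0$ if and only if $\tilde\sigma^{n/p}$ does not vanish at $\eta$, which gives (1)$\Leftrightarrow$(3).

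The main obstacle is making this identification of $\sigma_0$ rigorous: one must check that relative reflexive forms on $\scX$ near the generic point of $\scX_0$ agree with the relative log forms computed on $\scY'$, and that the restriction map \eqref{eq:forms restriction} is compatible with this chart. Since both are determined at the generic point of the central fiber, where $\scX$ and $\scY'$ are isomorphic and smooth over $\bA^1$, this should reduce to the local calculation above.
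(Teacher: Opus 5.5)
Your proposal is correct and follows essentially the same route as the paper: (2)$\Leftrightarrow$(3) by comparing $\ord_D(\pi^*\sigma^{n/p})=A_X(D)$ with $\frac{n}{p}A_\sigma(D)$, and (1)$\Leftrightarrow$(3) via Proposition \ref{prop:closed+non-deg}(2) and Lemma \ref{lem:tc bir construction}, computing at the generic point of $\scG$ (where $\scG=\{t=0\}$) to get $m=-r$ and to reduce non-vanishing of $\sigma_0^{n/p}$ to non-vanishing of $\tilde\sigma^{n/p}$ along $D$. The paper writes your chart $y=ut$ as a local identity of sheaves of relative (log) $p$-forms twisted by $-r\scG$ near the generic point of $\scG$. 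It treats the compatibility you flag as automatic, since $m$ and the non-vanishing of $\sigma_0^{n/p}$ are both determined at the generic point of $\scX_0$, where $\scX$ and $\scY'$ agree.
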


\begin{proof}
By definition, 
\[
A_X(D)=\ord_D(\pi^*\sigma^{n/p})\ge \frac{n}{p}\cdot \ord_D(\pi^*\sigma) = \frac{n}{p}\cdot A_\sigma (D) = \frac{nr}{p}\, ,
\]
with equality if and only if $\tilde{\sigma}^{n/p} \in H^0(Y,\Omega^{[n]}_{Y}(\log D)(-\frac{nr}{p}D))$ does not vanish at the generic point of $D$. This proves that $(2)\Leftrightarrow (3)$.

To  prove $(1)\Leftrightarrow (3)$, we track how the differential form specializes using Lemma \ref{lem:tc bir construction}. We follow the notation thereof: let $\scY=Y\times \bA^1$, $\varphi\colon \scY'\to \scY$ be the normalized blowup along $D\times \{0\}\subseteq \scY$, and $\mathscr{G}$ be the exceptional divisor over the generic point $\eta$ of $D\times \{0\}$. Let $\scD=D\times \bA^1\subseteq \scY$. Then $\mathrm{pr}_Y^*\tilde{\sigma}$ is a section of 
\[
\mathrm{pr}_Y^*\Omega^{[p]}_{Y}(\log D)(-rD) = \Omega^{[p]}_{\scY/\bA^1}(\log \scD)(-r\scD)
\]
that does not vanish at $\eta$, thus
\[ 
\sigma_{\scY'} = \varphi^*\mathrm{pr}_Y^*\tilde{\sigma} \in \varphi^*\Omega^{[p]}_{\scY/\bA^1}(\log \scD)(-r\scD) 
\]
does not vanish at the generic point $\zeta$ of $\mathscr{G}$ by construction. By Proposition \ref{prop:closed+non-deg}(2), in order to check whether the specialization $\sigma_0$ of $\sigma$ along the test configuration $\scX\to \bA^1$ is non-degenerate, it suffices to show that $\sigma_{\scX}^{n/p}$ is non-vanishing at the generic point of $\scX_0$. By Lemma \ref{lem:tc bir construction}, $\scX_0$ is the strict transform of $\mathscr{G}$. By a local computation, we have 
\[
\varphi^*\Omega^{[p]}_{\scY/\bA^1}(\log \scD)(-r\scD) = \Omega^{[p]}_{\scY'/\bA^1}(\log \mathscr{G})(-r\mathscr{G}) \cong  \Omega^{[p]}_{\scY'/\bA^1}(-r\mathscr{G})
\]
in a neighborhood of $\zeta$, where the second isomorphism holds because $\scG=\{t=0\}$ at $\zeta$ (as before $t$ is the coordinate on $\bA^1$). Thus by definition, the integer $m$ in Definition \ref{def:degeneration of form} is equal to $-r$, and $\sigma_{\scX} = \sigma_{\scY'}$ at the generic point $\zeta$ of $\mathscr{G}$. It now suffices to check that $\sigma_{\scY'}^{n/p}$ is non-vanishing at $\zeta$. But
\[
\sigma_{\scY'}^{n/p} = \varphi^*\mathrm{pr}_Y^*(\tilde{\sigma}^{n/p}),
\]
and the right hand side does not vanish at $\zeta$ if and only if $\mathrm{pr}_Y^*(\tilde{\sigma}^{n/p})$ does not vanish at $\eta = \varphi(\zeta)$. This is the case if and only if $\tilde{\sigma}^{n/p}$ does not vanish at the generic point of $D$. This proves that $(1)\Leftrightarrow (3)$. 
\end{proof}



\section{Deformation of symplectic singularities}\label{s-deformation}

In this section, we present a proof of Theorem \ref{thm:symplectic rigid} on the rigidity of symplectic singularities with respect to special test configurations. This was proved in \cite{NO-symp}, using results in \cite{Namikawa-deformation-terminal,Namikawa-deformation,Namikawa-notes-on-deformation}.
Our argument also follows essentially from a combination of the tools developed in these papers. Since the relevant
ingredients are dispersed in the literature, we include a direct proof for
the reader's convenience. In particular, we do not explicitly need the theory of Poisson deformations.


Let $(x\in X=\Spec(R),\sigma)$ be a symplectic singularity of dimension $n$, i.e. $x\in X$ is a klt singularity with a closed non-degenerate $2$-form $\sigma$. Let $D$ be a $\sigma$-admissible Koll\'ar component over $x\in X$ (Definition \ref{def:sigma-admissible}) and let $\fa_m:=\fa_m(\ord_D)$ be the valuation ideals. Let
\[
f\colon \scX:=\Spec(\cR)\to \bA^1_t
\]
be the test configuration corresponding to $D$ (Definition \ref{def:TC}), where
\begin{equation} \label{eq:extended Rees repeated}
\cR:=\bigoplus_{m\in\bZ} t^{-m} \fa_m \subseteq R[t,t^{-1}]
\end{equation} 
is the extended Rees algebra. Let $R_0:=\gr_D R = \bigoplus_{m\in\bN} \fa_m/\fa_{m+1}$ so that $\scX_0\cong \Spec(R_0)$. By the definition of $\sigma$-admissibility, there exists some $\bG_m$-equivariant relative $2$-form $\sigma_{\scX}\in H^0(\scX,\Omega^{[2]}_{\scX/\bA^1})$ such that $\sigma_t:=\sigma_{\scX}|_{\scX_t}$ is a symplectic form for every $t\in \bA^1$ and $\sigma_1=\sigma$. Let $(x_0\in X_0,\sigma_0)$ be the central fiber of the test configuration. Our goal is to show:

\begin{thm}[\cite{NO-symp}] \label{thm:symplectic rigid}
Under the above assumptions, we have a formal isomorphism 
\[
(x\in X,\sigma)^{\wedge}\cong (x_0\in X_0,\sigma_0)^{\wedge}
\]
of symplectic singularities.
\end{thm}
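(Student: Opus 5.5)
The plan is to show that the $\bG_m$-equivariant family $(\scX,\sigma_{\scX})\to\bA^1$ is formally trivial near the section $t\mapsto x_t$. Concretely, I will construct a vector field on the total space that lifts $t\partial_t$ and preserves $\sigma_{\scX}$ modulo the relative structure, and then integrate it formally, in the spirit of Namikawa's argument that Poisson deformations of conical symplectic singularities are unobstructed and rigid in the negative-weight direction. The first reduction uses $\bG_m$-equivariance. The test configuration is determined by the Rees algebra $\cR$, so it suffices to trivialize the deformation $\cR/t^{k}\cR$ of $R_0$ compatibly for all $k$. A trivialization over $\Spec \bC[\![t]\!]$ that is compatible with the symplectic forms, restricted to $t=1$ after completing at $x$, gives the desired formal isomorphism. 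The point is that the completion of $R$ at $\fm_x$ is identified with the $\fm$-adic completion of $\cR\otimes \bC[t]/(t-1)$. Since $\cR$ is graded and the $\bG_m$-action scales $t$, a $\bG_m$-equivariant formal trivialization near $x_0$ propagates to $t=1$. This is the standard way one passes from "formally trivial over $\bC[\![t]\!]$" to an isomorphism of formal completions for test configurations with a contracting action.

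The second step identifies the Kodaira--Spencer class of the family with a closed $2$-form. Since $\sigma_{\scX}$ is a relative symplectic form and $\sigma$ is closed, I consider the absolute form $\widetilde{\sigma}$ on $\scX\setminus\scX_0$ obtained by pulling back $\sigma$. By the computation in the proof of Proposition \ref{prop:closed+non-deg}(1), $t^{m}\widetilde\sigma$ (with $m=-A_\sigma(D)$) extends to a reflexive absolute $2$-form on $\scX$ whose image in relative forms is $\sigma_{\scX}$. Its differential is $m\,t^{m-1}\rd t\wedge\widetilde\sigma$. Hence the contraction $\iota_{\theta}$ of a vector field $\theta$ lifting $t\partial_t$ can be solved for via non-degeneracy: I define $\theta$ on the smooth locus of $\scX\to\bA^1$ by requiring $\iota_\theta(t^m\widetilde\sigma)$ to be a suitable multiple of the Euler-type $1$-form. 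On the smooth locus $\theta$ is then a symplectic-type vector field lifting $t\partial_t$. Since $\scX$ is normal and the non-smooth locus of $\scX\to\bA^1$ has codimension $\ge 2$, $\theta$ extends to a derivation of $\cR$, because $T_{\scX}$ is reflexive.

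The main obstacle is that $\theta$ must lift $t\partial_t$ as a genuine derivation, not merely modulo the relative tangent sheaf. Equivalently, the obstruction class in $H^1(\scX_0^{\mathrm{sm}}, T_{\scX_0^{\mathrm{sm}}})\cong H^1(\scX_0^{\mathrm{sm}},\Omega^1)$ (via $\sigma_0$) of the first-order deformation must be killed. Here I would use Namikawa's results for symplectic singularities: for $X_0$ with rational singularities and $\operatorname{codim}\ge 4$ behaviour handled via $\bQ$-factorial terminalization, the relevant group $H^1(\scX_0^{\mathrm{reg}},\Omega^1)$ is identified with $H^2(\scX_0^{\mathrm{reg}},\bC)$ through the closed form. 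The class of the family is then the class $[\widetilde\sigma]$ restricted to the fiber. I then use the $\bG_m$-weights. The deformation $\cR$ has $t$ of weight $-1$, which is negative with respect to a grading on which everything in $R_0$ has nonnegative weight. The Kodaira--Spencer class therefore lives in a graded piece of $H^2$ in which the torus acts with nonzero weight, whereas the torus acts trivially on topological cohomology $H^2(\scX_0^{\mathrm{reg}},\bC)$. So the class vanishes, and the same argument applies at each order by induction on $k$. Once $\theta$ exists as an honest derivation with $\theta(t)=t$, exponentiating $\theta-t\partial_t$ along the $t$-adic and $\fm_{x_0}$-adic filtration gives the formal isomorphism. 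The isomorphism preserves the forms because the Lie derivative of $\sigma_{\scX}$ along $\theta$ vanishes by construction.
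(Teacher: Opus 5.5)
Your reduction to compatible $\bG_m$-equivariant trivializations of $\cR/t^k\cR$ can be made to work (take graded inverse limits and set $t=1$), and your weight heuristic is the right core idea. The proposal has two genuine gaps, however.

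\textbf{The vector field.} You aim for the wrong vector field. A lift of $t\partial_t$ exists for free: the generator $\xi'$ of the $\bG_m$-action satisfies $\partial_{\xi'}(t)=-t$ and $\cL_{\xi'}\sigma_{\scX}=\ell\sigma_{\scX}$, where $\ell$ is the weight of $\sigma_0$. Correspondingly, the obstruction to lifting $t\partial_t$ is $t\kappa$, not the Kodaira--Spencer class $\kappa$, so your ``equivalently'' conflates two different problems.

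Requiring in addition $\cL_\theta\sigma_{\scX}=0$ does not repair this. Take $X=\bA^2$, $\sigma=\rd x\wedge\rd y$, and $D$ the exceptional curve of the blowup. Then $\scX=\Spec\bC[u,v,t]$ with $x=tu$, $y=tv$, and $\sigma_{\scX}=\rd u\wedge\rd v$. The invariant field $\theta=t\partial_t+u\partial_u-v\partial_v$ lifts $t\partial_t$ and satisfies $\cL_\theta\sigma_{\scX}=0$. Yet $\theta+\xi'$ restricts at $t=1$ to $2x\partial_x$, which dilates $\sigma$ but induces no good $\bG_m$-action.

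What is needed is a weight-one lift $\xi$ of $\partial_t$ itself with $\cL_\xi\sigma_{\scX}=0$, i.e.\ $\theta=t\xi$ vanishing along $\scX_0$. Then $\eta=t\xi+\xi'$ is vertical with $(\partial_\eta-m)\fa_m\subseteq\fa_{m+1}$. Exactly this property makes the $m$-eigenspace of $\partial_\eta$ on $\widehat R$ map isomorphically onto $\gr^m_D R$. Your expression $\theta-t\partial_t$ is not even defined on $\scX$, which carries no product vector field $t\partial_t$.

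Also, $t^m\widetilde\sigma$ is not a regular absolute form. In the same example, $t^{-2}\widetilde\sigma=\rd u\wedge\rd v+(v\,\rd u-u\,\rd v)\wedge\frac{\rd t}{t}$ has a log pole along $\scX_0$. In any case, a $2$-form on the $(n+1)$-dimensional $\scX$ cannot determine $\theta$ by contraction.

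\textbf{The cohomology.} The class governing a \emph{symplectic} lift lies in $\mathbb{H}^2$ of the truncated relative de Rham complex $\Omega^{\ge 1}$, not in $H^1(\Omega^1)$. Comparing it with topological cohomology, where the torus acts trivially, requires $H^1(U,\cO_U)=0$, so that $\mathbb{H}^2(\Omega^{\ge1})\to\mathbb{H}^2(\Omega^\bullet)$ is injective. On $U=\scX_0^{\mathrm{reg}}$ this fails whenever $\scX_0$ has codimension-two singularities, which is the typical case. For $\bC^2/\Gamma$, both $H^1(U,\cO_U)$ and $H^1(U,\Omega^1_U)$ are infinite-dimensional local cohomology groups, while $H^2(U,\bC)=0$. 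So the identification $H^1(\scX_0^{\mathrm{reg}},\Omega^1)\cong H^2(\scX_0^{\mathrm{reg}},\bC)$ you invoke is false, and terminalizing $X_0$ alone does not carry the family along.

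The paper instead takes a terminal modification $\scY\to\scX$ of the total space:
\begin{itemize}
\item Since $\scX_0$ is klt, $(\scX,\scX_0)$ is plt by inversion of adjunction, so every exceptional divisor dominates $\bA^1$.
\item Hence $\scY_0$ is normal and crepant over $\scX_0$, and the pulled-back form is symplectic on every fiber.
\item The smooth locus $\mathscr U$ of $\scY\to\bA^1$ has complement of codimension $\ge 3$, whence $H^1(\mathscr U,\cO_{\mathscr U})=0$.
\end{itemize}
On $\mathscr U$, the \v{C}ech class $(\iota_{\xi_i-\xi_j}\sigma_{\mathscr U},\cL_{\xi_i}\sigma_{\mathscr U})$ built from local weight-one lifts $\xi_i$ of $\partial_t$ has weight $1+\ell$. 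One needs $\ell>0$ to place it in the weight-positive part; this is Lemma \ref{lem:wt=log discrep}, which uses that $X_0$ is klt, and nonnegative weights on $R_0$ alone do not give it. That weight-positive part vanishes because $\Omega^\bullet_{\mathscr U/\bA^1}$ is quasi-isomorphic to the pullback of $\cO_{\bA^1}$, on which $t$ has weight $-1$. The resulting global lift extends over $\scY$ and descends to $\scX$ by reflexivity, so no order-by-order induction is needed.
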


As a preliminary step, we show that the symplectic form $\sigma_0$ on the central fiber has positive weight with respect to the induced $\bG_m$-action. This is a consequence of the following more general observation.

\begin{lem} \label{lem:wt=log discrep}
Let $x\in X$ be a normal singularity with a good torus $\bT$-action, and let $\sigma$ be a (nonzero) $\bT$-equivariant $p$-form on $X$ with weight $\alpha\in M(\bT)$. Then for any Reeb vector $\xi\in \ft_\bR^+$, we have $A_\sigma(\xi) = \langle\alpha,\xi\rangle$. In particular, a good $\bG_m$-action on a symplectic singularity is dilating if the symplectic form is $\bG_m$-equivariant.
\end{lem}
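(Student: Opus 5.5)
The plan is to reduce to an explicit toric-type computation. Since the torus $\bT$ is diagonalizable and the statement is local at $x$, we may $\bT$-equivariantly pick a $\bT$-invariant open affine neighborhood. Everything is $\bT$-equivariant, so the valuation $\wt_\xi$ is $\bT$-invariant.

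Step 1 is to reduce to quasi-regular $\xi$. Both sides of $A_\sigma(\xi)=\langle\alpha,\xi\rangle$ are continuous in $\xi$ on the Reeb cone. The right side is linear. For the left side, choose a $\bT$-equivariant log smooth model adapted to a neighborhood of $\xi$ (e.g.\ via an equivariant resolution of a toric-type degeneration). Then Lemma \ref{lem:A linear} shows $A_\sigma$ is linear, hence continuous, near $\xi$ on the simplex containing the $\wt_\eta$'s. Alternatively one can use the homogeneity $A_\sigma(\lambda\xi)=\lambda A_\sigma(\xi)$ together with continuity. So it suffices to treat $\xi$ quasi-regular, and after rescaling, $\xi\in N$ primitive, generating a good $\bG_m$-action with Koll\'ar component $D=(X\setminus\{x\})/\bG_m$ as in Paragraph \ref{para-quasi-kollar}. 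Then $\wt_\xi=c\cdot\ord_D$ for the appropriate constant $c$; when $\xi$ is primitive and acts freely in codimension one, $c=1$. In general we track the Seifert multiplicity.

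Step 2 is the local computation at the generic point of $D$ on the plt blowup $Y=\Spec_D\bigoplus_{m\ge0}\cO_D(mL)$. Near a general point of $D$, where the Seifert bundle is an honest $\bG_m$-bundle, we have local coordinates $(y,u)$ with $y$ coordinates on $D$ and $u$ a fiber coordinate of weight $\langle\cdot,\xi\rangle=1$, so $D=(u=0)$ and $\ord_D=\wt_\xi$. The log forms $\Omega^p_Y(\log D)$ have local basis $\rd y_I$ and $\rd y_J\wedge \rd u/u$, all of $\bG_m$-weight $0$ (the $y_i$ being pulled back from $D$, and $\rd u/u$ being invariant). Writing $\pi^*\sigma=\sum a_I\,\rd y_I+\sum b_J\,\rd y_J\wedge\rd u/u$, the $\bG_m$-equivariance with weight $\langle\alpha,\xi\rangle$ forces every coefficient to be $\bG_m$-homogeneous of weight $\langle\alpha,\xi\rangle$. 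Hence each coefficient is of the form $u^{\langle\alpha,\xi\rangle}\cdot(\text{function of }y)$, and these coefficients are not all zero. Therefore $\ord_D(\pi^*\sigma)=\langle\alpha,\xi\rangle$, i.e.\ $A_\sigma(\xi)=\langle\alpha,\xi\rangle$. The orbifold/Seifert multiplicity does not affect this, since at the generic point of $D$ the stabilizers are trivial once $\xi$ is taken primitive for the effective $\bG_m$.

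Step 3 handles the last assertion. For a symplectic singularity with good $\bG_m$-action and equivariant $\sigma$ of weight $w$, the generator $\xi$ is a Reeb vector. Theorem \ref{thm:log disc>=0} gives $w=A_\sigma(\xi)\ge0$. To get strict positivity, note that $\sigma^{n/2}$ is a nowhere vanishing volume form of weight $\tfrac n2 w$, and its log discrepancy equals $A_X(\xi)>0$ by klt-ness. Applying the same computation to $\sigma^{n/2}$ gives $\tfrac n2 w=A_X(\xi)>0$, so $w>0$ and the action is dilating.

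The main obstacle is Step 1, namely justifying continuity of $A_\sigma$ on the whole Reeb cone, or else avoiding it. I would try first to avoid it by doing Step 2 directly for arbitrary $\xi$. For this, take a $\bT$-equivariant log resolution that is toroidal near the generic $\bT$-orbit, where $X$ is locally $T\times V$ with $T$ the open torus orbit. On that chart $\wt_\xi$ is the monomial valuation, the log forms $\rd\chi^{m}/\chi^{m}$ are invariant, and the same homogeneity argument gives the formula for all $\xi$ at once.
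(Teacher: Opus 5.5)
Your proposal is correct and follows essentially the same route as the paper: you reduce to quasi-regular $\xi$ via the local linearity of Lemma \ref{lem:A linear}. At the generic point of the zero section $D$ you observe that $\pi^*\sigma$ equals $u^{\ell}$ times an invariant, nonvanishing section of $\Omega^{[p]}_Y(\log D)$, which is the paper's observation that $t^{-\ell}\sigma$ is invariant; and you deduce positivity of the weight from $A_{\sigma^{n/2}}(D)=A_X(D)>0$. Your hedge about Seifert multiplicities in Step 1 is unnecessary: for the effective $\bG_m$ given by a primitive $\xi$ the generic stabilizer is trivial, so $\wt_\xi=\ord_D$ always, as you note yourself in Step 2.
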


Recall that a $\bG_m$-action on a symplectic singularity is called dilating if the symplectic form is $\bG_m$-equivariant of positive weight (\cite[Definition 1.7]{Kaledin-survey}).

\begin{proof}
By Lemma \ref{lem:A linear}, it suffices to prove this for quasi-regular Reeb vectors, thus we may assume that $\bT=\bG_m$, the $\bG_m$-action on $\sigma$ has weight $\ell\in\bZ$, and we need to show that $A_\sigma (D) = \ell$, where $D$ is the prime divisor over $X$ corresponding to this $\bG_m$-action.  Recall that $g\colon X\setminus \{x\}\to Z:=(X\setminus \{x\})/\bG_m$ has a Seifert $\bG_m$-bundle structure and $D$ can be identified with its zero section. Let $Y$ be the variety obtained by adding the zero section to $X\setminus \{x\}$; in particular $X\setminus\{x\} \cong Y\setminus D$ and we get a morphism $Y\to X$ that contracts $D$ to $x$. The $\bG_m$-invariant differential forms on $Y\setminus D$ are generated by linear combinations of wedge products of $\frac{\rd t}{t}$ and $g^*\sigma_0$, where $t$ is a $\bG_m$-equivariant defining equation of $D$ and $\sigma_0$ varies over the $1$-forms on $Z$. These are also the local free generators of $\Omega_Y^{[p]}(\log D)$ at the generic point of $D$. Since $t$ has weight $1$, we see that $t^{-\ell}\sigma$ is $\bG_m$-invariant, thus by the above discussion it defines a local section of $\Omega_Y^{[p]}(\log D)$ that does not vanish at the generic point of $D$. It follows that $A_\sigma(D)=\ell$ by the definition of log discrepancy. 

For the last statement, note that if $\sigma$ is a symplectic form, then $\sigma^{n/2}$ is a free generator of $\omega_X$, hence $A_{\sigma^{n/2}}(D)=A_X(D)>0$ by the definition of log discrepancy and the fact that $X$ is klt. It follows that the $\bG_m$-action has positive weight on $\sigma^{n/2}$ and hence on $\sigma$ as well.
\end{proof}


The crucial step in the proof of Theorem \ref{thm:symplectic rigid} is the following equivariant lifting statement. Let $\partial_t$ be the vector field $\frac{\partial}{\partial t}$ on $\bA^1_t$.

\begin{lem} \label{lem:lift vector field}
Under the assumptions of Theorem \ref{thm:symplectic rigid}, there exists some $\bG_m$-equivariant lifting $\xi\in H^0(T_{\scX})$ of the vector field $\partial_t$ to $\scX$ such that $\cL_\xi (\sigma_{\scX})=0$.
\end{lem}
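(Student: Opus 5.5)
The plan is to reduce the problem to solving a linear equation for a vertical vector field, using the relative symplectic form to turn vector fields into $1$-forms, and then to use the positivity of the weight (Lemma \ref{lem:wt=log discrep}) to make the relevant cohomology class vanish.

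\textbf{Step 1: work on the smooth locus and compute the weight.} Let $U\subseteq \scX$ be the smooth locus of $f$. Since all fibers are normal, $\scX\setminus U$ has codimension $\ge 2$, and $T_{\scX}$, $\Omega^{[1]}_{\scX/\bA^1}$ are reflexive. It therefore suffices to construct $\xi$ on $U$ and extend it by reflexivity; the identity $\cL_\xi\sigma_{\scX}=0$ also extends, since $\Omega^{[2]}_{\scX/\bA^1}$ is torsion free. On $U$ the relative form $\sigma_{\scX}$ is fiberwise non-degenerate, so contraction gives an isomorphism $T_{U/\bA^1}\cong \Omega^1_{U/\bA^1}$. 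Let $\zeta$ be the Euler field of the $\bG_m$-action on $\scX$; it lifts $t\partial_t$. By Definition \ref{def:degeneration of form} and the proof of Proposition \ref{prop:non-degenerate criterion}, $\sigma_{\scX}=t^{-r}\sigma'$ with $r=A_\sigma(D)$. Hence $\sigma_{\scX}$ is $\bG_m$-homogeneous of weight $-r$ (up to sign convention), and $r>0$ by Lemma \ref{lem:wt=log discrep}.

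\textbf{Step 2: reduce to solving for a vertical field.} Choose a closed absolute $2$-form $\Sigma$ on $U$ (reflexive on $\scX$) lifting $\sigma_{\scX}$. On $\scX\setminus\scX_0\cong X\times \bG_m$ a natural candidate is $t^{-r}\mathrm{pr}_X^*\sigma$ corrected by a $\rd t$-term. For a first lift $\xi'$ of $\partial_t$, the relative form $\cL_{\xi'}\sigma_{\scX}=\rd(\iota_{\xi'}\Sigma)|_{\mathrm{rel}}$ is relatively exact. Explicitly, over $t\neq 0$ the trivial product lift satisfies $\cL_{\partial_t}\sigma_{\scX}=-rt^{-1}\sigma_{\scX}$. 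Fiberwise, positive weight gives $\sigma_t=\tfrac1r\,\rd\,\iota_{E}\sigma_t$ by Cartan's formula, where $E$ is a vertical Euler-type field; on $\scX_0$ this is the actual Euler field. So the correction needed is a vertical field $v$ with $\cL_v\sigma_{\scX}=-\cL_{\xi'}\sigma_{\scX}$. Via the isomorphism $T_{U/\bA^1}\cong\Omega^1_{U/\bA^1}$, this becomes: find a relative $1$-form $\theta$ with $\rd_{\mathrm{rel}}\theta=-\cL_{\xi'}\sigma_{\scX}$, and set $v:=\iota^{-1}(\theta)$.

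\textbf{Step 3 (main obstacle): globalize.} Local lifts $\xi'_i$ exist on a cover of $U$. On overlaps, $\xi'_i-\xi'_j$ is vertical, and the corresponding $1$-forms give a Čech class whose obstruction lies in $H^1(U,\Omega^{1,\mathrm{cl}}_{U/\bA^1})$, or in the relative de Rham $H^1$/$H^2$. This is the Gauss--Manin derivative of $[\sigma_t]$. I would kill it in two ways:
\begin{enumerate}
\item the class of $\sigma_t$ has nonzero $\bG_m$-weight, while the cohomology of the punctured germ is $\bG_m$-invariant, so the class must vanish;
\item the remaining $H^1$-type terms vanish on $U$ because $\scX$ has rational (indeed symplectic, hence canonical) singularities, using the depth estimate $\mathrm{depth}_{x_0}\cO\ge 3$ (Flenner) together with the extension theorem (Theorem \ref{thm:log disc>=0}, i.e.\ GKKP), so that closed reflexive $1$-forms are locally exact and local solutions glue.
\end{enumerate}
This is the step I expect to be hardest. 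If the Čech approach is messy, I would instead argue directly on the graded ring $\cR$: the weight decomposition makes the equation $\cL_v\sigma_{\scX}=-\cL_{\xi'}\sigma_{\scX}$ solvable weight by weight, dividing by the nonzero weight.

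\textbf{Step 4: equivariance.} Set $\xi=\xi'+v$. Since $\bG_m$ is reductive and both $\partial_t$ and $\sigma_{\scX}$ are homogeneous, projecting $\xi$ onto the appropriate weight component (weight $-1$, the weight of $\partial_t$) preserves both the lifting property and $\cL_\xi\sigma_{\scX}=0$. This yields a $\bG_m$-equivariant lift $\xi$ of $\partial_t$ with $\cL_\xi\sigma_{\scX}=0$.
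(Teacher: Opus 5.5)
There is a genuine gap in Step~3, which is where the content of the lemma lies, and Step~2 assumes something essentially equivalent to the conclusion. Your overall shape is right: use $\iota_\bullet\sigma_{\scX}\colon T_{U/\bA^1}\cong\Omega^1_{U/\bA^1}$ to turn the lifting problem into a \v{C}ech obstruction, then kill it by weights.

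\textbf{Step~2.} Nothing in the setup gives a closed absolute $2$-form $\Sigma$ lifting $\sigma_{\scX}$. Over $\bA^1\setminus\{0\}$ one has $\rd(t^{-r}\mathrm{pr}_X^*\sigma)=-rt^{-r-1}\rd t\wedge\mathrm{pr}_X^*\sigma$. A $\rd t$-correction therefore requires $\sigma$ to be exact on $X$, and the correction must also extend over $\scX_0$; you have arranged neither. Moreover, such a $\Sigma$ would make Step~3 empty. The $\Sigma$-horizontal lift ($\xi(t)=1$ and $\iota_\xi\Sigma\in\cO_U\cdot\rd t$) is unique, hence glues, and it satisfies $\cL_\xi\sigma_{\scX}=(\rd\,\iota_\xi\Sigma)|_{\mathrm{rel}}=0$. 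Conversely, a lift as in the lemma produces such a $\Sigma$. So the existence of $\Sigma$ is essentially the lemma itself.

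Two related claims also fail. The vertical Euler field $E$ exists only on $\scX_0$, since $X$ carries no $\bG_m$-action. Your fallback of solving weight by weight on $\cR$ fails because the Euler field of $\scX$ is not vertical. Contraction with it does not descend to $\Omega^\bullet_{\scX/\bA^1}$ (only modulo $t$), so there is no Cartan homotopy on relative forms.

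\textbf{Step~3.} The obstruction lies in $\mathbb{H}^2(U,\Omega^{\bullet\ge1}_{U/\bA^1})$. From the triangle $\Omega^{\bullet\ge1}_{U/\bA^1}\to\Omega^\bullet_{U/\bA^1}\to\cO_U$ you need two things.

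(i) The positive-weight part of $\mathbb{H}^2(U,\Omega^\bullet_{U/\bA^1})$ must vanish. This group is not $\bG_m$-invariant: $\Omega^\bullet_{U/\bA^1}\simeq f^{-1}\cO_{\bA^1}$ carries the weights $0,-1,-2,\dots$ of $\bC[t]$. The class dies because its weight is \emph{positive}, which is where Lemma~\ref{lem:wt=log discrep} enters; ``nonzero weight'' would not suffice.

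(ii) You need $H^1(U,\cO_U)=0$, and this is false for your $U$. The complement of the smooth locus of $\scX\to\bA^1$ contains $\mathrm{Sing}(X)\times(\bA^1\setminus\{0\})$. This has codimension $2$ as soon as $X$ has a codimension-two stratum (transversal Du Val singularities, present in most symplectic singularities). Since $\scX$ is affine, $H^1(U,\cO_U)\cong H^2_{\scX\setminus U}(\scX,\cO_{\scX})$, which is nonzero because the local ring at the generic point of such a component is $2$-dimensional. Depth at $x_0$ and GKKP extension do not address this.

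\textbf{The missing idea.} Replace $\scX$ by a $\bG_m$-equivariant crepant terminal modification $\scY\to\scX$.
\begin{itemize}
\item Since $(\scX,\scX_0)$ is plt by inversion of adjunction, every exceptional divisor dominates $\bA^1$ and $\scY_0$ is normal.
\item Hence the non-smooth locus of $\scY\to\bA^1$ has codimension $\ge3$.
\item As $\scY$ is Cohen--Macaulay with rational singularities over an affine base, local cohomology gives $H^1(\mathscr{U},\cO_{\mathscr{U}})=0$ on the smooth locus $\mathscr{U}$ of $\scY\to\bA^1$.
\end{itemize}
On $\mathscr{U}$, (i) and (ii) together kill the positive-weight obstruction. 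The resulting lift extends over $\scY$ and descends to a derivation of $H^0(\scY,\cO_{\scY})=\cR$. Your Step~4 is fine.
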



\begin{proof}
To make sense of the statement, we first observe that for any vector field $\eta$ on $\bA^1$ and any lift $\xi$ of $\eta$ to some open subset $\mathscr{U}$ of $\scX$, we have $\cL_{\xi}(f^*\rd t) = f^*\cL_\eta (\rd t)\in f^*\Omega_{\bA^1}^1$, thus the Lie derivative $\cL_\xi$ on $\Omega_{\mathscr{U}/\bA^1}^{[\bullet]}$ is well defined.

By construction, the symplectic form $\sigma_0$ is $\bG_m$-equivariant. By Lemma \ref{lem:wt=log discrep}, the $\bG_m$-action has positive weight on $\sigma_0$, hence also on $\sigma_{\scX}$. From \eqref{eq:extended Rees repeated}, we also see that the $\bG_m$-action on $t$ (resp. $\partial_t$) has weight $-1$ (resp. $1$). 

Let $f\colon \scY\to \scX$ be a terminal modification \cite[Theorem 1.33]{Kol13}. Then $A_{\scX}(\scE)=1$ for every exceptional divisor $\scE$ of $f$. Since $\scX$ has a klt fiber over $0\in \bA^1$, the pair $(\scX,\scX_0)$ is plt by inversion of adjunction \cite[Theorem 5.50]{KM98}, thus $A_{\scX}(\scE) = A_{\scX,\scX_0}(\scE)+\ord_{\scE}(\scX_0)>\ord_{\scE}(\scX_0)$. It follows that $\ord_{\scE}(\scX_0) = 0$ and hence every exceptional divisor of $f$ dominates $\mathbb{A}^1$. Moreover, we get $f^*(K_{\scX}+\scX_0)=K_{\scY}+\scY_0$, which implies that $(\scY,\scY_0)$ is also plt, hence $\scY_0$ is normal by \cite[Proposition 5.51]{KM98}.

Let $\mathscr{U}\subseteq \scY$ be the ($\bG_m$-invariant) locus where the induced morphism $\scY\to \bA^1$ is smooth. Then its complement $\scY\setminus \mathscr{U}$ has codimension at least $3$ because $\scY$ is terminal and $\scY_0$ is normal. Because $f\colon \scY\to \scX$ is crepant and fiberwise birational, the pullback $\sigma_{\scY}$ of $\sigma_{\scX}$ remains symplectic on each fiber $\scY_t$. Since both $\scX$ and $\scY$ have rational singularities (see \cite[Theorem 5.22]{KM98}) and $\scX$ is affine, we have $H^i(\scY,\cO_{\scY})=H^i(\scX,\cO_{\scX})=0$ for $i=1,2$. Since $\scY$ is Cohen-Macaulay by \cite[Theorem 5.10]{KM98}, from the long exact sequence of local cohomology we also have $H^i(\mathscr{U},\cO_{\mathscr{U}})=0$ for $i=1,2$.

Consider the truncated algebraic de Rham complex $\Omega_{\mathscr{U}/\bA^1}^{\bullet\ge 1}$ on $\mathscr{U}$ (by convention the complex starts in degree $1$):
\[
\Omega_{\mathscr{U}/\bA^1}^1 \xrightarrow{\rd} \Omega_{\mathscr{U}/\bA^1}^2 \xrightarrow{\rd} \dots \xrightarrow{\rd} \Omega_{\mathscr{U}/\bA^1}^n . 
\]
It is well known that this complex controls the deformation theory of smooth symplectic varieties. The $\bG_m$-action on $\scX$ lifts to a $\bG_m$-action on $\mathscr{U}$, hence $\bG_m$ also naturally acts on $\Omega_{\mathscr{U}/\bA^1}^{\bullet\ge 1}$ and its hypercohomology. 

We show that the obstruction to the equivariant lifting problem of $\partial_t$ in the lemma's statement is a class of positive weight in $\mathbb{H}^2(\mathscr{U},\Omega_{\mathscr{U}/\bA^1}^{\bullet\ge 1})$. To see this, let $(\mathscr{U}_i)_{i\in I}$ be a finite open covering of $\mathscr{U}$ by $\bG_m$-invariant affine subsets (this is possible by \cite{Sumihiro}). Since each term in the de Rham complex is quasi-coherent, the hypercohomology groups $\mathbb{H}^\bullet(\mathscr{U},\Omega_{\mathscr{U}/\bA^1}^{\bullet\ge 1})$ can be computed by the total \v{C}ech complex $(\mathscr{C}^\bullet,\delta)$ \cite[\href{https://stacks.math.columbia.edu/tag/01FP}{Tag 01FP}]{stacks-project} (see \cite[Proof of Proposition 8]{Namikawa-deformation-terminal}), whose degree $m$ term is
\[
\mathscr{C}^m = \bigoplus_{p+q=m,\,q\ge 1} \prod_{i_0<\dots<i_p} H^0(\mathscr{U}_{i_0\dots i_p},\Omega_{\mathscr{U}/\bA^1}^q). 
\]
Since $\mathscr{U}\to \bA^1$ is smooth, on each $\bG_m$-invariant affine open $\mathscr{U}_i$ we can lift $\partial_t$ to some $\bG_m$-equivariant vector fields $\xi_i$ of weight $1$ (i.e. same weight as $\partial_t$). On each $\mathscr{U}_{ij}$ we then get a vertical vector field $\xi_{ij}=\xi_i - \xi_j \in T_{\mathscr{U}/\bA^1}$. Since contraction $\iota_{\bullet}(\sigma_{\mathscr{U}})$ with the relative symplectic form gives an isomorphism $T_{\mathscr{U}/\bA^1}\cong \Omega^1_{\mathscr{U}/\bA^1}$, and since $\sigma_{\mathscr{U}}$ has positive weight under the $\bG_m$-action, the collection
\[
\theta:=\big( \iota_{\xi_{ij}}(\sigma_{\mathscr{U}}), \cL_{\xi_i} \sigma_{\mathscr{U}}\big) \in \mathscr{C}^2
\]
gives a $\bG_m$-equivariant element with positive weight. 

We show that $\delta(\theta)=0$. This amounts to three equalities:
\begin{align*}
\rd (\cL_{\xi_i} \sigma_{\mathscr{U}}) = \cL_{\xi_i} (\rd \sigma_{\mathscr{U}}) & = 0, \\
\cL_{\xi_{ij}} \sigma_{\mathscr{U}} - \rd\, \iota_{\xi_{ij}}(\sigma_{\mathscr{U}}) = \iota_{\xi_{ij}}(\rd  \sigma_{\mathscr{U}}) & = 0, \\
\iota_{\xi_{ij}}(\sigma_{\mathscr{U}}) +  \iota_{\xi_{jk}}(\sigma_{\mathscr{U}}) + \iota_{\xi_{ki}}(\sigma_{\mathscr{U}}) & = 0 \quad (\mbox{on }\mathscr{U}_{ijk}).
\end{align*}
The first equality holds because $\rd \sigma_{\scX}=0$, the second follows from Cartan's formula $\cL_\xi = \rd\, \iota_\xi+\iota_\xi\, \rd$, while the third holds as $\xi_{ij}+\xi_{jk}+\xi_{ki}=0$ by construction. Thus we get a positive-weighted cohomology class $[\theta]\in H^2(\mathscr{C}^\bullet)\cong \mathbb{H}^2(\mathscr{U},\Omega_{\mathscr{U}/\bA^1}^{\bullet\ge 1})$. Different choices of lifting have the form $\xi'_i=\xi_i+\eta_i$ for some $\eta_i\in H^0(T_{\mathscr{U}_i/\bA^1})$, and it is straightforward to check using Cartan's formula that the corresponding element $\theta'\in \mathscr{C}^2$ satisfies $\theta'-\theta = \delta(\tau)$ where $\tau:=\big(\iota_{\eta_i}(\sigma_{\mathscr{U}})\big)\in \mathscr{C}^1$. From these we conclude that the cohomology class $[\theta]$ is independent of the choice of the lifting $\xi_i$. 

Next we show that $[\theta]=0$ if and only if $\partial_t$ equivariantly lifts to some globally defined vector field $\xi$  such that $\cL_{\xi} \sigma_{\mathscr{U}}=0$. Indeed, $[\theta]=0$ implies that we can find lifts $\xi_i$ such that $\iota_{\xi_{ij}}(\sigma_{\mathscr{U}}) =0$ and $\cL_{\xi_i} \sigma_{\mathscr{U}}=0$. Since contraction $\iota_{\bullet}(\sigma_{\mathscr{U}})$ with the relative symplectic form gives an isomorphism $T_{\mathscr{U}/\bA^1}\cong \Omega^1_{\mathscr{U}/\bA^1}$, this implies $\xi_{ij}=0$. Therefore, the $\xi_i$'s glue to a vector field $\xi$ and we have $\cL_{\xi} \sigma_{\mathscr{U}}=0$. The reverse direction is clear. 

It remains to show that $[\theta]=0$. In fact, we shall prove the stronger statement that the weight-positive part of $\mathbb{H}^2(\mathscr{U},\Omega_{\mathscr{U}/\bA^1}^{\bullet\ge 1})$ is trivial, {\it cf.} \cite[Lemma 21]{Namikawa-deformation-terminal}. For this we follow the argument in {\it ibid}. Since $H^i(\mathscr{U},\cO_{\mathscr{U}})=0$ for $i=1,2$, the long exact sequence associated to the distinguished triangle $\Omega_{\mathscr{U}/\bA^1}^{\bullet\ge 1} \to \Omega_{\mathscr{U}/\bA^1}^{\bullet}\to \cO_{\mathscr{U}} \to \Omega_{\mathscr{U}/\bA^1}^{\bullet\ge 1}[1]$ gives
\[
\mathbb{H}^2(\mathscr{U},\Omega_{\mathscr{U}/\bA^1}^{\bullet\ge 1}) \cong \mathbb{H}^2(\mathscr{U},\Omega_{\mathscr{U}/\bA^1}^{\bullet}).
\]
But the de Rham complex $\Omega_{\mathscr{U}/\bA^1}^{\bullet}$ is $\bG_m$-equivariantly quasi-isomorphic to $f^{-1}\cO_{\bA^1}$, and the weight positive part of the latter is trivial (because the $\bG_m$-action on $\bA^1$ has negative weight). Therefore, the weight-positive part of $\mathbb{H}^2(\mathscr{U},\Omega_{\mathscr{U}/\bA^1}^{\bullet\ge 1})$ is also trivial. This concludes the proof.
\end{proof}

\begin{proof}[Proof of Theorem \ref{thm:symplectic rigid}]
By Lemma \ref{lem:lift vector field}, we can fix a $\bG_m$-equivariant lifting $\xi$ of $\partial_t$ with $\cL_\xi (\sigma_{\scX})=0$. This corresponds to a derivation $\partial_\xi\colon \cR\to \cR$ such that $\partial_\xi(\cR_m)\subseteq \cR_{m+1}$ for all $m\in\bZ$ and $\partial_\xi(t) = 1$. The $\bG_m$-action on $\scX$ also generates a vector field $\xi'$ whose image in $\bA^1$ is $-t\frac{\partial}{\partial t}$; as a derivation, it acts by $\partial_{\xi'}=m\cdot \mathrm{id}$ on $\cR_m$. Then $\eta = t\xi+\xi'$ satisfies 
\[
\partial_\eta(t)= t\cdot \partial_\xi(t)-t=0 \, ,
\]
hence it is a $\bG_m$-invariant vertical vector field (i.e. $\eta\in H^0(T_{\scX/\bA^1})$). In particular, by restricting to the general fiber it defines a derivation $\partial_\eta$ of $R$, which can be identified with the derivation $t\partial_\xi$ on $\cR_0 = R$. The condition $\partial_\xi(\cR_m)\subseteq \cR_{m+1}$ is then equivalent to $(\partial_\eta - m)\fa_m\subseteq \fa_{m+1}$. In particular, $\partial_\eta(\fa_m)\subseteq \fa_m$. Note that every $\fa_m$ is $\fm_x$-primary and $\fa_1=\fm_x$ since the Koll\'ar component $D$ is centered at $x$. Hence the vector field $\eta$ fixes the canonical section $\overline{\{x\}\times (\bA^1\setminus\{0\})}$ of the test configuration and  $\partial_\eta$ extends to a derivation of the formal completion $\widehat{R}$. By induction, we see that
\begin{equation} \label{eq:derivation}
\frac{(\partial_\eta-m)\cdots(\partial_\eta-m-i+1)}{i!}\cdot \fa_m\subseteq \fa_{m+i}    
\end{equation}
for all $i\in\bN$ (by convention, the operator on the left hand side is the identity when $i=0$). Let 
\[
\widehat{R}_m:=\{s\in \widehat{R}\,|\,\partial_\eta(s) = ms\}.
\]
Then $\widehat{R}_m\cdot \widehat{R}_\ell \subseteq \widehat{R}_{m+\ell}$, as for any $s\in \widehat{R}_m $ and $s'\in  \widehat{R}_\ell$, we have 
\[
\partial_\eta(s\cdot s')=\partial_\eta s \cdot s'+s\cdot \partial_\eta s' =ms\cdot s'+\ell s\cdot s'=(m+\ell) s\cdot s'\, .
\]   

We claim that $\widehat{R}_m\subseteq \widehat{\fa}_m:=\fa_m \widehat{R}$ and the induced map 
\[
\varphi_m\colon \widehat{R}_m\to \widehat{\fa}_m/\widehat{\fa}_{m+1}\cong \fa_m/\fa_{m+1} = \gr^m_D R
\]
is an isomorphism. Essentially this holds because $\partial_\eta-m$ is the zero map on $\gr_D^m R$, and is invertible on $\gr^{\ell}_D R$ whenever $\ell\neq m$. More concretely, for any $s\in \fa_m$, let $\bar{s}\in \fa_m/\fa_{m+1}$ be its reduction, and set
\[
\tilde{s}:=\sum_{i=0}^\infty \frac{(-1)^i}{i!} (\partial_\eta-m)\cdots(\partial_\eta-m-i+1)\cdot s \in \widehat{\fa}_m \, ,
\]
which is well-defined by \eqref{eq:derivation}. Then $\tilde{s}-s\in \widehat{\fa}_{m+1}$ and
\begin{align*}
(\partial_\eta-m)(\tilde{s}) & = \sum_{i=0}^\infty \frac{(-1)^i}{i!} (\partial_\eta-m)\cdots(\partial_\eta-m-i+1) \Big((\partial_\eta-m-i) +i\Big)\cdot s \\
& = \sum_{i=0}^\infty \frac{(-1)^i}{i!} \prod_{j=0}^i (\partial_\eta-m-j) \cdot s + \sum_{i=0}^\infty \frac{(-1)^{i-1}}{i!} \prod_{j=0}^i (\partial_\eta-m-j)\cdot s = 0.
\end{align*}
Therefore, $\tilde{s} \in \widehat{R}_m$ and $\varphi_m(\tilde{s})=\bar{s}\in \fa_m/\fa_{m+1}$. This proves that $\varphi_m$ is surjective. On the other hand, if $0\neq s\in \widehat{R}_m$ then there is some integer $\ell\in \bN$ (in fact $\ell = \ord_D(s)$) such that $s\in \widehat{\fa}_\ell \setminus \widehat{\fa}_{\ell+1}$. In particular, its reduction $\bar{s}\in \widehat{\fa}_\ell/\widehat{\fa}_{\ell+1}\cong \gr^\ell_D R$ is nonzero and we have $\partial_\eta (\bar{s})=\ell \bar{s}$. But we also have $\partial_\eta(s)=ms$, hence $\ell = m$. This gives $\widehat{R}_m\subseteq \widehat{\fa}_m$ as well as the injectivity of $\varphi_m$. The claim is thus proved. 

Using the isomorphism $\varphi_m$ we may identify $R_0=\bigoplus_{m\in\bN} \gr_D^m R$ with the subring $\bigoplus_{m\in\bN} \widehat{R}_m$ of $\widehat{R}$ whose completion is also $\widehat{R}$. Moreover, $\widehat{\fa}_m = \widehat{\bigoplus}_{\ell\ge m} \widehat{R}_\ell$. As a result, we get a formal $\bG_m$-action on $\widehat{R}$ with corresponding Koll\'ar component $D$ and $\widehat{R}\cong \widehat{R_0}$. It remains to show that the induced symplectic form $\widehat{\sigma}$ on $\widehat{X}_x = \Spec(\widehat{R})$ is equivariant under the above $\bG_m$-action. Equivalently, we need to show that $\cL_\eta(\sigma_{\scX})=\ell \cdot \sigma_\scX$ for some integer $\ell$. This follows because by our choice of the vector fields $\xi,\xi'$, we have $\cL_{\xi'}(\sigma_{\scX})=\ell \cdot \sigma_\scX$ where $\ell = \wt(\sigma_{\scX})$, while $\cL_{t\xi}(\sigma_{\scX}) = t\cL_{\xi}(\sigma_{\scX}) = 0 \in H^0(\Omega^{[2]}_{\scX/\bA^1})$ by Cartan's formula. This completes the proof.
\end{proof}

\section{Stability and non-degeneracy}\label{s-canonical}

In this section, we give a lower bound on the log discrepancy of a Koll\'ar component with respect to some differential form, in terms of the stability threshold of the corresponding log Fano pair. We also prove that stable degeneration preserves non-degenerate forms. One of the key observations is that if $Y\to X$ is the plt blowup of the Koll\'ar component $D$, then the sheaf $\Omega_Y^{[1]}(\log D)|_D$ (interpreted in the appropriate orbifold or stacky sense), which already appears in Proposition \ref{prop:non-degenerate criterion}, is the canonical extension of $\cO_D$ by the orbifold cotangent sheaf of the induced log Fano pair $(D,\Delta_D)$, and the maximal slope of this sheaf is closely related, on one hand, to the stability threshold of $(D,\Delta_D)$ and, on the other hand, to the vanishing order of differential forms along $D$.




\subsection{Orbifold (co)tangent sheaf and stability of canonical extension}

We start by recalling the definition of the orbifold (co)tangent sheaf and its canonical extension, and prove some general properties of these sheaves. The main reference is \cite{CKT-hyperbolicity} (see also \cites{GT-orbifold-stab,Li-extension-stab,Dai-MY,Dai-stability}).

\begin{defn}[{\cite[Definition 2.37]{CKT-hyperbolicity}}]
Let $(X,\Delta)$ be a pair. A quasi-finite dominant morphism $f\colon Y \to X$ from a normal variety is called \emph{adapted} to $(X,\Delta)$ if $f^*\Delta$ has integral coefficients.
\end{defn}

By \cite[Proposition 2.38]{CKT-hyperbolicity}, every pair $(X,\Delta)$ has an adapted (abelian) Galois cover. 


\begin{defn}[{\cite[Definition 3.11]{CKT-hyperbolicity}}]
Let $(X,\Delta)$ be a pair and let $f\colon Y \to X$ be a quasi-finite cover adapted to $(X,\Delta)$. 
We define the \emph{sheaf of adapted reflexive differentials} (or \emph{adapted cotangent sheaf}) $\Omega^{[1]}_{(X,\Delta,f)}$ on $Y$ as the reflexive hull of 
\[
\big(f^{[*]}\Omega_X^{1}(\log \lfloor \Delta\rfloor)\big)(f^*\{\Delta\}) \cap \Omega^{[1]}_Y(\log \Delta_f) \subseteq \Omega^{[1]}_Y(\log \Delta_f)(f^*\{\Delta\}),
\]
where $\Delta_f:=(f^*\lfloor\Delta\rfloor)_{\mathrm{red}}$ and $f^{[*]}$ is the reflexive pullback. The adapted tangent sheaf is defined to be 
\[
T_{(X,\Delta,f)}:=\left(\Omega^{[1]}_{(X,\Delta,f)}\right)^* .
\]

More concretely, there is a big open subset $U\subseteq X$ over which $\Supp(\Delta)$ is smooth and at any $y\in f^{-1}(U)$ the map $f$ takes the form
\[
(y_1,\dots,y_n)\mapsto (x_1,\dots,x_n)=(y_1^m,y_2,\dots,y_n)
\]
in local coordinates for some positive integer $m$; moreover, if $x=f(y)\in \Supp(\Delta)$ then $\Supp(\Delta) = \{x_1=0\}$. Then $\Omega^{[1]}_{(X,\Delta,f)}$ is the unique reflexive subsheaf of $\Omega^{[1]}_Y(\log \Delta_f)(f^*\{\Delta\})$ that is locally generated by
\begin{equation} \label{eq:local generator}
f^*\left(\frac{\rd x_1}{x_1^{\lambda_1}}\right), f^*\rd x_2,\dots,f^*\rd x_n
\end{equation}
over $f^{-1}(U)$, where $\lambda_1$ is the coefficient of $\Delta$ in this local chart. One can check that the subsheaf generated by \eqref{eq:local generator} is independent of the choice of local coordinates (e.g. if $u$ is a unit, then replacing $x_1$ by $ux_1$ does not change the subsheaf). Similarly, over $f^{-1}(U)$ the adapted tangent sheaf $T_{(X,\Delta,f)}$ is locally generated (as a subsheaf of $f^*T_X$) by
\[
f^*\left(x_1^{\lambda_1}\frac{\partial}{\partial x_1}\right), f^*\left(\frac{\partial}{\partial x_2}\right),\dots,f^*\left(\frac{\partial}{\partial x_n}\right).
\]
\end{defn}
 
We next define the canonical extensions of the adapted (co)tangent sheaf ({\it cf.} \cite[Section 2.3]{Li-extension-stab} or \cite[Section 2.2.2]{Dai-MY}).

\begin{defn}
Let $(X,\Delta)$ be a pair and let $f\colon Y \to X$ be a quasi-finite cover adapted to $(X,\Delta)$. Note that $f^{[*]}\Omega^1_X \subseteq \Omega^{[1]}_{(X,\Delta,f)}$. Consider the composition
\[
\Pic(X)=H^1(X,\cO_X^*)\xrightarrow{c_1} H^1(X,\Omega_X^{[1]})\xrightarrow{f^*} H^1(Y,\Omega^{[1]}_{(X,\Delta,f)}) \, ,
\]
where the first map is induced by the composition of $\rd \log \colon \cO_X^*\to \Omega_X^1$ and $\Omega^1_X\to \Omega_X^{[1]}$. For any $\bR$-Cartier divisor $L$ on $X$, we denote by $E_{(X,\Delta,f),L}$ the extension of $\cO_Y$ by $\Omega^{[1]}_{(X,\Delta,f)}$ with extension class $f^*c_1(L) \in H^1(Y,\Omega^{[1]}_{(X,\Delta,f)})$. We note that its isomorphism class only depends on $(\bR\setminus\{0\})\cdot [L]\subseteq \Pic(X)_\bR$. We also set $E_{X,L}:=E_{(X,0,\mathrm{id}),L}$. If $K_X+\Delta$ is $\bQ$-Cartier, the \emph{canonical extension} of $\cO_Y$ by $\Omega^{[1]}_{(X,\Delta,f)}$ is defined to be $E_{(X,\Delta,f)}:=E_{(X,\Delta,f),K_X+\Delta}$.
\end{defn}

\begin{rem}
The definitions above are functorial in the sense that if $f\colon Y\to X$ is a quasi-finite cover adapted to $(X,\Delta)$ and $g\colon Z\to Y$ is another quasi-finite cover, then the natural maps 
\[
g^{[*]}\Omega^{[1]}_{(X,\Delta,f)} \to \Omega^{[1]}_{(X,\Delta,g\circ f)} \quad \mathrm{and} \quad g^{[*]}E_{(X,\Delta,f),L} \to E_{(X,\Delta,g\circ f),L}
\]
are isomorphisms. As such, we can also define adapted (co)tangent sheaves $\Omega^{[1]}_{(\cX,\Delta,f)}$, $T_{(\cX,\Delta,f)}$ and the extensions $E_{(\cX,\Delta,f),L}$ on any adapted quasi-finite cover $f\colon \cY\to \cX$ of Deligne-Mumford stacks. In particular, if $\Delta=\sum_{i=1}^m \frac{a_i}{b_i} \Delta_i$ is a decomposition into irreducible components (where $a_i,b_i\in \bN$, $\gcd(a_i,b_i)=1$) and each component $\Delta_i$ is Cartier (e.g. if $(X,\Supp(\Delta))$ is SNC), then we may form the fiber product 
\[
\cX:=\sqrt[b_1]{\Delta_1}\times_X \dots \times_X \sqrt[b_m]{\Delta_m}
\]
of the corresponding root stacks (see \cite[Appendix B]{AGV-GW-DM} and \cite[Section 2]{Cad-root-stack}). In this case a quasi-finite cover $f\colon Y\to X$ is adapted to $(X,\Delta)$ if and only if it factors through $\cX$, and the sheaves $\Omega^{[1]}_{(X,\Delta,f)}$, $T_{(X,\Delta,f)}$ and $E_{(X,\Delta,f),L}$ on $Y$ are the reflexive pullbacks of the corresponding sheaves on the normalization of $\cX$. The existence of such a stack $\cX$ will streamline many of our constructions by just considering the geometric objects on $\cX$.
\end{rem}

The following geometric description of the extension sheaf will be useful later.

\begin{lem} \label{lem:log cotangent restrict to can ext}
Let $\cX$ be a smooth separated Deligne-Mumford stack and $\cD$ a smooth divisor on $\cX$. Then
\[
\Omega_{\cX}^1(\log \cD)|_{\cD} \cong E_{\cD,N_{\cD/\cX}}.
\]
\end{lem}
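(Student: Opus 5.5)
Our plan is to compare the two extensions of $\cO_{\cD}$ by $\Omega^1_{\cD}$ through their extension classes. First, the residue sequence gives an exact sequence
\[
0\to \Omega^1_{\cX}\to \Omega^1_{\cX}(\log\cD)\xrightarrow{\mathrm{res}} \cO_{\cD}\to 0 .
\]
Restricting $\Omega^1_{\cX}(\log\cD)$ to $\cD$, the image of $\Omega^1_{\cX}|_{\cD}$ is the quotient by the conormal direction. Indeed, $\cO_{\cX}(-\cD)|_{\cD}\to\Omega^1_{\cX}|_{\cD}$, sending $t\mapsto \rd t$, becomes zero in $\Omega^1_{\cX}(\log\cD)|_{\cD}$, since $\rd t = t\cdot\frac{\rd t}{t}$. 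So we get
\[
0\to\Omega^1_{\cD}\to \Omega^1_{\cX}(\log\cD)|_{\cD}\to\cO_{\cD}\to 0 .
\]
All of this is étale local on $\cX$, so it can be checked on an étale atlas. In local coordinates $(t,y_2,\dots,y_n)$ with $\cD=(t=0)$, the middle sheaf is free on $\frac{\rd t}{t},\rd y_2,\dots,\rd y_n$, and the sequence is visibly exact. This gives an extension of $\cO_{\cD}$ by $\Omega^1_{\cD}$ of the same shape as $E_{\cD,N_{\cD/\cX}}$; here the pair is $(\cD,0)$, so the adapted cotangent sheaf is just $\Omega^1_{\cD}$.

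Next we compute the extension class by Čech cocycles. Cover $\cX$ (via an étale atlas, or by working with Čech cohomology on the stack) by opens $U_i$ on which $\cD=(t_i=0)$, and write $t_i=g_{ij}t_j$ with $g_{ij}\in\cO^*(U_{ij})$. The local splittings $\cO_{\cD}\to\Omega^1_{\cX}(\log\cD)|_{\cD}$ send $1\mapsto \frac{\rd t_i}{t_i}$. On overlaps they differ by
\[
\frac{\rd t_i}{t_i}-\frac{\rd t_j}{t_j}=\frac{\rd g_{ij}}{g_{ij}}\Big|_{\cD}=\rd\log(g_{ij}|_{\cD}).
\]
Since $t_i$ is a local generator of $\cO(-\cD)$, the functions $g_{ij}|_{\cD}$ are transition functions of $N^\vee_{\cD/\cX}=\cO(-\cD)|_{\cD}$. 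Hence the extension class is $c_1(N^\vee_{\cD/\cX})=-c_1(N_{\cD/\cX})$ in $H^1(\cD,\Omega^1_{\cD})$, with the sign depending on conventions.

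Finally, by the remark in the definition, the isomorphism class of $E_{X,L}$ only depends on $(\bR\setminus\{0\})\cdot[L]$. So the sign is irrelevant, and we conclude that $\Omega^1_{\cX}(\log\cD)|_{\cD}\cong E_{\cD,N_{\cD/\cX}}$. Concretely, rescaling the map to $\cO_{\cD}$ by $-1$ changes the class by a sign.

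The main obstacle is making the stacky setting rigorous: we must make sure that Čech computations and the map $c_1\colon\Pic\to H^1(\Omega^1)$ via $\rd\log$ behave on a separated Deligne–Mumford stack. We handle this by doing everything on an étale atlas with descent data, or by using the Čech-to-derived comparison for étale covers of DM stacks. The local computations are $\mu$-equivariant, so they descend.
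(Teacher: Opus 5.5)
Your proposal is correct and follows the paper's proof step by step: restrict the residue sequence to $\cD$, factor the first map through $\Omega^1_{\cD}$ with exactness checked in local coordinates, and identify the extension class by a \v{C}ech computation. The paper justifies the \v{C}ech computation on the stack by tameness, computing cohomology on the coarse moduli space. Your remark that the sign convention does not matter, because $E_{X,L}$ depends only on $(\bR\setminus\{0\})\cdot[L]$, is a valid addition.
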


\begin{proof}
This should be well-known to experts. The exact sequence 
\[
0\to \Omega^1_{\cX}\to \Omega^1_{\cX}(\log \cD)\xrightarrow{\mathrm{res}}\cO_{\cD}\to 0
\]
on $\cX$ restricts to an exact sequence
\[
\Omega_{\cX}^1|_{\cD}\to \Omega^1_{\cX}(\log \cD)|_{\cD}\to \cO_{\cD}\to 0
\]
on $\cD$. The first map factors through $\Omega_{\cD}^1$, and we get an induced exact sequence (exactness can be checked in local coordinates)
\[
0\to \Omega_{\cD}^1\to \Omega^1_{\cX}(\log \cD)|_{\cD}\to \cO_{\cD}\to 0.
\]
A direct computation using \v{C}ech cohomology then shows that the corresponding extension class is $c_1(N_{{\cD}/\cX})\in H^1(\cD,\Omega_{\cD}^1)$. (Here we note that as we work over $\bC$, any DM stack is tame, see e.g. \cite[Section 3]{AOV-tame-stack}, thus the cohomology of a quasi-coherent sheaf can be computed on the coarse moduli space and therefore coincides with \v{C}ech cohomology.) 
\end{proof}

\begin{lem} \label{lem:orbifold cotangent}
Let $(X,D)$ be a plt pair whose boundary divisor $D$ is reduced, and let $\Delta_D$ be the $\bQ$-divisor on $D$ obtained by adjunction: $(K_X+\Delta)|_D=K_D+\Delta_D$. Assume that $D$ is $\bQ$-Cartier. Let $f\colon \cX\to X$ be the index one covering stack with respect to $D$, and let $\cD:=f^*D$. Then 
\begin{enumerate}
    \item both $D$ and $\cD$ are normal,
    \item the induced cover $g=f|_{\cD}\colon \cD\to D$ is adapted to $(D,\Delta_D)$, 
    \item we have 
    \[
    \Omega^{[1]}_{(D,\Delta_D,g)} \cong \Omega^{[1]}_{\cD} \quad \mathrm{and} \quad E_{(D,\Delta_D,g),N_{D/X}}\cong E_{\cD,N_{\cD/\cX}}.
    \] 
\end{enumerate}
\end{lem}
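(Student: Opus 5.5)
The proof is local: everything is checked on a big open subset of $D$, followed by reflexive extension. Since $(X,D)$ is plt, $D$ is normal by \cite[Proposition 5.51]{KM98}.

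\emph{Local structure in codimension one on $D$.} Near a codimension one point of $D$ (a codimension two point of $X$), the plt pair $(X,D)$ is analytically a cyclic quotient $(\bA^2,\{y_1=0\})/\mu_m$, with $\mu_m$ acting by weights $(1,a)$ and $\gcd(a,m)=1$. Here $D$ is the image of $\{y_1=0\}$ and the different has coefficient $1-\frac1m$ there (\cite[Section 3.3]{Kol13}). The local Cartier index of $D$ is $m$, so the index one cover with respect to $D$ is this $\bA^2$ chart. Consequently $\cX$ is smooth over a big open subset of $D$, and $\cD=\{y_1=0\}$ is smooth there.

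Next I show $\cD$ is normal globally. The index one cover is étale in codimension one over $X$. By \cite[Proposition 5.20]{KM98}, $(\cX,\cD)$ is plt, being the crepant pullback $f^*(K_X+D)=K_{\cX}+\cD$ of a plt pair. Hence $\cD$ is normal by the same result \cite[Proposition 5.51]{KM98}, applied étale locally on an atlas. This proves (1).

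For (2), the local form of $g$ is $y_2\mapsto x_2=y_2^m$ on $\cD\to D$. This is because $\mu_m$ acts faithfully on the $y_2$-line, as $\gcd(a,m)=1$. So $g$ ramifies with index $m$ along the component of $\Delta_D$ of coefficient $1-\frac1m$. Over the locus where $\cX\to X$ is étale there is no ramification and no boundary. Thus $g^*\Delta_D$ is integral. A separate point is that $\lfloor\Delta_D\rfloor=0$ because $(D,\Delta_D)$ is klt; this again follows from the plt hypothesis.

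For (3), I compute in the local chart. With $\lambda_1=1-\frac1m$, the adapted cotangent sheaf is generated by
\[
g^*\frac{\rd x_2}{x_2^{1-1/m}}=\frac{m\,y_2^{m-1}\rd y_2}{y_2^{m-1}}=m\,\rd y_2 .
\]
Together with the unramified coordinates, this gives exactly $\Omega^1_{\cD}$ on a big open set. Since both sheaves are reflexive on the normal $\cD$, they agree.

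For the extension sheaves, both are extensions of $\cO_{\cD}$ by the same sheaf. The class of the first is $g^*c_1(N_{D/X})$, where $N_{D/X}$ is understood as the $\bQ$-line bundle $\cO_X(D)|_D$. The class of the second is $c_1(N_{\cD/\cX})=c_1(f^*\cO_X(D)|_{\cD})=g^*c_1(\cO_X(D)|_D)$. The key point here is that $D$ becomes Cartier on $\cX$, so $\cO_{\cX}(\cD)=f^{[*]}\cO_X(D)$ is an honest line bundle whose restriction is $N_{\cD/\cX}$.

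The two classes then match by functoriality of $c_1$ under pullback. This must be made precise: I would define $c_1(N_{D/X})$ through a multiple $\cO_X(kD)$ with $kD$ Cartier near $D$, scale by $1/k$, and use the remark that the isomorphism class depends only on $(\bR\setminus\{0\})\cdot[L]$. An isomorphism of extensions with matching classes then gives the claimed isomorphism. Reflexivity on $\cD$ lets me compare everything on the big open set, which Lemma \ref{lem:log cotangent restrict to can ext} controls via the smooth locus of $(\cX,\cD)$.

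The main obstacle is this compatibility of extension classes. One must justify that the reflexive pullback $g^{[*]}$ of $c_1$ of a $\bQ$-Cartier divisor agrees with $c_1$ of the honest normal bundle on the stack. Because both sheaves are reflexive and $H^1$-classes on the big open set determine the extensions there, I plan to do this comparison on the smooth big open subset and then extend.
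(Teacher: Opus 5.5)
Your proof is correct and follows essentially the same route as the paper: $(\cX,\cD)$ is plt by \cite[Proposition 5.20]{KM98} and $D,\cD$ are normal by \cite[Proposition 5.51]{KM98}; the cyclic-quotient model $\big((\bA^2,\{y_1=0\})/\mu_m\big)$ at codimension one points of $D$ gives $\Omega^{[1]}_{(D,\Delta_D,g)}=\Omega^{[1]}_{\cD}$; and the extension classes match because $f^*D=\cD$. The only differences are cosmetic: for (2) the paper gets integrality of $g^*\Delta_D$ directly from $K_{\cD}=g^*(K_D+\Delta_D)$ rather than from your ramification computation, and the class comparison you flag as the main obstacle follows globally from $\cO_{\cX}(k\cD)\cong f^*\cO_X(kD)$ and functoriality of $\rd\log$, so your detour through the big open set is valid (by reflexivity) but unnecessary.
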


\begin{proof}
By construction, $\cD$ is a Cartier divisor on $\cX$. Since $f$ is quasi-\'etale, we have $f^*(K_X+D)=K_{\cX}+\cD$. As $(X,D)$ is plt, we deduce that $(\cX,\cD)$ is also plt \cite[Proposition 5.20]{KM98}, hence (1) holds by \cite[Proposition 5.51]{KM98}. By adjunction we get $K_{\cD} = g^*(K_D+\Delta_D)$, thus $g^*\Delta_D$ is an integral divisor on $\cD$ which gives (2).

By definition, both $\Omega^{[1]}_{(D,\Delta_D,g)}$ and $\Omega^{[1]}_{\cD}$ are reflexive subsheaves of $\Omega^{[1]}_{\cD}(f^*\Delta_D)$. To prove the first isomorphism in (3), it suffices to check that they agree over every codimension one point (call it $\xi$) of $D$. Since $(X,D)$ is plt, \'etale locally at $\xi$ it is a cyclic quotient $\big((\bA^2,\{x=0\})/\mu_r\big)\times \bA^{n-2}$ with $n=\dim X$ (see e.g. \cite[Section 3.3]{Kol13}), thus (ignoring the smooth factor $\bA^{n-2}$) we reduce to the case $\cD=[\bA^1/\mu_r]$, with coarse moduli space $D=\bA^1/\mu_r\cong \bA^1$, and $\Delta_D = (1-\frac{1}{r})[0]$ (see \cite[Theorem 3.36]{Kol13}), where the equality $\Omega^{[1]}_{(D,\Delta_D,g)}=\Omega^{[1]}_{\cD}$ holds by a direct local computation. This gives the first part of (3).

By definition, $E_{(D,\Delta_D,g),N_{D/X}}$ is the extension of $\cO_{\cD}$ by $\Omega^{[1]}_{(D,\Delta_D,g)}$ with extension class $g^*c_1(N_{D/X})$. As $\Omega^{[1]}_{(D,\Delta_D,g)} \cong \Omega^{[1]}_{\cD}$ and $f^*D = \cD$, we see that $E_{(D,\Delta_D,g),N_{D/X}}$ is also the extension of $\cO_{\cD}$ by $\Omega^{[1]}_{\cD}$ with extension class $c_1(N_{\cD/\cX})$, thus the second part of (3) holds as well.
\end{proof}

\begin{defn}[{\emph{cf.} \cite[Definition 2.23]{GT-orbifold-stab}, \cite[Example 2.21]{Dai-MY}}] \label{def:orbifold slope}
The orbifold cotangent (resp. tangent) sheaf $\Omega^{[1]}_X(\log \Delta)$ (resp. $T_X(-\log \Delta)$) of a pair $(X,\Delta)$ is defined to be the collection $\{\Omega^{[1]}_{(X,\Delta,f)}\}$ (resp. $T_{(X,\Delta,f)}$) as $f$ varies among the finite covers of $X$ that are adapted to $(X,\Delta)$. Similarly, when $K_X+\Delta$ is $\bQ$-Cartier, we define the canonical extension $E_{X,\Delta}$ of $\cO_X$ by $\Omega^{[1]}_X(\log \Delta)$ to be the collection $\{E_{(X,\Delta,f)}\}$. 

Let $H$ be an ample $\bQ$-Cartier divisor on $X$ and $\cE$ be one of $\Omega^{[1]}_X(\log \Delta)$, $T_X(-\log \Delta)$, or $E_{X,\Delta}$. While $\cE$ is not a sheaf on $X$, its (reflexive) pullback $f^{[*]}\cE$ along a finite cover $f\colon Y\to X$ adapted to $(X,\Delta)$ is a well-defined (reflexive) sheaf on $Y$. We say that $\cE$ is slope semistable with respect to $H$ if $f^*\cE$ is slope semistable with respect to $f^*H$. By \cite[Lemma 3.2.2]{HL-book-sheaf}, this definition is independent of the cover $f$. For the same reason, we can define the slope and maximal slope of $\cE$ with respect to $H$ as
\[
\mu_{H}(\cE):=\frac{\mu_{f^*H}(f^{[*]}\cE)}{\deg f}\quad \mathrm{and}\quad\mu_{\max,H}(\cE):=\frac{\mu_{\max,f^*H}(f^{[*]}\cE)}{\deg f},
\]
where again the right hand sides are independent of $f$.
\end{defn}

Next, we need a logarithmic analogue of \cite{DGP-Q-Fano-decomp} (see also \cite{Li-extension-stab,Dai-stability}). We shall use it to control the maximal slope of the adapted canonical extension for log Fano pairs that are not necessarily K-semistable. This is done by adding a general boundary to make the pair K-semistable and comparing the corresponding adapted canonical extensions. 

\begin{thm} \label{thm:can ext ss}
Let $(X,\Delta)$ be a K-semistable log Fano pair. Then the canonical extension $E_{X,\Delta}$ of $\cO_X$ by the orbifold cotangent sheaf $\Omega^{[1]}_X(\log \Delta)$ is slope semistable with respect to $-(K_X+\Delta)$.
\end{thm}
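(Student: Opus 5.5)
The plan follows the analytic strategy of Tian and Li for the smooth case, as extended to $\bQ$-Fano varieties in \cite{DGP-Q-Fano-decomp}. The idea is to produce, for every $\varepsilon>0$, a hermitian metric on (the pullback to an adapted cover of) the canonical extension $E_{X,\Delta}$ whose mean curvature is $\varepsilon$-close to a constant multiple of the identity. Semistability then follows from the standard Chern--Weil argument, as in the approximate Hermite--Einstein criterion of Kobayashi--L\"ubke type. Since slope semistability is unchanged under pullback along an adapted finite cover, I will work equivariantly on the smooth locus of the root stack $\cX$ associated to $(X,\Delta)$. Equivalently, I will work with orbifold/conic metrics on $X^{\mathrm{reg}}\setminus \Supp\Delta$ that have cone angles $2\pi(1-a_i)$ along the components of $\Delta$.

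\textbf{Step 1: approximate K\"ahler--Einstein metrics.} Since $(X,\Delta)$ is K-semistable, for every $t\in(0,1)$ the twisted pair $(X,\Delta+(1-t)H_t)$ is K-polystable, indeed uniformly K-stable. Here $H_t\sim_\bQ -(K_X+\Delta)$ is a general divisor, or one may use $\delta(X,\Delta)\ge 1>t$ directly. By the singular Yau--Tian--Donaldson theorem (Berman--Boucksom--Jonsson, Li--Tian--Wang, Liu--Xu--Zhuang), there is a twisted K\"ahler--Einstein current
\[
\omega_t\in c_1(-(K_X+\Delta)),\qquad \mathrm{Ric}(\omega_t)=t\,\omega_t+(1-t)[H_t]+[\Delta].
\]
It is smooth on the complement of $\Supp(\Delta+H_t)\cup X^{\mathrm{sing}}$ and conic along $\Delta$ and $H_t$. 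Away from $H_t$ this gives $\mathrm{Ric}(\omega_t)\ge t\,\omega_t$ in the orbifold sense. Alternatively, one can avoid $H_t$ entirely by taking the usual continuity path $\mathrm{Ric}(\omega_t)=t\omega_t+(1-t)\omega_0$, which is solvable for all $t<1$ precisely when $\delta\ge 1$.

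\textbf{Step 2: Tian's metric on the extension.} Following Tian, I view the dual $E^*$, an extension of $T_X(-\log\Delta)$ by $\cO$, as the tangent bundle of the total space of the canonical cone, restricted to the section. I put on $E^*$ the hermitian metric $h_t$ built from $\omega_t$ and the fibre metric of $-(K_X+\Delta)$ whose curvature is $\omega_t$, using the second fundamental form of the extension given by $\omega_t$. A direct computation shows that $\sqrt{-1}\Lambda_{\omega_t}F_{h_t}$ is block-diagonal, with diagonal blocks given by $\mathrm{Ric}(\omega_t)-\omega_t$ (traced) and a constant. Hence $\|\Lambda F_{h_t}-\lambda\,\mathrm{id}\|_{L^1(\omega_t^n)}\le C(1-t)$, where $\lambda$ is the slope constant. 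If $\mathcal{S}\subseteq E$ is a saturated subsheaf, the Gauss--Codazzi formula gives
\[
\deg_{-(K_X+\Delta)}\mathcal{S}\le \int \mathrm{tr}(\pi_{\mathcal S}\sqrt{-1}\Lambda F_{h_t})\,\omega_t^n,
\]
with $\pi_{\mathcal S}$ the orthogonal projection. Letting $t\to1$ then yields $\mu(\mathcal S)\le\mu(E)$.

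\textbf{Main obstacle.} The hard step is justifying the Chern--Weil inequality for $\deg\mathcal S$ when $h_t$ is only defined on $X^{\mathrm{reg}}\setminus\Supp\Delta$ and degenerates near $X^{\mathrm{sing}}$, where $\mathcal S$ is also not a subbundle. My first attempt would be to pass to a log resolution and use cutoff functions with small $\omega_t$-capacity. The key input is that $\omega_t$ has bounded potentials and that the locus where it fails to be an orbifold-smooth metric has zero capacity. Together these show that the second fundamental form term is $L^2$ and the boundary contributions vanish, in the style of Guenancia's and \cite{DGP-Q-Fano-decomp}'s arguments. The real coefficients of $\Delta$, as opposed to standard coefficients, mean the orbifold picture is not available. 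In that case I would use conic metrics with cone angles $2\pi(1-a_i)$ and the known asymptotics of conic KE metrics along the snc part of $\Delta$ to control the curvature near $\Delta$. This is where the general-coefficient case demands genuine analytic work.
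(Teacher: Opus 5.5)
\textbf{Gap: the twisted metric is placed on the wrong sheaf.} Your Step 1 is essentially the perturbation the paper uses: for general $H\sim_{\bQ}-(K_X+\Delta)$ and small $\epsilon>0$, the pair $(X,\Delta+\epsilon H)$ is uniformly K-stable and so carries a K\"ahler--Einstein metric. The problem is Step 2, where you put the resulting metric $\omega_t$ on $E_{X,\Delta}$ itself.

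On a cover adapted only to $(X,\Delta)$, $\omega_t$ still has a cone singularity of angle $<2\pi$ along $H_t$, and the cover does not resolve it. So the metric $h_t$ induced on $E_{X,\Delta}$ degenerates in the conormal direction of $H_t$ and is not quasi-isometric to a smooth metric there. Because $\mathrm{Ric}(\omega_t)=t\omega_t+(1-t)[H_t]+[\Delta]$, a definite amount of degree sits on $H_t$ and is invisible to your pointwise computation of $\sqrt{-1}\Lambda F_{h_t}$. Already for $\mathcal S=E_{X,\Delta}$, the integral of $\mathrm{tr}\,F_{h_t}\wedge\omega_t^{n-1}$ over the regular locus is (with the usual normalizations) $-t(\alpha^n)$, not $\deg E_{X,\Delta}=-(\alpha^n)$, where $\alpha=c_1(-(K_X+\Delta))$.

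Consequently, $\deg\mathcal S\le\int\mathrm{tr}(\pi_{\mathcal S}\sqrt{-1}\Lambda F_{h_t})\,\omega_t^n$ does not follow from Gauss--Codazzi plus cut-offs. You would have to prove that, for every saturated $\mathcal S$, the part of the curvature of the induced metric on $\det\mathcal S$ supported on $H_t$ is non-positive. This is plausible, since $h_t$ is bounded above by a smooth metric, but it is genuine extra work. The cut-off argument you outline, like Lemma~\ref{lem1}, disposes of the error term only because the ratio of the induced metric to a smooth one is bounded, and that is exactly what fails along $H_t$. Your ``main obstacle'' paragraph never mentions $H_t$.

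Your fallback, the Aubin path $\mathrm{Ric}(\omega_t)=t\omega_t+(1-t)\omega_0+[\Delta]$ for all $t<1$, amounts to the existence of approximate K\"ahler--Einstein metrics on singular K-semistable log Fano pairs. The appendix notes this is not available in the literature, so you cannot use it as a black box.

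\textbf{How the paper avoids this: change the sheaf, not the metric.} Set $\Delta_\epsilon=\Delta+\epsilon H$ and let $g_\epsilon$ be a cover adapted to $(X,\Delta_\epsilon)$. On this cover the K\"ahler--Einstein metric of $(X,\Delta_\epsilon)$ is bounded on $\Omega^{[1]}_{(X,\Delta_\epsilon,g_\epsilon)}$. The K-polystable case, proved with the regularized metrics $\omega_{t,\epsilon}$ on a log resolution (Lemmas~\ref{lem1} and~\ref{lem2}), then shows that $E_\epsilon$ is semistable. Here $E_\epsilon$ is the extension of $\cO$ by $\Omega^{[1]}_{(X,\Delta_\epsilon,g_\epsilon)}$ with class induced by $-(K_X+\Delta)$; it is isomorphic to the canonical extension of $(X,\Delta_\epsilon)$ because the two classes are proportional.

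The inclusion $\Omega^{[1]}_{(X,\Delta,g_\epsilon)}\subseteq\Omega^{[1]}_{(X,\Delta_\epsilon,g_\epsilon)}$, together with the five lemma, gives an injection of the pulled-back canonical extension of $(X,\Delta)$ into $E_\epsilon$. Hence every subsheaf $\mathcal S$ satisfies $\mu(\mathcal S)\le\mu(E_\epsilon)=-\frac{1-\epsilon}{n+1}(\alpha^n)$, after normalizing by the degree of the cover. Letting $\epsilon\to0$ concludes.

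Enlarging the orbifold cotangent sheaf along $H$ is precisely what turns your degenerate metric into a bounded one. With this change you can keep your Step 1 and the Tian-type block computation of Step 2. Non-standard rational coefficients in $\Delta$ are not an obstruction either: on an adapted cover, the conic metric is quasi-isometric to a smooth metric on the adapted cotangent sheaf, which is the estimate the appendix relies on.
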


\begin{proof}
See Theorem \ref{thm:can ext appendix}.
\end{proof}

\begin{lem} \label{lem:mu_max bound by delta}
Let $(X,\Delta)$ be a log Fano pair of dimension $n$ and let $H=-(K_X+\Delta)$. Then $\mu_H(E_{X,\Delta})=-\frac{1}{n+1}(H^n)$ and
\begin{equation} \label{eq:mu_max bound by delta}
\mu_{\max,H}(E_{X,\Delta})\le -\frac{\min\{1,\delta(X,\Delta)\}\cdot (H^n)}{n+1}.
\end{equation}
\end{lem}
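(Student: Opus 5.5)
The slope identity is a direct computation, and the inequality will come from adding a K-semistable complement and comparing canonical extensions.

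\emph{Slope.} Work on a finite adapted cover $f\colon Y\to X$ of degree $d$. Since $\Omega^{[1]}_{(X,\Delta,f)}$ has determinant $f^*(K_X+\Delta)$, the extension $E_{(X,\Delta,f)}$ has rank $n+1$ and
\[
c_1\bigl(E_{(X,\Delta,f)}\bigr)=f^*(K_X+\Delta)=-f^*H .
\]
Hence
\[
\mu_{f^*H}=\frac{-(f^*H)^n}{n+1}=\frac{-d\,(H^n)}{n+1},
\]
and dividing by $d$ gives $\mu_H(E_{X,\Delta})=-\frac{(H^n)}{n+1}$.

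\emph{Inequality, K-semistable case.} If $\delta:=\delta(X,\Delta)\ge 1$, Theorem \ref{thm:can ext ss} says $E_{X,\Delta}$ is semistable. So $\mu_{\max}=\mu$, and the bound holds with equality.

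\emph{Inequality, case $\delta<1$.} By \cite[Theorem 1.8]{LXZ-HRFG}, choose a general effective $G\sim_\bQ(1-\delta)H$ such that $(X,\Delta+G)$ is a K-semistable log Fano pair. Its anticanonical class is $H':=-(K_X+\Delta+G)\sim_\bQ\delta H$. By generality we may assume that $\Supp G$ meets $\Supp\Delta$ properly and that $(X,\Delta+G)$ is klt. Take a finite cover $f\colon Y\to X$ adapted to $(X,\Delta+G)$; it is then also adapted to $(X,\Delta)$. On $Y$ the local generators \eqref{eq:local generator} give an inclusion
\[
\Omega^{[1]}_{(X,\Delta,f)}\hookrightarrow\Omega^{[1]}_{(X,\Delta+G,f)}
\]
which is an isomorphism away from $f^{-1}(\Supp G)$. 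The extension classes are $f^*c_1(K_X+\Delta)=-f^*c_1(H)$ and $-f^*c_1(H')=-\delta f^*c_1(H)$. Because the isomorphism class of $E_{\cdot,L}$ depends only on $(\bR\setminus\{0\})\cdot[L]$, we can rescale the $\cO_Y$-factor by $\delta$ to match the classes. This should give an injection of sheaves
\[
E_{(X,\Delta,f)}\hookrightarrow E_{(X,\Delta+G,f),\,K_X+\Delta+G}
\]
which is generically an isomorphism.

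The main obstacle is making this compatibility of extension classes precise. One has to check that pushing the class $f^*c_1(K_X+\Delta)$ forward along the inclusion of cotangent sheaves gives a class proportional to $f^*c_1(K_X+\Delta+G)$ in $H^1\bigl(\Omega^{[1]}_{(X,\Delta+G,f)}\bigr)$. This uses that $c_1(G)=(1-\delta)c_1(H)$, and that the \v{C}ech cocycle $\rd\log$ of the local equations of $G$ becomes a coboundary in $\Omega^{[1]}_{(X,\Delta+G,f)}$ up to the pole allowance. If this comparison fails as stated, I would instead compare the two sheaves on the complement of $f^{-1}(\Supp G)$ and bound degrees of saturated subsheaves directly.

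\emph{Concluding the slope bound.} Granting the injection, any subsheaf $\cF\subseteq E_{(X,\Delta,f)}$ maps into the larger sheaf. By Theorem \ref{thm:can ext ss} applied to $(X,\Delta+G)$, the larger sheaf is semistable with respect to $f^*H'$, hence with respect to $f^*H$ since $H'\sim_\bQ \delta H$. Therefore
\[
\mu_{f^*H}(\cF)\le\mu_{f^*H}\bigl(E_{(X,\Delta+G,f)}\bigr)=\frac{-(f^*H'\cdot f^*H^{n-1})}{n+1}=-\frac{\delta\, d\,(H^n)}{n+1}.
\]
Dividing by $d$ gives $\mu_{\max,H}(E_{X,\Delta})\le -\frac{\delta\,(H^n)}{n+1}$, which is the claimed bound when $\delta<1$.
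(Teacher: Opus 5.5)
Your proposal is correct and follows the paper's proof essentially step by step. The slope identity comes from $c_1(E_{(X,\Delta,f)})=f^*(K_X+\Delta)$ (the paper cites \cite[Corollary 3.9]{CKT-hyperbolicity}). The case $\delta\ge 1$ is Theorem \ref{thm:can ext ss}. Otherwise, a K-semistable complement from \cite[Theorem 1.8]{LXZ-HRFG} yields an injection of canonical extensions into a semistable sheaf of $H$-slope $-\frac{\delta}{n+1}(H^n)$.

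The step you single out as the main obstacle is automatic, and the paper handles it in one line.
\begin{itemize}
\item[] The map $L\mapsto f^*c_1(L)$ from $\Pic(X)_\bQ$ to $H^1(Y,\Omega^{[1]}_{(X,\Delta+G,f)})$ is $\bQ$-linear. It factors through $H^1(Y,\Omega^{[1]}_{(X,\Delta,f)})$ via the inclusion $\Omega^{[1]}_{(X,\Delta,f)}\subseteq\Omega^{[1]}_{(X,\Delta+G,f)}$.
\item[] Since $K_X+\Delta+G\sim_\bQ\delta(K_X+\Delta)$ already in $\Pic(X)_\bQ$, the pushout of $E_{(X,\Delta,f)}$ along this inclusion has extension class $\delta^{-1}$ times that of $E_{(X,\Delta+G,f)}$. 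This pushout is the cokernel of $\Omega^{[1]}_{(X,\Delta,f)}\to E_{(X,\Delta,f)}\oplus\Omega^{[1]}_{(X,\Delta+G,f)}$, and it is therefore isomorphic to $E_{(X,\Delta+G,f)}$.
\item[] The five lemma applied to the resulting morphism of short exact sequences then gives the injection $E_{(X,\Delta,f)}\hookrightarrow E_{(X,\Delta+G,f)}$.
\end{itemize}

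You should drop the $\rd\log$-coboundary heuristic. It is not needed, and read literally it is false. If $f^*c_1(G)=(1-\delta)f^*c_1(H)$ vanished in $H^1(Y,\Omega^{[1]}_{(X,\Delta+G,f)})$ with $\delta<1$, then the class of $E_{(X,\Delta+G,f)}$ would vanish too. The split summand $\cO_Y$, of slope $0$, would then destabilize it.

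Finally, you should record that $\delta\in\bQ$ when $\delta<1$ (\cite[Theorem 5.1]{LXZ-HRFG}). Without this, neither $G\sim_\bQ(1-\delta)H$ nor covers adapted to $(X,\Delta+G)$ make sense.
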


\begin{proof}
The equality $\mu_H(E_{X,\Delta})=-\frac{1}{n+1}H^n$ is a direct consequence of \cite[Corollary 3.9]{CKT-hyperbolicity}. If $(X,\Delta)$ is K-semistable, then $E_{X,\Delta}$ is slope semistable with respect to $H$ by Theorem \ref{thm:can ext ss}, thus 
\[
\mu_{\max,H}(E_{X,\Delta}) = \mu_{H}(E_{X,\Delta}) = -\frac{1}{n+1}(H^n)
\]
and \eqref{eq:mu_max bound by delta} obviously holds. 

Assume that $(X,\Delta)$ is not K-semistable and let $\delta=\delta(X,\Delta)$. Then $0<\delta<1$ and $\delta\in \bQ$ by \cite[Theorem 5.1]{LXZ-HRFG}. By \cite[Theorem 1.8]{LXZ-HRFG}, there exists some general $\bQ$-divisor $0\le G\sim_{\bQ} H$ such that the log Fano pair $(X,\Gamma:=\Delta+(1-\delta)G)$ is K-semistable. 

Let $f\colon Y\to X$ be a finite cover adapted to $(X,\Gamma)$. Since $G$ is general, $f$ is also adapted to $(X,\Delta)$. By construction, $\Omega^{[1]}_{(X,\Delta,f)}$ is a subsheaf of $\Omega^{[1]}_{(X,\Gamma,f)}$. Since $K_X+\Gamma\sim_{\bQ} -\delta H$ is proportional to $K_X+\Delta$, we deduce that the canonical extension $E_{(X,\Gamma,f)}$ can be obtained as the cokernel of 
\[
\Omega^{[1]}_{(X,\Delta,f)} \to E_{(X,\Delta,f)} \oplus \Omega^{[1]}_{(X,\Gamma,f)},
\]
and we have a commutative diagram
\[
\begin{tikzcd}
0 \arrow[r] & \Omega^{[1]}_{(X,\Delta,f)} \arrow[r] \arrow[d, hook] & E_{(X,\Delta,f)} \arrow[r] \arrow[d, hook] & \cO_Y \arrow[r] \arrow[d, equal] & 0 \\
0 \arrow[r] & \Omega^{[1]}_{(X,\Gamma,f)} \arrow[r] & E_{(X,\Gamma,f)} \arrow[r] & \cO_Y \arrow[r] & 0
\end{tikzcd}
\]
whose middle column is injective by the five lemma. By Theorem \ref{thm:can ext ss}, we know that $E_{X,\Gamma}$ is slope semistable with respect to $-(K_X+\Gamma)$, hence also slope semistable with respect to $H$.  It follows that
\[
\mu_{\max,H}(E_{X,\Delta})\le \mu_{\max,H}(E_{X,\Gamma}) = \mu_H (E_{X,\Gamma}) = \frac{1}{n+1}(K_X+\Gamma\cdot H^{n-1}) = -\frac{\delta}{n+1}(H^n).
\]
This proves \eqref{eq:mu_max bound by delta}.
\end{proof}

\subsection{Log discrepancy estimate and non-degeneracy}

In this subsection, we prove that stable degeneration preserves non-degenerate forms. The last missing ingredient is the following log discrepancy estimate that refines Theorem \ref{thm:log disc>=0}.

\begin{thm}\label{thm:estimate dis}
Let $\sigma$ be a non-zero (but not necessarily non-degenerate) reflexive $p$-form on a klt singularity $x\in X$ of dimension $n$.
\begin{enumerate}
\item Let $D$ be a Koll\'ar component over $x$. Then
\[
\min\{\delta(D,\Delta_D),1\} \cdot \frac{A_X(D)}{n}\le \frac{A_{\sigma}(D)}{p} \, .
\]
\item Let $v$ be the minimizer of the normalized volume function. Then 
\[
\frac{A_X(v)}{n}\le \frac{A_{\sigma}(v)}{p} \, .
\] 
\end{enumerate}
\end{thm}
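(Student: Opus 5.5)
For part (1) I will work on the plt blowup $\pi\colon Y\to X$ of $D$ and pass to the index one covering stack $f\colon\cY\to Y$ with respect to $D$, setting $\cD:=f^*D$. Let $r:=A_\sigma(D)$. The strict transform $\tilde\sigma\in H^0(Y,\Omega^{[p]}_Y(\log D)(-rD))$ from Definition \ref{def:sigma-admissible} (defined for any $\sigma$) does not vanish at the generic point of $D$. Pulling back to $\cY$ and restricting to $\cD$ should give a nonzero section of $\bigl(\Omega^{[p]}_{\cY}(\log\cD)|_{\cD}\bigr)(-r\cD|_{\cD})$, understood reflexively. I will check this on the big open set where $\cY$ and $\cD$ are smooth; since the section is nonzero at the generic point, restricting there suffices. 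By Lemmas \ref{lem:log cotangent restrict to can ext} and \ref{lem:orbifold cotangent}, $\Omega^{[1]}_{\cY}(\log\cD)|_{\cD}\cong E_{\cD,N_{\cD/\cY}}\cong E_{(D,\Delta_D,g),N_{D/Y}}$.

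Next I relate this sheaf to the canonical extension. Adjunction gives $K_Y+D\sim_{\bQ,X}A_X(D)\,D$, so $-(K_D+\Delta_D)\sim_\bQ -A_X(D)\,D|_D=A_X(D)\,L$, where $L:=-D|_D$ is ample. Hence $N_{D/Y}=D|_D$ is a nonzero multiple of $K_D+\Delta_D$, and the extension $E_{(D,\Delta_D,g),N_{D/Y}}$ is isomorphic to the canonical extension $E_{(D,\Delta_D,g)}$, since only the class up to scaling matters. We obtain a nonzero section of $\bigwedge^{[p]}E_{(D,\Delta_D,g)}\otimes g^*\cO(rL)$. With $H:=-(K_D+\Delta_D)$ and $d=n-1=\dim D$, put $\mu_{\max}:=\mu_{\max,H}(E)$ and $\mu:=\mu_H(E)=-\frac{(H^{d})}{n}$, the latter by Lemma \ref{lem:mu_max bound by delta} since $E$ has rank $n=d+1$. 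A nonzero section of a reflexive sheaf forces its maximal slope to be $\ge 0$. Using $\mu_{\max}(\bigwedge^{[p]}E)\le p\,\mu_{\max}(E)$ in characteristic zero, and $\mu_H(g^*rL)=\frac{r}{A_X(D)}(H^{d})$ after normalizing by $\deg g$, I get
\[
0\le p\,\mu_{\max}+\frac{r}{A_X(D)}(H^{d})\le -\frac{p\min\{1,\delta\}(H^{d})}{n}+\frac{r}{A_X(D)}(H^{d}),
\]
where the second step uses \eqref{eq:mu_max bound by delta}. This rearranges to $\min\{1,\delta\}\frac{A_X(D)}{n}\le\frac{r}{p}$.

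There are two delicate points. One is making the slope bound for the exterior power rigorous for reflexive orbifold sheaves; here I would use the standard fact that tensor products of semistable sheaves are semistable, applied via the Harder--Narasimhan filtration on an adapted cover. The other is the bookkeeping of $\deg g$ and of the identification $\cD|_\cD=g^*(D|_D)$.

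For part (2), take a log smooth model $(Y,E)$ adapted to the minimizer $v$. By Theorem \ref{thm:delta->1}, for any $\varepsilon>0$ the divisorial valuations $w=c\cdot\ord_D$ in a small neighborhood $U$ of $v$ come from Kollár components $D$ with $\delta(D,\Delta_D)\ge 1-\varepsilon$. Part (1) then gives $(1-\varepsilon)\frac{A_X(w)}{n}\le\frac{A_\sigma(w)}{p}$, after scaling by $c$, since both sides are homogeneous. Both $A_X$ and $A_\sigma$ are linear, hence continuous, near $v$ in $\QM(Y,E)$ (Lemma \ref{lem:A linear}), and divisorial valuations, namely those with rational coordinates, are dense. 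Letting $w\to v$ and then $\varepsilon\to0$ yields (2). The main obstacle is part (1), specifically producing the nonzero twisted section on the stack and controlling the maximal slope of its exterior power.
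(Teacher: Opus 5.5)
Your proposal is correct and follows the paper's proof essentially step for step: a nonzero section of $\Omega^{[p]}_{\cY}(\log\cD)(-r\cD)|_{\cD}$ on the index one covering stack, the identification of $\Omega^{[1]}_{\cY}(\log\cD)|_{\cD}$ with the canonical extension of $(D,\Delta_D)$ via Lemmas \ref{lem:log cotangent restrict to can ext} and \ref{lem:orbifold cotangent}, the bound of Lemma \ref{lem:mu_max bound by delta}, and, for (2), approximation by Koll\'ar components from Theorem \ref{thm:delta->1} together with the local linearity of $A_X$ and $A_\sigma$ near $v$. The two technical points you flag, the slope bound for the exterior power and the $\deg g$ bookkeeping, are used in the paper in the same way without further comment.
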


\begin{proof}
We first prove (1). Let $\delta := \min\{\delta(D,\Delta_D),1\}$ and $r := A_\sigma(D)$. Let $Y\to X$ be the plt blowup of $D$. Consider the index one covering stack $\varphi\colon \cY\to Y$ with respect to $D$ and let $\cD=\varphi^*D$. By pulling back through $\varphi$, we shall view the strict transform $\tilde{\sigma}$ of $\sigma$ (Definition \ref{def:sigma-admissible}) as a section of $\Omega_{\cY}^{[p]}(\log \cD)(-r\cD)$ that does not vanish at the generic point of $\cD$. By restriction, we get a non-zero section $\tilde{\sigma}|_{\cD}$ of $\Omega_{\cY}^{[p]}(\log \cD)(-r\cD)|_{\cD}$. In particular,
\[
H^0(\cD,\Omega_{\cY}^{[p]}(\log \cD)(-r\cD)|_{\cD}) \neq 0
\]
and hence
\[
\mu_{\max}(\Omega^{[p]}_{\cY}(\log \cD)(-r\cD)|_{\cD}) \ge 0,
\]
where the slope is computed with respect to $-(K_D+\Delta_D)$ and defined in a way analogous to Definition \ref{def:orbifold slope} by considering finite covers of $D$ that factor through $\cD$. Since
\[
K_D+\Delta_D = (K_Y+D)|_D\sim_\bQ (\pi^*K_X + A_{X}(D)\cdot D)|_D\sim_\bQ A_{X}(D)\cdot D|_D,
\]
we also have
\begin{align*}
\mu_{\max}(\Omega^{[p]}_{\cY}(\log \cD)(-r\cD)|_{\cD}) & \le p\cdot \mu_{\max}(\Omega^{[1]}_{\cY}(\log \cD)|_{\cD})+ (-rD|_D\cdot (-K_D-\Delta_D)^{n-2}) \\
& = p\cdot \mu_{\max}(\Omega^{[1]}_{\cY}(\log \cD)|_{\cD})+\frac{r}{A_{X}(D)}(-K_D-\Delta_D)^{n-1}\,.
\end{align*}
By Lemmas \ref{lem:log cotangent restrict to can ext} and \ref{lem:orbifold cotangent}, we have 
\[
\Omega_{\cY}^{[1]}(\log \cD)|_{\cD} \cong E_{\cD,N_{\cD/\cY}}= E_{(D,\Delta_D,\varphi|_{\cD}),N_{D/Y}}
\]
(since the sheaves in question are reflexive, it suffices to show that they agree on the big open set where both $\cY$ and $\cD$ are smooth, thus Lemma \ref{lem:log cotangent restrict to can ext} applies). Since $K_D+\Delta_D$ is proportional to $D|_D$, we see that $E_{(D,\Delta_D,\varphi|_{\cD}),N_{D/Y}}$ is isomorphic to the canonical extension $E_{D,\Delta_D,\varphi}$, thus
 \[
 \mu_{\max}(\Omega^{[1]}_{\cY}(\log \cD)|_{\cD})=\mu_{\max} (E_{D,\Delta_D}) \, .
 \]
Therefore, combining the inequalities above we conclude 
 \[
A_{\sigma}(D)= r\ge \frac{-p\cdot \mu_{\max}(E_{D,\Delta_D}) \cdot A_{X}(D)}{(-K_D-\Delta_D)^{n-1}} \, .
\]
By Lemma \ref{lem:mu_max bound by delta}, we also have
\[
\mu_{\max}(E_{D,\Delta_D})\le -\frac{\delta}n(-K_D-\Delta_D)^{n-1},
\]
hence $A_{\sigma}(D)\ge \frac{\delta \cdot p}{n} A_X(D)$ which gives (1). 

For (2), let $(Y,E)\to X$ be a log smooth model adapted to $v$. By Theorem \ref{thm:delta->1}, for any $\varepsilon > 0$ there exists a divisorial valuation $\lambda \cdot \ord_D \in \QM(Y,E)$ near $v$ such that $D$ is a Koll\'ar component and $\delta(D,\Delta_D) \ge 1 - \varepsilon$. By part (1),
\[
(1-\varepsilon) \cdot \frac{A_X(\lambda\cdot \ord_D)}{n} \le \frac{A_\sigma(\lambda\cdot \ord_D)}{p}\, .
\]
By Lemma \ref{lem:A linear}, we may assume that both $A_X(\cdot)$ and $A_\sigma(\cdot)$ are linear around  $v\in \QM(Y,E)$. Letting $\varepsilon \to 0$, we obtain $\frac{A_X(v)}{n}\le \frac{A_{\sigma}(v)}{p}$.
\end{proof}

\begin{thm} \label{thm:sigma-admissible nbhd}
Let $x\in X$ be a klt singularity and let $v\in \Val_{X,x}$ be the normalized
volume minimizer. Let $(Y,E)\to X$ be a log smooth model adapted to $v$ and let $\sigma\in H^0(X,\Omega_X^{[p]})$ be a non-degenerate reflexive
differential form. 

Then there exists an open neighborhood $U\subseteq \QM(Y,E)$ of $v$ such that every divisorial valuation in $U$ is given by a Koll\'ar component, and the corresponding test configuration specializes $\sigma$ to a non-degenerate $p$-form $\sigma_0$ on the stable degeneration $x_0\in X_0$ of $x\in X$ (see Theorem \ref{thm-SDC}).
\end{thm}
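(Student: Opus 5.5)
The plan is to combine Theorem \ref{thm:estimate dis}(2) with the opposite inequality coming from non-degeneracy, and then use local linearity of the two log discrepancy functions to propagate the resulting equality from $v$ to nearby divisorial valuations.

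First, by \cite[Lemma 2.10, Proposition 2.59]{LX-higher-rank} (as in the proof of Theorem \ref{thm:delta->1}), there is an open neighborhood $U_1\subseteq \QM(Y,E)$ of $v$ with the following properties: every divisorial valuation $\lambda\cdot\ord_D\in U_1$ comes from a Koll\'ar component $D$, and $\gr_D R\cong \gr_v R$. Hence the test configuration induced by $D$ has central fiber the stable degeneration $X_0$. By Lemma \ref{lem:A linear}, after shrinking we may also assume that $A_X(\cdot)$ and $A_\sigma(\cdot)$ are both linear on $U_1$. Here $U_1$ is viewed as an open subset of the cone $\bR_{>0}^r$.

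Next I will compare $A_X$ and $A_\sigma$ on $U_1$. Since $\sigma$ is non-degenerate, $\sigma^{n/p}$ is a nowhere vanishing generator of $\omega_X$ (Remark \ref{rem:trivial-canonical}). Pulled back to $\Omega^n_Y(\log E)$, its valuation is therefore $A_X(w)$ for any $w\in\QM(Y,E)$. Because $w(s\wedge s')\ge w(s)+w(s')$ for rational sections of log forms, we get
\[
A_X(w)=A_{\sigma^{n/p}}(w)\ge \tfrac{n}{p}A_\sigma(w) \quad\text{for all } w\in U_1.
\]
Thus the linear function $\ell(w):=\frac{A_\sigma(w)}{p}-\frac{A_X(w)}{n}$ satisfies $\ell\le 0$ on $U_1$. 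Theorem \ref{thm:estimate dis}(2) gives $\ell(v)\ge 0$, so $\ell(v)=0$. A linear function that is $\le 0$ on an open set and vanishes at an interior point $v$ is identically zero there. Hence $\ell\equiv 0$ on $U_1$.

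In particular, every divisorial $\lambda\ord_D\in U:=U_1$ satisfies $\frac{A_X(D)}{n}=\frac{A_\sigma(D)}{p}$, since both sides are homogeneous in $\lambda$. By Proposition \ref{prop:non-degenerate criterion}, $D$ is then $\sigma$-admissible, meaning the specialization $\sigma_0$ on $\scX_0\cong X_0$ is non-degenerate. The central fiber is normal because it is klt (Definition \ref{def:TC}), so Definition \ref{def:degeneration of form} applies.

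The main point to check is the linearity step. Lemma \ref{lem:A linear} gives linearity only on some adapted model, possibly after a further blowup, so I must make sure the neighborhood can be taken inside the given $\QM(Y,E)$. Passing to a refined adapted model does not change the simplex near $v$, since both models are adapted to $v$ of rational rank $r$, so this should be harmless. The inequality $A_{\sigma^{n/p}}\ge \frac{n}{p}A_\sigma$ for quasi-monomial $w$ (not just divisors) needs a brief justification via a local basis of $\Omega^1_Y(\log E)$; alternatively, one can verify it only at divisorial points and extend by linearity and density.
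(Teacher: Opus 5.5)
Your proposal is correct and follows the same route as the paper. You derive $\frac{A_X(v)}{n}=\frac{A_\sigma(v)}{p}$ from Theorem \ref{thm:estimate dis}(2) together with the reverse inequality $A_X=A_{\sigma^{n/p}}\ge \frac{n}{p}A_\sigma$ coming from non-degeneracy, propagate the equality to a neighborhood of $v$ by local linearity, shrink so that divisorial points are Koll\'ar components degenerating to $X_0$, and conclude by Proposition \ref{prop:non-degenerate criterion}. Your worry about the adapted model is unnecessary: Lemma \ref{lem:A linear} already states that $A_\sigma(\cdot)$ is linear near $v$ on any log smooth model adapted to $v$, so no refinement of the given $\QM(Y,E)$ is needed.
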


\begin{proof}
Let $n=\dim X$. We first show that
\begin{equation}\label{eq:n/p log discrep}
\frac{A_X(v)}{n}= \frac{A_{\sigma}(v)}{p}\, .
\end{equation}
Indeed, by Theorem \ref{thm:estimate dis}(2), we have $\frac{A_X(v)}{n}\le \frac{A_{\sigma}(v)}{p}$. On the other hand, since $\sigma$ is non-degenerate, $\sigma^{\frac{n}{p}}$ is a free generator of $\omega_X$ (see Remark \ref{rem:trivial-canonical}), hence by definition $A_X(w) = A_{\sigma^{n/p}}(w) \ge \frac{n}{p} A_\sigma(w)$ for any quasi-monomial valuation $w\in \Val_{X}$. In particular, $A_X(v) \ge \frac{n}{p} A_\sigma(v)$. Combining the two inequalities we obtain \eqref{eq:n/p log discrep}.

By Lemma \ref{lem:A linear}, both $A_\sigma(\cdot)$ and $A_X(\cdot)$ are linear in some neighborhood of $v$ in $\QM(Y,E)$. Since $\frac{A_X(w)}{n}\ge \frac{A_{\sigma}(w)}{p}$ for any quasi-monomial valuation $w\in \Val_X$ and equality holds for $v$, we must have
\begin{equation} \label{eq:n/p log discrep in nbhd}
\frac{A_X(w)}{n} = \frac{A_{\sigma}(w)}{p}
\end{equation}
in some neighborhood $U\subseteq \QM(Y,E)$ of $v$. Shrinking $U$ if necessary, we may assume by Theorem \ref{thm:delta->1} that every divisorial valuation in $U$ corresponds to a Koll\'ar component. By \cite[Lemma 2.10]{LX-higher-rank}, we may also assume (possibly after shrinking $U$) that the corresponding test configuration degenerates $x\in X$ to $x_0\in X_0$. By \eqref{eq:n/p log discrep in nbhd} and Proposition \ref{prop:non-degenerate criterion}, these Koll\'ar components are also $\sigma$-admissible; in other words, the specialization $\sigma_0$ of $\sigma$ is also non-degenerate.
\end{proof}

\section{Kaledin's conjecture} \label{s-Kaledin}

In this section, we prove a strong form Theorem \ref{thm:Kaledin enhanced} of Kaledin's conjecture that symplectic singularities are formally conical. It also answers a question of Namikawa \cite[Page 3]{Namikawa-torichyperkahler}  (also see \cite{Namikawa-torichyperkahlerII}) in greater generality.




\begin{defn} \label{def:dilating auto}
Let $x\in X$ be a singularity and let $\sigma$ be a reflexive differential form on $X$. A $\sigma$-dilating (or simply dilating if $\sigma$ is clear from the context) automorphism of $x\in X$ is defined to be an automorphism $g$ of $x\in X$ such that $g^*\sigma = \lambda\sigma$ for some $\lambda\in \bC^*$. The group of dilating automorphisms is denoted by $\Aut(x\in X,[\sigma])$. In the case of a symplectic singularity $(x\in X,\sigma)$, a $\sigma$-dilating automorphism is also called a dilating symplectic automorphism. A symplectic singularity is said to be conical if it has a dilating symplectic good torus action. Recall that by Lemma \ref{lem:wt=log discrep}, the symplectic form has positive weight with respect to any dilating good $\bG_m$-action.
\end{defn}

\begin{thm} \label{thm:Kaledin enhanced}
Let $(x\in X,\sigma)$ be a symplectic singularity. Then there exists a K-semistable Fano cone singularity $x_0\in (X_0;\xi_0)$ and a $\bT_0:=\langle\xi_0\rangle$-equivariant symplectic form $\sigma_0$ on $X_0$ such that $(x\in X,\sigma)$ is formally isomorphic to $(x_0\in X_0,\sigma_0)$. 

If moreover $(x\in X,\sigma)$ has a dilating symplectic good torus $\bT_X$-action, then there exists a $\bT$-equivariant algebraic isomorphism 
\[
(x\in X,\sigma)\cong (x_0\in X_0,\sigma_0)
\]
for some torus $\bT$ extending the dilating symplectic actions by $\bT_X$ and $\bT_0$ respectively.
\end{thm}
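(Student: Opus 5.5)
For the first part, I would combine Theorem \ref{thm:sigma-admissible nbhd} with Theorem \ref{thm:symplectic rigid}. Let $v$ be the normalized volume minimizer of $x\in X$ and fix a log smooth model $(Y,E)$ adapted to $v$. Theorem \ref{thm:sigma-admissible nbhd} gives a divisorial valuation in $\QM(Y,E)$ near $v$. It is given by a Koll\'ar component $D$ whose test configuration degenerates $x\in X$ to the stable degeneration $x_0\in X_0$. Along this degeneration $\sigma$ specializes to a non-degenerate $2$-form $\sigma_0$, so $D$ is $\sigma$-admissible, and $\sigma_0$ is closed by Proposition \ref{prop:closed+non-deg}(1). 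By Theorem \ref{thm-SDC}, $X_0$ is klt and $(X_0;\xi_0)$ is a K-semistable Fano cone with $\xi_0=\xi_v$, so $(x_0\in X_0,\sigma_0)$ is a symplectic singularity. Theorem \ref{thm:symplectic rigid} then gives the formal isomorphism $(x\in X,\sigma)^\wedge\cong(x_0\in X_0,\sigma_0)^\wedge$.

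The remaining point is $\bT_0$-equivariance of $\sigma_0$. The form $\sigma_0$ is equivariant for the $\bG_m$ of the chosen test configuration, and we need equivariance for the whole torus $\langle\xi_0\rangle$. Every divisorial valuation $w$ in the neighborhood $U$ yields the same $X_0$ through $\gr_wR\cong\gr_vR$, and the corresponding $\xi_w$ span the Reeb cone near $\xi_0$. The specializations of $\sigma$ along these test configurations should all be the initial form $\mathrm{in}_v(\sigma)$ of $\sigma$ in the $\gr_v$ sense, extended to forms. The reason is that \eqref{eq:n/p log discrep in nbhd} and linearity of $A_\sigma$ on $U$ (Lemma \ref{lem:A linear}) force the initial term to be homogeneous for all these gradings at once. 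Such an initial form is homogeneous for the full $\bT_0$-grading, with weight given by the linear function $A_\sigma$ (compare Lemma \ref{lem:wt=log discrep}). I expect this identification of the specializations with a single multigraded initial form to be the main technical step. I would verify it on the smooth locus using the explicit description of Proposition \ref{prop:non-degenerate criterion}.

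For the second part, suppose $\bT_X$ acts as a dilating symplectic good torus action. The minimizer is unique up to scaling (Theorem \ref{thm-SDC}), so it is $\bT_X$-invariant. By \cite{LX-higher-rank} and Theorem \ref{thm:minimizer is ss}, it should then be of the form $\wt_{\xi}$ for a Reeb vector $\xi$ on $X$, lying in the Reeb cone of a maximal torus $\bT\supseteq\bT_X$ of $\Aut(x\in X,[\sigma])$. Here I would use that the maximal torus of the dilating automorphism group is well defined and that conjugacy moves $\xi$ inside it. In that case $\gr_vR\cong R$ equivariantly, so $X_0\cong X$ algebraically and $\sigma_0=\sigma$ with $\bT$ extending both $\bT_X$ and $\bT_0=\langle\xi\rangle$. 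The subtle step is showing that $\sigma$ is $\langle\xi\rangle$-equivariant. I would deduce this from the first part, since the specialization along a $\bT$-invariant grading of a $\bT_X$-homogeneous form is itself.
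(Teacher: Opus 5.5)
\textbf{The formal isomorphism.} Your argument for it (Theorem \ref{thm:sigma-admissible nbhd}, Proposition \ref{prop:closed+non-deg}(1), Theorem \ref{thm:symplectic rigid}) is exactly the paper's.

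\textbf{The second part has a genuine gap.} You claim the $\bT_X$-invariant minimizer on $X$ ``should be'' of the form $\wt_\xi$ for a Reeb vector on $X$, but this is not justified. Invariance only places the minimizer among $\bT_X$-invariant valuations. It is of the form $\wt_\xi$ for a torus acting on $X$ exactly when $X$ is already a K-semistable Fano cone, which is not automatic. Theorem \ref{thm:minimizer is ss} assumes K-semistability rather than proving it. For conical symplectic singularities the claim is true, but only as a consequence of the theorem you are proving.

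The next step is circular as well. Even granting $v=\wt_\xi$, the specialization of $\sigma$ is only its lowest $\xi$-weight component. So $\sigma_0=\sigma$ requires $\langle\xi\rangle\subseteq\Aut(x\in X,[\sigma])$. Conjugating $\xi$ into a maximal torus $\bT\supseteq\bT_X$ of $\Aut(x\in X,[\sigma])$ is possible only once that inclusion is known, and $\bT_X$-homogeneity of $\sigma$ does not give $\xi$-homogeneity.

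The missing idea is to feed the formal isomorphism from the first part into Proposition \ref{prop:cone+form formal isom = alg isom}. Both $(x\in X,\sigma)$ with $\bT_X$ and $(x_0\in X_0,\sigma_0)$ with $\bT_0$ are conical with dilating good torus actions. Uniqueness up to conjugacy of a maximal torus of $\Aut(\widehat R,[\widehat\sigma])$ (Lemma \ref{lem:max torus in formal auto}) recovers both algebras and forms from the completion, which gives the $\bT$-equivariant algebraic isomorphism directly. This is how the paper concludes.

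\textbf{$\bT_0$-equivariance in the first part.} Here you take a different route from the paper. It can be made to work, but the reason you give does not prove it.
\begin{itemize}
\item Linearity of $A_\sigma$ on $U$ only says, for each divisorial $w$ separately, that $\sigma_{0,w}$ has $\xi_w$-weight $A_\sigma(w)$. A rational $\xi_w$ takes that value on a whole hyperplane of $\bT_0$-weights, so you get neither $\bT_0$-homogeneity nor independence of $w$.
\item Equation \eqref{eq:n/p log discrep in nbhd} plays no role in homogeneity.
\item Proposition \ref{prop:non-degenerate criterion} works on one plt blowup at a time, so it cannot compare different $w$.
\end{itemize}

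What does work is a computation on the common model $(Y,E)$. At the generic point $\eta$ of $E_1\cap\dots\cap E_r$, expand $\pi^*\sigma$ in a frame of $\Omega^p_Y(\log E)$. By $\bQ$-linear independence of the coordinates of $v$, a single exponent $\beta_\sigma\in\bZ^r$ computes $A_\sigma(w)$ for all $w$ near $v$. Moreover, each $\scX_w$ has, at the generic point of its central fibre, the same DVR as the Rees family of $\cO_{Y,\eta}$ (as in Lemma \ref{lem:tc bir construction}). Its central fibre therefore has function field $\bk(\eta)(\bar y_1,\dots,\bar y_r)$, compatibly with $\gr_wR\cong\gr_vR$. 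Hence every $\sigma_{0,w}$ is the degree-$\beta_\sigma$ part of $\pi^*\sigma$, a single $\bT_0$-eigenform.

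This gives slightly more than the paper: the stable-degeneration torus itself dilates $\sigma_0$. The paper avoids this computation. It takes a maximal torus $\bT$ of $\Aut(x_0\in X_0,[\sigma_0])$ and argues by contradiction. Suppose $v_0$ is not $\wt_\xi$ for any $\xi\in N(\bT)_\bR$. Then a $\bT$-invariant Koll\'ar component near $v_0$ not coming from $\bT$, together with Theorem \ref{thm:symplectic rigid} and Proposition \ref{prop:cone+form formal isom = alg isom}, would produce a dilating $\bT\times\bG_m$-action on $(X_0,\sigma_0)$, contradicting maximality. So $v_0=\wt_{\xi_0}$ with $\xi_0\in N(\bT)_\bR$, at the cost of possibly realizing the minimizer by a different torus $\langle\xi_0\rangle\subseteq\bT$.
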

We have the following corollary, whose first part is Kaledin's conjecture (see \cite{Kaledin-sym, Kaledin-survey}). 
\begin{cor} \label{cor-Kaledin}
Every symplectic singularity is formally isomorphic to a conical symplectic singularity, and every conical symplectic singularity is a K-semistable Fano cone. \qed
\end{cor}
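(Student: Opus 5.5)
The corollary follows from Theorem \ref{thm:Kaledin enhanced}; I will derive both parts from it, though most of the work lies in that theorem.

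\emph{First part.} Let $(x\in X,\sigma)$ be a symplectic singularity. Theorem \ref{thm:Kaledin enhanced} gives a formal isomorphism $(x\in X,\sigma)^\wedge\cong(x_0\in X_0,\sigma_0)^\wedge$, where $X_0$ is a Fano cone with good torus $\bT_0=\langle\xi_0\rangle$ and $\sigma_0$ is $\bT_0$-equivariant. So $(X_0,\sigma_0)$ is symplectic and carries a symplectic good torus action. For this to be \emph{conical} in the sense of Definition \ref{def:dilating auto}, the action must be dilating, i.e. $\sigma_0$ must be $\bT_0$-semi-invariant. By Lemma \ref{lem:wt=log discrep} its weight $\alpha$ then satisfies $\langle\alpha,\xi\rangle=A_{\sigma_0}(\xi)$, and Theorem \ref{thm:sigma-admissible nbhd} gives
\[
A_{\sigma_0}(\xi)=\tfrac{2}{n}A_{X_0}(\xi)>0
\]
for every Reeb vector $\xi$. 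The weight is therefore positive and the action is dilating. If a single $\bG_m$ is preferred, a quasi-regular Reeb vector near $\xi_0$ supplies one.

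\emph{Second part.} Let $(x\in X,\sigma)$ be conical, with dilating good $\bT_X$-action. The second half of Theorem \ref{thm:Kaledin enhanced} gives a $\bT$-equivariant algebraic isomorphism $(x\in X,\sigma)\cong(x_0\in X_0,\sigma_0)$, with $\bT\supseteq\bT_X,\bT_0$. Transporting $\xi_0$ through this isomorphism yields a Reeb vector for a good torus action on $X$ that minimizes the normalized volume, since K-semistability of $(X_0;\xi_0)$ is intrinsic. By Paragraph \ref{say-higherrank}, this says exactly that $X$ is a K-semistable Fano cone.

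\emph{Main obstacle.} The substance lies in proving Theorem \ref{thm:Kaledin enhanced} itself. Its first part follows by combining Theorem \ref{thm:sigma-admissible nbhd}, which provides a $\sigma$-admissible Koll\'ar component degenerating to $X_0$, with Theorem \ref{thm:symplectic rigid}, which gives the formal isomorphism. The hard piece is the algebraicity and equivariance statement in the conical case, i.e. comparing the two torus actions inside $\Aut(x\in X,[\sigma])$.
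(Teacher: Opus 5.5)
Your proposal is correct and matches the paper, which reads the corollary off directly from Theorem \ref{thm:Kaledin enhanced}. The first half of that theorem gives the formal isomorphism with the conical $(x_0\in X_0,\sigma_0)$, and the second half transports the K-semistable Reeb vector $\xi_0$ to any conical $X$. One small correction on the first part: Theorem \ref{thm:sigma-admissible nbhd} does not give $A_{\sigma_0}(\xi)=\frac{2}{n}A_{X_0}(\xi)$ for every Reeb vector; its proof only establishes this equality near the minimizer. The positivity you want follows instead from Lemma \ref{lem:wt=log discrep}, applied to both $\sigma_0$ and $\sigma_0^{n/2}$. In any case, positivity is not needed, since ``dilating'' in Definition \ref{def:dilating auto} only requires semi-invariance of $\sigma_0$.
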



We prove Theorem \ref{thm:Kaledin enhanced} in several steps. Given the results from the previous sections, what we really need to prove is that formal isomorphisms of conical symplectic singularities are algebraizable, and that the Reeb vector dilates the symplectic form (a priori, since the construction involves approximating the normalized volume minimizer by Koll\'ar components, the specialization of the symplectic form depends on the choice of the approximation and is only preserved by some rational approximation of the Reeb vector). For this we follow some ideas from \cite[8.4]{Kol-cone-auto}. Roughly speaking, we shall prove that conical symplectic singularities with a given formal isomorphism type are in one-to-one correspondence with the maximal tori in the formal dilating symplectic automorphism group. Moreover, the maximal torus is unique up to conjugation, hence the algebraic isomorphism class of the conical symplectic singularity is determined by its completion. We then analyze the stable degeneration process to show that the maximal torus yields a Reeb vector giving the normalized volume minimizer.

As a prototype of the argument, we first show how to turn a formal isomorphism into an algebraic isomorphism when the singularities have good torus actions.

\begin{lem} \label{lem:cone formal isom = alg isom}
Let $x\in X$ and $y\in Y$ be two singularities with good actions by tori $\mathbb T_X$ and $\mathbb T_Y$. Assume that they are formally isomorphic, i.e. $\widehat{\cO_{X,x}}\cong \widehat{\cO_{Y,y}}$. Then there is an algebraic isomorphism $(x\in X)\cong (y\in Y)$ as $\mathbb{T}$-singularities for some good actions by a torus $\mathbb T$ extending the actions by $\mathbb T_X$ and $\mathbb T_Y$ respectively.
\end{lem}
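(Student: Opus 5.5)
We plan to realize both tori inside one algebraic group of formal automorphisms and use conjugacy of maximal tori, following \cite[8.4]{Kol-cone-auto}. Set $\widehat{R}:=\widehat{\cO_{X,x}}\cong \widehat{\cO_{Y,y}}$ with maximal ideal $\widehat{\fm}$. Through the given formal isomorphism, $\bT_X$ and $\bT_Y$ both act on $\widehat{R}$ by continuous automorphisms. For each $k$, let $G_k:=\Aut(\widehat{R}/\widehat{\fm}^{k})$. Each $G_k$ is a linear algebraic group, since $\widehat{R}/\widehat{\fm}^k$ is finite dimensional, and the transition maps $G_{k+1}\to G_k$ are algebraic. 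Let $G$ be the pro-algebraic group $\Aut(\widehat{R})=\varprojlim G_k$.

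The first step is to reduce to a single finite level. Because the actions are good, $R_X=\bigoplus_\alpha (R_X)_\alpha$ is generated by finitely many homogeneous elements. Every $\bT_X$-weight occurring on $\widehat{\fm}/\widehat{\fm}^2$ pairs positively with a Reeb vector $\xi_X$. Hence for $k\gg0$ the $\bT_X$-action on $\widehat{R}$ is determined by its image in $G_k$. More precisely, the algebraic ring $R_X$ is recovered as the subring of $\bT_X$-finite vectors in $\widehat{R}$. The same holds for $\bT_Y$ and $R_Y$. The key point is that, for a torus $\bT'\subseteq G$ lifting compatibly through all levels, the subring $\widehat{R}^{\bT'\text{-fin}}=\bigoplus_\chi \widehat{R}_\chi$ of $\bT'$-finite vectors is a finitely generated algebra. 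Moreover, its spectrum is an algebraic singularity with a good $\bT'$-action and completion $\widehat{R}$, provided $\bT'$ contains a Reeb-type one-parameter subgroup. Positivity of weights on the cotangent space gives the finite dimensionality of each weight space.

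Next we choose a maximal torus $\bT\subseteq G$ containing $\bT_X$. Dimension bounds make this possible: a torus acting faithfully on $\widehat{R}$ acts faithfully on $\widehat{\fm}/\widehat{\fm}^2$ after linearization, so its rank is at most the embedding dimension. Every torus in $G$ is contained in a maximal one, and any two maximal tori are conjugate. We check conjugacy levelwise in the groups $G_k$ and pass to the limit. This works because the kernels $\ker(G_{k+1}\to G_k)$ are unipotent, so maximal tori of $G_{k+1}$ map isomorphically onto maximal tori of $G_k$, and conjugating elements can be lifted step by step.

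Choose a maximal torus $\bT'\supseteq\bT_Y$. Then $\bT'=g\bT g^{-1}$ for some $g\in G$. Replacing the formal isomorphism by its composite with $g$, we may assume $\bT'=\bT$. The torus $\bT$ commutes with $\bT_X$, so $R_X$ decomposes into $\bT$-weight spaces. Since $\bT\supseteq\bT_X$, every $\bT$-finite vector is $\bT_X$-finite. Conversely, $\bT$ preserves the finite-dimensional $\bT_X$-weight spaces. Together these give $\widehat{R}^{\bT\text{-fin}}=R_X$, and symmetrically $\widehat{R}^{\bT\text{-fin}}=R_Y$. Hence $R_X=R_Y$ inside $\widehat{R}$, and this is an isomorphism of $\bT$-singularities. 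The action of $\bT$ is good because it contains the good action of $\bT_X$.

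The main obstacle is the Lie-theoretic step, namely the existence and conjugacy of maximal tori in the pro-algebraic group $G$, together with the finiteness of $\bT$-weight spaces. I would handle this by passing to a large finite level $k$. I would also use the unipotence of the kernels, noting that $\exp$ converges $\widehat{\fm}$-adically on $\ker(G\to G_1)$, so that the conjugating element can be built as a convergent limit.
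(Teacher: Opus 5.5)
Your proposal is correct and follows essentially the same route as the paper: extend $\bT_X$ to a maximal torus $\bT$ of $\Aut(\widehat{R})$ (existence and conjugacy by \cite[8.4]{Kol-cone-auto}), recover $R_X$ as the direct sum of the $\bT$-weight spaces of $\widehat{R}$ using that the $\bT_X$-weight spaces are finite dimensional, and conclude from conjugacy of maximal tori. One correction to your sketch of the cited group-theoretic step: the maps $G_{k+1}\to G_k$ need not be surjective, and their kernels are unipotent only for $k\ge 2$, so you should first replace each $G_k$ by the stable image $\bigcap_{m}\mathrm{im}(G_m\to G_k)$, as Koll\'ar does, before lifting maximal tori and conjugating elements level by level.
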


We emphasize that we do not assume that the tori acting on $X$ and $Y$ have the same dimension, and even if they do, the formal isomorphism does not need to be compatible with the torus actions.

\begin{proof}
This should be well-known to the experts, see e.g. \cite[Section 8]{Kol-cone-auto}. We sketch a proof for the reader's convenience. Let $\widehat{R}=\widehat{\cO_{X,x}}\cong \widehat{\cO_{Y,y}}$. By \cite[8.4]{Kol-cone-auto}, the maximal torus in the formal automorphism group $\Aut(\widehat{R})$ is well-defined and unique up to conjugation. 

If $\bT_X$ is the torus acting on $x\in X$, we may extend it to a maximal torus $\bT\subseteq \Aut(\widehat{R})$. The subalgebra $R_X:=H^0(X,\cO_X)\subseteq \widehat{R}$ is a direct sum of $\bT_X$-invariant subspaces. Because the $\bT_X$-action is good, every $\bT_X$-invariant subspace of a given weight is finite dimensional, and further decomposes into a direct sum of $\bT$-invariant subspaces. It follows that $R_X$ can be recovered from $\widehat{R}$ as the direct sum of $\bT$-invariant subspaces. Since $\bT$ is unique up to conjugation, this implies that the algebra $R_X$ is determined (up to isomorphism) by $\widehat{R}$, i.e. we have
\[
\begin{tikzcd}
 (R_X, \mathbb T_X\subset \mathbb T) \arrow[r, hook] \arrow[d,dotted] & (\widehat{R},\mathbb{T})\arrow{d}{\cong}  \\
(R_Y, \mathbb T_Y\subset \mathbb T) \arrow[r,hook] & (\widehat{R},\mathbb{T})
\end{tikzcd}
\]
In particular, there exists a $\mathbb T$-equivariant algebraic isomorphism between $X$ and $Y$.  
\end{proof}

To prove Theorem \ref{thm:Kaledin enhanced} we need the following generalization of Lemma \ref{lem:cone formal isom = alg isom} to the setting of reflexive differentials, which in particular answers the algebraicity question raised in \cite[Page 3]{Namikawa-torichyperkahler}.

\begin{prop} \label{prop:cone+form formal isom = alg isom}
Let $x\in X$ (resp. $y\in Y$) be  singularities with reflexive $p$-forms $\sigma_X$ (resp. $\sigma_Y$) and dilating good actions by tori $\mathbb T_X$ (resp. $\mathbb T_Y$). Assume that we have a formal isomorphism $(x\in X,\sigma_X)^\wedge \cong (y\in Y,\sigma_Y)^\wedge$. Then there exist some dilating good actions on $X$ (resp. $Y$) by a torus $\mathbb T$ extending the actions by $\mathbb T_X$ (resp. $\mathbb T_Y$), and a $\bT$-equivariant algebraic isomorphism $(x\in X,\sigma_X)\cong (y\in Y,\sigma_Y)$.
\end{prop}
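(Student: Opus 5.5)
We adapt the proof of Lemma \ref{lem:cone formal isom = alg isom}, replacing the full formal automorphism group $\Aut(\widehat{R})$ by the formal dilating group $G:=\Aut(\widehat{R},[\widehat{\sigma}])$. Here $\widehat{R}=\widehat{\cO_{X,x}}\cong\widehat{\cO_{Y,y}}$ and $\widehat{\sigma}$ is the common completion of $\sigma_X$ and $\sigma_Y$.

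First we would show that $G$ has a well-defined maximal torus, unique up to conjugation. The approach follows \cite[8.4]{Kol-cone-auto}. For each $k$, consider the image $G_k$ of $G$ in $\Aut(\widehat{R}/\widehat{\fm}^k)$. The condition $g^*\widehat\sigma=\lambda\widehat\sigma$ is closed: it can be tested on truncations of $\widehat\sigma$, viewed as an element of $\widehat{\Omega}^{[p]}$ modulo high powers of $\fm$. It also transforms compatibly with the truncation maps. Hence each $G_k$ is a linear algebraic group, and $G=\varprojlim G_k$ is a pro-algebraic group whose transition maps $G_{k+1}\to G_k$ have unipotent kernels. Maximal tori of $G_k$ are conjugate. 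A maximal torus of $G_{k+1}$ maps isomorphically onto a maximal torus of $G_k$, because the kernel is unipotent and the argument is the same as in loc. cit. Taking limits then gives maximal tori $\bT\subseteq G$ that are unique up to $G$-conjugacy. Any torus in $G$, in particular the images of $\bT_X$ and $\bT_Y$, is contained in such a maximal $\bT$.

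Next, take a maximal torus $\bT\supseteq\bT_X$ in $G$. As in Lemma \ref{lem:cone formal isom = alg isom}, $R_X\subseteq\widehat{R}$ is the sum of $\bT_X$-weight spaces. Each weight space is finite dimensional because the action is good, so it splits into $\bT$-weight spaces. Therefore $R_X$ equals the span of the $\bT$-eigenvectors in $\widehat{R}$, and in particular is $\bT$-stable. The $\bT$-action on $X$ is good: it extends a good action, so every orbit closure still contains $x$, and effectivity holds after replacing $\bT$ by its image. It is dilating by definition of $G$, since $\bT\subseteq G$ acts on $\widehat\sigma$ and hence on $\sigma_X$ through a character. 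Do the same for $Y$ with a maximal torus $\bT'\supseteq\bT_Y$.

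By conjugacy there is $g\in G$ with $g\bT g^{-1}=\bT'$. Then $g$ carries the span of $\bT$-eigenvectors to the span of $\bT'$-eigenvectors, so it restricts to a $\bT$-equivariant algebra isomorphism $R_X\cong R_Y$. Since $g^*\widehat\sigma=\lambda\widehat\sigma$, rescaling by $\lambda^{-1}$ turns this into an isomorphism of forms. We may rescale, or compose with an element of $\bT$ acting on $\widehat\sigma$ by $\lambda^{-1}$; this is possible as long as the character of $\widehat\sigma$ is nontrivial, which holds by dilation. Finally, the algebraic form $\sigma_X$ is recovered from $\widehat\sigma$ as its image in $\Omega^{[p]}$, because forms on the cone are determined by their $\bT$-finite parts. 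So the algebraic isomorphism carries $\sigma_X$ to $\sigma_Y$.

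The main obstacle is the first step, namely the pro-algebraic structure of $G$ and the conjugacy of its maximal tori. The key point is that stabilizing the line $[\widehat\sigma]$ cuts out an algebraic subgroup at each finite level. We would verify this by noting that the action on $\widehat{\Omega}^{[p]}/\fm^k\widehat\Omega^{[p]}$ factors through $G_{k'}$ for some $k'$ depending on $k$. A second point needing care is the final step, where we must check that the reflexive form on the algebraic cone coincides with the $\bT$-finite part of its completion.
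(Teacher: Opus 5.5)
Your proposal follows the paper's own proof. The paper likewise:
\begin{itemize}
\item extends $\bT_X$ to a maximal torus of $\Aut(\widehat{R},[\widehat{\sigma}])$;
\item gets existence and conjugacy of maximal tori from Koll\'ar's pro-algebraic argument with unipotent kernels;
\item recovers $R_X$ and $\sigma_X$ from the $\bT$-eigenvectors.
\end{itemize}
The step you flag as the main obstacle is exactly Lemma~\ref{lem:reflex diff finite level}, which you assert but do not prove. The paper proves it by applying Artin--Rees twice to the double-dual description $\widehat\Omega^{[p]}=\Hom(\Hom(\widehat\Omega^{p},\widehat R),\widehat R)$, together with the fact that $\Omega^1_X\otimes R_l$ depends only on $R_{l+1}$.

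Your explicit normalization of the scalar $\lambda$ is a point the paper leaves implicit. However, your claim that the weight of $\widehat\sigma$ is nontrivial ``by dilation'' is not right under the paper's definition, which allows weight zero; for example, take a form pulled back from the base of a cone. In that case, choose the conjugating element in the identity component instead: the character $\lambda\colon \Aut(\widehat{R},[\widehat{\sigma}])\to\bG_m$ is trivial there, because it is already trivial on a maximal torus.
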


We start with some analysis of the formal automorphism group. We use the following notation until we finish the proof of this proposition. Let $R:=R_X:=H^0(\cO_X)$ and $\fm:=\fm_x$. Let $\widehat{R} = \widehat{\cO_{X,x}}$, let $\widehat{\Omega}_{\widehat{R}}^{[p]}$ (resp. $\widehat{\Omega}_{\widehat{R}}^p$) be the completion of $\Omega_X^{[p]}$ (resp. $\Omega_{X}^p$) at $x$, and $\widehat{\sigma}\in \widehat{\Omega}_{\widehat{R}}^{[p]}$ the image of $\sigma:=\sigma_X$. We use similar notation on $Y$ by changing the subscript. For each $k\in\bN$, let $R_k:=R/\fm^k$, $\Omega_k :=\Omega_{X}^{[p]}\otimes R_k$ and let $\sigma_k$ be the image of $\sigma$ in $\Omega_k$. These only depend on the formal completion $(x\in X,\sigma_X)^\wedge$. By analogy with \cite[8.4]{Kol-cone-auto}, we want to express $\Aut(\widehat{R},[\widehat{\sigma}])$ as an inverse limit of $\Aut(R_l,[\sigma_k])$. In order to make sense of this, we need to show that each $\Omega_k$ depends only on $R_l$ (rather than $\widehat{R}$) for some sufficiently large $l$, so that the group $\Aut(R_l,[\sigma_k])$ is well-defined.

\begin{lem} \label{lem:reflex diff finite level}
For each $k\in\bN$, there exists some integer $l\ge k$ such that the $R_k$-module $\Omega_k$ is determined by $R_l$. In particular, any $\varphi\in \Aut(R_l)$ induces an isomorphism $\varphi^*\Omega_k \cong \Omega_k$. 
\end{lem}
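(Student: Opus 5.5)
The plan is to present the reflexive module $\Omega^{[p]}_X$ as the kernel of a map between modules of Kähler differentials twisted by an ideal, and then use Artin--Rees to see that only finitely many orders of the completion are needed. Concretely, $\Omega^{[p]}_X$ is a finitely generated reflexive $R$-module, hence a second syzygy. It can therefore be written as $\Hom_R(M,R)$ for a finitely generated module $M$. The natural choice is $M=(\Omega^p_X)^*$: since $\Omega^{[p]}_X=(\Omega^p_X)^{**}$, we get $\Omega^{[p]}_X=\Hom_R(\Hom_R(\Omega^p_X,R),R)$. Both $\Omega^p_X$ and the double dual commute with completion, because $\widehat{R}$ is flat over $R$ and Hom of finitely generated modules commutes with flat base change. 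Hence $\widehat{\Omega}^{[p]}_{\widehat R}=(\widehat\Omega^p_{\widehat R})^{**}$, where $\widehat\Omega^p_{\widehat R}$ is the $\fm$-adically separated module of differentials of $\widehat R$. This is intrinsic to the complete local ring $\widehat R$: it is $\bigwedge^p$ of $\varprojlim_l \Omega_{R_l}$.

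Next, for the finite-level statement, I would choose a presentation $F_1\to F_0\to (\widehat\Omega^p_{\widehat R})^*\to 0$ by free modules. Then $\widehat\Omega^{[p]}=\ker(F_0^*\to F_1^*)$ is a kernel of a map between free modules. Tensoring with $R_k$, I would use Artin--Rees to find $c$ with $\ker\cap \fm^{k+c}F_0^*\subseteq \fm^k\ker$. It follows that $\Omega_k=\widehat\Omega^{[p]}/\fm^k\widehat\Omega^{[p]}$ is determined by the map $F_0^*\otimes R_{k+c}\to F_1^*\otimes R_{k+c}$: it is the image in $F_0^*\otimes R_k$ of the kernel modulo $\fm^{k+c}$, which equals the image of the true kernel by the Artin--Rees inclusion.

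The remaining point is that this map modulo $\fm^{k+c}$ is itself determined by a finite truncation $R_l$. For this I would apply the same Artin--Rees argument twice more. First, $\widehat\Omega^p\otimes R_m=\bigwedge^p\Omega_{R_{m+1}}\otimes R_m$ depends only on $R_{m+1}$. Second, $\Hom(\widehat\Omega^p,\widehat R)\otimes R_m$ is the image of $\Hom(\widehat\Omega^p\otimes R_{m'},R_{m'})$ in $\Hom(\cdot\,,R_m)$ for $m'\gg m$; this again follows from Artin--Rees applied to a presentation of $\widehat\Omega^p$. Iterating, the dual and then the double dual modulo $\fm^k$ are functorially determined by $R_l$ for $l$ large, depending on $k$. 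Functoriality in $R_l$ then gives that any $\varphi\in\Aut(R_l)$ induces $\varphi^*\Omega_k\cong\Omega_k$.

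I expect the main obstacle to be making the Artin--Rees constants independent of the chosen automorphism. The constant $c$ is attached to the specific submodule, and an automorphism of $R_l$ need not lift to $\widehat R$, so it is not obvious that $\varphi$ transports one presentation to another with the same constant. To handle this, I would phrase the truncated description intrinsically as a statement about images of inverse systems $\mathrm{im}(\mathcal{K}_{m'}\to \mathcal{K}_m)$. These images stabilize for $m'\ge m+c$ (Mittag-Leffler), and are defined canonically from the ring $R_{m'}$ alone, so they are preserved by any isomorphism of $R_{m'}$. With that formulation no lifting of $\varphi$ is needed.
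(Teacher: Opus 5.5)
Your overall route is the paper's. You write $\widehat{\Omega}^{[p]}$ as a double dual, and use Artin--Rees plus stabilization of images to show that $\Hom(M,N)\otimes R_k$ is determined by finitely many truncations of $M$ and $N$. You apply this twice and use that $\widehat\Omega^p\otimes R_m$ depends only on $R_{m+1}$. Your observation that the resulting construction is functorial in the truncated ring, so no lift of $\varphi$ is needed, is also how the paper's argument works.

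However, the key identification is misstated in both places you use it. You claim that $\Omega_k=K/\fm^kK$, with $K=\ker(F_0^*\to F_1^*)$, is the image in $F_0^*\otimes R_k$ of the kernel modulo $\fm^{k+c}$. You likewise claim that $\Hom(\widehat\Omega^p,\widehat R)\otimes R_m$ is the image of $\Hom(\widehat\Omega^p\otimes R_{m'},R_{m'})$ in $\Hom(\widehat\Omega^p\otimes R_m,R_m)$. Any such image is a submodule of the level-$k$ object, at best $K/(K\cap\fm^kF_0^*)$, and $K\cap\fm^kF_0^*$ can be strictly larger than $\fm^kK$. For example, take $\widehat R=\bC[[x,y]]$, $M=\fm$, and $R_m=\widehat R/\fm^m$. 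Then $\Hom(M,\widehat R)=\widehat R$. But multiplication by $a$ induces zero on $M\otimes R_m=\fm/\fm^{m+1}\to R_m$ as soon as $a\in\fm^{m-1}$. So the image of $\Hom(M,\widehat R)$ in $\Hom(M\otimes R_m,R_m)$ is $R_{m-1}$, not $\Hom(M,\widehat R)\otimes R_m=R_m$.

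The fix uses exactly the Artin--Rees inclusion $K\cap\fm^{k+c}F_0^*\subseteq\fm^kK$ that you wrote down, but you must reduce from a higher level. Let $V\subseteq F_0^*\otimes R_{k+c}$ be the image of $K$. By stabilization, $V$ equals the image of $\ker(F_0^*\otimes R_m\to F_1^*\otimes R_m)=\Hom_{R_m}(N\otimes R_m,R_m)$ for $m\gg k+c$, where $N=(\widehat\Omega^p)^*$. Then
\[
V\otimes R_k = K/\bigl(\fm^kK+K\cap\fm^{k+c}F_0^*\bigr) = K/\fm^kK=\Omega_k .
\]
This is precisely the paper's claim that $\Hom(M,N)\otimes R_k\to V_l\otimes R_k$ is an isomorphism for $l\gg k$. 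There $V_l$ is the image of $\Hom(M,N)$ in $\Hom(M_l,N_l)$, equivalently the image of $\Hom(M_{l'},N_{l'})$ for $l'\gg l$. The paper proves it with a free cover $L\to M$ and Artin--Rees for $\Hom(M,N)\subseteq\Hom(L,N)$, which is your kernel presentation with $L=F_0$. With this correction made in both places, your argument goes through and coincides with the paper's.
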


\begin{proof}
For any finitely generated $\widehat{R}$-modules $M,N$ and any $l\in \bN$, let $V_l$ be the image of $$\varphi_l\colon \Hom(M,N)\to \Hom(M_l,N_l)$$ (where for ease of notation we denote $M\otimes R_l$ by $M_l$, etc., and Hom is taken in the category of $\widehat{R}$-modules). Since $\Hom(M_l,N_l)$ is of finite dimension and $\Hom(M,N)$ is an inverse limit of $\Hom(M_l,N_l)$, we know that $V_l$ is also the image of $\Hom(M_{l'},N_{l'})$ for some sufficiently large $l'$. We claim that for any $k\in\bN$, there exists some $l\in\bN$, $l\ge k$ such that the induced map 
\[
\varphi_{k,l}\colon\Hom(M,N)\otimes R_k \to V_l\otimes R_k
\]
is an isomorphism. Indeed, let $L\to M$ be a surjection from a finite free $\widehat{R}$-module, then $\Hom(M,N)$ is a submodule of $\Hom(L,N)$ and $s\in \Hom(M,N)$ is in the kernel of $\varphi_{l}$ if and only if $s\in \fm^l \cdot\Hom(L,N)$. By the Artin-Rees lemma, the latter condition implies $s\in \fm^k\cdot\Hom(M,N)$ if $l\gg 0$. This translates to the injectivity of $\varphi_{k,l}$, whereas its surjectivity is clear. It then follows that $\Hom(M,N)\otimes R_k$ is determined by $M_l$ and $N_l$ for some sufficiently large $l\in\bN$ (depending on $k$).

By definition, $$\widehat{\Omega}_{X}^{[p]} = \Hom(\Hom(\widehat{\Omega}_{X}^p,\widehat{R}),\widehat{R})\,.$$ Applying the above observation twice, we see that $\Omega_k$ only depends on $\Omega_{X}^p\otimes R_l$ and $R_l$ for some sufficiently large $l$. Note that $\Omega_{X}^1\otimes R_l\cong \Omega_{R_{l+1}/\bC}\otimes R_l$ only depends on $R_{l+1}$. Therefore, $\Omega_k$ only depends on $R_l$ for sufficiently large $l$.
\end{proof}

\begin{lem} \label{lem:max torus in formal auto}
A maximal torus of $\Aut(\widehat{R},[\widehat{\sigma}])$ exists and is unique up to conjugation. 
\end{lem}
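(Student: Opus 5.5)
We will follow the strategy of \cite[8.4]{Kol-cone-auto} and express $\Aut(\widehat{R},[\widehat{\sigma}])$ as an inverse limit of linear algebraic groups. For each $k$, choose $l=l(k)\ge k$ as in Lemma \ref{lem:reflex diff finite level}; we may assume $l(k)$ is increasing in $k$. Every $\varphi\in\Aut(R_l)$ then acts on $\Omega_k$, and we let
\[
G_k:=\{\varphi\in \Aut(R_{l(k)}) \mid \varphi^*\sigma_k\in \bC^*\cdot \sigma_k\}.
\]
Here $\Aut(R_l)$ is a linear algebraic group, being a closed subgroup of $\mathrm{GL}(R_l)$. The condition that $\varphi^*\sigma_k$ is proportional to $\sigma_k$ is Zariski closed: it says that $\varphi^*\sigma_k\wedge\sigma_k$-type minors vanish, i.e.\ that $\varphi^*\sigma_k$ lies in the line spanned by $\sigma_k$ in the finite-dimensional space $\Omega_k$. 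Hence each $G_k$ is a linear algebraic group. The natural reduction maps $R_{l(k+1)}\to R_{l(k)}$ and $\Omega_{k+1}\to\Omega_k$ are compatible with these actions, so they induce algebraic homomorphisms $G_{k+1}\to G_k$.

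Next we identify the limit. Completeness gives $\Aut(\widehat{R})=\varprojlim \Aut(R_l)$. Since $\widehat{\Omega}^{[p]}_{\widehat{R}}=\varprojlim \Omega_k$ and $\widehat{\sigma}=(\sigma_k)_k$, we get $\Aut(\widehat{R},[\widehat{\sigma}])=\varprojlim_k G_k$. The one point needing care is the scalars: the scalars $\lambda_k$ with $\varphi^*\sigma_k=\lambda_k\sigma_k$ are compatible because $\sigma_k\neq 0$ for $k\gg0$, so they stabilize to a single $\lambda$.

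To handle the fact that the transition maps need not be surjective, we replace $G_k$ by the stable image $G'_k:=\bigcap_{j\ge k}\mathrm{im}(G_j\to G_k)$. This is a descending chain of closed subgroups, so it stabilizes by the Noetherian property. The maps $G'_{k+1}\to G'_k$ are surjective and $\varprojlim G'_k=\varprojlim G_k$. We then show that for large $k$ the kernels of $G'_{k+1}\to G'_k$ are unipotent. Such a kernel consists of automorphisms trivial modulo $\fm^{l(k)}$, and therefore lies in the unipotent group $\{\varphi : \varphi\equiv \mathrm{id} \bmod \fm^{l(k)}\}$ acting on $R_{l(k+1)}$; this group is unipotent since $\varphi-\mathrm{id}$ raises the $\fm$-adic filtration.

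Given unipotent kernels, the Koll\'ar argument runs as follows. A maximal torus $\bT_k\subseteq G'_k$ exists and is unique up to conjugation. Surjections with unipotent kernel map maximal tori isomorphically onto maximal tori, and every maximal torus downstairs lifts, by conjugacy in the preimage, which is an extension of a torus by a unipotent group. This lets us build a compatible system $(\bT_k)$ with $\bT_{k+1}\xrightarrow{\sim}\bT_k$ for $k\ge k_0$, giving a torus $\bT=\varprojlim\bT_k\subseteq \Aut(\widehat{R},[\widehat{\sigma}])$. It is maximal because any torus in the limit has image a torus in some $G'_k$.

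For uniqueness, let $\bT,\bT'$ be two maximal tori. Their images in $G'_k$ are conjugate by elements $g_k$. Each fiber of admissible conjugators forms a torsor under the centralizer-type groups, and we can choose the $g_k$ compatibly by the Mittag-Leffler property: the sets of conjugators are cosets of algebraic subgroups, nonempty and compatible, and inverse limits of nonempty algebraic cosets under surjective-with-unipotent-kernel maps are nonempty. Their limit $g$ conjugates $\bT$ to $\bT'$.

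The main obstacle is verifying the unipotent kernel claim. The difficulty is that the dilating condition involves $\sigma_k$, which only sees $\Omega_k$, and not all of $R_{l(k)}$. If unipotence causes trouble, the first fallback is to work directly inside $\Aut(\widehat{R})$ via \cite[8.4]{Kol-cone-auto}, which already provides the pro-algebraic structure with unipotent kernels. We would then note that $\Aut(\widehat{R},[\widehat{\sigma}])$ is cut out by closed conditions at each level, so it is a closed pro-algebraic subgroup and inherits unipotent kernels from the ambient tower.
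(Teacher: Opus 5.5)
Your proposal is correct and follows essentially the paper's route: realize $\Aut(\widehat{R},[\widehat{\sigma}])$ as $\varprojlim_k \Aut(R_{l_k},[\sigma_k])$, observe that each transition kernel is unipotent because it lies in the kernel of $\Aut(R_{l_{k+1}})\to\Aut(R_{l_k})$, and then run the argument of \cite[8.4]{Kol-cone-auto}. Your stable-image replacement and the explicit lifting of conjugators only spell out what the paper leaves to ``the same argument as in loc.\ cit.'' The worry in your last paragraph is unnecessary, since that containment of kernels already settles the unipotence (given $l(k)\ge 2$).
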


\begin{proof}
By Lemma \ref{lem:reflex diff finite level}, we can choose an increasing sequence of positive integers $l_k\ge k$ ($k=1,2,\dots$) such that $\Omega_k$ only depends on $R_{l_k}$ for each $k$. Then we can define
\[
\Aut(R_{l_k},[\sigma_k]) := \{\varphi\in \Aut(R_{l_k})\,|\,\varphi^*\sigma_k = \lambda \sigma_k \mbox{ for some } \lambda\in \bC^*\},
\]
which is an algebraic subgroup of $\Aut(R_{l_k})$. Recall that $\Omega_k$ and $R_k$ only depend on the completion $\widehat{R}$, and we have $\Aut(\widehat{R},[\widehat{\sigma}]) = \varprojlim \Aut(R_{l_k},[\sigma_k]) $. The kernel of the natural homomorphism
\[
\Aut(R_{l_{k+1}},[\sigma_{k+1}])\to \Aut(R_{l_k},[\sigma_k])
\]
is unipotent for all $k\ge 2$, as it is contained in the kernel of $\Aut(R_{l_{k+1}})\to \Aut(R_{l_k})$, and the latter is unipotent as in \cite[8.4]{Kol-cone-auto}. By the same argument as in {\it loc. cit.}, we deduce that a maximal torus of $\Aut(\widehat{R},[\widehat{\sigma}])$ exists and is unique up to conjugation. 
\end{proof}

\begin{proof}[Proof of Proposition \ref{prop:cone+form formal isom = alg isom}]
We apply the same argument as in the proof of Lemma \ref{lem:cone formal isom = alg isom}. Extend $\bT_X$ to a maximal torus $\bT$ of $\Aut(\widehat{R},[\widehat{\sigma}])$. As in the proof of Lemma \ref{lem:cone formal isom = alg isom}, we know that $R_X$ is the direct sum of $\bT$-invariant subspaces of $\widehat{R}$. Once the inclusion $R_X\subseteq \widehat{R}$ and hence $\Omega_X^{[p]}\subseteq \widehat{\Omega}_{\widehat{R}}^{[p]}$ is fixed, the reflexive $p$-form $\sigma_X$ is also uniquely determined by its image $\widehat{\sigma}$ in $\widehat{\Omega}_{\widehat{R}}^{[p]}$. Since the maximal torus of $\Aut(\widehat{R},[\widehat{\sigma}])$ is unique up to conjugation by Lemma \ref{lem:max torus in formal auto}, the rest of the proof works as in Lemma \ref{lem:cone formal isom = alg isom}.
\end{proof}

We can now prove Kaledin's conjecture.

\begin{proof}[Proof of Theorem \ref{thm:Kaledin enhanced}]
By Theorem \ref{thm:sigma-admissible nbhd} and Proposition \ref{prop:closed+non-deg}(1), we can find some Koll\'ar component over $x\in X$ whose corresponding test configuration degenerates $(x\in X,\sigma)$ to some conical symplectic singularity $(x_0\in X_0,\sigma_0)$ such that $x_0\in X_0$ is a K-semistable Fano cone. By Theorem \ref{thm:symplectic rigid}, there is a formal isomorphism $(x\in X,\sigma)^{\wedge}\cong (x_0\in X_0,\sigma_0)^{\wedge}$.

We next show that the Reeb vector $\xi_0$ on $X_0$ corresponding to the normalized volume minimizer can be chosen so that the action of $\bT_0=\langle\xi_0\rangle$ is $\sigma_0$-dilating. To this end, let $\bT$ be a maximal torus of $\Aut(x_0\in X_0,[\sigma_0])$ that contains the $\sigma_0$-dilating $\bG_m$ given by its conical structure. In particular, the $\bT$-action on $x_0\in X_0$ is good. We aim to show that we can choose $\mathbb T\supseteq \mathbb T_0$. 
If the minimizer $v_0\in \Val_{X_0,x_0}$ is not of the form $\wt_\xi$ for any Reeb vector $\xi\in N(\bT)_\bR$, then there exists a $\bT$-equivariant log smooth model $(Y_0,E_0)\to X_0$ adapted to $v_0$ and an open neighborhood $U\subseteq \QM(Y_0,E_0)$ of $v_0$ such that none of the valuations in $U$ is of the form $\wt_\xi$. By Theorem \ref{thm:sigma-admissible nbhd} and Proposition \ref{prop:closed+non-deg}(1), we can find some $\bT$-invariant Koll\'ar component over $x_0\in X_0$, given by a divisorial valuation in $U$ sufficiently close to $v_0$, whose corresponding test configuration induces a trivial degeneration of $x_0\in X_0$ (since it is already a K-semistable Fano cone) and specializes $\sigma_0$ to some other symplectic form $\sigma'_0$ on $X_0$. Since the test configuration is $\bT$-equivariant by construction and the Koll\'ar component is not given by any one parameter subgroup of $\bT$ (by our choice of $U$), we get an effective dilating symplectic good $\bT\times \bG_m$-action on $(x_0\in X_0,\sigma'_0)$. By Theorem \ref{thm:symplectic rigid}, we have a formal isomorphism 
\[
(x_0\in X_0,\sigma_0)^\wedge \cong (x_0\in X_0,\sigma'_0)^\wedge;
\]
since both symplectic singularities are conical, we may upgrade it into an algebraic isomorphism 
\[
(x_0\in X_0,\sigma_0) \cong (x_0\in X_0,\sigma'_0)
\]
by Proposition \ref{prop:cone+form formal isom = alg isom}. But then we also get an effective dilating symplectic $\bT\times \bG_m$-action on $(x_0\in X_0,\sigma_0)$, contradicting our assumption that $\bT$ is a maximal torus of $\Aut(x_0\in X_0,[\sigma_0])$. Therefore, $v_0=\wt_{\xi_0}$ for some $\xi_0\in N(\bT)_\bR$ and in particular the action of $\bT_0=\langle\xi_0\rangle\subseteq \bT$ is dilating since $\bT$ is so. 

The remaining part of the theorem follows directly from Proposition \ref{prop:cone+form formal isom = alg isom}.
\end{proof}



\section{Examples}\label{s-open}

In this section, we apply our general results in Section \ref{s-Kaledin} to two classes of symplectic singularities, namely (normalized) nilpotent orbit closures and hypertoric singularities. We analyze the torus actions on these singularities, and identify the minimizers of their normalized volume functions with the help of their symmetry.

\subsection{Torus in the center}

We first explain our general strategy for finding the minimizers. For any group $G$, we denote its center by $Z(G)$ and denote by $C_G(H)$ the centralizer of a subgroup $H$, i.e. 
\[
C_G(H)=\{g\in G\mid gh=hg, \forall h\in H\}\,.
\]
If $G$ acts on a singularity $x\in (X,\Delta)$, we also denote by $\Aut^G(x\in (X,\Delta))$ the group of $G$-equivariant automorphisms, which is also the centralizer of $G$ in $\Aut(x\in (X,\Delta))$. The identity component of an algebraic group $G$ is denoted by $G^\circ$. The starting point is the following  observation. 

\begin{lem}\label{l-center}
Let $x\in (X=\Spec(R),\Delta)$ be a log Fano cone singularity and let $\xi$ be a Reeb vector. Let $\bT = \langle\xi\rangle$ be the torus generated by $\xi$ and let $G\subseteq {\rm Aut}(x\in (X,\Delta))$ be a reductive subgroup containing $\bT$ that preserves the valuation $\wt_\xi$. Then $\mathbb T \subseteq Z(G)$.
\end{lem}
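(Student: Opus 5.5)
We argue that $G$ normalizes $\bT$, and then use rigidity of tori together with connectedness considerations to show that the conjugation action is trivial.

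First we show that every $g\in G$ preserves the $\xi$-grading up to finite data. Since $g$ preserves $\wt_\xi$, it preserves each valuation ideal $\fa_\lambda(\wt_\xi)$. Hence $g$ acts on $\gr_{\wt_\xi}R$. Because $\wt_\xi$ is induced by the $\bT$-action, there is a canonical identification $\gr_{\wt_\xi}R\cong R$ compatible with the grading, namely $R=\bigoplus_\alpha R_\alpha$ and $\fa_\lambda=\bigoplus_{\langle\alpha,\xi\rangle\ge\lambda}R_\alpha$.

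Next we check that conjugates of $\bT$ are again graded. Conjugating, $g\bT g^{-1}$ is a torus whose Lie algebra contains $g\xi g^{-1}$, and $\wt_{g\xi g^{-1}}=g_*\wt_\xi=\wt_\xi$. Since $\bT=\langle\xi\rangle$ is the smallest torus containing $\xi$, the real one-parameter subgroup generated by $\xi$ is dense in $\bT$. We will show that an element $\eta$ of some torus $\bT'\subseteq\Aut$ with $\wt_\eta=\wt_\xi$ forces the induced gradings to agree. The idea is to take a maximal torus $\bT''$ of $G_\xi:=\{g: g_*\wt_\xi=\wt_\xi\}$ containing $\bT$. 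This is legitimate because $G_\xi$ is an algebraic group: it is the inverse limit of stabilizers of the finite-dimensional quotients $R/\fa_\lambda$, and it is compatible with $G$ being reductive.

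The cleaner route is the following. The group $G$ acts on each finite-dimensional space $R/\fa_{\lambda}$, and it preserves the filtration $\fa_\mu/\fa_\lambda$ for $\mu<\lambda$. Since $G$ is reductive, this filtration splits $G$-equivariantly, so $G$ acts on $\gr=\bigoplus_\mu \fa_\mu/\fa_{>\mu}$ compatibly, and the action of $G$ on $R$ is isomorphic to its action on $\gr R$. Under this isomorphism $\bT$ acts on each $\fa_\mu/\fa_{>\mu}$ through characters $\alpha$ with $\langle\alpha,\xi\rangle=\mu$. The key point is that the $\xi$-action, i.e. the derivation acting by $\mu$ on $\fa_\mu/\fa_{>\mu}$, is canonically determined by the filtration, and hence commutes with every $g\in G$, since $g$ preserves the filtration and therefore the associated graded pieces. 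Hence $\xi$ commutes with $G$, both on the infinitesimal level and via the one-parameter real subgroup $\exp(s\xi)$. Taking the Zariski closure of this subgroup gives all of $\bT$, since $\bT$ is the smallest torus containing $\xi$. The centralizer of $G$ is Zariski closed, so $\bT\subseteq Z(G)$.

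The main obstacle is rigorously making sense of "the operator acting by $\mu$ on $\gr_\mu$" as an element commuting with $G$ on $R$ itself rather than only on $\gr R$. Concretely, one must show that $g$ preserves each weight space $\bigoplus_{\langle\alpha,\xi\rangle=\mu}R_\alpha$, not merely the filtration. We would handle this by the reductive splitting: the semisimple endomorphism $\xi$ on the finite-dimensional $G$-module $R/\fa_\lambda$ has eigenspaces which are $\bT$-stable complements to the filtration. Applying $g\in G$ yields another element $g\xi g^{-1}$ inducing the same operator on $\gr$. Both $\xi$ and $g\xi g^{-1}$ lie in the Lie algebra of $G$, and their difference acts nilpotently, because it kills the graded pieces. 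The Jordan decomposition in the reductive group $G$ then forces $g\xi g^{-1}=\xi$: two semisimple elements differing by a nilpotent element that commutes with them appropriately must coincide. We expect to verify this last point via the filtration-stabilizer parabolic $P\subseteq GL(R/\fa_\lambda)$. There, $\xi$ and $g\xi g^{-1}$ are semisimple elements of $\mathrm{Lie}(P)$ with the same image in the Levi quotient, so they are conjugate by the unipotent radical. Intersecting with the reductive $G$, which meets the unipotent radical trivially since $G\cap P$ is reductive, then gives equality.
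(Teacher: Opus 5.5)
Your final route is the same mechanism as the paper's proof. $G$ preserves the $\wt_\xi$-filtration on a finite-dimensional quotient, so it lies in the parabolic $P$ stabilizing that filtration. $\xi$ is central in the Levi, and the reductive $G$ meets the unipotent radical $U$ trivially. The paper runs this on $R/\fm_x^2$, using \cite[8.4]{Kol-cone-auto} to embed $G$ there, and argues at the group level.

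However, the closing step as you wrote it does not work.

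\begin{itemize}
\item The Jordan-decomposition claim fails unless you already know the difference commutes with $\xi$, which is what you are trying to prove. For example, $\mathrm{diag}(1,0)$ and $\begin{pmatrix}1&1\\0&0\end{pmatrix}$ are both semisimple. Both preserve the line $\bC e_1$ and induce the same maps on $\bC e_1$ and on $\bC^2/\bC e_1$. They differ by a nilpotent, yet they are not equal.
\item Saying that $\xi$ and $\mathrm{Ad}(g)\xi$ are conjugate by some $u\in U$ yields nothing. Nothing puts $u$ in $G$, so $G\cap U=\{1\}$ says nothing about $u$.
\end{itemize}

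The correct finish uses ingredients you already have. Set $N:=\mathrm{Ad}(g)\xi-\xi$.
\begin{itemize}
\item $N$ kills the graded pieces, so $N\in\Lie(U)$.
\item $N\in\Lie(G)$, since both $\xi$ and $\mathrm{Ad}(g)\xi$ lie there.
\item In characteristic $0$, $\Lie(G)\cap\Lie(U)=\Lie(G\cap U)=0$, because $(G\cap U)^\circ$ is a connected normal unipotent subgroup of the reductive $G^\circ$.
\end{itemize}
The group-level version, as in the paper: for $t\in\bT$, the element $tgt^{-1}g^{-1}$ lies in $G$ because $\bT\subseteq G$, and in $U$ because $t$ is central in the Levi. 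Hence it is trivial.

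Since $G$ need not act faithfully on $R/\fa_\lambda$, apply this to its image in $\mathrm{GL}(R/\fa_\lambda)$, which is still reductive. You then get $[g,\xi]=0$ on every $R/\fa_\lambda$, hence on $R$ because $\bigcap_\lambda \fa_\lambda=0$. This has the small advantage of bypassing the embedding $G\hookrightarrow \Aut(R/\fm_x^2)$.

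Your splitting idea in the third paragraph also closes the argument directly, with no parabolic, once you add a weight count. Let $C_\mu$ be a $G$-stable complement of $\fa_{>\mu}/\fa_\lambda$ in $\fa_\mu/\fa_\lambda$.
\begin{itemize}
\item $C_\mu$ is $\bT$-stable because $\bT\subseteq G$.
\item $C_\mu$ is $\bT$-isomorphic to $\fa_\mu/\fa_{>\mu}$, so all its $\bT$-weights $\alpha$ satisfy $\langle\alpha,\xi\rangle=\mu$.
\item Hence $C_\mu$ lies in the $\mu$-eigenspace of $\xi$ on $R/\fa_\lambda$, and by dimension it equals it.
\end{itemize}
So $G$ preserves every $\xi$-eigenspace, at each level $\lambda$ and hence on $R$. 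Since $\bT=\langle\xi\rangle$, the map $\alpha\mapsto\langle\alpha,\xi\rangle$ is injective on $M(\bT)$. These eigenspaces are therefore exactly the weight spaces $R_\alpha$, and $G$ commutes with $\bT$; your Zariski-closure step also works. This variant uses only linear reductivity of $G$ in characteristic $0$, not the parabolic/Levi structure.

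Finally, drop your second paragraph. The claim that $G_\xi$ is an algebraic group because it is an inverse limit of algebraic groups is unjustified, and it is not used.
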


We caution that the lemma fails without the reductivity assumption. Already for $0\in \bA^2$ there exist automorphisms (e.g. $(x,y)\mapsto (x+ty^2,y)$, $t\in \bC$) that preserve the valuation $\mult_0$ but do not commute with the $\bG_m$-scaling action. The reason behind this is that while the minimizer $\wt_\xi$ is invariant under automorphisms, the Reeb vector $\xi$ is not, as different Reeb vectors (for different torus actions) can yield the same valuation. 

We will apply Lemma \ref{l-center} to K-semistable Fano cone singularities. In this setting, the reductivity assumption is almost necessary: if $\xi$ is quasi-regular (i.e. $\bT=\bG_m$) and $x\in (X,\Delta;\xi)$ is \emph{K-polystable}, then the centralizer of $\bT$ in $\Aut(x\in (X,\Delta))$ is a direct product of $\bT$ with the automorphism group of the K-polystable log Fano pair $(V,\Delta_V):=\left((X,\Delta)\setminus\{x\}\right)/\bG_m$, and $\Aut(V,\Delta_V)$ is reductive by \cite{ABHLX-reductivity}. 

\begin{proof}
By \cite[8.4]{Kol-cone-auto}, the natural map $G\to \Aut(R/\fm_x^2)$ (where the right hand side is the automorphism group of the finite dimensional algebra $R/\fm_x^2$) is an embedding, since it has a unipotent kernel and $G$ is reductive. We may thus view $G$ and $\bT$ as subgroups of ${\rm GL}(V)$, where $V=R/\fm_x^2$. We have a weight decomposition $V = \bigoplus_{\alpha\in M(\bT)} V_\alpha$ and the Reeb vector $\xi$ induces a filtration of $V$ by 
\[
\cF_\xi^\lambda V := \bigoplus_{\alpha\in m(\bT),\,\langle\alpha,\xi\rangle\ge \lambda} V_\alpha
\]
which coincides with the filtration induced by $\wt_\xi$, i.e. $\bar{s}\in \cF_\xi^\lambda V$ if and only if there exists some $s\in R$ with reduction $\bar{s}$ such that $\wt_\xi (s)\ge \lambda$. 

Since $\wt_\xi$ is $G$-invariant by assumption, the filtration $\cF_\xi^\bullet V$ is also $G$-invariant, hence $G$ is contained in the parabolic subgroup $P(\xi)\subseteq {\rm GL}(V)$ that fixes the filtration $\cF_\xi^\bullet V$. In effect, after a change of basis and with $m=\dim V$ we may assume that $\bT$ is contained in the diagonal subgroup of ${\rm GL}_m$, the Reeb vector $\xi$ may then be identified with an element $(\xi_1,\dots,\xi_m)$ of $\bR^m$ which without loss of generality can be arranged so that $\xi_1\ge \dots\ge\xi_m$. Then
\[
P(\xi) = \{A=(a_{ij})_{1\le i,j\le m}\,|\,a_{ij}=0 \mbox{ whenever } \xi_i>\xi_j\}\subseteq {\rm GL}_m
\]
is the subgroup of block upper triangular matrices. From this explicit description, it is straightforward to see that $\xi$ is in the Lie algebra of the center of the Levi subgroup
\[
L(\xi) = \{(a_{ij})\in {\rm GL}_m\,|\,a_{ij}=0 \mbox{ whenever } \xi_i\neq \xi_j\} \subseteq P(\xi)
\]
of block diagonal matrices. In particular, the torus $\bT$ generated by $\xi$ is (by definition) contained in the center of $L(\xi)$. Since $G$ is reductive, the induced map $G\subseteq P(\xi)\to L(\xi)$ is injective (note that since we are in characteristic $0$, the unipotent kernel of $P(\xi)\to L(\xi)$ can not have any finite subgroup). Therefore, as $\bT$ commutes with $L(\xi)$, it also commutes with $G$.
\end{proof}


As a corollary, we have the following criterion for the quasi-regularity of the minimizer.

\begin{lem} \label{lem:quasi-regular}
Let $x\in (X,\Delta;\xi)$ be a K-semistable log Fano cone singularity with the action of a reductive group $G$. Assume that
\begin{enumerate}
    \item $Z(G)^\circ\cong \bG_m$,
    \item the $\bT_0:=Z(G)^\circ$-action on $x\in X$ is good, and
    \item $\bT_0$ is a maximal torus of $\Aut^G(x\in (X,\Delta))$. 
\end{enumerate}
Then $\xi$ is quasi-regular, and $\langle\xi\rangle$ is conjugate to $\bT_0$ in $\Aut(x\in (X,\Delta))$.
\end{lem}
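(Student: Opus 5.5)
The plan is to move $G$, by an automorphism preserving the minimizer, into the centralizer of $\bT:=\langle\xi\rangle$. This places a conjugate of $\bT$ inside $\Aut^G(x\in(X,\Delta))$, and conjugacy of maximal tori there then compares it with $\bT_0$.

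\emph{Step 1 (the stabilizer of the minimizer).} Let $H\subseteq \Aut(x\in(X,\Delta))$ be the subgroup of automorphisms $h$ with $h_*\wt_\xi=\wt_\xi$.

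First I show $G\subseteq H$. By Theorem \ref{thm-SDC}(1), $\wt_\xi$ is the unique minimizer up to rescaling, and automorphisms preserve $\widehat{\mathrm{vol}}$, so $g_*\wt_\xi=c\,\wt_\xi$ for some $c>0$. Since $A_{X,\Delta}(g_*\wt_\xi)=A_{X,\Delta}(\wt_\xi)$, we get $c=1$. Clearly $\bT\subseteq H$ as well.

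Next I view $H$ inside $\mathrm{GL}(V)$, $V=R/\fm_x^2$, exactly as in the proof of Lemma \ref{l-center}. Its image lies in the parabolic $P(\xi)$, and I expect $H=C_H(\bT)\ltimes U_H$ with $U_H$ unipotent. The reason is that conjugating by the one-parameter subgroups of $\bT$ and letting $t\to 0$ retracts $H$ onto the elements that preserve each graded piece of the $\wt_\xi$-filtration, i.e.\ onto $C_H(\bT)$. The kernel of this retraction acts trivially on $\gr_{\wt_\xi}R$, hence is unipotent.

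\emph{Step 2 (conjugating $G$).} Since $G\subseteq H$ is reductive and we are in characteristic $0$, $G$ is contained in a Levi subgroup of $H$ by Mostow's theorem. Levi subgroups are conjugate under $U_H$, so there is $u\in U_H$ with $uGu^{-1}\subseteq C_H(\bT)$. Equivalently, $\bT':=u^{-1}\bT u$ commutes with $G$, so $\bT'\subseteq \Aut^G(x\in(X,\Delta))$.

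\emph{Step 3 (comparison with $\bT_0$).} Since $\bT_0=Z(G)^\circ$ commutes with everything in $\Aut^G$, the product $\bT'\bT_0$ is a torus in $\Aut^G(x\in(X,\Delta))$. This group is a linear algebraic group: it acts faithfully on the finite-dimensional graded pieces determined by the good $\bT_0$-action of (2).

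By (3), $\bT_0$ is a maximal torus, and $\bT'\bT_0$ lies in some maximal torus. Maximal tori are conjugate, so all of them have rank one, which forces $\bT'\bT_0=\bT_0$. Hence $\bT'\subseteq\bT_0\cong\bG_m$, and since $\bT'\neq 1$ we get $\bT'=\bT_0$. Thus $\langle\xi\rangle=u\bT_0u^{-1}$ has rank one, so $\xi$ is a real multiple of a cocharacter, i.e.\ quasi-regular, and $\langle\xi\rangle$ is conjugate to $\bT_0$.

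The main obstacle is Step 1: justifying that $H$ is an algebraic group (possibly infinite-dimensional, so one may need to pass to a finite jet level, as in \cite[8.4]{Kol-cone-auto}) with Levi decomposition $C_H(\bT)\ltimes U_H$. Only then does Mostow's conjugacy theorem apply. I would try first to work entirely with the images in $P(\xi)\subseteq\mathrm{GL}(V)$, using that $G$ embeds there. The drawback is that this conjugates $G$ only inside $\mathrm{GL}(V)$. One must check that the conjugating unipotent element can be lifted to an actual automorphism of $x\in(X,\Delta)$ preserving $\wt_\xi$; this is where the degeneration/retraction description of $H$ is needed.
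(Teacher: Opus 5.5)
The gap is the step you flag yourself, and it is the heart of the argument. Your Step 1 reasoning applies to every automorphism, not just those in $G$, so $H$ is all of $\Aut(x\in(X,\Delta))$. This group is in general not algebraic. For instance, for $0\in\bA^n$ ($n\ge 2$) with the standard dilation it contains $(x_1,\dots,x_n)\mapsto(x_1+x_2^k,x_2,\dots,x_n)$ for every $k$. Correspondingly, $U_H$ is infinite dimensional and is unipotent only on each finite jet level.

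Mostow's theorem (a reductive subgroup lies in a Levi subgroup, and Levi subgroups are conjugate under the unipotent radical) does not apply in this setting as stated. Making it work would need a separate argument: vanishing of $H^1(G,-)$ on the successive quotients of the jet filtration, then a limit argument that only produces a \emph{formal} conjugator, then an algebraization step.

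Your fallback does not close the gap. Conjugating the image of $G$ into the Levi $L(\xi)$ of $P(\xi)\subseteq\mathrm{GL}(R/\fm_x^2)$ yields an element of $\mathrm{GL}(R/\fm_x^2)$ that need not be induced by any automorphism of $X$. Then $\bT'$ would not be a torus of automorphisms of $(X,\Delta)$, and Step 3 would have nothing to act on. Step 3 itself is fine once $\bT'\subseteq\Aut^G(x\in(X,\Delta))$ is known.

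The missing idea is to move $\xi$ rather than $G$. Then the only infinite-dimensional input is conjugacy of maximal tori in $\Aut(x\in(X,\Delta))$ (\cite[8.4]{Kol-cone-auto}, as used in Lemma \ref{lem:cone formal isom = alg isom}). The paper proceeds as follows.

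\begin{enumerate}
\item Work in $\Aut^{\bT_0}(x\in(X,\Delta))$, which is algebraic because $\bT_0$ is good. You note this only in Step 3.
\item Take a maximal reductive subgroup $K\supseteq G$ of it, and a maximal torus $\bT$ of $K$ containing $\bT_0$.
\item Since $K^\circ$ is a Levi subgroup, $\bT$ is a maximal torus of $\Aut^{\bT_0}(x\in(X,\Delta))$. Any torus containing $\bT$ centralizes $\bT_0$, so $\bT$ is also a maximal torus of $\Aut(x\in(X,\Delta))$.
\item Replace $\xi$ by a conjugate so that $\langle\xi\rangle\subseteq\bT\subseteq K$. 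This is harmless: the minimizer $\wt_\xi$ is $\Aut$-invariant, and both conclusions of the lemma are conjugation-invariant.
\item Apply Lemma \ref{l-center} to the reductive group $K$. Its $P(\xi)$/$L(\xi)$ argument is exactly your retraction idea, but it needs no conjugation of $G$ because $K$ already contains $\langle\xi\rangle$. It gives $\langle\xi\rangle\subseteq Z(K)$, hence $\langle\xi\rangle\subseteq\Aut^G(x\in(X,\Delta))$.
\end{enumerate}

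Your Step 3 then finishes the proof.
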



\begin{proof}
Since the $\bT_0$-action on $x\in X$ is good, $\Aut^{\bT_0}(x\in (X,\Delta))$ is the direct product of $\mathbb T_0$ and the automorphism group of the log Fano pair 
\[
((X,\Delta)\setminus\{x\})/\bT_0\, ,
\]
in particular it is an algebraic group. Note that $G\subseteq \Aut^{\bT_0}(x\in (X,\Delta))$. Let $H$ be a maximal reductive subgroup of $\Aut^{\bT_0}(x\in (X,\Delta))$ containing $G$ (in particular $H^\circ$ is a Levi subgroup of $\Aut^{\bT_0}(x\in (X,\Delta))$), and let $\bT$ be a maximal torus of $H$ containing $\bT_0$. Then $\bT$ is a maximal torus of $\Aut^{\bT_0}(x\in (X,\Delta))$ and hence also of $\Aut(x\in (X,\Delta))$ as $\Aut^{\bT_0}(x\in (X,\Delta))$ is the centralizer of the torus $\bT_0$. 

Since all the maximal tori of $\Aut(x\in (X,\Delta))$ are conjugate to each other by Lemma \ref{lem:cone formal isom = alg isom}, we may replace $\xi$ by a conjugate and assume that $\langle \xi \rangle\subseteq \bT$. In particular, $\langle \xi \rangle\subseteq H$. Since the normalized volume minimizer $\wt_\xi$ is unique up to scaling \cite{XZ-uniqueness}, it is invariant under the action of ${\rm Aut}(x\in (X,\Delta))$.  We can thus apply Lemma \ref{l-center} and conclude that
\[
\langle \xi \rangle \subseteq Z(H)\subseteq \Aut^H(x\in (X,\Delta))\subseteq \Aut^G(x\in (X,\Delta))\,.
\]
By assumption, $\bT_0$ is a maximal torus of $\Aut^G(x\in (X,\Delta))$, hence by \cite[Section 21.3, Corollary A]{Humphreys-alg-gp}, 
the torus $\langle \xi \rangle$ is conjugate to a subtorus of $\bT_0$. But $\bT_0\cong \bG_m$, thus $\langle\xi\rangle \cong \bG_m$ and it is conjugate to $\bT_0$. 
\end{proof}

Recall that $\Aut(x\in X,[\sigma])$ is the dilating symplectic automorphism group (see Definition \ref{def:dilating auto}). By the same argument as above, we have  

\begin{lem} \label{lem:quasi-regular symplectic}
Let $(x\in X, \sigma)$ be a conical symplectic singularity and let $G\subseteq \Aut(x\in X,[\sigma])$ be a reductive group. Assume that
\begin{enumerate}
    \item $Z(G)^\circ\cong \bG_m$,
    \item the $\bT_0:=Z(G)^{\circ}$-action on $x\in X$ is good, and
    \item $\bT_0$ is a maximal torus of $\Aut^G(x\in X,[\sigma])$. 
\end{enumerate}
Then the Koll\'ar component corresponding to the $\bT_0$-action is the minimizer of the normalized volume.
\end{lem}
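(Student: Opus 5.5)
The plan is to mimic the proof of Lemma \ref{lem:quasi-regular}, with $\Aut(x\in (X,\Delta))$ replaced throughout by the dilating symplectic group $\Aut(x\in X,[\sigma])$. First I reduce to the case where the minimizer comes from a dilating torus. By Theorem \ref{thm:Kaledin enhanced} (second part, applied to the conical $(x\in X,\sigma)$), $(x\in X,\sigma)$ is $\bT$-equivariantly algebraically isomorphic to a K-semistable Fano cone $(x_0\in X_0,\sigma_0;\xi_0)$ for some torus $\bT$. Here $\bT$ extends both the given dilating action and $\bT_0':=\langle\xi_0\rangle$, and acts dilatingly. Transporting $\xi_0$ through this isomorphism gives a Reeb vector $\xi$ on $X$ with the following properties: $\wt_\xi$ is the normalized volume minimizer, and $\langle\xi\rangle\subseteq \Aut(x\in X,[\sigma])$.

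Next I set up the centralizer. Since $\bT_0$ acts goodly, $\Aut^{\bT_0}(x\in X,[\sigma])$ is an algebraic group. It is a closed subgroup of $\bT_0\times\Aut(V)$, where $V=(X\setminus\{x\})/\bT_0$, because dilating is a closed condition. It contains $G$, since $\bT_0$ is central in $G$. Choose a maximal reductive subgroup $H\supseteq G$ of it and a maximal torus $\bT'\supseteq\bT_0$ of $H$. Then $\bT'$ is a maximal torus of $\Aut^{\bT_0}(x\in X,[\sigma])$, and hence of $\Aut(x\in X,[\sigma])$: any torus containing $\bT'$ centralizes $\bT_0$. By Lemma \ref{lem:max torus in formal auto}, together with the argument of Proposition \ref{prop:cone+form formal isom = alg isom}, maximal tori of the dilating symplectic automorphism group of a conical singularity are conjugate. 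So after conjugating $\xi$ by a dilating symplectic automorphism, which preserves the minimizer up to the relabeling, we may assume $\langle\xi\rangle\subseteq\bT'\subseteq H$.

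Now apply the centralizer lemma. The minimizer is unique up to scaling \cite{XZ-uniqueness}, so $\wt_\xi$ is invariant under all automorphisms, in particular under $H$. Lemma \ref{l-center} then gives $\langle\xi\rangle\subseteq Z(H)$, so $\langle\xi\rangle\subseteq\Aut^H(x\in X,[\sigma])\subseteq \Aut^G(x\in X,[\sigma])$. By hypothesis (3) and \cite[Section 21.3, Corollary A]{Humphreys-alg-gp}, $\langle\xi\rangle$ is conjugate within $\Aut^G(x\in X,[\sigma])$ into $\bT_0\cong\bG_m$. Hence $\xi$ is quasi-regular, and $\wt_\xi$ equals a multiple of $g_*\wt_{\eta}$, where $\eta$ generates $\bT_0$. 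By automorphism-invariance of the minimizer, $\wt_\eta$ is also a minimizer. By Paragraph \ref{para-quasi-kollar}, $\wt_\eta$ is the Kollár component of the $\bT_0$-action, which is what the statement claims.

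The main obstacle is the conjugacy of maximal tori inside the possibly non-algebraic group $\Aut(x\in X,[\sigma])$. I would handle it as in Lemma \ref{lem:max torus in formal auto}, using the pro-algebraic structure with unipotent kernels and the fact that an algebraic torus of dilating automorphisms of a conical singularity is determined by its image in the formal group. A secondary point to check is that $\Aut^{\bT_0}(x\in X,[\sigma])$ is genuinely algebraic, so that maximal reductive subgroups exist.
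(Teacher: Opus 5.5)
Your proposal is correct and follows essentially the same route as the paper. You take a dilating Reeb vector $\xi$ computing the minimizer from Theorem~\ref{thm:Kaledin enhanced}, then rerun the proof of Lemma~\ref{lem:quasi-regular} inside $\Aut(x\in X,[\sigma])$, with conjugacy of maximal tori supplied by the formal-automorphism results behind Proposition~\ref{prop:cone+form formal isom = alg isom}; the paper invokes these in packaged form via Theorem~\ref{thm:Kaledin enhanced}.

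A minor simplification at the end: $\bT_0$ is central in $\Aut^G(x\in X,[\sigma])$ and is a maximal torus there, so it is the unique maximal torus. The last conjugation step therefore gives $\langle\xi\rangle=\bT_0$ directly.
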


\begin{proof}
By Theorem \ref{thm:Kaledin enhanced}, there exists some Reeb vector $\xi$ on $X$ such that $\wt_\xi$ is the normalized volume minimizer and $\langle\xi\rangle\subseteq \Aut(x\in X,[\sigma])$. The rest of the proof then works as in Lemma \ref{lem:quasi-regular}, replacing $\Aut(x\in (X,\Delta))$ by $\Aut(x\in X,[\sigma])$, and the use of Lemma \ref{lem:cone formal isom = alg isom} by Theorem \ref{thm:Kaledin enhanced}.
\end{proof}

\subsection{Normalized nilpotent closures}


We apply the above criterion to nilpotent orbit closures and address part of \cite[Problem 23]{XZ-open}. Let $G$ be a simply connected semisimple algebraic group and let $\fg$ be its Lie algebra.  The  group $G$ acts on $\fg$ by the adjoint representation.  An element $u\in \fg$ is called \emph{nilpotent} if ${\rm ad}(u):\fg\to \fg,\,v\mapsto [u,v]$ is a nilpotent endomorphism.  The \emph{nilpotent cone} of $\fg$ is
\[
  \cN=\cN(\fg):=\{e\in \fg\mid e \text{ is nilpotent}\}.
\]
Then $\cN$ is invariant under the adjoint action ${\rm Ad}\colon G\times \fg\to \fg$.

For any nonzero element $e\in \cN$, let $O_{e}\subset \cN$ be its orbit under the adjoint action of $G$ and let $\overline{O}_e $ be its closure in $\cN$. We also denote by $\widetilde{O}_e$ the normalization of  $\overline{O}_e $. It is well-known that there are only finitely many nilpotent orbits in $\cN$ (see e.g. \cite[Section 3]{CM-nilpotent-orbit}), so $\overline{O}_e$ and $\widetilde{O}_e $ consist of finitely many $G$-orbits. 

It is also known that $\overline{O}_e$ and $\widetilde{O}_e$ are preserved by the natural scaling $\bG_m$-action on $\fg$. Denote by $\overline{o}\in \overline{O}_e$ and $\tilde{o}\in \widetilde{O}_e $ the corresponding vertices, which are the preimages of $0\in \fg$. The group $G$ acts on $O_e$, $\overline{O}_e$ and therefore $\widetilde{O}_e$, and has $\tilde{o}$ as a fixed point. The normalization $\widetilde{O}_e$ is a conical symplectic singularity with the $G$-invariant Kirillov-Kostant-Souriau form $\sigma$ and the induced $\bG_m$-action on $\widetilde{O}_e $ is dilating of weight one (see \cite{Pan-nil-orbit-symp} and \cite[Section 1]{BK-nilpotent}). 

\begin{lem}\label{l-one dim}
The identity component of $\Aut^G(\tilde{o} \in \widetilde{O}_e,[\sigma])$ is a semi-direct product $\mathbb{G}_m \ltimes U$ for some unipotent group $U$. 
\end{lem}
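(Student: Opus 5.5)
Write $A:=\Aut^G(\tilde{o}\in \widetilde{O}_e,[\sigma])^\circ$. Since the scaling $\bG_m$ commutes with the adjoint action and dilates $\sigma$, $A$ contains this $\bG_m$. The plan is to show that every element of $A$ acts on the dense orbit $O_e\subseteq \widetilde{O}_e$ in a very restricted way. An element of $A$ preserves the open $G$-orbit $O_e$: it is the unique open $G$-orbit, $G$-orbits are permuted by $G$-equivariant automorphisms, and the normalization map is an isomorphism over $O_e$. Hence $A$ embeds into the group of $G$-equivariant automorphisms of the homogeneous space $O_e\cong G/G_e$, which is $N_G(G_e)/G_e$, acting by right multiplication. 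So $A\hookrightarrow N_G(G_e)/G_e$.

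Next I want to identify $N_G(G_e)/G_e$, or at least its identity component. An equivariant automorphism $\varphi$ sends $e\mapsto e'=\varphi(e)$ with $G_{e'}=G_e$. I then use the moment map $\mu\colon O_e\hookrightarrow \fg\cong\fg^*$, i.e. the inclusion, which is the moment map for the KKS form $\sigma$. If $\varphi^*\sigma=\lambda\sigma$, then $\mu\circ\varphi$ is a moment map for $\lambda^{-1}$ times the action, up to a constant. Since $\fg$ is semisimple there are no nonzero $G$-invariant constants, so $\varphi(y)=\lambda\, y$ (or $\lambda^{-1} y$, depending on conventions) for all $y\in O_e$. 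Let me double-check this step: $\iota_{\zeta_X}\sigma=\rd\langle\mu,\zeta\rangle$ for $\zeta\in\fg$. Applying $\varphi^*$ and using $G$-equivariance of $\varphi$ gives $\iota_{\zeta_X}\lambda\sigma=\rd\langle\mu\circ\varphi,\zeta\rangle$. Hence $\mu\circ\varphi-\lambda\mu$ is locally constant and $G$-equivariant, so it is a $G$-fixed element of $\fg$, hence $0$.

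It follows that $\varphi$ coincides with the scaling by $\lambda$ on $O_e$, and hence on $\widetilde{O}_e$. So $A$ is exactly $\bG_m$, with $U$ trivial. The statement allows a unipotent $U$, which suggests the authors may have had a cruder argument in mind, but a trivial $U$ is a special case of the claim.

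The main obstacle is this moment-map rigidity step. One subtlety is whether a $\sigma$-dilating automorphism of $\widetilde{O}_e$ restricted to $O_e$ is genuinely symplectic up to scalar; this holds because $O_e$ is smooth and reflexive forms restrict. The other subtlety is that $\varphi$ preserves $O_e$, which uses the finiteness of orbits. If the moment-map argument ran into a snag, I would fall back to the structure theory. The reductive part of $N_G(G_e)/G_e$ comes from the reductive centralizer of an $\mathfrak{sl}_2$-triple, and I would check that only the $h$-direction (which corresponds to scaling) survives the dilating condition, leaving a unipotent remainder $U$.
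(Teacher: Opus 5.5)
Your argument is correct, and it takes a different and sharper route than the paper.

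\textbf{The paper's route.}
\begin{enumerate}
\item It splits $\fg$ into simple factors and identifies $\Aut^G(O_e)$ with $N(G_e)/G_e\cong\prod_i N(G_{i,e_i})/G_{i,e_i}$.
\item It imports from \cite[Theorem 7.1]{BK-nilpotent} the structure result $(N(G_{i,e_i})/G_{i,e_i})^\circ\cong\bG_m\ltimes U_i$.
\item It notes that a unipotent group has no nontrivial character, so it fixes $\sigma_i$.
\item It checks that only the diagonal subgroup of $\bG_m^d$ dilates $\sigma=\sum_i\mathrm{pr}_i^*\sigma_i$.
\end{enumerate}

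\textbf{Your route.} You use that the inclusion $O_e\hookrightarrow\fg\cong\fg^*$ is an injective equivariant moment map for the KKS form. $G$-equivariance gives $\varphi^*\zeta_X=\zeta_X$. Together with $\varphi^*\sigma=\lambda\sigma$, this forces $\mu\circ\varphi-\lambda\mu$ to be a constant in $(\fg^*)^G=0$. Hence $\varphi$ is scaling by $\lambda$ on $O_e$, and by density on $\widetilde{O}_e$.

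\textbf{What each route buys.}
\begin{itemize}
\item Your argument is self-contained and does not cite Brylinski--Kostant.
\item It needs no reduction to simple factors. The vanishing of $(\fg^*)^G$ handles all factors at once, which is exactly what the paper's ``diagonal subgroup'' step does.
\item It proves more. The whole group $\Aut^G(\tilde{o}\in\widetilde{O}_e,[\sigma])$, not just its identity component, is the scaling $\bG_m$, so $U$ is trivial.
\item The same observation shows the paper's $U'_i$, which preserves $\sigma_i$, acts trivially.
\item The genuinely unipotent part of $N(G_e)/G_e$ is never $\sigma$-dilating. For example, $y\mapsto y+by^2$ on the regular nilpotent orbit of $\mathfrak{sl}_3$ is $G$-equivariant but not $\sigma$-dilating.
\item The paper's weaker form is all that Theorem~\ref{t-nilpotent} needs, since only maximality of the central $\bG_m$ as a torus is used.
\item Your fallback via $\mathfrak{sl}_2$-triples is unnecessary.
\end{itemize}

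One expository point: your sentence about ``$\lambda^{-1}$ times the action'' is garbled. The displayed double-check computation is the correct statement and should replace it.
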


\begin{proof}
Let $G=\prod_{i=1}^d G_i$, $\fg=\bigoplus_{i=1}^d \fg_i$ be the decomposition into simple factors and write $e=(e_1,\dots,e_d)$. By discarding factors $G_i$ with $e_i=0$ (which does not affect the isomorphism class of $(\tilde{o} \in \widetilde{O}_e,\sigma)$), we may assume that every $e_i$ is nonzero. We have $O_e = \prod_{i=1}^d O_{e_i}$, $\widetilde{O}_e = \prod_{i=1}^d \widetilde{O}_{e_i}$ and $\sigma = \sum_{i=1}^d \mathrm{pr}_i^*\sigma_i$ where $\sigma_i$ is the Kirillov-Kostant-Souriau form on the normalized orbit closure $\widetilde{O}_{e_i}$ of $e_i\in \cN(\fg_i)$.

Let $H=\Aut^G(\tilde{o} \in \widetilde{O}_e,[\sigma])^\circ$. As $H$ commutes with the $G$-action, it preserves each $G$-orbit and in particular $H\subseteq  \Aut^G(O_e,[\sigma])$.
Since $O_e=G/G_e = \prod_{i=1}^d G_i/G_{i,e_i}$, where $G_e$ (resp. $G_{i,e_i}$) is the stabilizer of $e$ (resp. $e_i$) in $G$ (resp. $G_i$), we know that
\[
\Aut^G(O_e) \cong N(G_e)/G_e \cong \prod_{i=1}^d \left(N(G_{i,e_i})/G_{i,e_i}\right)\cong \prod_{i=1}^d \Aut^G(O_{e_i})\,,
\]
where $N(G_e)$ is the normalizer of $G_e$ in $G$ and similarly for $N(G_{i,e_i})$. Taking the symplectic form $\sigma$ into account, the above product decomposition gives
\[
\Aut^G(O_e,[\sigma]) \subseteq \prod_{i=1}^d \Aut^{G_i}(O_{e_i},[\sigma_i]) \subseteq \prod_{i=1}^d \left(N(G_{i,e_i})/G_{i,e_i}\right)\,. 
\]
By \cite[Theorem 7.1]{BK-nilpotent}, we have 
\[
\left(N(G_{i,e_i})/G_{i,e_i}\right)^\circ \cong \bG_m \ltimes U_i
\]
for some unipotent group $U_i$. From the discussion before the lemma, we also know that $\Aut^G(O_{e_i},[\sigma_i])$ contains at least one $\bG_m$, thus 
\[
\Aut^{G_i}(O_{e_i},[\sigma_i])^\circ = \bG_m \ltimes U'_i
\]
for some unipotent group $U'_i$. Note that $U'_i$ acts trivially on $\sigma_i$ (as a unipotent group does not have any non-trivial one-dimensional representation), and the $\bG_m$ factor is dilating on $\sigma_i$ with weight one. Now 
\[
\prod_{i=1}^d \Aut^{G_i}(O_{e_i},[\sigma_i])^\circ \cong \bG_m^d \ltimes U
\]
where $U=\prod_{i=1}^d U'_i$ acts trivially on $\sigma$, while the only subgroup of $\bG_m^d$ that dilates $\sigma$ is the diagonal subgroup. Hence 
\[
\Aut^G(O_e,[\sigma])^\circ \cong \bG_m\ltimes U
\]
as desired.
\end{proof}

\begin{thm}\label{t-nilpotent}
For the normalization of a nilpotent orbit closure, the $\bG_m$ scaling action gives the minimizer of the normalized volume. As a consequence, the quotient $\left(\widetilde{O}_e \setminus \{\tilde{o}\}\right)/\bG_m$ is a K-semistable Fano variety.
\end{thm}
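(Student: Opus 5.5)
The plan is to apply Lemma \ref{lem:quasi-regular symplectic} to the conical symplectic singularity $(\tilde{o}\in \widetilde{O}_e,\sigma)$. For the reductive group we take $G':=G\times \bG_m\subseteq \Aut(\tilde{o}\in\widetilde{O}_e,[\sigma])$, where $G$ acts by the adjoint action and $\bG_m$ is the scaling action. The scaling action commutes with the adjoint action because the adjoint action is linear on $\fg$. The Kirillov--Kostant--Souriau form $\sigma$ is $G$-invariant and has weight one under scaling, so $G'$ does lie in the dilating symplectic automorphism group. If the scaling action is not effective on $\widetilde{O}_e$, we replace $\bG_m$ by its effective quotient, and likewise $G$ by its image; this does not affect centers up to finite groups. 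Then $G'$ is reductive, since $G$ is semisimple.

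Next we check the three hypotheses of Lemma \ref{lem:quasi-regular symplectic}.

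For (1), $Z(G)$ is finite because $G$ is semisimple. Hence $Z(G')^\circ=\bG_m$, the scaling torus; after passing to images the same holds up to finite groups.

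For (2), the scaling action is good. The coordinate ring of $\widetilde{O}_e$ is the integral closure of a quotient of $\mathrm{Sym}(\fg^*)$. It is therefore graded with positive weights in positive degrees and has degree-zero part $\bC$, so the vertex $\tilde{o}$ lies in the closure of every orbit.

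For (3), we must show that $\bT_0$ is a maximal torus of $\Aut^{G'}(\tilde{o}\in\widetilde{O}_e,[\sigma])$. This group is contained in $\Aut^G(\tilde{o}\in\widetilde{O}_e,[\sigma])$. By Lemma \ref{l-one dim}, the identity component of the latter is $\bG_m\ltimes U$ with $U$ unipotent, so every torus in it has dimension at most one. The scaling $\bG_m$ is contained in it, and since scaling commutes with $G'$, it also lies in $\Aut^{G'}$. Hence $\bT_0$ is a maximal torus there.

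Lemma \ref{lem:quasi-regular symplectic} now says that the Koll\'ar component given by the scaling action computes the minimizer of the normalized volume on $\tilde{o}\in\widetilde{O}_e$. Since this minimizer is $\wt_\xi$ for the quasi-regular Reeb vector $\xi$ of the scaling action, $(\widetilde{O}_e;\xi)$ is a K-semistable Fano cone. By Theorem \ref{thm:kc K-ss}, together with the discussion in Paragraph \ref{para-quasi-kollar}, the corresponding log Fano pair $(D,\Delta_D)$ on the Koll\'ar component is K-semistable, where $D=(\widetilde{O}_e\setminus\{\tilde{o}\})/\bG_m$. It remains to see that $\Delta_D$ is absent or at least harmless, which would make $D$ itself a K-semistable Fano variety. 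The boundary $\Delta_D$ comes from the nontrivial stabilizers of the $\bG_m$-action in codimension one. I expect the only real care to be needed here: if the scaling action has a generic stabilizer $\mu_k$, we first pass to the effective quotient torus. Nontrivial stabilizers along divisors would then have to be excluded, for instance by an argument based on homogeneity of the dense orbit $O_e$. Alternatively, one can read the conclusion "K-semistable Fano variety" as referring to the log Fano pair $(D,\Delta_D)$, which is the natural base.
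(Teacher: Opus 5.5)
Your first part, which checks the hypotheses of Lemma \ref{lem:quasi-regular symplectic} for $G\times\bG_m$ (finite center of $G$, goodness of scaling, and maximality of the scaling torus via Lemma \ref{l-one dim}), is the paper's argument for the first assertion.

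The gap is in the second assertion. The theorem says that $D=(\widetilde{O}_e\setminus\{\tilde{o}\})/\bG_m$ is a K-semistable Fano \emph{variety}, i.e.\ that $\Delta_D=0$. You leave this open and even propose reading the statement as being about the pair $(D,\Delta_D)$; that weakens the claim and is not acceptable. The missing step is short. Scaling acts freely on $\fg\setminus\{0\}$, hence freely on $\overline{O}_e\setminus\{\overline{o}\}$ (equivalently, $\bC[\overline{O}_e]$ is generated in degree one). The normalization $\widetilde{O}_e\to\overline{O}_e$ is $\bG_m$-equivariant and an isomorphism over the smooth open orbit $O_e$. The complement of $O_e$ has codimension at least two: it is a finite union of smaller nilpotent orbits, which are even-dimensional since they carry symplectic forms; compare \cite[Proposition 1.2]{BK-nilpotent}, which says the normalization is an isomorphism in codimension one. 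Hence the action on $\widetilde{O}_e\setminus\{\tilde{o}\}$ is free in codimension one. Every prime divisor of $D$ pulls back to an invariant divisor meeting this free locus, so the Seifert $\bG_m$-bundle has no multiple fibers over divisors and $\Delta_D=0$. This also removes your worry about a generic stabilizer $\mu_k$, since the action is already free on $O_e$.

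Your homogeneity idea does give freeness on $O_e$, because stabilizers in the central $\bG_m$ are constant along a $G$-orbit. What you never supply is the codimension-two statement for $\widetilde{O}_e\setminus O_e$, and without it nothing rules out a divisorial component of $\Delta_D$.
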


\begin{proof}
Let $H=G\times \bG_m\subseteq \Aut(\tilde{o} \in \widetilde{O}_e,[\sigma])$. Then $Z(H)=\{1\}\times \bG_m$, since $G$ is semisimple. The $Z(H)$-action on $\tilde{o} \in \widetilde{O}_e$ is induced by the scaling action on $\fg$, which is good, and 
\[
\Aut^H(\tilde{o} \in \widetilde{O}_e,[\sigma])^\circ\subseteq \Aut^G(\tilde{o} \in \widetilde{O}_e,[\sigma])^\circ \cong \bG_m\ltimes U
\]
for some unipotent group $U$ by Lemma \ref{l-one dim}, hence $Z(H)\cong \bG_m$ is a maximal torus of $\Aut^H(\tilde{o} \in \widetilde{O}_e,[\sigma])$. This shows that $H$ satisfies all the conditions of Lemma \ref{lem:quasi-regular symplectic}. Therefore, we conclude by Lemma \ref{lem:quasi-regular symplectic} that the normalized volume minimizer is given by the Koll\'ar component that corresponds to the $Z(H)$-action, i.e. the $\bG_m$ scaling action on $\tilde{o} \in \widetilde{O}_e$.



Since  $\mathbb{C}[\overline{O}_e]$ is generated by its degree one elements  (which are naturally identified with $\fg$), we know the $\mathbb{G}_m$-action on $\overline{O}_e\setminus \{\overline{o}\}$ is free. As $\widetilde{O}_e\to \overline{O}_e$ is isomorphic in codimension one (see \cite[Proposition 1.2]{BK-nilpotent}), the $\mathbb{G}_m$-action on $\widetilde{O}_e\setminus \{\tilde{o}\}$ is free in codimension one. Thus the quotient is a variety (instead of a pair). Therefore, it is a K-semistable Fano variety by \cite{LX-Kol-comp-stab}.
\end{proof}


\begin{rem}\label{rem-Kronheimer}
It is expected that the base $\left(\widetilde{O}_e \setminus \{\tilde{o}\}\right)/\bG_m$ of the normalized nilpotent orbit closure, and more generally, the natural bases (obtained through the Kazhdan $\bG_m$-action) of the Slodowy slices of nilpotent orbit closures are K-polystable. Indeed, Kronheimer \cite[Theorem 1 and Remark (ii) thereafter]{Kronheimer-nilpotent} has shown that the smooth locus of any Slodowy slice admits a hyperk\"ahler metric. While Kronheimer's original construction is somewhat indirect, in the case of nilpotent orbit closures in classical simple groups, this hyperk\"ahler metric can also be realized via hyperk\"ahler reduction  from a flat quaternionic vector space $\mathbb H^n$ \cite{KS-classical-nilpotent} (see also \cite[Section 2]{KS-hyperkahler-potential}). In this case, it follows from the explicit description of the corresponding hyperk\"ahler potential (we only need its asymptotic behavior near the boundary $\widetilde{O}_e\setminus O_e$) that the hyperk\"ahler metric extends to a K\"ahler cone metric on the entire normalized nilpotent orbit closure; in particular, the base is a K-polystable Fano variety.  
\end{rem}

\subsection{Hypertoric singularities}\label{ss-hypertoric}
Hypertoric singularities are the hyperk\"ahler analogue of affine toric varieties. They are defined as the Hamiltonian reduction of $(\mathbb{C}^{2N},\sigma_{\rm st})$ with respect to a subtorus of $\mathbb{G}_m^N$. In particular, they admit an effective Hamiltonian action by a half-dimensional torus. See \cite{BD-toric,Hausel-Sturmfels, Proudfoot-survey, Namikawa-torichyperkahler} for more background.  
\begin{defn}[Hypertoric singularity]
Let \(N\) and \(n\) be positive integers with \(n \leq N\). 
Let
\[
        B : \mathbb Z^n \longrightarrow \mathbb Z^N
\]
be an injective homomorphism, represented by an integer-valued
\(N \times n\)-matrix, such that each row vector of \(B\) is primitive and
\(\operatorname{Coker}(B)\) is torsion-free. Choose an integer-valued
\((N-n)\times N\)-matrix \(A\) so that there is an exact sequence
\[
        0 \longrightarrow \mathbb Z^n   \xrightarrow{\;B\;}  \mathbb Z^N     \xrightarrow{\;A\;}    \mathbb Z^{N-n}  \longrightarrow 0 .
\]
This gives an exact sequence of algebraic tori
\[
        1 \longrightarrow \mathbb T_A      \longrightarrow \mathbb T        \longrightarrow  \mathbb T_Y
        \longrightarrow 1 \,,
\]
where $\mathbb T_A\cong \bG_m^{N-n}$, $\mathbb T\cong \bG_m^{N}$ and $\mathbb T_Y\cong \bG_m^{n}$. 
Let \(\mathbb C^{2N}\) have coordinates
\[
        z_1,\ldots,z_N,w_1,\ldots,w_N
\]
and the standard symplectic form
\[
        \sigma_{\mathrm{st}} =  \sum_{i=1}^N \rd z_i \wedge \rd w_i .
\]
The torus $\mathbb T $ acts on $(\mathbb C^{2N},   \sigma_{\mathrm{st}})$ by
\[
        z_i \mapsto t_i z_i,
        \qquad
        w_i \mapsto t_i^{-1} w_i,
        \qquad 1 \leq i \leq N .
\]
Restricting this action to the subtorus $ \mathbb T _A\subset  \mathbb T $, we obtain a Hamiltonian $ \mathbb T _A$-action. Let \(a_{ij}\) be the \((i,j)\)-entry of \(A\), and let $ \mu \colon \mathbb C^{2N} \longrightarrow \mathbb C^{N-n}$ be defined by
\[
        \mu(\mathbf{z},\mathbf{w})
        =
        \left(
        \sum_{j=1}^N a_{1j} z_j w_j,
        \ldots,
        \sum_{j=1}^N a_{N-n,j} z_j w_j
        \right) \, ,
\]
which is the corresponding moment map, normalized by \(\mu(0)=0\).  Then
\begin{eqnarray*}
        Y(A,0)
       & := &
        \mu^{-1}(0) /\!\!/   \mathbb T_A
         \end{eqnarray*}     
is called the \emph{hypertoric singularity} associated to \(A\).

The 2-form $\sigma_{\rm st}|_{\mu^{-1}(0)}$ descends to a symplectic form $\sigma_Y$ on $Y(A,0)$. One can see that $(Y(A,0),\sigma_Y)$ is a conical symplectic singularity with a good $\bG_m$-action induced by the standard dilation on $\mathbb C^{2N}$, and $\mathbb{T}_Y$ acts on $(Y(A,0),\sigma_Y)$ by symplectomorphisms.
\end{defn}

Recall that a symplectic $G$-action on a symplectic variety $(Y,\sigma)$ is said to be Hamiltonian if there is a map $\fg\to \bC[Y]$, $\xi\mapsto H_\xi$ such that $\rd H_\xi =\iota_{\xi_Y} \sigma$ for any $\xi\in \fg$, where $\xi_Y$ is the vector field on $Y$ induced by the $G$-action. 

\begin{lem}\label{l-Sym=Hamilton}
Any symplectic torus $\bT$-action on a conical symplectic singularity $(y\in Y,\sigma)$ that commutes with the conical $\bG_m$-action is Hamiltonian. 
\end{lem}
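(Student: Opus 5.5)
The plan is to construct the Hamiltian functions by hand, using the positive weight of the conical action. Let $\bG_m$ act dilatingly on $(y\in Y,\sigma)$, so $\sigma$ has weight $\ell>0$ by Lemma \ref{lem:wt=log discrep}. Let $E$ denote the Euler vector field of the conical action. For any $\xi\in\ft=\Lie(\bT)$ let $\xi_Y$ be the induced vector field on $Y$. Since $\bT$ is symplectic, $\cL_{\xi_Y}\sigma=0$. Because $\sigma$ is closed, Cartan's formula then gives $\rd(\iota_{\xi_Y}\sigma)=0$ on $Y^{\mathrm{sm}}$. So $\alpha_\xi:=\iota_{\xi_Y}\sigma$ is a closed reflexive $1$-form.

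First I determine the weight of $\alpha_\xi$. Since $\bT$ commutes with the conical $\bG_m$, the field $\xi_Y$ is $\bG_m$-invariant, that is, $[E,\xi_Y]=0$. Hence $\alpha_\xi$ is $\bG_m$-homogeneous of weight $\ell$:
\[
\cL_E\alpha_\xi=\iota_{[E,\xi_Y]}\sigma+\iota_{\xi_Y}\cL_E\sigma=\ell\,\alpha_\xi .
\]
Now define the candidate function
\[
H_\xi:=\tfrac{1}{\ell}\,\iota_E\alpha_\xi=\tfrac1\ell\,\sigma(\xi_Y,E).
\]
This is a regular function on $Y^{\mathrm{sm}}$. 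Since $Y$ is normal and $Y\setminus Y^{\mathrm{sm}}$ has codimension $\ge2$, it extends to an element of $\bC[Y]$. Using Cartan's formula and $\rd\alpha_\xi=0$,
\[
\rd H_\xi=\tfrac1\ell\bigl(\cL_E\alpha_\xi-\iota_E\rd\alpha_\xi\bigr)=\alpha_\xi=\iota_{\xi_Y}\sigma .
\]
This identity holds on $Y^{\mathrm{sm}}$, hence as reflexive forms. The map $\xi\mapsto H_\xi$ is clearly linear, so the action is Hamiltonian in the sense defined above.

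The main point needing care is the reflexive setting. Vector fields on the normal variety $Y$ restrict to $Y^{\mathrm{sm}}$, and Cartan calculus is applied there. The contraction $\iota_E\alpha_\xi$ makes sense because $E$ and $\xi_Y$ are honest derivations of $\bC[Y]$. I also need that $\cL_E\sigma=\ell\sigma$ as forms, which is what "dilating of weight $\ell$" means. The positivity $\ell>0$, supplied by Lemma \ref{lem:wt=log discrep}, is exactly what makes division by $\ell$ possible, and this is where the conical hypothesis enters.

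If one also wants $H_\xi$ to be $\bT$-equivariant, one can check directly that $H_\xi$ is $\bT$-invariant. This holds because $\bT$ preserves $\sigma$ and commutes with $E$. One may also normalize $H_\xi(y)=0$, which is automatic since $H_\xi$ has $\bG_m$-weight $\ell>0$.
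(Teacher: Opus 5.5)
Your proof is correct and is essentially the paper's argument: $\cL_E\sigma=\ell\sigma$ with $\ell>0$ and $[E,\xi_Y]=0$ give $\cL_E\alpha_\xi=\ell\alpha_\xi$, and Cartan's formula applied to the closed form $\alpha_\xi$ then yields $\alpha_\xi=\rd\bigl(\tfrac1\ell\iota_E\alpha_\xi\bigr)$. Your extra remarks (working on $Y^{\mathrm{sm}}$, extending $H_\xi$ by normality, $\bT$-invariance and vanishing at the vertex) only spell out details the paper leaves implicit.
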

\begin{proof}
Assume the $\mathbb{G}_m$-action on $\sigma$ has weight $\ell>0$.
It is enough to construct the  Hamiltonian function for each
\(\xi\in\mathfrak t=\operatorname{Lie}(\mathbb T)\). Let $\xi_Y$ denote the corresponding vector field on $Y$. Since the $\mathbb T$-action is symplectic, it preserves $\sigma$ and we have $\mathcal L_{\xi_Y}\sigma=0$.
Hence, by Cartan's formula,
\[
\rd(\iota_{\xi_Y}\sigma)=\mathcal L_{\xi_Y}\sigma-\iota_{\xi_Y}\rd\sigma=0.
\]
Thus $\alpha_\xi:=\iota_{\xi_Y}\sigma$ is a closed regular \(1\)-form.

Let $\eta$ be the Euler vector field corresponding to the conical $\bG_m$-action. Then $\cL_{\eta}\sigma = \ell\sigma$. Because the $\mathbb T$-action commutes with the conical
\(\mathbb G_m\)-action, we have $ [\eta,\xi_Y]=0$. Therefore
\[
\mathcal L_\eta\alpha_\xi = \mathcal L_\eta(\iota_{\xi_Y}\sigma) = \iota_{[\eta,\xi_Y]}\sigma+\iota_{\xi_Y}(\mathcal L_\eta\sigma) = \ell\,\iota_{\xi_Y}\sigma =\ell\alpha_\xi .
\]
On the other hand, applying Cartan's formula to the closed \(1\)-form
\(\alpha_\xi\), we get
\[
\mathcal L_\eta\alpha_\xi
=
\rd(\iota_\eta \alpha_\xi)+\iota_\eta(\rd\alpha_\xi)
=
\rd(\iota_\eta\alpha_\xi).
\]
Since $\ell>0$, combining the two identities gives 
\[
\alpha_\xi
=
\rd\left(\frac{1}{\ell}\iota_\eta\alpha_\xi\right)
=
\rd\left(\frac{1}{\ell}\iota_\eta\iota_{\xi_Y}\sigma\right).
\]
Thus the Hamiltonian function for \(\xi\) is
\[
H_\xi
=
\frac{1}{\ell}\iota_\eta\iota_{\xi_Y}\sigma
=
\frac{1}{\ell}\sigma(\xi_Y,\eta),
\]
up to the usual sign convention for contractions. 
In particular,
\[
\rd H_\xi=\iota_{\xi_Y}\sigma \, ,
\]
so the action is Hamiltonian.
\end{proof}

We also need the following standard lemma. 
\begin{lem}\label{l-dimHamilton}
Let $\mathbb T$ be a torus with an effective Hamiltonian action on a symplectic variety $(Y,\sigma)$. Then $\dim (\mathbb{T})\le \frac{1}{2}\dim (Y)$.
\end{lem}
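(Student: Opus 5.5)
The plan is to reduce to the smooth locus and argue with isotropic orbits, as in the classical proof that a Hamiltonian torus action on a symplectic manifold has rank at most half the dimension.

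\textbf{Step 1: a free point.} Let $\bT$ act effectively on $Y$ with moment map $\xi\mapsto H_\xi$, and put $Y^{\mathrm{sm}}$ for the smooth locus, where $\sigma$ is a genuine symplectic form. The smooth locus is $\bT$-invariant. Since the action is effective, the generic stabilizer is finite, so the set of points with finite stabilizer is open and nonempty. Choose such a point $y\in Y^{\mathrm{sm}}$. Then the infinitesimal action $\ft\to T_yY$, $\xi\mapsto \xi_Y(y)$, is injective, and its image $V$ has dimension $\dim\bT$.

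\textbf{Step 2: the orbit tangent space is isotropic.} For $\xi,\zeta\in\ft$ we have
\[
\sigma(\xi_Y,\zeta_Y)=\iota_{\zeta_Y}\iota_{\xi_Y}\sigma=\iota_{\zeta_Y}\rd H_\xi=\zeta_Y(H_\xi).
\]
This is the derivative of $H_\xi$ along the $\bT$-action. The function $H_\xi$ is $\bT$-invariant after possibly modifying the moment map by a constant, because $\bT$ is abelian. Indeed, $\zeta_Y(H_\xi)$ is then the Poisson bracket $\{H_\zeta,H_\xi\}$, and this equals $H_{[\zeta,\xi]}$ plus a constant, which is a constant. A constant function on the connected $Y$ vanishes at the vertex. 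Alternatively, invariance follows directly because $\bT$ is connected and commutative: $\rd(\zeta_Y H_\xi)=\cL_{\zeta_Y}\iota_{\xi_Y}\sigma=\iota_{[\zeta_Y,\xi_Y]}\sigma=0$, so $\zeta_Y(H_\xi)$ is locally constant. To see that this constant is $0$, note that $\zeta_Y(H_\xi)$ is the derivative of $H_\xi$ along a torus orbit. Such a derivative is a weight-zero function under the diagonalizable $\bT$-action on the coordinate ring, and the derivative of a weight-zero function along the torus vanishes. Hence $\sigma(\xi_Y,\zeta_Y)=0$, so $V$ is isotropic in the symplectic vector space $(T_yY,\sigma_y)$.

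\textbf{Step 3: conclusion.} An isotropic subspace of a symplectic vector space of dimension $\dim Y$ has dimension at most $\frac12\dim Y$. This gives $\dim\bT=\dim V\le\frac12\dim Y$.

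\textbf{Main obstacle.} The delicate point is showing that $\sigma(\xi_Y,\zeta_Y)$ vanishes identically rather than merely being constant. My first attempt would be to decompose the coordinate ring into $\bT$-weight spaces. The function $\zeta_Y(H_\xi)$ lies in the weight-zero part, and applying the derivation $\zeta_Y$ acts on the weight-$\alpha$ part by multiplication by $\langle\alpha,\zeta\rangle$. So it kills the weight-zero part, and hence $\zeta_Y(H_\xi)=\zeta_Y\big((H_\xi)_0\big)=0$.
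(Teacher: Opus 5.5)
Your argument is correct and follows the same route as the paper: at a general smooth point the orbit tangent space is isotropic because $\sigma(\xi_Y,\zeta_Y)=\zeta_Y(H_\xi)$ vanishes, so $\dim\bT\le\frac12\dim Y$. Where you go further is in proving this a priori only locally constant function is zero, which the paper simply attributes to $\bT$ being abelian: your weight-space argument settles it in general, since a constant has weight zero while $\zeta_Y$ kills the weight-zero part of $H_\xi$. For the same reason you should drop the remark that a constant vanishes at the vertex, which does not make sense here because the lemma assumes no fixed point.
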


\begin{proof}
For $\xi\in \operatorname{Lie}(\mathbb T)$, let \(\xi_Y\) denote the corresponding vector field on $Y$. Since the torus action under consideration is Hamiltonian, there is a Hamiltonian function $H_\xi$ such that $\rd H_\xi=\iota_{\xi_Y}\sigma$.
For two elements \(\xi,\eta\in \operatorname{Lie}(\mathbb T)\), as the torus $\mathbb T$ is abelian, the Hamiltonian functions Poisson commute:
\[
        \{H_\xi,H_\eta\}=0 .
\]
But $  \{H_\xi,H_\eta\}=        \sigma(\xi_Y,\eta_Y)$.
Therefore, $      \sigma(\xi_Y,\eta_Y)=0$
for all \(\xi,\eta\in \operatorname{Lie}(\mathbb T)\). Hence the tangent space to the $\mathbb T$-orbit at a general smooth point $y\in Y$ is an isotropic subspace of the symplectic vector space
$ T_yY$. Therefore, $\dim (\mathbb T)\le \frac{1}{2}\dim Y$.
\end{proof}

\begin{thm}\label{t-hypertoric}
The standard diagonal $\bG_m$-action on $\mathbb C^{2N}$ descends to a good \(\mathbb G_m\)-action on \(Y(A,0)\), which yields the minimizer of $Y(A,0)$.
\end{thm}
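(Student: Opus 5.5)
We apply Lemma \ref{lem:quasi-regular symplectic} with the reductive group $G:=\bT_Y\times \bG_m\subseteq \Aut(y\in Y(A,0),[\sigma_Y])$, where $\bG_m$ is the conical action descended from the diagonal dilation of $\mathbb C^{2N}$. The first step is to check that this dilation descends and is good. The dilation commutes with $\bT_A$ and scales $\mu$ by weight $2$, so it preserves $\mu^{-1}(0)$ and acts on the invariant ring $\bC[\mu^{-1}(0)]^{\bT_A}$. That ring is a quotient of $\bC[\mathbf z,\mathbf w]^{\bT_A}$, whose generators are invariant monomials of positive degree. Hence all nonconstant weights are positive and the action is good. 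It dilates $\sigma_{\mathrm{st}}$ with weight $2$, hence also dilates $\sigma_Y$. Since $\bT_Y$ is abelian, we have $Z(G)^\circ=G$, which is a torus of rank $n+1$. Condition (1) of the lemma therefore fails for this $G$, and the lemma cannot be applied directly.

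Instead, I would rerun the argument of Lemma \ref{lem:quasi-regular symplectic} by hand. By Theorem \ref{thm:Kaledin enhanced}, there is a Reeb vector $\xi$ with $\wt_\xi$ the minimizer and $\langle\xi\rangle\subseteq \Aut(y\in Y,[\sigma_Y])$. Let $\bT'$ be a maximal torus of $\Aut(y\in Y,[\sigma_Y])$ containing $\bT_Y\times\bG_m$. Its symplectic part $\bT'_0:=\ker(\bT'\to\bG_m)$, the kernel of the character by which $\bT'$ acts on $\sigma_Y$, commutes with the conical $\bG_m$. By Lemma \ref{l-Sym=Hamilton}, $\bT'_0$ is Hamiltonian, and by Lemma \ref{l-dimHamilton} it has dimension at most $n=\frac12\dim Y$. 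Since it contains $\bT_Y$, we get $\bT'_0=\bT_Y$ and $\bT'=\bT_Y\times\bG_m$. So $\bT_Y\times\bG_m$ is itself a maximal torus. By the conjugacy of maximal tori (Lemma \ref{lem:max torus in formal auto} together with Proposition \ref{prop:cone+form formal isom = alg isom}), we may conjugate $\xi$ so that it lies in $\Lie(\bT_Y\times\bG_m)_\bR$.

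The remaining task is to locate $\xi$ inside this $(n+1)$-dimensional Reeb cone. For this I would use the extra discrete symmetry. The map swapping $z_i\leftrightarrow w_i$ for all $i$, composed with a sign change such as $z_i\mapsto -z_i$, sends $\sigma_{\mathrm{st}}$ to $\pm\sigma_{\mathrm{st}}$ and $\mu$ to $\pm\mu$. It normalizes $\bT_A$, so it descends to a dilating automorphism $\tau$ of $Y$. Conjugation by $\tau$ acts on $\bT_Y$ by inversion and fixes the conical $\bG_m$. Because the minimizer is unique up to scaling (\cite{XZ-uniqueness}) and automorphism invariant, $\wt_\xi$ is $\tau$-invariant. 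Writing $\xi=(\xi_Y,c)$, the Reeb vector $(-\xi_Y,c)$ then induces the same valuation. If two Reeb vectors in the same torus induce the same valuation, then they pair identically with every weight of the torus occurring in $\bC[Y]$. These weights span $M$, so the two Reeb vectors are equal. Hence $\xi_Y=0$, and $\xi$ is a multiple of the diagonal dilation, which is the claim.

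I expect the main obstacle to be the conjugacy step. It requires the maximal torus of the \emph{algebraic} group $\Aut(y\in Y,[\sigma_Y])$ to be unique up to conjugation, and the conjugated $\xi$ to remain compatible with $\tau$. One clean way is to replace conjugation by an averaging argument: the set of Reeb vectors in $\Lie(\bT_Y\times\bG_m)_\bR$ that compute the minimizer is nonempty, and by the equality-of-weights observation it is a single ray. This ray is $\tau$-stable, so it must lie on the $\tau$-fixed line $\{\xi_Y=0\}$.
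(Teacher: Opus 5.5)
Your proposal is correct and follows the paper's proof. The steps match: $\bG_m\times\bT_Y$ is a maximal dilating torus by Lemmas \ref{l-Sym=Hamilton} and \ref{l-dimHamilton}, and Theorem \ref{thm:Kaledin enhanced} (its second part) places the minimizer in this torus's Reeb cone, which already resolves your conjugacy worry, so the averaging detour is unnecessary. The only difference is the final step: the paper applies Lemma \ref{l-center} to $G=\langle\iota\rangle\ltimes\bT'$ with $Z(G)^\circ=\bG_m$, whereas you argue directly that $\iota$-invariance of $\wt_\xi$, together with injectivity of $\xi\mapsto\wt_\xi$ on a fixed Reeb cone, forces $\xi_Y=0$. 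That is a slightly more elementary route to the same conclusion, and your extra sign change is harmless but unneeded, since anti-symplectic maps are already dilating.
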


\begin{proof}
It is easy to see that the diagonal $\mathbb{G}_m$-action descends to an action $\lambda$ on $Y(A,0)$ and preserves the symplectic form \(\sigma_Y\) (with weight \(2\)). 

Since $\dim (Y(A,0))=2n$, by Lemmas \ref{l-Sym=Hamilton} and \ref{l-dimHamilton}, the maximal torus which acts on $(Y(A,0),\sigma_Y)$ by symplectomorphisms and commutes with $\lambda$ is of dimension at most $n$. So $\mathbb{T}_Y$ is such a maximal torus. Therefore, the maximal torus which dilates $(Y(A,0),\sigma_Y)$ is of dimension $n+1$, and $\mathbb{T}'=\mathbb{G}_m\times \mathbb T_Y$ yields such a maximal torus, where the action of the first factor is $\lambda$.

By Theorem \ref{thm:Kaledin enhanced}, we know that the Reeb cone of $\mathbb{T}'$ contains a minimizer of the normalized volume function for $0\in Y(A,0)$. 
There is an involution
\[
        \iota:\mathbb C^{2N}\longrightarrow \mathbb C^{2N},
        \qquad
        \iota(z_i,w_i)=(w_i,z_i),
\]
which preserves $\mu^{-1}(0)$ and $\sigma_{\mathrm{st}}$, hence descends to a symplectic automorphism of $Y(A,0)$. Note that $\iota$ commutes with $\lambda$ and  $\iota^{-1}\bT_Y\iota=\bT_Y$, and the conjugation action by $\iota$ on $\bT_Y$ swaps the weights on $z_i$ and $w_i$. Let $G\cong \bZ/2\bZ\ltimes \bT'$ be the subgroup of $\Aut(Y(A,0),[\sigma_Y])$ generated by $\iota$ and $\bT'$. Then $Z(G)=\bG_m$, generated by the projection of the standard dilation on $\mathbb{C}^{2N}$. By Lemma \ref{l-center}, we see that it gives the minimizer of $Y(A,0)$. 
\end{proof}

\begin{rem}
By taking symplectic quotients and products, we get more examples of symplectic Fano cone singularities with explicit descriptions of the normalized volume minimizer. For quotients, this follows from the uniqueness of the minimizer \cite{XZ-uniqueness}. For products, the relevant more general fact is that if $x_i\in (X_i;\xi_i)$ are K-semistable Fano cone singularities, and we normalize the Reeb vectors $\xi_i$ so that $A_{X_i}(\xi_i)=n_i:=\dim X_i$, then the product $(x_1,x_2)\in (X_1\times X_2;\xi_1+\xi_2)$ is a K-semistable Fano cone. Since we are unable to find a good reference for this, we sketch an analytic proof as follows (we skip some details since this fact is not used elsewhere in this paper; it should also be possible to prove it by purely algebraic arguments along the lines of \cite{Z-product-K,XZ-uniqueness}). By taking K-polystable degenerations of $X_i$ (see e.g. \cite{LWX-tangent-cone}) it is not hard to reduce this to the case when $x_i\in (X_i;\xi_i)$ are K-polystable Fano cones. By the Yau-Tian-Donaldson correspondence for Fano cone singularities (see \cite[Theorem 2.9]{Li-Fano-cone-YTD} or \cite[Theorem 1.0.8]{Huang-thesis}), each $x_i\in (X_i;\xi_i)$ admits a Ricci-flat K\"ahler cone metric $\omega_i$, hence the product also admits a Ricci-flat K\"ahler cone metric $\omega:=\mathrm{pr}_1^*\omega_1+\mathrm{pr}_2^*\omega_2$ (the normalization $A_{X_i}(\xi_i)=n_i$ ensures that $\omega$ is homothetic with respect to the vector field $\xi_1+\xi_2$), hence another application of the Yau-Tian-Donaldson correspondence yields that $(x_1,x_2)\in (X_1\times X_2;\xi_1+\xi_2)$ is a K-polystable Fano cone.

\end{rem}

\appendix

\section{Darboux theorem for symplectic singularities} \label{s:Darboux}

In this appendix, we prove an analog of Darboux's theorem for symplectic singularities, in the sense that the symplectic structure is unique up to analytic isomorphism on any singularity germ. This might be well-known to the experts but we are unable to find a good reference (see e.g. \cite[Lemma 1.3]{Namikawa-deformation} and \cite{Namikawa-equivalence-symp} for some special cases). Throughout, let $\Aut(X^{\an},x)$ be the automorphism group of an analytic singularity germ $x\in X$.

\begin{thm} \label{thm:Darboux}
Let $x\in X$ be a rational singularity germ, and let $\sigma_0,\sigma_1$ be two symplectic forms on $X$. Then there exists some $\varphi\in \Aut(X^\an,x)$ such that $\varphi^*\sigma_0 = \sigma_1$.
\end{thm}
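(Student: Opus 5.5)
We follow Moser's path method on a resolution, as in \cite[Lemma 1.3]{Namikawa-deformation}, with the rigidity input supplied by the same hypercohomology vanishing used in the proof of Lemma \ref{lem:lift vector field}. First, since $X$ has rational singularities and admits a symplectic form, it has a trivial canonical sheaf by Remark \ref{rem:trivial-canonical}, hence is canonical and in particular klt. Set $\sigma_s:=(1-s)\sigma_0+s\sigma_1$ for $s\in\bC$. The top power $\sigma_s^{n/2}$ is a holomorphic section of $\omega_X\cong\cO_X$ that is a polynomial in $s$. At $s=0,1$ it is a unit at $x$. Hence it is a unit near $x$ for all $s$ outside a finite set $S\subseteq\bC$. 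We connect $0$ and $1$ by a real path $\gamma$ in $\bC\setminus S$. After shrinking $X$ to a small neighborhood of $x$ (uniformly along the compact path), each $\sigma_s$ with $s\in\gamma$ is non-degenerate on $X^{\mathrm{sm}}$. Closedness is automatic, so each such $\sigma_s$ is a symplectic form.

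Next we perform the Moser step. We look for a time-dependent holomorphic vector field $v_s$ on $X$ with $\cL_{v_s}\sigma_s=-\frac{\rd\sigma_s}{\rd s}=\sigma_0-\sigma_1$. By Cartan's formula this becomes $\rd(\iota_{v_s}\sigma_s)=\sigma_0-\sigma_1$. The key claim is that the closed reflexive $2$-form $\sigma_1-\sigma_0$ is exact near $x$, namely $\sigma_1-\sigma_0=\rd\beta$ for some reflexive $1$-form $\beta$. Granting this, we define $v_s$ on $X^{\mathrm{sm}}$ by $\iota_{v_s}\sigma_s=-\beta$. This is possible because $\sigma_s$ identifies $T$ with $\Omega^1$ there. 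Since $T_X$ is reflexive, $v_s$ extends to a vector field on $X$. We then also want $v_s(x)=0$, meaning $v_s$ preserves $\fm_x$. For this we normalize $\beta$: by subtracting $\rd g$ for a suitable function $g$, we may assume that $\beta$ vanishes to order $\ge 1$ at $x$ in a suitable sense. Alternatively, we use that every vector field on a klt (hence non-smooth) germ whose flow is analytic must fix the singular point $x$. This holds whenever $x$ is singular, since the singular locus is preserved by any flow. If $x$ is smooth, the statement is the classical Darboux theorem.

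The flow $\varphi_s$ of $v_s$ is an automorphism of the germ defined for $s\in\gamma$. Integrability needs care on the singular space. We handle it by lifting $v_s$ to a functorial log resolution, or by noting that derivations of $\cO_{X,x}$ preserving $\fm_x$ integrate to flows on the analytic germ by standard results on analytic derivations. The standard Moser computation $\frac{\rd}{\rd s}\varphi_s^*\sigma_s=\varphi_s^*(\cL_{v_s}\sigma_s+\dot\sigma_s)=0$ holds on $X^{\mathrm{sm}}$. It gives $\varphi_1^*\sigma_1=\sigma_0$, and taking $\varphi:=\varphi_1^{-1}$ gives $\varphi^*\sigma_0=\sigma_1$.

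The main obstacle is the exactness of closed reflexive $2$-forms near $x$, i.e.\ a local Poincaré lemma for $\Omega^{[\bullet]}$ in degree $2$. Our plan for this step is as follows. Take a resolution $\pi\colon Z\to X$. By \cite{GKKP-extend-forms} (Theorem \ref{thm:log disc>=0}), $\pi^*\tau$ is a holomorphic closed $2$-form on $Z$, where $\tau:=\sigma_1-\sigma_0$. Shrink $X$ to a contractible Stein neighborhood $B$ of $x$. Using rationality, $H^i(\pi^{-1}B,\cO)=0$ for $i=1,2$. The Hodge-theoretic degeneration on the resolution gives $H^2(\pi^{-1}B,\bC)\to H^2(\pi^{-1}B,\cO)=0$ the appropriate way, namely via $\mathbb{H}^2(\Omega^{\bullet\ge1})\cong\mathbb{H}^2(\Omega^\bullet)=H^2(\pi^{-1}B,\bC)$, exactly as in the proof of Lemma \ref{lem:lift vector field}. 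So $[\pi^*\tau]$ is a topological class on $\pi^{-1}B$, which retracts onto $\pi^{-1}(x)$. Here lies the subtle point: we must show that this class vanishes. We would try first to use that $\pi^*\tau$ is a holomorphic $2$-form, hence of type $(2,0)$. Its restriction to the fibers of $\pi$ over $x$ pairs trivially with the algebraic (Hodge type $(1,1)$) classes spanning $H_2(\pi^{-1}(x))$, which is generated by curve classes for rational singularities. Thus the class is zero. Then $\pi^*\tau=\rd\beta'$ for a holomorphic $1$-form $\beta'$ on $\pi^{-1}B$ (holomorphic again by the $\Omega^{\bullet\ge1}$ comparison and $H^1(\cO)=0$). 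Finally, $\beta:=\pi_*\beta'$ is the desired reflexive $1$-form.
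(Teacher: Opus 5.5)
You follow the same Moser-path strategy as the paper. Deriving exactness of $\sigma_1-\sigma_0$ from a resolution, instead of citing \cite[Corollary 2.8]{Kaledin-sym} as the paper does, is acceptable, provided you supply a reference for the nontrivial fact you invoke: for a rational singularity, $H_2(\pi^{-1}(x),\bQ)$ is spanned by algebraic curves. The genuine gap is the requirement that $v_s$ fix $x$. Without it, the time-one flow is only an isomorphism of germs $(X,x)\to (X,\varphi_1(x))$, not an element of $\Aut(X^{\an},x)$, and neither of your two arguments establishes it.

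Your fallback argument is false whenever $\mathrm{Sing}(X)$ is positive-dimensional at $x$. Take $X=(\bC^2/\{\pm1\})\times\bC^2$ with the product symplectic form. The field $\partial/\partial y_1$ on the second factor preserves $\mathrm{Sing}(X)=\{0\}\times\bC^2$ but moves the origin. Such fields genuinely arise in your construction: $\beta=\pi_*\beta'$ is only determined up to adding $\rd g$, and adding $\rd y_1$ changes $v_s$ by a constant field along the $\bC^2$ factor.

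Your other option, normalizing $\beta$ to ``vanish at $x$ in a suitable sense'', leaves the actual content unspecified. The naive sense $\beta\in\fm_x\Omega^{[1]}_X$ is not attainable in general. For instance, on $\bC^2/\{\pm1\}$ with coordinates $u,v$, the class of $u\,\rd v-v\,\rd u$ in $\Omega^{[1]}_X/\fm_x\Omega^{[1]}_X$ is not in the image of $\rd(\fm_x)$.

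The missing idea is the paper's Lemma \ref{lem:Poicare}:
\begin{enumerate}
\item Vector fields preserve the stratification $X_i=\mathrm{Sing}(X_{i-1})$, so $v_s(x)$ lies in $T_xX_m$ for the deepest stratum $X_m$ through $x$.
\item By \cite[Theorem 2.3]{Kaledin-sym}, this stratum is a Poisson subvariety whose induced structure is symplectic, so Hamiltonian fields $H_f$ of functions $f$ on $X$ span $T_xX_m$.
\item Replacing $\beta$ by $\beta-\rd f_s$ for a suitable $f_s$ makes $v_s$ vanish at $x$ while keeping $\rd\beta=\sigma_1-\sigma_0$. Here $f_s$ depends on $s$, since $H_f$ depends on $\sigma_s$.
\end{enumerate}
Only after this normalization does the flow give automorphisms of the germ $(X,x)$, via Kaup's theorem as in Lemma \ref{lem:vector field->auto}.
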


Before proving this, we recall an auxiliary result on local group actions on normal singularity germs induced by vector fields. Here we say that a subvariety (or subscheme) is preserved by a vector field $\xi$ on (the smooth locus of) a normal variety if its ideal sheaf is preserved by the corresponding derivation $\partial_\xi$.

\begin{lem} \label{lem:vector field->auto}
Let $x\in X$ be a normal singularity germ and let $\xi_t\in H^0(X,T_X)$ be a family of vector fields on $X$ that fixes $x$, parametrized by an analytic convex open neighborhood $U_0$ of $0\in\bA^1$. Then for any compact subset $V_0\subseteq U_0$, there exists a one-parameter family $\varphi_t\in \Aut(X^{\an},x)$, $t\in V_0$, such that $\varphi_0=\mathrm{id}$ and $\frac{\rd \varphi_t}{\rd t} = \xi_t$.
\end{lem}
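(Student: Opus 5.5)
We plan to embed the germ $x\in X$ into a smooth ambient germ, extend the vector fields there, integrate the resulting time-dependent flow, and check that the flow preserves $X$. Fix a closed embedding $(X,x)\hookrightarrow(\bC^N,0)$ with ideal $I=(f_1,\dots,f_k)$. Since $X$ is normal, each $\xi_t$ (a section of the reflexive sheaf $T_X$, defined on $X^{\mathrm{sm}}$) corresponds to a derivation $\partial_{\xi_t}$ of $\cO_{X,x}$. It extends to all of $X$ by Hartogs, because $\cO_X$ is normal and hence $S_2$. We lift these derivations to derivations $\tilde\xi_t$ of $\cO_{\bC^N,0}$ preserving $I$. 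Concretely, we choose holomorphic functions $g_i(t,z)$ on $U\times\bC^N$ restricting to $\partial_{\xi_t}(z_i|_X)$, and set $\tilde\xi_t=\sum_i g_i\partial_{z_i}$.

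The lifting must depend holomorphically (or at least continuously) on $t$. To arrange this, we first interpret the family as a single derivation on $U_0\times X$ relative to $U_0$, i.e.\ a section of $T_{U_0\times X/U_0}$. We then lift its values on the coordinate functions via the surjection $\cO_{U_0\times\bC^N}\to\cO_{U_0\times X}$, using Cartan's Theorem B on a Stein neighborhood of $V_0\times\{0\}$. This is the step where "family parametrized by $U_0$" is used.

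Such a lift need not be tangent to $X$ off $X$, but it automatically satisfies $\tilde\xi_t(I)\subseteq I$. Indeed, for $f\in I$ the function $\tilde\xi_t(f)|_X$ equals $\partial_{\xi_t}(f|_X)=0$, because $\tilde\xi_t$ induces $\partial_{\xi_t}$ on $\cO_X$ by construction. We also have $\tilde\xi_t(0)$ lying in $\fm$, since $\xi_t$ fixes $x$ and so preserves $\fm_x$. After shrinking, we may therefore take $g_i(t,0)=0$.

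Next we solve the time-dependent ODE $\frac{\rd}{\rd t}\Phi_t(z)=\tilde\xi_t(\Phi_t(z))$ with $\Phi_0=\mathrm{id}$. Because $0$ is a fixed point for all $t$, the solution through $0$ exists for all $t\in V_0$. By compactness of $V_0$ and continuous dependence on initial data, there is a neighborhood $W$ of $0$ on which $\Phi_t$ is defined for all $t\in V_0$ and is a biholomorphism onto its image fixing $0$. Convexity of $U_0$ ensures that each $t\in V_0$ is joined to $0$ by the segment inside $U_0$; we integrate along this real path, or along a complex path, using holomorphic dependence.

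It remains to show $\Phi_t(X\cap W)\subseteq X$. For $f\in I$, set $h_f(t)=f\circ\Phi_t$. Then $\frac{\rd}{\rd t}(f\circ\Phi_t)=(\tilde\xi_t f)\circ\Phi_t$. Writing $\tilde\xi_t f_j=\sum_l a_{jl}(t,z)f_l$, which is possible since $\tilde\xi_t(I)\subseteq I$, the vector $F(t)=(f_j\circ\Phi_t)$ satisfies the linear system $F'=A(t)F$ with $F(0)=(f_j)$. Hence $F(t)=M(t)F(0)$ for an invertible holomorphic matrix $M(t)$, namely the fundamental solution. This gives $\Phi_t^*I=I$, so $\Phi_t$ restricts to an automorphism $\varphi_t$ of $(X,x)$ with $\frac{\rd\varphi_t}{\rd t}=\xi_t$.

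The expected main obstacle is producing an ambient lift that depends well on $t$ and has coefficients $a_{jl}$ holomorphic jointly in $(t,z)$ on a uniform neighborhood. We will handle this by working on $V_0\times W$ as a single analytic space and applying coherence (Theorem B) to the ideal $I\cdot\cO_{U_0\times\bC^N}$.
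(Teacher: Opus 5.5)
Your proof is correct, but it takes a different route from the paper.

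\textbf{The paper's route.} The paper argues intrinsically on $X$. It converts the time-dependent family into the autonomous vector field $(\xi_t,\partial_t)$ on $X\times U_0$. It then cites Kaup's theorem that holomorphic vector fields on complex spaces generate local holomorphic flows. Finally, it uses that $x$ is fixed, that $V_0$ is compact, and the group law (composing $f_{t/k}$ with itself $k$ times) to obtain one neighborhood of $x$ on which $\varphi_t$ is defined for all $t\in V_0$.

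\textbf{Your route.} You embed $(X,x)\hookrightarrow(\bC^N,0)$ and lift the relative derivation to $U_0\times\bC^N$ by Theorem B. You then solve a classical non-autonomous holomorphic ODE along segments. The uniform domain comes from openness of the existence domain together with compactness of $V_0\times\{0\}$. Tangency to $X$ comes from the linear system $F'=AF$ for $F=(f_j\circ\Phi_t)$. That last step is exactly the content of Kaup's theorem in the embedded, time-dependent case.

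\textbf{Trade-off.} Your argument is self-contained modulo Cartan's theorems and standard ODE theory, but it pays for this with an embedding and a lifting step. The paper's argument is shorter and coordinate-free, at the price of using Kaup as a black box.

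\textbf{One step to justify.} You should justify the identity $\tilde\xi_t(h)|_X=\partial_{\xi_t}(h|_X)$ for arbitrary convergent power series $h$, not only polynomials in the $z_i$. It holds because any $\bC$-derivation $D$ of an analytic local ring satisfies $D(\fm^{k+1})\subseteq\fm^{k}$. Splitting $h$ into a polynomial of degree $\le k$ plus a term in $\fm^{k+1}$ then shows that $D(h)-\sum_i D(z_i)\,\partial h/\partial z_i\in\bigcap_k\fm^k=0$, by Krull. Run this stalkwise on $U_0\times(X\cap B)$; then $\tilde\xi(I)\subseteq I$ holds as sheaves, and your coherence argument produces the holomorphic coefficients $a_{jl}$.

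\textbf{Minor.} The vanishing $g_i(t,0)=0$ is automatic, since $z_i|_X\in\fm_x$ and $\partial_{\xi_t}(\fm_x)\subseteq\fm_x$. No shrinking is needed for it.
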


\begin{proof}
We may assume that $V_0$ is also convex. Consider the vector field $\widetilde{\xi}$ on $X\times U_0$ given by $\widetilde{\xi}(\mathbf{x},t) = (\xi_t,\partial_t)$. By \cite[Satz 3]{Kau-group-action}, it is induced by a local group action on $X\times U_0$. In other words, there exists an open neighborhood $U$ of $X\times U_0\times \{0\}$ in $X\times U_0\times U_0$ and a morphism $U\to X\times U_0$ of the form $$(\mathbf{x},s,t)\mapsto f_t(\mathbf{x},s) =  (\widetilde{\varphi}_t(\mathbf{x},s),s+t)$$ such that $f_0=\mathrm{id}$ and $f_t\circ f_{t'} = f_{t+t'}$ whenever both sides are well-defined. Because the vector field $\xi_t$ fixes $x$, we have $f_t(x,s)=(x,s+t)$ for any $s\in U_0$ and $0<|t|\ll 1$ (depending on $s$). By the compactness of $V_0$, we can then find some $0<\varepsilon\ll 1$ and an analytic open neighborhood $U_x$ of $x\in X$ such that $f_t(\mathbf{x},s)$ is defined for all $\mathbf{x}\in U_x$, $s\in V_0$ and $|t|<\varepsilon$. This in turn implies that the composition 
\[ f_{t/k}\circ \dots \circ f_{t/k} \mbox {\ \ \ ($k$ times)}
\] is well defined on some open neighborhood $U'$ of $(x,0)\in X\times U_0$ for some sufficiently large integer $k$ and for all $t\in V_0$. It is an analytic continuation of $f$ to $U'\times V_0$ since $f$ defines a group action. Therefore, we may assume from the beginning that $U'\times V_0\subseteq U$. Then 
\[
\varphi_t (\mathbf{x}) := \widetilde{\varphi}_t (\mathbf{x},0), \quad t\in V_0
\]
defines an automorphism of the analytic germ $x\in X$ and satisfies the conditions of the lemma by construction.
\end{proof}

Recall that any symplectic form $\sigma$ on a normal singularity $x\in X$ induces an isomorphism $\Omega_X^{[2]}\cong (\wedge^{2}T_X)^{**}$, which turns $\sigma$ into a bivector $\Theta$ and defines a Poisson structure on $X$ by $\{f,g\}:=\Theta(\rd f\wedge \rd g)$, see \cite{Kaledin-sym}. Any $f\in \cO_X$ thus gives a derivation $\{f,-\}$ on $\cO_X$, which can be identified with the Hamiltonian vector field $H_f$ on $X$ defined by $\iota_{H_f}\sigma = \rd f$. 

\begin{lem} \label{lem:Poicare}
Let $x\in X$ be a symplectic singularity, and let $\xi$ be a vector field on $X$. Then there exists some $f\in \cO_{X,x}$ such that $\xi-H_f$ fixes $x$.
\end{lem}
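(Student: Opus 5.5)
The plan is to reduce to a pointwise statement about the value of the vector field $\xi$ at the singular point $x$, and to use that symplectic singularities have only finitely many symplectic leaves. Here $X$ is stratified by symplectic leaves in the sense of Kaledin \cite{Kaledin-sym}, and the leaf through $x$ is a locally closed smooth subvariety $L\ni x$.

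The first step is to show that $\xi$ is tangent to $L$ at $x$. Vector fields on $X$ (derivations of $\cO_X$) preserve the singular locus and, more generally, every stratum of the canonical stratification by iterated singular loci. The reason is that the flow of $\xi$ acts on the germ by analytic automorphisms (in the spirit of Lemma \ref{lem:vector field->auto}, or via \cite{Kau-group-action}), and these preserve the singular locus. By Kaledin's theorem the symplectic leaves are exactly the connected components of the strata $X_i^{\mathrm{sm}}$, where $X_{i+1}=\mathrm{Sing}(X_i)$. Hence $\xi$ preserves the stratum containing $x$, and so $\xi(x)\in T_xL$.

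The second step is to realize $\xi(x)$ by a Hamiltonian vector field. The leaf $L$ is a symplectic manifold, and the Hamiltonian vector fields $H_f(x)$, for $f\in\cO_{X,x}$, span $T_xL$; this is essentially the definition of a symplectic leaf, namely that the image of the Poisson bivector $\Theta$ at $x$ is $T_xL$. Concretely, one takes local functions on $X$ restricting to coordinates on $L$ near $x$. Their Hamiltonian vector fields at $x$ are tangent to $L$ and span $T_xL$, because $\Theta|_L$ is the inverse of the nondegenerate form on $L$. We then choose $f$ as a linear combination of these functions so that $H_f(x)=\xi(x)$.

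It follows that $\xi-H_f$ vanishes at $x$ as a tangent vector, i.e.\ it maps $\fm_x$ into $\fm_x$. That is exactly the statement that it fixes $x$.

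The main obstacle is making the first and second steps rigorous at a singular point, where $T_xX$ means the Zariski tangent space. One must show two things: that derivations preserve the ideal of each stratum closure, and that the Hamiltonian derivations span the image of $\xi$ in $\mathrm{Hom}(\fm_x/\fm_x^2,\bC)$ modulo derivations that fix $x$. For preservation of strata I would cite Seidenberg's theorem that derivations preserve the singular locus (in characteristic $0$, derivations preserve the nilradical and the Jacobian-type ideals), and then iterate. For the spanning, Kaledin's local product decomposition $X\cong L\times Y$ (analytically, with $Y$ having a point leaf) shows that derivations vanishing on the transverse factor fix $x$. The $L$-directional part is then handled by Darboux coordinates on $L$, whose Hamiltonians are coordinate functions.
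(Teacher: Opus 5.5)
Your proposal is correct and is essentially the paper's argument. Vector fields preserve the stratification by iterated singular loci. Kaledin's theorem makes the stratum through $x$ a Poisson subscheme that is symplectic near $x$. Hamiltonian vector fields of functions restricting to local coordinates on that stratum then span its tangent space at $x$.

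The paper phrases the last step slightly differently: it restricts $\xi$ to the smooth stratum $X_m$, matches it there by some $H_{\bar f}$, and lifts $\bar f$ to $\cO_{X,x}$. This is equivalent to your comparison inside the Zariski tangent space $T_xX$, so the local product decomposition invoked in your final paragraph is not needed.
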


\begin{proof}
Define a stratification $$X_0\supseteq X_1\supseteq \dots\supseteq X_m$$ of $X$ inductively by $X_0:=X$ and $X_i:=\mathrm{Sing}(X_{i-1})$ until the last stratum becomes smooth. This stratification is preserved by any automorphism of $X$ and hence also by all vector fields. In particular, the ideal sheaf of $X_i$ is preserved by the derivation $\partial_\xi$ corresponding to $\xi$, thus $\partial_\xi$ also defines a derivation on $\cO_{X_i}$ and hence a vector field $\xi_i$ on each $X_i$. By \cite[Theorem 2.3]{Kaledin-sym} (and the proof of \cite[Proposition 3.1]{Kaledin-sym} which gives the precise form of the stratification), every $X_i$ is a Poisson subscheme of $X$ whose induced Poisson structure is given by a symplectic form $\sigma_i$. By construction, the smallest stratum $X_m$ is smooth. 

If $x_1,\dots,x_r \in \cO_{X_m,x}$ are local coordinates at $x\in X_m$, and $\eta_i:=H_{x_i}$ ($i=1,\dots,r$) are the corresponding Hamiltonian vector fields on $X_m$, then $\eta_1,\dots,\eta_r$ span $T_x X_m$. In particular, we can choose some linear combination $\bar{f}=\sum_{i=1}^r a_i x_i \in \cO_{X_m,x}$ ($a_i\in \bC$) of the $x_i$ such that the vector field
\[
\xi_m-H_{\bar{f}} = \xi_m - \sum_{i=1}^r a_i\eta_i
\]
on $X_m$ vanishes at $x$, which implies 
\[
\partial_{\xi_m}(\bar{g})-\{\bar{f},\bar{g}\}\in \fm_x \cO_{X_m,x}\mbox{\ \ for any $\bar{g}\in \fm_x$}\, . 
\]
Therefore, if $f\in\cO_{X,x}$ is any lift of $\bar{f}$, then the above condition yields 
\[
\partial_\xi(g) - \{f,g\}\in \fm_x
\]
for any $g\in \fm_x$. In other words, $\xi-H_f$ fixes $x\in X$.
\end{proof}

We now prove the Darboux theorem for symplectic singularities.

\begin{proof}[Proof of Theorem \ref{thm:Darboux}]
The basic ingredients are Moser's trick \cite{Moser-trick} and the exactness of the symplectic forms proved by Kaledin \cite{Kaledin-sym}, though the singularity introduces some mild subtleties. 

For $\tau\in\bC$, let $\sigma_\tau := (1-\tau)\sigma_0 + \tau \sigma_1$. Note that $\sigma_\tau$ is a symplectic form if and only if the induced section $\sigma_\tau^{n/2}\in H^0(X,\omega_X)$ is non-vanishing at $x$, which is the case when $\tau=0,1$. Thus, there are only finitely many values of $\tau\in\bC$ for which $\sigma_\tau$ is not symplectic. Hence to prove the theorem, we may assume that $\sigma_t$ is symplectic for all $t\in [0,1]$; this is because for general $\tau\in \bC$ and $\sigma'=\sigma_\tau$, any convex combination of $\sigma_j$ ($j=0,1$) and $\sigma'$ is symplectic, and it is enough to find an automorphism that takes $\sigma'$ to $\sigma_i$ for each $i$.

By \cite[Corollary 2.8]{Kaledin-sym}, possibly after shrinking $X$ around $x$, there exists a reflexive $1$-form $\alpha$ on $X$ such that 
\begin{equation} \label{eq:symp form exact}
\sigma_1 - \sigma_0 = \rd \alpha.    
\end{equation}
Consider the vector field $\xi'_t$ on $X$ defined by $\alpha = \iota_{\xi'_t}(\sigma_t)$. Replacing $\alpha$ by $\alpha-\rd f$ for $f\in \cO_{X,x}$ preserves the condition \eqref{eq:symp form exact}, while the corresponding vector field changes to $\xi'_t-H_f$, where $H_f$ is the Hamiltonian vector field with respect to $\sigma_t$. Therefore, by Lemma \ref{lem:Poicare}, we can choose an algebraic family of reflexive $1$-forms $\alpha_t$ on $X$ such that
\[
\rd \alpha_t = \sigma_1-\sigma_0 = \frac{\rd \sigma_{t}}{\rd t},
\]
and the vector field $\xi_t$ defined by $\alpha_t = \iota_{\xi_t}(\sigma_t)$ fixes $x$ for all $t$. 

As $x$ is fixed by all vector fields $\xi_t$, we know from Lemma \ref{lem:vector field->auto} that there exists a one-parameter family of automorphisms $\varphi_t\in \Aut(X^{\an},x)$, $t\in [0,1]$, such that $\varphi_0=\mathrm{id}$ and $\frac{\rd \varphi_t}{\rd t} = \xi_t$. We then have 
\[
\frac{\rd}{\rd t} \varphi_t^*\sigma_0 = \cL_{\xi_t} (\varphi_t^*\sigma_0) = \rd\big(\iota_{\xi_t}(\varphi_t^*\sigma_0)\big),
\]
where the second equality is by Cartan's formula. Since we also have $\frac{\rd \sigma_t}{\rd t} = \rd \big(\iota_{\xi_t}(\sigma_t)\big)$ and $\varphi^*_0 \sigma_0 = \sigma_0$, we must have $\varphi^*_t \sigma_0 = \sigma_t$ for all $t\in [0,1]$. In particular, $\varphi_1^*\sigma_0 = \sigma_1$.
\end{proof}

\markboth{HENRI GUENANCIA}{STABLE DEGENERATION AND KALEDIN'S CONJECTURE}
\clearpage
\appendix
\setcounter{section}{1}

\section{Slope stability of the canonical extension of log Fano pairs} \label{s:can ext ss}

\centerline{\small{\textsc{Henri Guenancia}}}

\medskip

The aim of this appendix is to show the following result:

\begin{thm}
\label{thm:can ext appendix}
Let $(X,\Delta)$ be a log Fano pair and let $E_{X,\Delta}$ be the canonical extension of $\cO_X$ by $\Omega_X^{[1]}(\log \Delta)$.
\begin{enumerate}
\item If $(X,\Delta)$ is K-polystable, then $E_{X,\Delta}$ is slope polystable with respect to $-c_1(K_X+\Delta)$.
\item If $(X,\Delta)$ is K-semistable, then $E_{X,\Delta}$ is slope semistable with respect to $-c_1(K_X+\Delta)$.
\end{enumerate}
\end{thm}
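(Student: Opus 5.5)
The proof will be analytic, following the strategy of Tian and Li for the canonical extension (and \cite{DGP-Q-Fano-decomp,Dai-stability} in the standard-coefficient case). The overall aim is to produce a singular Hermitian--Einstein type metric on the extension bundle from a (conical) Kähler--Einstein metric, and then deduce polystability from a Kobayashi--Lübke type argument that tolerates singularities.

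\textbf{Step 1: the K-polystable case.} By the Yau--Tian--Donaldson correspondence for log Fano pairs (Li--Tian--Wang, Berman--Boucksom--Jonsson), a K-polystable $(X,\Delta)$ admits a singular Kähler--Einstein metric $\omega\in c_1(-(K_X+\Delta))$ with $\mathrm{Ric}(\omega)-[\Delta]=\omega$. This metric is smooth on $X_{\mathrm{reg}}\setminus\Supp\Delta$, has bounded potential, and has conic (orbifold) behaviour along $\Delta$ generically. I would pass to an adapted cover $f\colon Y\to X$, on which $f^*\omega$ is a genuine orbifold-type metric on $f^{[*]}T_X(-\log\Delta)$ near generic points of $\Supp f^*\Delta$.

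On the dual extension $F:=E_{X,\Delta}^*$, which is an extension of $T_{(X,\Delta,f)}$ by $\cO_Y$, I would put the Tian-type metric. Concretely, split $F\cong \cO\oplus T$ smoothly and use the second fundamental form given by $\omega$ itself; equivalently, use the metric on $T_{\bC^*\text{-cone}}$ coming from the Calabi ansatz. The Einstein condition $\mathrm{Ric}(\omega)=\omega$ should make this metric Hermitian--Einstein on the locus $Y^\circ$ where everything is smooth. One then checks the trace-free curvature identity $\Lambda_\omega F_h=\lambda\,\mathrm{id}$ there.

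\textbf{Step 2: Hermitian--Einstein implies polystable.} Let $\cS\subseteq F$ be a saturated subsheaf. I would compute $\deg_{f^*H}\cS$ via the Chern--Weil formula $\int_{Y^\circ}\mathrm{tr}(\pi F_h)\wedge\omega^{n-1}$ minus $\int|\bar\partial\pi|^2$, where $\pi$ is the orthogonal projection. For this I need the integrations by parts to be justified across the singular set $Y\setminus Y^\circ$, which has codimension $\ge2$ together with the conic divisor. That justification uses two inputs: the bounded potential together with the existence of cut-off functions of small energy, i.e.\ a Guenancia--Păun/Campana--Guenancia--Păun type argument, and the fact that the curvature of the model metric is $L^2$-bounded with respect to $\omega$. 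The output is $\mu(\cS)\le\mu(F)$, with equality forcing $\bar\partial\pi=0$, i.e.\ a holomorphic splitting. Hence $F$, and dually $E_{X,\Delta}$, is polystable. The slope is independent of the choice of cover by Definition~\ref{def:orbifold slope}.

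\textbf{Step 3: the K-semistable case.} A K-semistable pair degenerates via a special test configuration to a K-polystable pair $(X_0,\Delta_0)$ (Li--Wang--Xu), but semicontinuity of semistability of reflexive sheaves in families of singular pairs is delicate. I would instead use the perturbation approach. For $\varepsilon>0$, take a general $G_\varepsilon\in|-m(K_X+\Delta)|$ so that $(X,\Delta+\frac{\varepsilon}{m}G_\varepsilon)$ with rescaled class is K-stable, hence carries a conic Kähler--Einstein metric with $\mathrm{Ric}=(1-\varepsilon)\omega_\varepsilon+[\Delta+\ldots]$; this holds since the delta invariant is $>1$ after the twist. I would then rerun Step~2 with approximate Hermitian--Einstein metrics, where the error terms are $O(\varepsilon)$, so that for any $\cS\subseteq F$ one gets $\mu(\cS)\le\mu(F)+C\varepsilon$. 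The added boundary $G_\varepsilon$ only enlarges the orbifold cotangent sheaf, as in the proof of Lemma~\ref{lem:mu_max bound by delta}, so destabilizing subsheaves of the original $F$ inject. Letting $\varepsilon\to0$ gives semistability.

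\textbf{Main obstacle.} The hardest part will be the analytic justification in Step~2, namely controlling $\int|\bar\partial\pi|^2$ and the boundary terms near the singularities of $X$ and the non-snc part of $\Delta$ with general (non-standard) coefficients, where no orbifold chart exists. I would first try to prove uniform $L^2$ curvature bounds via a regularization $\omega_\delta$ smooth on a resolution and use Kobayashi-type inequalities for the approximations. I would also need the fact that $\mathrm{codim}(X_{\mathrm{sing}})\ge2$ leaves no contribution to the degrees.
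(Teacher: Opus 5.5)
Your architecture is the same as the appendix's. A K\"ahler--Einstein metric gives Tian's metric on the extension, and semistability follows from a Chern--Weil/Griffiths estimate with cut-off functions. Polystability comes from vanishing of the second fundamental form, and the K-semistable case from passing to the uniformly K-stable pair $(X,\Delta+\ep H)$ with $H$ general and injecting canonical extensions.

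\textbf{The gap in Step~2.} This step carries the real content of the theorem, and it is not established; the fix you sketch misses the device that makes it work.

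You cannot integrate by parts directly against the singular metric $\omega$. Near $g^{-1}(X_{\rm sing})$ there is no smooth reference metric on $\det\mathcal{S}$ to compare the induced singular metric with, and no $L^2$ curvature control of $\omega$ is available near klt points. On a resolution, $\pi^*\omega$ degenerates along the exceptional divisors, so the relevant weights are unbounded.

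If you instead regularize by genuine (conic) K\"ahler metrics on a log resolution $\tX$, these necessarily lie in a perturbed class $\alpha_t=\pi^*\alpha+t[\omega_{\tX}]$, with $[\omega_{\tX}]=\pi^*\alpha-\sum_j\ep_j c_1(F_j)$. Tian's construction built from them is then a metric on the extension with class $\frac{1}{1+t}\alpha_t$, not on the pullback of $E_{X,\Delta}$. The missing idea is the comparison between the two:
\begin{itemize}
\item The paper introduces the twisted extensions $\tE_t$ and the injection $\tE\hookrightarrow\tE_t\otimes f^*\cO_{\tX}(F)$ of \eqref{inj}.
\item Lemma~\ref{lem1} shows that the resulting correction to $c_1(\tF)\cdot f^*\alpha_t^{n-1}$ is $-\deg(f)\sum_j\nu_j\,c_1(F_j)\cdot\alpha_t^{n-1}$. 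This is an $O(t)$ term of the favourable (nonpositive) sign, because the $F_j$ are $\pi$-exceptional.
\item Lemma~\ref{lem2} shows that the Ricci error $A_{t,\ep}$ of the non-Einstein approximations $\omega_{t,\ep}$ tends to $0$ in $L^1$. This uses only the trace inequality for positive $(1,1)$-forms, not any $L^2$ curvature bound.
\end{itemize}
All of this is done on a cover $\tY$ of the resolution and then descended to $Y$, since saturations differ only by $\mu$-exceptional divisors.

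\textbf{Polystability.} Your claim that ``equality forces $\bar\partial\pi=0$'' presupposes an exact Chern--Weil identity. With approximations you only get $\int\mathrm{tr}(\tau_{t,\ep}\wedge\tau_{t,\ep}^*)\wedge f^*\omega_{t,\ep}^{n-1}\to0$. You then need local smooth convergence $\omega_{t,\ep}\to\pi^*\omega$ on $\pi^{-1}(X_{\rm reg}\setminus\Supp\Delta)$ to obtain a parallel subbundle there.

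Even then, the splitting is only defined off the divisors $g^{-1}(\Supp\Delta)$ and the branch locus, where reflexivity does not help. The paper extends the orthogonal projection across them because it has norm at most $1$ for a metric that is quasi-isometric to a smooth one at general points of these divisors \cite[Theorem~2]{GP-conic}. Riemann extension then applies.

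\textbf{Step~3.} No approximate Hermitian--Einstein metrics are needed. $(X,\Delta+\ep H)$ carries a genuine K\"ahler--Einstein metric, so the polystable case applies verbatim to its canonical extension $E_\ep$. The $\ep$ enters only through $\mu(E_\ep)=(1-\ep)\mu(E)$.

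Watch the direction of the inclusion, though. The extra boundary enlarges the orbifold \emph{cotangent} sheaf, giving $h_\ep^{[*]}E\hookrightarrow E_\ep$. For your dual $F=E^*$ this reverses, so subsheaves of $F$ do not inject into $F_\ep$. Run this step on $E$ itself.
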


This is a generalization of previous results obtained in \cite{Tian-extension} (when $X$ is smooth and $\Delta=0$), \cite{Li-extension-stab} (when $(X,\Delta)$ is log smooth), \cite{DGP-Q-Fano-decomp} (when $\Delta=0$) and \cite{Dai-stability} (when $\Delta$ has standard coefficients). We will mostly follow the approach of \cite{DGP-Q-Fano-decomp} but a few additional technical inputs will be needed along the way to deal with codimension one singularities, e.g. Lemma~\ref{lem1}, the slope polystability statement and the K-semistable case (where the existence of approximate K\"ahler-Einstein metrics as in \cite{Li-valuative-criterion} does not seem to have appeared in the literature yet).

\subsection{Setup}

Let $X$ be a normal projective complex variety of dimension $n$ and let $\Delta=\sum_{i\in I} d_i\Delta_i$ be an effective $\mathbb Q$-divisor. Write $d_i=\frac{b_i}{a_i}$ with $\mathrm{gcd}(a_i,b_i)=1$ and set $N:=\mathrm{lcm}\{a_i, i\in I\}$. We assume that $(X,\Delta)$ is klt and that $-(K_X+\Delta)$ is an ample $\Q$-divisor so that $(X,\Delta)$ is a log Fano pair. Set $\alpha:=c_1(-(K_X+\Delta))\in H^2(X,\R)$. We denote by $(X,\Delta)_{\rm reg}$ the big Zariski open subset of $X$ where $X$ is smooth and $\Delta$ has simple normal crossings. \\

\subsection{Varieties and morphisms}
We fix a log resolution $\pi:\tX\to X$ of the pair $(X,\Delta)$. That is, if $\tDe$ denotes the strict transform of $\Delta$, then $(\tX, \tDe)$ is log smooth, $\pi$ is an isomorphism over $(X,\Delta)_{\rm reg}$ and the exceptional locus of $\pi$ is a divisor $F=\sum_{j\in J} F_j$ such that the support of $\tDe+F$ has simple normal crossings. We write 
\begin{equation}
\label{disc}
K_{\tX}+\tDe=\pi^*(K_X+\Delta)+\sum_{j\in J} c_j F_j
\end{equation}
where $c_j>-1$ for all $j\in J$. 

Choose a sufficiently ample divisor $A$ on $\tX$ and set $B:=\sum_{i\in I} \tDe_i+A$. Then there exists a finite surjective morphism $f:\tY\to \tX$ satisfying the following properties. 
\begin{enumerate}
\item  The pair $(\tY, f^{-1}(B))$ is log smooth.  
\item $f$ is étale over the complement of $B$.
\item Near any point $\tilde y \in f^{-1}(B)$,  there exists a system of coordinates $(w_k)$ (resp. $(z_k)$) centered at $\tilde y$ (resp. $f(\tilde y)$) and an integer $p=p(\tilde y)$ such that with respect to these coordinates, the map $f$ can be expressed as
\[f(w_1, \ldots, w_n)=(w_1^N, \ldots, w_p^N, w_{p+1}, \ldots, w_n)\]
where for each $1\le k \le p$, $(z_k=0)$ is a local equation near $f(\tilde y)$ of one of the irreducible components of $B$. 
\end{enumerate}
We denote by $\tY\overset{\mu}{\to} Y\overset{g}{\to} X$ the Stein factorization of $\pi \circ f$ so that we have a commutative diagram
\[
\begin{tikzcd}
\tY \arrow[r, "f"] \arrow[d, "\mu"] & \tX \arrow[d, "\pi"]\\
Y \arrow[r, "g"] & X
\end{tikzcd}
\] 
Clearly, $g$ is adapted for $(X,\Delta)$ in the sense that $g$ is a finite surjective morphism and $g^*\Delta$ is an integral Weil divisor.

\subsection{Vector bundles}
By definition, the vector bundle $\Omega^1_{(\tX,\tDe, f)}$ on $\tY$ is squeezed between $f^*\Omega_{\tX}^1$ and  $\Omega_{\tY}^1$ as below
\begin{equation}
\label{sandwich}
 f^*\Omega_{\tX}^1 \hookrightarrow \Omega^1_{(\tX,\tDe, f)} \hookrightarrow \Omega_{\tY}^1
 \end{equation}
where both maps are isomorphisms away from $f^{-1}(B)$. We denote by $\tE:=E_{(\tX, \tDe, f)}$ the vector bundle on $\tY$ which is the extension of $\cO_{\tY}$ by $\Omega^1_{(\tX,\tDe, f)}$ whose extension class is the image of $\pi^*\alpha$ under the natural map 
\begin{equation}
\label{pullback}
r:H^1(\tX,\Omega_{\tX}^1)\to H^1(\tY, \Omega^1_{(\tX,\tDe, f)})
\end{equation} induced by the inclusion map $f^*\Omega_{\tX}^1 \to \Omega^1_{(\tX,\tDe, f)}$. In particular, we have an exact sequence of vector bundles on $\tY$
\[0\longrightarrow  \Omega^1_{(\tX,\tDe, f)} \longrightarrow \tE \longrightarrow \cO_{\tY} \longrightarrow 0\]
which splits $\tau_i:\tE|_{V_i}\overset{\simeq}{\longrightarrow} \cO_{\tY}|_{V_i} \oplus  \Omega^1_{(\tX,\tDe, f)}|_{V_i}$ in restriction to each element of a covering family $(V_i)$ of open subsets of $\tY$ arising as the inverse image $V_i=f^{-1}(U_i)$ of some open subsets $U_i\subset \tX$ . Moreover, the transition maps on the overlaps $V_{ij}$ are of the form
\begin{equation}
\label{transition}
\Phi_{ij}:=
\begin{pmatrix}
\mathrm{Id}_{\cO_{\tY}|_{V_i}} &0 \\
a_{ij}& \mathrm{Id}_{\Omega^1_{(\tX,\tDe, f)}|_{V_i}}
\end{pmatrix}
\end{equation}
where $a_{ij}\in H^0(V_{ij},\Omega^1_{(\tX,\tDe, f)})$ represents $r(\pi^*\alpha)$. 

We will later need to work with a deformation of $\tE$ that we now describe. Up to shrinking $U_i$, one can assume that for each $\sigma \in J$, $F_\sigma|_{U_i}=(f_{\sigma i}=0)$ where $f_{\sigma i}\in \cO_{\tY}(U_i)$. The transition functions of $\cO_{\tX}(F_\sigma)$ are $g_{\sigma ij}:=\frac{f_{\sigma j}}{f_{\sigma i}}$ and $c_1(F_\sigma)$ is represented by $\frac{dg_{\sigma ij}}{g_{\sigma ij}}$. Given $z:=(z_\sigma)_{\sigma \in J}\in \mathbb C^J$, consider the extension $\tE_z$ of $\cO_{\tY}$ by $\Omega^1_{(\tX,\tDe, f)}$ which is split over $V_{i}$ and has transition maps
\[\Phi_{ij}^z:=
\begin{pmatrix}
\mathrm{Id}_{\cO_{\tY}|_{V_i}} &0 \\
a_{ij}+f^*(\sum_{\sigma \in J} z_\sigma \frac{dg_{\sigma ij}}{g_{\sigma ij}})& \mathrm{Id}_{\Omega^1_{(\tX,\tDe, f)}|_{V_i}}
\end{pmatrix}
\]
on $V_{ij}$. The extension class of $\tE_z$ is $r(\pi^*\alpha+\sum_{\sigma \in J}z_\sigma c_1(F_\sigma))$. Moreover, the injective maps $\Psi_i^z:\tE|_{V_i}\to \tE_z|_{V_i}\otimes f^*\cO_{\tX}(F)$ given via the splitting $\tau_i$ by
\[\Psi_i^z:=
\begin{pmatrix}
\mathrm{Id}_{\cO_{\tY}|_{V_i}} &0 \\
f^*(\sum_{\sigma \in J} z_\sigma \frac{df_{\sigma i}}{f_{\sigma i}})& \mathrm{Id}_{\Omega^1_{(\tX,\tDe, f)}|_{V_i}}
\end{pmatrix}
\]
satisfy $\Phi_{ij}^z \circ \Psi_i^z=\Psi_j^z \circ \Phi_{ij}$ on $V_{ij}$ thanks to the identity  $\frac{dg_{\sigma ij}}{g_{\sigma ij}}=\frac{df_{\sigma j}}{f_{\sigma j}}-\frac{df_{\sigma i}}{f_{\sigma i}}$. That is, the maps $\Psi_i^z$ glue to a global injective morphism 
\begin{equation}
\label{inj}
\Psi^z:\tE\longhookrightarrow \tE_z\otimes f^*\cO_{\tX}(F)
\end{equation} over $\tY$.

Similarly, one can define a reflexive sheaf $E:=E_{(X,\Delta,g)}$ as the extension of $\cO_Y$ by $\Omega^{[1]}_{(X,\Delta,g)}$ with extension class the image of $-(K_X+\Delta)$ under the map 
\begin{equation}
\label{ext map}
\mathrm{Pic}(X)_{\Q}\to H^1(X,\Omega_X^1) \overset{g^{[*]}}{\to}H^1(Y,\Omega_{(X,\Delta, g)}^{[1]})
\end{equation} 
where the first map is the composition of $d\log:\cO_X^*\to \Omega_X^1$ while the second one is induced by the composition $f^*\Omega_X^1\to f^{[*]}\Omega_X^1$ and the injection $f^{[*]}\Omega_X^1 \subset \Omega_{(X,\Delta, g)}^{[1]}$. 

We say that $E_{X,\Delta}$ is semistable (resp. polystable) with respect to $\alpha$ if $E=E_{(X,\Delta,g)}$ is semistable (resp. polystable) with respect to $g^*\alpha$. This notion is independent of the particular choice of the adapted cover as one sees e.g. using \cite[Lemmas~3.2.2 and 3.2.3]{HL-book-sheaf}. The local nature of the formation of the canonical extension (see \cite[Remark~7]{DGP-Q-Fano-decomp}) shows that over the big Zariski open set $U:=g^{-1}((X,\Delta)_{\rm reg})$, $\mu$ is an isomorphism and induces an identification $\mu^*E|_U\simeq \tE|_{\mu^{-1}(U)}$. Now, if $G\subset E$ is a coherent subsheaf, then one can extend $\mu^*G|_U$ to a coherent sheaf $\tilde G \subset \tE$. By construction, one has that $c_1(\tilde G)-\mu^*c_1(G)$ is a linear combination of $\mu$-exceptional divisors hence $\mu_{f^*\pi^*\alpha}(\tilde G)=\mu_{g^*\alpha}(G)$. In particular
\begin{equation}
\label{ss}
\tE \,\, \mbox{is} \,\, f^*\pi^*\alpha \mbox{-semistable} \quad \Longrightarrow \quad E\,\, \mbox{is} \,\, g^*\alpha \mbox{-semistable}.
\end{equation}

\subsection{Metrics} 
\label{metrics}
We fix a smooth K\"ahler metric $\omega_X\in \alpha=-c_1(K_X+\Delta)$. 
We assume from now on that there exists a K\"ahler-Einstein metric $\omega$ on $(X,\Delta)$, i.e. there exists a positive current $\omega=\omega_X+dd^c \varphi$ for some $\varphi \in \mathrm{PSH}(X,\omega_X) \cap L^{\infty}(X)$ solving
\begin{equation}
\label{KE}
\mathrm{Ric}(\omega)= \omega+[\Delta]
\end{equation}
where $[\Delta]$ is the current of integration along the $\Q$-divisor $\Delta$. In other words, the bounded $\omega_X$-psh function $\varphi$ is a solution of the complex Monge-Ampère equation
\begin{equation}
\label{MA}
(\omega_X+dd^c \varphi)^n=e^{-\varphi} \mu_{(X,\Delta,h)} 
\end{equation}
where $h$ is a smooth hermitian metric on the $\Q$-line bundle $K_X+\Delta$ with Chern curvature $\Theta(K_X+\Delta,h)=-\omega_X$ and $\mu_{(X,\Delta, h)}$ is the adapted measure on $(X,\Delta)$ relative to $h$. That is, if $\sigma$ is a local trivialization of $m(K_X+\Delta)$, then $\mu_{(X,\Delta,h)}$ is locally given by $i^{n^2}\frac{\sigma^{1/m} \wedge\, \overline{\sigma^{1/m}} }{|\sigma|^{2/m}_{h^{ m}}}$, where $\sigma$ is viewed as a local meromorphic section of $K_{X_{\rm reg}}$ with poles of order $md_i$ along $\Delta_i\cap X_{\rm reg}$. Here, $m$ is a sufficiently large and divisible integer and $\mu_{(X,\Delta,h)}$ is well-defined globally on $X_{\rm reg}\setminus \Supp(\Delta)$. The klt condition is equivalent to 
$\mu_{(X,\Delta,h)}$ having finite mass on $X_{\rm reg}\setminus \Supp(\Delta)$; we extend the measure trivially to the whole $X$. Thanks to \cite[Theorem~2]{GP-conic}, the restriction $\omega|_{(X,\Delta)_{\rm reg}}$ is a K\"ahler metric with cone singularities along $\Delta$. \\

Fix sections $t_j\in H^0(\tX, \mathcal O_{\tX}(F_j))$ such that $(t_j=0)=F_j$ and pick smooth hermitian metrics $h_{F_j}$ on $\mathcal O_{\tX}(F_j)$. Through \eqref{disc}, we get a smooth hermitian metric $\tilde h$ on $K_{\tX}+\tDe$ such that the pullback of \eqref{MA} to $\tX$ becomes
\begin{equation}
\label{MA2}
(\pi^*\omega_X+dd^c \pi^*\varphi)^n=e^{-\pi^*\varphi}\prod_{j\in J} |t_j|^{2c_j} \mu_{(\tX,\tDe,\tilde h)}.
\end{equation}
where $|t_j|^{2}$ has to be understood as $|t_j|^{2}_{h_{F_j}}$. There exist positive numbers $(\ep_j)_{j\in J}$ such that there exists a K\"ahler metric
 \[\omega_{\tX}\in \pi^*\alpha-\sum_{j\in J} \ep_j c_1(F_j).\] 
 Thanks to Demailly's regularization theorem there exists a family of smooth $C\omega_X$-psh functions $\psi_{\ep}$ such that $\psi_{\ep}$ decreases pointwise to $\pi^*\varphi$. Moreover, $\psi_\ep$ converges locally smoothly to $\pi^*\varphi$ on $\pi^{-1}(X_{\rm reg}\setminus \Supp(\Delta))$. For any $t,\ep>0$, consider the bounded $(\pi^*\omega_X+t\omega_{\tX})$-psh function $\varphi_{t,\ep}$ solution of
\begin{equation}
\label{MA3}
(\pi^*\omega_X+t\omega_{\tX}+dd^c \varphi_{t,\ep})^n=e^{-\psi_\ep+c_{t,\ep}}\prod_{j\in J} (|t_j|^{2}+\ep^2)^{c_j} \mu_{(\tX,\tDe,\tilde h)}.
\end{equation}
where $c_{t,\ep}\in \R$ is a harmless normalization constant and set 
\[\omega_{t,\ep}:=\pi^*\omega_X+t\omega_{\tX}+dd^c \varphi_{t,\ep}, \quad \alpha_t:=\pi^*\alpha+t[\omega_X]=(1+t)\pi^*\alpha-t\sum_{j\in J} \ep_j c_1(F_j).\] 
The Ricci curvature of $\omega_{t,\ep}$ is given by 
\begin{equation}
\label{Ricci}
\mathrm{Ric}(\omega_{t,\ep})=\omega_{t,\ep}-t \omega_{\tX}+ dd^c(\psi_\ep-\varphi_{t,\ep})-\Theta_\ep+[\tDe]
\end{equation}
where $\Theta_\ep=\sum_{j\in J} c_j  \Theta_{\ep, j}$ and $\Theta_{\ep, j}:=\Theta(F_j, h_{F_j})+dd^c \log (|t_j|^2+\ep^2)$. Note that there exists $C>0$ such that for all $j\in J$, we have
\begin{equation}
\label{min theta}
\Theta_{\ep,j}+g_\ep \omega_{\tX}\ge 0, \quad \mbox{where} \quad g_\ep:=C\sum_{j\in J} \frac{\ep^2}{\ep^2+|t_j|^2}.
\end{equation}

By \cite[Theorem~A]{GP-conic}, $\omega_{t,\ep}$ is a K\"ahler metric with cone singularities along $\tDe$. In particular, $f^*\omega_{t,\ep}$ induces a singular hermitian metric $h_{t,\ep}$ on $\Omega_{(\tX, \tDe, f)}$ which is smooth away from $f^{-1}(B)$ and {\it bounded} globally on $\tY$. That is, given a smooth hermitian metric $\tilde h$ on $\Omega_{(\tX, \tDe, f)}^1$, we have 
\begin{equation}
\label{bounded}
C\tilde h \ge h_{t,\ep}\ge C^{-1}\tilde h
\end{equation} for some $C>0$. In particular, we have 
\begin{equation}
\label{upper bound 0}
f^*\omega_{t,\ep} \le C_{t, \ep}  \omega_{\tY},
\end{equation}
for some K\"ahler metric $\omega_{\tY}$ on $\tY$. Moreover, as $\ep\to 0$ but $t>0$ is fixed, we still get a domination of the form
\begin{equation}
\label{upper bound}
\omega_{t,\ep} \le C_t \, \omega_{\rm cone},
\end{equation}
where $\omega_{\rm cone}$ is a K\"ahler metric on $\tX\setminus \Supp(\tDe+F_-)$ with cone singularities along $\tDe+F_-$, where $F_-:=\sum_{c_j<0}(-c_j)F_j$, and $C_t>0$ is a constant independent of $\ep>0$. This is a consequence of the proof of \cite[Theorem~2]{GP-conic}, see also \cite[Proposition~4.1]{G12}.

\subsection{Slope computations and proof of the theorem}
\label{slope}
 Let $\tF\subset \tE$ be a saturated subsheaf of rank $r$, inducing a morphism of vector bundles $j:L\to \Lambda^r\tE$ where $L:=\det\tF=(\Lambda^r \tF)^{**}$ is a line bundle and $j$ is injective as a map of sheaves. We define $S\subset \tY$ to be the Zariski closed subset where $\tF$ is not a subbundle of $\tE$; we have $\mathrm{codim}(S)\ge 2$ since $\tF$ is saturated. 
 
 Given $t>0$, denote by $\tE_t$ the extension of $\cO_{\tY}$ by $\Omega^1_{(\tX,\tDe, f)}$ with class $\frac{1}{1+t}r(\alpha_t)$.  From \eqref{inj}, we get an injective morphism 
 \begin{equation}
 \label{inj2}
 j_t:L\to \Lambda^r \tE_t \otimes f^*\cO_{\tX}(rF).
 \end{equation}
 Note that although $j_0$ is injective (as a vector bundle morphism) away from $S$, $j_t$ is a priori only injective (as a vector bundle morphism) away from $S\cup F$.  The datum consisting of the (singular) metric $h_{t,\ep}$ on $\Omega_{(\tX, \tDe, f)}^1$, the trivial metric $h_{\cO_{\tY}}$ on $\cO_{\tY}$, the element $\beta_{t,\ep}:=\frac{1}{1+t}f^*\omega_{t,\ep}$ and a fixed $\mathcal C^\infty$ splitting of $\tE_t$ induce a (singular) metric $\tilde h_{t,\ep}$ on $\tE_t$ such that its Chern connection $\nabla_{t,\ep}$ is given by
 \[\nabla_{t,\ep}=
 \begin{pmatrix}
 d & -\beta_{t,\ep}\\
 \beta_{t,\ep}^* & \nabla_{h_{t,\ep}}
 \end{pmatrix}
 \]
where we see $\beta_{t,\ep}$ as a bounded $(0,1)$-form with values in $\Omega_{(\tX, \tDe, f)}^1$, see \eqref{sandwich}. Moreover, the restriction of $\beta_{t,\ep}$ to the complement of $f^{-1}(B)$ is an honest smooth $(0,1)$-form with values in $\Omega_{\tY}^1$. Now, let $h_F$ be any smooth hermitian metric on $\cO_{\tX}(F)$. The metrics $\tilde h_{t,\ep}$ and $h_F$ induce a (singular) hermitian metric 
\begin{equation}
\label{Vt}
h:=\Lambda^r \tilde h_{t,\ep} \otimes f^*h_F^{\otimes r} \quad \mbox{on} \quad V_t:=\Lambda^r \tE_t \otimes f^*\cO_{\tX}(rF).
\end{equation}
We have $c_1(V_t)=r(c_1(\tE_t)+{n+1 \choose r} c_1(f^*F))$. Through the map $j_t$, $h$ induces a (singular) metric $j_t^*h$ on $L$ which is smooth outside $S\cup f^{-1}(B+F)$. 
We have Griffiths' formula holding away from $S\cup f^{-1}(B+F)$:
\begin{equation}
\label{Gr}
\Theta(L, j_t^*h)=\mathrm{pr}_L \Theta(V_t, h)|_L+\tau_{t,\ep}\wedge \tau_{t,\ep}^*
\end{equation}
where $\tau_{t,\ep}$ is the second fundamental form of $j_t$; it is a smooth $(0,1)$-form with values in $\mathrm{Hom}(V_t,L)$ in the complement of $S\cup f^{-1}(B+F)$ while its adjoint $\tau_{t,\ep}^*$ is taken with respect to $h$ and $h|_L$. 
Next, we analyze the terms inside \eqref{Gr}. 

\begin{lem}
\label{lem1}
There exists a family $(\chi_\delta)_\delta$ of cut-off functions for $S\cup f^{-1}(B+F)$ such that for any $t,\ep>0$, we have
\[c_1(\tF) \cdot f^*\alpha_t^{n-1}= \lim_{\delta \to 0} \int_{\tY} \chi_\delta \, \Theta(L, j_t^*h) \wedge f^*\omega_{t,\ep}^{n-1}+O(t).\]
Moreover, the term $O(t)$ is non-positive and independent of $\ep$.
\end{lem}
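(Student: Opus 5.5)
The plan is to turn the slope $c_1(\tF)\cdot f^*\alpha_t^{n-1}$ into a curvature integral over the locus where all metrics are smooth, and to bound the error coming from the singular locus. The starting observation is that $c_1(\tF)=c_1(L)$. Moreover, $j_t$ identifies $L$ with a subsheaf of the vector bundle $V_t$. Hence the singular metric $j_t^*h$ on $L$ has the form $e^{-\phi}$ times a smooth metric, and the weight $\phi$ is locally $-\log$ of a sum of squares of holomorphic functions, up to bounded terms coming from \eqref{bounded}.

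The first step is to split the curvature as
\[
\Theta(L,j_t^*h)=\Theta(L,h_L)+dd^c\phi ,
\]
where $h_L$ is a smooth metric on $L$. The smooth part gives $c_1(L)\cdot [f^*\omega_{t,\ep}]^{n-1}=c_1(\tF)\cdot f^*\alpha_t^{n-1}$. This uses that $\omega_{t,\ep}\in\alpha_t$ has bounded potential, so products of its currents are computed cohomologically (Bedford--Taylor theory on $\tY$, together with \eqref{upper bound 0}).

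The second step is to choose the cut-offs $\chi_\delta$: standard log-log cut-offs adapted to $S\cup f^{-1}(B+F)$. These satisfy $\chi_\delta\to 1$ and have controlled gradients, with $\int |d\chi_\delta|^2\wedge f^*\omega_{t,\ep}^{n-1}\to0$, because $f^*\omega_{t,\ep}$ is dominated by a smooth metric (\eqref{upper bound 0}) and $S$ has codimension at least $2$. Integrating by parts, the cross term is $\int \chi_\delta\, dd^c\phi\wedge f^*\omega_{t,\ep}^{n-1}$. Since $f^*\omega_{t,\ep}$ is closed, this equals $-\int d\chi_\delta\wedge d^c\phi\wedge f^*\omega_{t,\ep}^{n-1}$. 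I would split this term into two contributions.

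The first contribution comes from $S$ and from $f^{-1}(B)$. Near these sets $\phi$ is of the form $-\log\sum|g_k|^2$ plus a bounded term, so $d^c\phi$ is in $L^2$ against $f^*\omega_{t,\ep}^{n-1}$ away from a codimension-$2$ set. Along $f^{-1}(B)$, the boundedness \eqref{bounded} of $h_{t,\ep}$ means there is no genuine divisorial singularity. Both contributions therefore vanish as $\delta\to0$ by Cauchy--Schwarz.

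The second contribution comes from $f^{-1}(F)$, where $j_t$ genuinely degenerates: it vanishes along parts of $F$ because of the twist by $f^*\cO(rF)$ in \eqref{inj}. Here $dd^c\phi$ acquires a positive Lelong part $[\mathrm{div}]$ supported on $f^*F$. This part is not captured by the smooth-locus integral, so the difference is $-\sum_j m_j\, f^*F_j\cdot f^*\alpha_t^{n-1}$ for some multiplicities $m_j\ge0$.

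The third step is to show that this difference is $O(t)$, non-positive, and independent of $\ep$. The factor $\pi^*\alpha$ is trivial on $F_j$, since $F_j$ is $\pi$-exceptional. Expanding
\[
\alpha_t=(1+t)\pi^*\alpha-t\sum\ep_jc_1(F_j)
\]
therefore shows that $F_j\cdot\alpha_t^{n-1}$ is $O(t)$. The multiplicities $m_j$ depend only on $j_t$, not on $\ep$, so the error is $\ep$-independent. Its sign comes from the Lelong part being positive, and after moving it to the other side the correction is $-\sum m_j(\dots)\le 0$. The positivity of $F_j\cdot\alpha_t^{n-1}$ needed for this holds because $\alpha_t$ is Kähler on $\tX$, since $\omega_{\tX}\in\pi^*\alpha-\sum\ep_jc_1(F_j)$.

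I expect the main obstacle to be the analysis near $f^{-1}(F)\cap f^{-1}(B)$. There the metric $h_{t,\ep}$ degenerates only in the conic sense, while $j_t$ has zeros. The difficulty is to check that the conic singularities contribute no mass. To handle this, I would work in the local coordinates given by property (3) of $f$, where the pulled-back conic metric is quasi-isometric to a smooth one.
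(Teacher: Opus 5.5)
\textbf{Gap: the bounded part of the weight.} Your handling of the divisorial part is right and matches the paper. Write the weight of $j_t^*h$ as $\log\|j_t\|^2$ plus the bounded function $\rho=\log\big(|j_t(e)|^2_h/|j_t(e)|^2_{h_{V_t}}\big)$. The zeros of $j_t$ along $f^*F$ then produce the correction $-\deg(f)\sum_j\nu_j\,c_1(F_j)\cdot\alpha_t^{n-1}$. This term is $\ep$-independent, $O(t)$ and non-positive, because $F_j$ is $\pi$-exceptional and $\alpha_t=\pi^*\alpha+t[\omega_{\tX}]$ is K\"ahler. (With $\phi\approx-\log\sum|g_k|^2$, the divisorial part of $dd^c\phi$ is negative rather than a positive Lelong part, but your final sign is correct.)

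The gap is in how you dispose of the bounded part. You integrate by parts once and apply Cauchy--Schwarz, which needs $d\phi\in L^2$ near the bad set. You justify this by ``$\phi$ is $-\log\sum|g_k|^2$ plus a bounded term'', but that implication does not hold: boundedness of $\rho$ says nothing about $d\rho$. At a general point of $f^{-1}(B)$ the holomorphic part is smooth, so $d\phi$ is essentially $d\rho$, i.e.\ first derivatives of the metric induced by $f^*\omega_{t,\ep}$ in an adapted frame.
\begin{itemize}
\item For general (non-standard) coefficients, the pulled-back conic metric is not smooth on $\tY$.
\item The only available input is the zeroth-order quasi-isometry \eqref{bounded}.
\item There is no reason for $\rho$ to be quasi-psh, so no energy bound on $\int|d\rho|^2$ is at hand.
\end{itemize}
Working in the coordinates of property (3), as you propose for the ``main obstacle'', again only gives quasi-isometry. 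This codimension-one regularity issue is exactly what the lemma has to get around.

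\textbf{How the paper avoids it.} The paper never differentiates $\rho$; it integrates by parts twice. It first takes a log resolution $\varrho\colon Z\to\tY$ of the ideal of $S\cup f^{-1}(B+F)$. Then $\varrho^*\log\|j_t\|^2=\sum_k\lambda_k\log|s_{\Gamma_k}|^2+\sum_j\nu_j\log|s_{F'_j}|^2+\sigma$ with $\sigma$ smooth. Off $\Gamma\cup F'$, the curvature is therefore a sum of smooth curvature forms plus $dd^c(\varrho^*\rho+\sigma)$, with $\varrho^*\rho+\sigma\in L^\infty$. It uses cut-offs for the SNC divisor $\Gamma\cup F'$ satisfying $\pm dd^c\chi_\delta\le\omega_\Gamma$, with $\omega_\Gamma$ of Poincar\'e type. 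Using \eqref{upper bound 0} for fixed $t,\ep$, one gets
\[
\Big|\int_Z\chi_\delta\,dd^c(\varrho^*\rho+\sigma)\wedge\varrho^*f^*\omega_{t,\ep}^{n-1}\Big|\le C_{t,\ep}\,\|\varrho^*\rho+\sigma\|_{L^\infty(Z)}\int_{\Supp(d\chi_\delta)}\omega_\Gamma^n\longrightarrow 0 .
\]
The forms $\Theta(\Gamma_k,h_{\Gamma_k})$ contribute $c_1(\Gamma_k)\cdot\varrho^*f^*\alpha_t^{n-1}=0$, since $\Gamma_k$ is $\varrho$-exceptional. The forms $\Theta(F'_j,h_{F'_j})$ give the non-positive $O(t)$ term.

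Replace your first-order Cauchy--Schwarz step by this second-order estimate, which needs only the $L^\infty$ bound. The principalization also handles $S$ without a separate codimension-two $L^2$ argument. With that change, the rest of your argument goes through.
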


\begin{proof}
The metric $j_t^*h$ develops two kinds of singularities, due to the facts that $j_t$ need not be injective as a map of vector bundles (along $S\cup F$) and that $h_{t,\ep}$ (hence $h$, too) may be (mildly) singular along $f^{-1}(B)$. We fix a smooth hermitian metric $h_L$ (resp. $h_{V_t}$) on $L$ (resp. $V_t$), write 
\[j_t^*h=h_Le^{\psi}\]
and consider the function $\|j_t\|^2$ which is the squared norm of the morphism $j_t$ with respect to $h_{V_t}$ and $h_L$. In other words, given any $e\in L\setminus \{0\}$, we have $\|j_t\|^2=\frac{|j_t(e)|^2_{h_{V_t}}}{|e|^2_{h_L}}$. We also consider the function $\rho:=\log \frac{|j_t(e)|^2_{h }}{|j_t(e)|^2_{h_{V_t}}}$. All in all, we have 
\[\psi=\log \|j_t\|^2+\rho\]
and $\rho \in L^{\infty}(X)$ thanks to \eqref{bounded}.

Let $\varrho: Z\to \tY$ be a log resolution of the ideal of $S\cup f^{-1}(B+F)$, with exceptional divisor $\Gamma=\sum_{k\in K} \Gamma_k$. It is elementary to check that we have a decomposition
\[\varrho^*\log \|j_t\|^2=\sum_{k\in K} \lambda_k \log |s_{\Gamma_k}|^2+\sum_{j\in J} \nu_j\log |s_{F'_j}|^2+ \sigma\]
where $\lambda_k \ge 0$ (resp. $\nu_j \ge 0$), $s_{\Gamma_k}$ (resp. $s_{F_j'}$) is ``the'' canonical section of $\cO_Z(\Gamma_k)$ (resp. $\cO_Z(F_j')$ where $F_j'$ is the strict transform of $f^{-1}(F_j)=f^*F_j$), measured with respect to some fixed smooth hermitian metric $h_{\Gamma_k}$ (resp. $h_{F_j'}$), and $\sigma \in \mathcal C^{\infty}(Z)$. In particular, we find on the complement of $\Gamma\cup F'$ in $Z$:
\begin{equation}
\label{id}
\varrho^*\Theta(L, j_t^*h)=\varrho^*\Theta(L,h_L)+\sum_{k\in K}\lambda_k\Theta(\Gamma_k, h_{\Gamma_k})+\sum_{j\in J}\nu_j\Theta(F_j', h_{F_j'})+dd^c(\varrho^*\rho+\sigma).
\end{equation}
Let $(\chi_\delta)_\delta$ be the family of cut-off functions for $\Gamma \cup F'\subset Z$ considered e.g. in \cite{GP-conic}. It can be interchangeably viewed as a function on $Z$ or $\tY$. It has the property that $\pm dd^c \chi_\delta \le \omega_\Gamma$ where $\omega_\Gamma$ is a K\"ahler metric on $Z\setminus \Gamma$ with Poincar\'e type singularities along $\Gamma$. Also, recall that for $t,\ep>0$ fixed, the metric $f^*\omega_{t,\ep}$ is dominated by a K\"ahler metric on $\tY$. In particular, an integration by part yields 
\begin{equation}
\label{int zero}
\left| \int_{Z} \chi_\delta \, dd^c(\varrho^*\rho+\sigma) \wedge \varrho^*f^*\omega_{t,\ep}^{n-1} \right| \le C_{t,\ep}\cdot \|\varrho^*\rho+\sigma\|_{L^{\infty}(Z)}\cdot  \int_{\mathrm{Supp}(d\chi_\delta)} \omega_\Gamma^n 
\end{equation}
and the latter goes to zero when $\delta \to 0$. 

Now, since $\omega_{t,\ep}$ has bounded potentials, we have
\[c_1(L) \cdot f^*\alpha_t^{n-1}=\int_{\tY} \Theta(L,h_L) \wedge f^*\omega_{t,\ep}^{n-1} = \lim_{\delta \to 0} \int_Z \chi_\delta \varrho^*\Theta(L,h_L) \wedge \varrho^*f^*\omega_{t,\ep}^{n-1} \]
and by \eqref{id} and \eqref{int zero}, we infer that $c_1(L) \cdot f^*\alpha_t^{n-1}$ is computed by the quantity
\[\lim_{\delta \to 0} \int_{\tY}\chi_\delta \, \Theta(L, j_t^*h)\wedge f^*\omega_{t,\ep}^{n-1}-\sum_{k\in K} \lambda_k \underbrace{c_1(\Gamma_k) \cdot \varrho^*f^*\alpha_t^{n-1}}_{=0}-\mathrm{deg}(f) \sum_{j\in J} \nu_j c_1(F_j) \cdot \alpha_t^{n-1},\]
where the vanishing of the intersection numbers above holds since the $\Gamma_k$'s are $\varrho$-exceptional. The lemma now follows from the fact that $c_1(F_j) \cdot \alpha_t^{n-1}$ is a non-negative $O(t)$ term since $F_j$ is $\pi$-exceptional and $\alpha_t=\pi^*\alpha+O(t)$. 
\end{proof}

\begin{lem}
\label{lem2}
One has 
\[\limsup_{t\to 0} \limsup_{\ep \to 0} \limsup_{\delta \to 0} \int_{\tY} \chi_\delta \, \mathrm{tr}(\mathrm{pr}_L \Theta(V_t, h)|_L) \wedge f^*\omega_{t,\ep}^{n-1} \le-\frac{r}{n+1}(f^*\alpha^n). \]
\end{lem}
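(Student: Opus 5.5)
We compute the curvature of $V_t=\Lambda^r\tE_t\otimes f^*\cO_{\tX}(rF)$ explicitly from the connection matrix of $\nabla_{t,\ep}$. The curvature of $\tilde h_{t,\ep}$ on $\tE_t$ has the standard block form for an extension:
\[
\Theta(\tE_t,\tilde h_{t,\ep})=\begin{pmatrix} -\beta_{t,\ep}\wedge\beta_{t,\ep}^* & \ast\\ \ast & \Theta(h_{t,\ep})+\beta^*_{t,\ep}\wedge\beta_{t,\ep}\end{pmatrix},
\]
where the off-diagonal terms involve $\bar\partial\beta_{t,\ep}$ and $\partial\beta^*_{t,\ep}$. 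Since $\beta_{t,\ep}=\frac1{1+t}f^*\omega_{t,\ep}$ is $d$-closed, these off-diagonal terms vanish. The block $\Theta(h_{t,\ep})$ is the curvature of the dual of the orbifold tangent bundle with the metric induced by $\omega_{t,\ep}$, so it is $-{}^t\Theta(T,\omega_{t,\ep})$. Contracting with $\omega_{t,\ep}^{n-1}$ amounts to taking the trace against $\omega_{t,\ep}$. The Ricci identity \eqref{Ricci} then gives $\Lambda_{\omega_{t,\ep}}\Theta(T,\omega_{t,\ep})=\mathrm{Ric}$-endomorphism, which equals $\mathrm{Id}$ plus error terms. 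On the complement of $f^{-1}(B)$ the current $[\tDe]$ does not contribute. The diagonal terms $\beta\wedge\beta^*$, contracted against $\frac{1}{1+t}\omega_{t,\ep}$, give $-\frac{1}{1+t}\mathrm{Id}$ on $\cO$ and $+\frac{1}{1+t}\mathrm{Id}$ on $\Omega$, suitably normalized. The upshot is the pointwise identity
\[
n\,\Theta(\tE_t)\wedge\omega_{t,\ep}^{n-1}=\Big(-\tfrac{1}{1+t}\mathrm{Id}_{\tE_t}+R_{t,\ep}\Big)\omega_{t,\ep}^n,
\]
where $R_{t,\ep}$ is an endomorphism-valued error supported on the $\Omega$-block. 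It is built from $t\omega_{\tX}$, $dd^c(\psi_\ep-\varphi_{t,\ep})$ and $\Theta_\ep$.

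This is the almost-Hermite–Einstein condition with Einstein constant $-\frac1{1+t}$. Passing to $\Lambda^r$ multiplies the constant by $r$, and the twist by $f^*\cO(rF)$ adds $r\,\mathrm{tr}\,\Theta(F,h_F)$. Projecting to $L$ and integrating against $\chi_\delta$, the main term gives $-\frac{r}{n(1+t)}\int\omega_{t,\ep}^n$, which equals $-\frac{r}{n(1+t)}\deg(f)(\alpha_t^n)$ by the bounded-potential/cohomological identity. The relation $c_1(\tE)=c_1(K)$-type normalization converts the factor $1/n$ into the factor $1/(n+1)$ of the statement, once the $\cO$-block is accounted for. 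As $t\to0$ this tends to $-\frac{r}{n+1}(f^*\alpha^n)$ after this normalization. The $F$-twist term integrates to $O(t)$, since $F$ is $\pi$-exceptional, exactly as in Lemma~\ref{lem1}.

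The remaining task is to show that the error terms coming from $R_{t,\ep}$ have nonpositive limsup or vanish. There are three of them.
\begin{itemize}
\item The term $-t\omega_{\tX}$ contributes $O(t)$.
\item The term $dd^c(\psi_\ep-\varphi_{t,\ep})$, projected onto $L$, is bounded in absolute value by $r|\Delta_{\omega_{t,\ep}}(\psi_\ep-\varphi_{t,\ep})|$. I would control it in $L^1(\omega_{t,\ep}^n)$ by the $C^2$/Laplacian estimates of \cite{GP-conic}. The bound \eqref{upper bound} gives uniformity in $\ep$, and the smooth convergence $\psi_\ep\to\pi^*\varphi$ together with $\varphi_{t,\ep}\to\varphi_t$ makes the integral tend to $0$ as $\ep\to0$ and then $t\to0$.
\item The term $\Theta_\ep=\sum c_j\Theta_{\ep,j}$ splits according to sign. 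For $c_j>0$, the inequality \eqref{min theta} shows that the part acting on the $L$-projection is bounded below by $-c_jg_\ep\omega_{\tX}$. This yields an upper bound by $\int g_\ep\,\omega_{\tX}\wedge\omega_{t,\ep}^{n-1}$, which tends to $0$ as $\ep\to0$ by dominated convergence, using \eqref{upper bound} and $g_\ep\to0$ off $F$. For $c_j<0$, I would use \eqref{upper bound} with the cone metric along $F_-$ to show the mass concentrating near $F_j$ tends to zero.
\end{itemize}
The cutoffs $\chi_\delta$ are harmless because every integrand is dominated by an $L^1$ function, so we can take $\delta\to0$ first.

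I expect the main obstacle to be the negative-discrepancy terms and the Laplacian error. These require uniform-in-$\ep$ integrability of the trace of $\Theta_{\ep,j}$ against the degenerating metrics, and this is exactly where the cone-metric domination \eqref{upper bound} must be invoked carefully.
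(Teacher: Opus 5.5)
Your overall strategy (Chern--Weil for the extension metric, an almost Hermite--Einstein identity, then $L^1$ control of the Ricci defect) is the paper's. The gap is that your central identity is wrong, and its constant is exactly what the lemma asserts.

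With $\beta_{t,\ep}=\lambda f^*\omega_{t,\ep}$ and $\lambda=\frac{1}{1+t}$, the $\cO$-block receives only the second-fundamental-form term and has mean curvature $-n|\lambda|^2$. The $\Omega$-block has mean curvature $-1+|\lambda|^2$ plus the Ricci defect. These do not agree, and the block values you list do not combine to $-\frac{1}{1+t}\mathrm{Id}_{\tE_t}$ either. So for this $\beta_{t,\ep}$ there is no identity of the form ``constant $\cdot\,\mathrm{Id}_{\tE_t}$ + vanishing error''.

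The missing step uses that the extension depends on its class only up to a non-zero scalar. One replaces $\beta_{t,\ep}$ by $\frac{1+t}{\sqrt{n+1}}\beta_{t,\ep}$, i.e. takes $|\lambda|^2=\frac{1}{n+1}$. This equalizes the two blocks and gives
\[
\Theta(\tE_t,\tilde h_{t,\ep})\wedge f^*\omega_{t,\ep}^{n-1}=-\frac{1}{n+1}f^*\omega_{t,\ep}^{n}\otimes\mathrm{Id}_{\tE_t}-f^*\omega_{t,\ep}^{n}\otimes A_{t,\ep},
\]
where $A_{t,\ep}=\sharp_{f^*\omega_{t,\ep}}f^*\big[-t\omega_{\tX}+dd^c(\psi_\ep-\varphi_{t,\ep})-\Theta_\ep\big]$.

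Your constant cannot be repaired afterwards by a ``normalization''. Take $\tF=\tE$, so $r=n+1$ and the second fundamental form vanishes. Combined with Lemma~\ref{lem1} and \eqref{Gr}, your identity would give $c_1(\tE)\cdot f^*\pi^*\alpha^{n-1}\le-\frac{n+1}{n}(f^*\alpha^n)$. But $c_1(\tE)=f^*(K_{\tX}+\tDe)$ gives exactly $-(f^*\alpha^n)$, a contradiction.

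The error analysis also fails in three places.

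\textbf{Sign of the $\Theta_\ep$ term.} On the $\Omega$-block (the cotangent side) the mean curvature is $-{}^t\sharp\,\mathrm{Ric}(\omega_{t,\ep})$. Hence the term $-\Theta_\ep$ in \eqref{Ricci} contributes $+{}^t\sharp\Theta_\ep$, and your two cases are reversed:
\begin{itemize}
\item For $c_j<0$, the lower bound \eqref{min theta} already gives the needed upper bound.
\item For $c_j>0$, the positive part of $\Theta_{\ep,j}$ concentrating along $F_j$ raises the curvature, so a lower bound is of no use.
\end{itemize}

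\textbf{The missing cohomological mass bound.} What the paper uses, for all $j$ at once, is that $\Theta_{\ep,j}+g_\ep\omega_{\tX}\ge 0$. By \eqref{lin alg} its norm is then dominated by its trace, whose integral is cohomological: $c_1(F_j)\cdot\alpha_t^{n-1}+\int g_\ep\,\omega_{\tX}\wedge\omega_{t,\ep}^{n-1}$. The first term is independent of $\ep$ and tends to $0$ only as $t\to 0$, because $F_j$ is $\pi$-exceptional. In particular, \eqref{upper bound} cannot make this concentrated mass disappear as $\ep\to 0$.

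\textbf{The $dd^c$ term.} You bound the $L$-projection of $\sharp\,dd^c(\psi_\ep-\varphi_{t,\ep})$ by $r|\Delta_{\omega_{t,\ep}}(\psi_\ep-\varphi_{t,\ep})|$. This is false, because the form is indefinite and its trace does not control its operator norm; $dd^c\psi_\ep$ also has no uniform upper bound. This term needs a genuine $L^1$ estimate of $\|\sharp_{\omega_{t,\ep}}dd^c(\psi_\ep-\varphi_{t,\ep})\|$, which the paper imports from the treatment of ``the term (I)'' in \cite{DGP-Q-Fano-decomp}.
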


\begin{proof}
The Chern curvature form of the metric $\tilde h_{t,\ep}$ on $\tE_t$ is given by
 \[\Theta(\tE_t,\tilde h_{t,\ep})=
 \begin{pmatrix}
  \beta_{t,\ep}\wedge \beta_{t,\ep}^* & -\nabla^{1,0}_{h_{t,\ep}}\beta_{t,\ep}\\
 -\bar \partial  \beta_{t,\ep}^* & \Theta(\Omega_{(\tX, \tDe, f)}^1, h_{t,\ep}) +\beta_{t,\ep}^*\wedge \beta_{t,\ep}
 \end{pmatrix}
 \]
 away from $f^{-1}(B)$. Arguing as in \cite[p. 105]{DGP-Q-Fano-decomp} one infers that, up to replacing $\beta_{t,\ep}$ by $\frac{1+t}{\sqrt{n+1}}\beta_{t,\ep}$\footnote{The constants in \cite[(32)-(33)]{DGP-Q-Fano-decomp} and the equation below are actually slightly off.}, one has 
 \begin{equation}
 \label{curv}
 \Theta(\tE_t,\tilde h_{t,\ep})\wedge f^*\omega_{t,\ep}^{n-1}=-\frac 1{n+1} f^*\omega_{t,\ep}^{n} \otimes \mathrm{Id}_{\tE_t}-f^*\omega_{t,\ep}^{n} \otimes A_{t,\ep}
 \end{equation}
 where \[A_{t,\ep}=\sharp_{f^*\omega_{t,\ep}}f^*\big[-t \omega_{\tX}+dd^c(\psi_\ep-\varphi_{t,\ep})-\Theta_\ep\big],\]
see \eqref{Ricci}. The rest also follows very closely \cite{DGP-Q-Fano-decomp} (see also \cite{GSS}), with the only difference being the appearance of the function $\chi_\delta$ to cut off the singularities of $f^*\omega_{t,\ep}$. The key elementary result that we need is that if $\alpha$ is a positive $(1,1)$-form on $\tX$, then 
\begin{equation}
\label{lin alg}
\|\sharp_{\omega_{t,\ep}} \alpha\|_{\omega_{t,\ep}}\,  \omega_{t,\ep}^n  \le  \mathrm{tr}(\sharp_{\omega_{t,\ep}} \alpha) \,  \omega_{t,\ep}^n =n\alpha \wedge \omega_{t,\ep}^{n-1}.
\end{equation}
There are $4$ terms in the expansion of $A_{t,\ep}$, say $A_1, \ldots, A_4$. Clearly, one has 
\[ \|A_1+A_2\|\, f^*\omega_{t,\ep}^n \le Ct \cdot f^*(\omega_{\tX}\wedge \omega_{t,\ep}^{n-1})\]
Next, the argument given in \cite[pp.~99-100, ``The term (I)'']{DGP-Q-Fano-decomp} shows that 
\[\limsup_{\ep \to 0} \int_{\tY} \|A_3\| f^*\omega_{t,\ep}^n=0.\]
As for $A_4$, \eqref{upper bound 0} and \eqref{lin alg} combined with the Lebesgue dominated convergence theorem show that for fixed $\ep>0$, we have
\[\lim_{\delta \to 0} \int_{\tY} \chi_\delta \|A_4\| f^*\omega_{t,\ep}^n =  \int_{\tY}  \|A_4\| f^*\omega_{t,\ep}^n.\]
Then we write $\Theta_\ep=\sum_{j\in J} c_j (\Theta_{\ep, j}+g_\ep \omega_{\tX})-\big(\sum_{j\in J} c_j\big) g_\ep \omega_{\tX}$ and use \eqref{lin alg} to see that
\[\int_{\tY} \|A_4\|f^*\omega_{t,\ep}^n \le  n\cdot \mathrm{deg}(f) \left(\sum_{j\in J}|c_j| (c_1(F_j) \cdot \alpha_t^{n-1})+\Big| \sum_{j\in J} c_j \Big| \int_{\tX} g_\ep \omega_{\tX} \wedge \omega_{t,\ep}^{n-1}\right)\]
On the RHS, the first summand is independent of $\ep$ and goes to zero as $t\to 0$ while the second summand goes to zero when $\ep \to 0$ and $t>0$ is fixed thanks to \eqref{upper bound} and Lebesgue's dominated convergence theorem. Putting all the above estimates together  yields 
\[ \lim_{t\to 0} \lim_{\ep \to 0} \lim_{\delta \to 0} \int_{\tY} \chi_\delta \, \| A_{t,\ep}\| \, f^*\omega_{t,\ep}^{n} =0.\]
Given the definition of $(V_t,h)$ (see \eqref{Vt}) and the fact that $F$ is $\pi$-exceptional, we deduce from \eqref{curv} and the above identity that 
\[\limsup_{t\to 0} \limsup_{\ep \to 0} \limsup_{\delta \to 0} \int_{\tY} \chi_\delta \, \mathrm{tr}(\mathrm{pr}_L \Theta(V_t, h)|_L) \wedge f^*\omega_{t,\ep}^{n-1}\le-\frac{r}{n+1}\cdot \deg(f) \cdot \alpha^n,\]
hence the lemma.
\end{proof}

\begin{proof}[Proof of Theorem~\ref{thm:can ext appendix}]
We deal with the two cases of the theorem separately.

\smallskip

$(i)$ Here we assume that the pair $(X,\Delta)$ is K-polystable. Thanks to \cite[Theorem~5.2]{LXZ-HRFG} (see also \cite[Theorem~1.2]{Li-YTD}), there exists a K\"ahler-Einstein metric $\omega$ on $(X,\Delta)$. Therefore, one can use the approximations $\omega_{t,\ep}$ of $\pi^*\omega$ on $\tX$ introduced in Section~\ref{metrics} to compute the slopes of subsheaves $\widetilde F\subset \tE$ as explained in Section~\ref{slope}. More precisely, one combines Lemma~\ref{lem1}, Lemma~\ref{lem2} and \eqref{Gr} and infers that $\tE$ is semistable with respect to $f^*\pi^*\alpha$. Thanks to \eqref{ss}, this implies that $E$ is semistable with respect to $g^*\alpha$.

\medskip

It remains to show polystability. Let $G\subset E$ be a subsheaf such that $\mu_{g^*\alpha}(G)=\mu_{g^*\alpha}(E)$. As explained in the paragraph above \eqref{ss}, $G$ induces a saturated subsheaf $\tilde G\subset \tE$ such that $\mu_{f^*\pi^*\alpha}(\tilde G)=\mu_{f^*\pi^*\alpha}(\tE)$. From the proof of semistability of $\tE$, one infers the vanishing
\begin{equation}
\label{II}
\limsup_{t\to 0}\limsup_{\ep\to0} \int_{\tY^\circ} \mathrm{tr}(\tau_{t,\ep}\wedge \tau_{t,\ep}^*)\wedge f^*\omega_{t,\ep}^{n-1}=0,
\end{equation}
where $\tY^\circ:=\tY\setminus (S\cup f^{-1}(B+F))$. Note that the integrand above is pointwise non-negative and vanishes if and only if $\tau_{t,\ep}$ does. Classical stability estimates in Monge-Amp\`ere theory show that $\omega_{t,\ep}\to \pi^*\omega$ locally smoothly as tensors on $\pi^{-1}(X_{\rm reg}\setminus \Supp(\Delta))$, a locus on which $\omega$ (or $\pi^*\omega$) is known to be a genuine K\"ahler metric. Because of \eqref{II}, the second fundamental form of $j_0:L\to V_0=\Lambda^r\tE\otimes f^*\cO_{\tX}(rF)$ (see \eqref{inj2}-\eqref{Vt}) with respect to the metrics induced by $f^*\pi^*\omega$ and $h_F$ vanishes on $\tY^\circ$. In particular, $L\subset V_0$ is a parallel subbundle on $\tY^\circ$ (with respect to $j_0$ and the metrics $f^*\pi^*\omega$ and $h_F$), hence $L\subset \Lambda^r\tE$ is parallel, too (with respect to $j$ and the metric induced by $f^*\pi^*\omega$). By basic linear algebra, the same holds for $\tilde G\subset \tE$. In particular, the orthogonal $\tilde G^\perp$ is a holomorphic subbundle of $\tE$ yielding a holomorphic decomposition $\tE=\tilde G\oplus \tilde G^\perp$ on $\tY^\circ$.

Pushing forward by $\mu$, which is an isomorphism over $\tY^\circ$, we get $E=G\oplus G^\perp$ on the open set
\[
Y^\circ:=\mu(\tY^\circ)=g^{-1}(X_{\rm reg}\setminus \Supp(\Delta+\pi(A)))\setminus \mu(S),
\]
over which $g^*\omega$ is a genuine K\"ahler form and $G\subset E$ is a subbundle. We claim that the vector bundle $G^\perp$ on $Y^\circ$ extends to a reflexive sheaf on the whole $Y$ such that $E=G\oplus G^\perp$ globally on $Y$. In order to prove it, one argues as follows. Thinking of $G^\perp$ as a map $p:E\to E$ defined on $Y^\circ$ such that $p^2=p$ and $\mathrm{Im}(p)=G$, it is enough to show that $p$ extends to a morphism $E\to E$ on $Y$. Since $E$ and $G$ are reflexive and $g^{-1}(X_{\rm sing})\cup \mu(S)$ has codimension at least two, it is enough to prove the extension property at a general point of each component of $g^{-1}(\Delta+\pi(A))$. By construction, we have
\begin{equation}
\label{bounded-projection}
|p(e)|^2_{h_\omega}\le |e|^2_{h_\omega},
\end{equation}
where $h_\omega$ denotes the hermitian metric on $E$ induced by $g^*\omega$ over $Y^\circ$. Thanks to \cite[Theorem~2]{GP-conic}, $h_\omega$ is quasi-isometric to a fixed smooth hermitian metric on $E$ at a general point of a component of $g^{-1}(\Delta+\pi(A))$. Near such a point, $E$ is a vector bundle and one can think of $p$ as a holomorphic map $\C^{n+1}\to \C^{n+1}$ defined away from a hyperplane. By the ``boundedness'' of $h$ and \eqref{bounded}, one infers that $p$ is bounded near the said hyperplane, hence extends holomorphically across it. This ends the proof of the polystability of $E$.

\bigskip

$(ii)$ We now only require the pair $(X,\Delta)$ to be K-semistable. Pick $H\in |-m(K_X+\Delta)|$ general for $m$ large and divisible and consider the log Fano pair $(X,\Delta_\ep:=\Delta+\ep H)$ for $\ep\in \Q_{>0}$ small. Let $h_\ep:Y_\ep\to Y$ be an adapted cover for the pair $(Y,\ep g^* H)$ and denote by $g_\ep : Y_\ep \to X$ the induced map. We have a natural injection 
\begin{equation}
\label{inj form}
h_\ep^{[*]}\Omega_{(X,\Delta, g)}^{[1]}\simeq \Omega_{(X,\Delta,g_\ep)}^{[1]}\subset \Omega_{(X,\Delta_\ep,g_\ep)}^{[1]}.
\end{equation} Next, let $E_\ep$ be the extension of $\cO_{Y_\ep}$ by $\Omega_{(X,\Delta_\ep, g_\ep)}^{[1]}$ with extension class induced by $-(K_X+\Delta)$ under the map $\mathrm{Pic}(X)_{\Q}\to H^1(Y_\ep, \Omega_{(X,\Delta_\ep, g_\ep)}^{[1]})$, see \eqref{ext map}.  It is easy to check directly using the transition maps \eqref{transition} that \eqref{inj form} induces an injective map 
\begin{equation}
\label{inj ext}
h_\ep^{[*]}E\subset E_\ep,
\end{equation}
where $E:=E_{(X,\Delta,g)}$ is the canonical extension of $(X,\Delta, g)$. Moreover, since $K_X+\Delta_\ep$ is proportional to $K_X+\Delta$, $E_\ep$ is abstractly isomorphic to the canonical extension of $(X,\Delta_\ep, g_\ep)$, i.e. the extension of $\cO_{Y_\ep}$ by $\Omega_{(X,\Delta_\ep, g_\ep)}^{[1]}$ with extension class induced by $-(K_X+\Delta_\ep)$. Thanks to \cite[Lemma 5.5]{LXZ-HRFG} (the same proof works verbatim when $t=0$), the pair $(X,\Delta_\ep)$ is uniformly K-stable (hence K-polystable, too) for any $\ep>0$ small enough. Let $F\subset E$ be a coherent subsheaf of rank $r$, which induces a subsheaf $h_\ep^{[*]}F\subset E_\ep$ via \eqref{inj ext}.  By the previous case, $E_\ep$ is semistable with respect to $g_\ep^*\alpha$ hence we get
\begin{eqnarray*}
\frac{\deg(h_\ep)}{r}(c_1(F) \cdot g^*\alpha^{n-1})&=& \frac 1 r (c_1(h_\ep^{[*]}F) \cdot h_\ep^*g^*\alpha^{n-1})\\
& \le& \frac{1}{n+1}(c_1(E_\ep) \cdot g_\ep^*\alpha^{n-1})\\
&=&-\frac{1-\ep}{n+1}  \cdot \deg(h_\ep) \cdot g^*\alpha^n.  
\end{eqnarray*}
Dividing both sides by $\deg(h_\ep)$ and letting $\ep$ go to zero yields the desired slope semistability of $E$ with respect to $g^*\alpha$.
\end{proof}

\clearpage
\markboth{}{}

\bibliography{ref}

\begin{bibdiv}
\begin{biblist}

\bib{ABHLX-reductivity}{article}{
      author={Alper, Jarod},
      author={Blum, Harold},
      author={Halpern-Leistner, Daniel},
      author={Xu, Chenyang},
       title={Reductivity of the automorphism group of {$K$}-polystable {F}ano varieties},
        date={2020},
        ISSN={0020-9910,1432-1297},
     journal={Invent. Math.},
      volume={222},
      number={3},
       pages={995\ndash 1032},
      review={\MR{4169054}},
}

\bib{AGV-GW-DM}{article}{
      author={Abramovich, Dan},
      author={Graber, Tom},
      author={Vistoli, Angelo},
       title={Gromov-{W}itten theory of {D}eligne-{M}umford stacks},
        date={2008},
        ISSN={0002-9327,1080-6377},
     journal={Amer. J. Math.},
      volume={130},
      number={5},
       pages={1337\ndash 1398},
         url={https://doi.org/10.1353/ajm.0.0017},
      review={\MR{2450211}},
}

\bib{AOV-tame-stack}{article}{
      author={Abramovich, Dan},
      author={Olsson, Martin},
      author={Vistoli, Angelo},
       title={Tame stacks in positive characteristic},
        date={2008},
        ISSN={0373-0956,1777-5310},
     journal={Ann. Inst. Fourier (Grenoble)},
      volume={58},
      number={4},
       pages={1057\ndash 1091},
      review={\MR{2427954}},
}

\bib{Artin-approx}{article}{
      author={Artin, Michael},
       title={Algebraic approximation of structures over complete local rings},
        date={1969},
        ISSN={0073-8301,1618-1913},
     journal={Inst. Hautes \'Etudes Sci. Publ. Math.},
      number={36},
       pages={23\ndash 58},
         url={http://www.numdam.org/item?id=PMIHES_1969__36__23_0},
      review={\MR{268188}},
}

\bib{BD-toric}{article}{
      author={Bielawski, Roger},
      author={Dancer, Andrew~S.},
       title={The geometry and topology of toric hyperk\"{a}hler manifolds},
        date={2000},
        ISSN={1019-8385,1944-9992},
     journal={Comm. Anal. Geom.},
      volume={8},
      number={4},
       pages={727\ndash 760},
         url={https://doi.org/10.4310/CAG.2000.v8.n4.a2},
      review={\MR{1792372}},
}

\bib{BdFFU-log-discrepancy}{incollection}{
      author={Boucksom, S\'ebastien},
      author={de~Fernex, Tommaso},
      author={Favre, Charles},
      author={Urbinati, Stefano},
       title={Valuation spaces and multiplier ideals on singular varieties},
        date={2015},
   booktitle={Recent advances in algebraic geometry},
      series={London Math. Soc. Lecture Note Ser.},
      volume={417},
   publisher={Cambridge Univ. Press, Cambridge},
       pages={29\ndash 51},
}

\bib{Bea-symp-sing}{article}{
      author={Beauville, Arnaud},
       title={Symplectic singularities},
        date={2000},
        ISSN={0020-9910,1432-1297},
     journal={Invent. Math.},
      volume={139},
      number={3},
       pages={541\ndash 549},
      review={\MR{1738060}},
}

\bib{BJ-delta}{article}{
      author={Blum, Harold},
      author={Jonsson, Mattias},
       title={Thresholds, valuations, and {K}-stability},
        date={2020},
        ISSN={0001-8708,1090-2082},
     journal={Adv. Math.},
      volume={365},
       pages={107062, 57},
      review={\MR{4067358}},
}

\bib{BK-nilpotent}{article}{
      author={Brylinski, Ranee},
      author={Kostant, Bertram},
       title={Nilpotent orbits, normality and {H}amiltonian group actions},
        date={1994},
        ISSN={0894-0347,1088-6834},
     journal={J. Amer. Math. Soc.},
      volume={7},
      number={2},
       pages={269\ndash 298},
         url={https://doi.org/10.2307/2152759},
      review={\MR{1239505}},
}

\bib{BLQ-convexity}{article}{
      author={Blum, Harold},
      author={Liu, Yuchen},
      author={Qi, Lu},
       title={Convexity of multiplicities of filtrations on local rings},
        date={2024},
        ISSN={0010-437X,1570-5846},
     journal={Compos. Math.},
      volume={160},
      number={4},
       pages={878\ndash 914},
      review={\MR{4716588}},
}

\bib{Blu-existence}{article}{
      author={Blum, Harold},
       title={Existence of valuations with smallest normalized volume},
        date={2018},
        ISSN={0010-437X},
     journal={Compos. Math.},
      volume={154},
      number={4},
       pages={820\ndash 849},
         url={https://doi.org/10.1112/S0010437X17008016},
      review={\MR{3778195}},
}

\bib{Cad-root-stack}{article}{
      author={Cadman, Charles},
       title={Using stacks to impose tangency conditions on curves},
        date={2007},
        ISSN={0002-9327,1080-6377},
     journal={Amer. J. Math.},
      volume={129},
      number={2},
       pages={405\ndash 427},
         url={https://doi.org/10.1353/ajm.2007.0007},
      review={\MR{2306040}},
}

\bib{Che-HRFG}{misc}{
      author={Chen, Zhiyuan},
       title={Finite generation of higher rank quasi-monomial valuations via the extended {R}ees algebra},
        date={2025},
         url={https://arxiv.org/abs/2510.10737},
        note={\href{https://arxiv.org/abs/2510.10737}{\textsf{arXiv:2510.10737}}},
}

\bib{CKT-hyperbolicity}{incollection}{
      author={Claudon, Beno\^it},
      author={Kebekus, Stefan},
      author={Taji, Behrouz},
       title={Generic positivity and applications to hyperbolicity of moduli spaces},
        date={2021},
   booktitle={Hyperbolicity properties of algebraic varieties},
      series={Panor. Synth\`eses},
      volume={56},
   publisher={Soc. Math. France, Paris},
       pages={169\ndash 208},
      review={\MR{4457669}},
}

\bib{CM-nilpotent-orbit}{book}{
      author={Collingwood, David~H.},
      author={McGovern, William~M.},
       title={Nilpotent orbits in semisimple {L}ie algebras},
      series={Van Nostrand Reinhold Mathematics Series},
   publisher={Van Nostrand Reinhold Co., New York},
        date={1993},
        ISBN={0-534-18834-6},
      review={\MR{1251060}},
}

\bib{CS-cone}{article}{
      author={Collins, Tristan~C.},
      author={Sz\'{e}kelyhidi, G\'{a}bor},
       title={K-semistability for irregular {S}asakian manifolds},
        date={2018},
        ISSN={0022-040X,1945-743X},
     journal={J. Differential Geom.},
      volume={109},
      number={1},
       pages={81\ndash 109},
         url={https://doi.org/10.4310/jdg/1525399217},
      review={\MR{3798716}},
}

\bib{Dai-MY}{misc}{
      author={Dailly, Louis},
       title={Miyaoka-{Y}au equality and uniformization of log {F}ano pairs},
        date={2025},
         url={https://arxiv.org/abs/2501.05887},
        note={\href{https://arxiv.org/abs/2501.05887}{\textsf{arXiv:2501.05887}}},
}

\bib{Dai-stability}{misc}{
      author={Dailly, Louis},
       title={Stability properties of adapted tangent sheaves on {K}\"ahler--{E}instein log {F}ano pairs},
        date={2026},
         url={https://arxiv.org/abs/2601.19608},
        note={\href{https://arxiv.org/abs/2601.19608}{\textsf{arXiv:2601.19608}}},
}

\bib{DGP-Q-Fano-decomp}{article}{
      author={Druel, St\'{e}phane},
      author={Guenancia, Henri},
      author={P\u{a}un, Mihai},
       title={A decomposition theorem for {$\mathbb{Q}$}-{F}ano {K}\"{a}hler-{E}instein varieties},
        date={2024},
        ISSN={1631-073X,1778-3569},
     journal={C. R. Math. Acad. Sci. Paris},
      volume={362},
      number={Special issue S1},
       pages={93\ndash 118},
         url={https://doi.org/10.5802/crmath.612},
      review={\MR{4762190}},
}

\bib{DS-degeneration2}{article}{
      author={Donaldson, Simon},
      author={Sun, Song},
       title={Gromov-{H}ausdorff limits of {K}\"{a}hler manifolds and algebraic geometry, {II}},
        date={2017},
        ISSN={0022-040X},
     journal={J. Differential Geom.},
      volume={107},
      number={2},
       pages={327\ndash 371},
         url={https://doi.org/10.4310/jdg/1506650422},
      review={\MR{3707646}},
}

\bib{ELS03}{article}{
      author={Ein, Lawrence},
      author={Lazarsfeld, Robert},
      author={Smith, Karen~E.},
       title={Uniform approximation of {A}bhyankar valuation ideals in smooth function fields},
        date={2003},
     journal={Amer. J. Math.},
      volume={125},
      number={2},
       pages={409\ndash 440},
}

\bib{FO-delta}{article}{
      author={Fujita, Kento},
      author={Odaka, Yuji},
       title={On the {K}-stability of {F}ano varieties and anticanonical divisors},
        date={2018},
        ISSN={0040-8735,2186-585X},
     journal={Tohoku Math. J. (2)},
      volume={70},
      number={4},
       pages={511\ndash 521},
      review={\MR{3896135}},
}

\bib{GKKP-extend-forms}{article}{
      author={Greb, Daniel},
      author={Kebekus, Stefan},
      author={Kov\'acs, S\'andor~J.},
      author={Peternell, Thomas},
       title={Differential forms on log canonical spaces},
        date={2011},
        ISSN={0073-8301,1618-1913},
     journal={Publ. Math. Inst. Hautes \'Etudes Sci.},
      number={114},
       pages={87\ndash 169},
         url={https://doi.org/10.1007/s10240-011-0036-0},
      review={\MR{2854859}},
}

\bib{GP-conic}{article}{
      author={Guenancia, Henri},
      author={P\u{a}un, Mihai},
       title={Conic singularities metrics with prescribed {R}icci curvature: general cone angles along normal crossing divisors},
        date={2016},
        ISSN={0022-040X,1945-743X},
     journal={J. Differential Geom.},
      volume={103},
      number={1},
       pages={15\ndash 57},
         url={http://projecteuclid.org/euclid.jdg/1460463562},
      review={\MR{3488129}},
}

\bib{GT-orbifold-stab}{article}{
      author={Guenancia, Henri},
      author={Taji, Behrouz},
       title={Orbifold stability and {M}iyaoka-{Y}au inequality for minimal pairs},
        date={2022},
        ISSN={1465-3060,1364-0380},
     journal={Geom. Topol.},
      volume={26},
      number={4},
       pages={1435\ndash 1482},
         url={https://doi.org/10.2140/gt.2022.26.1435},
      review={\MR{4504444}},
}

\bib{G12}{article}{
      author={Guenancia, Henri},
       title={K\"ahler-{E}instein metrics with mixed {P}oincar\'e{} and cone singularities along a normal crossing divisor},
        date={2014},
        ISSN={0373-0956,1777-5310},
     journal={Ann. Inst. Fourier (Grenoble)},
      volume={64},
      number={3},
       pages={1291\ndash 1330},
         url={https://doi.org/10.5802/aif.2881},
      review={\MR{3330171}},
}

\bib{GSS}{article}{
      author={Guenancia, Henri},
       title={Semistability of the tangent sheaf of singular varieties},
        date={2016},
        ISSN={2313-1691,2214-2584},
     journal={Algebr. Geom.},
      volume={3},
      number={5},
       pages={508\ndash 542},
         url={https://doi.org/10.14231/AG-2016-024},
      review={\MR{3568336}},
}

\bib{HL-book-sheaf}{book}{
      author={Huybrechts, Daniel},
      author={Lehn, Manfred},
       title={The geometry of moduli spaces of sheaves},
     edition={Second},
      series={Cambridge Mathematical Library},
   publisher={Cambridge University Press, Cambridge},
        date={2010},
        ISBN={978-0-521-13420-0},
      review={\MR{2665168}},
}

\bib{Hausel-Sturmfels}{article}{
      author={Hausel, Tam\'{a}s},
      author={Sturmfels, Bernd},
       title={Toric hyper{K}\"{a}hler varieties},
        date={2002},
        ISSN={1431-0635,1431-0643},
     journal={Doc. Math.},
      volume={7},
       pages={495\ndash 534},
      review={\MR{2015052}},
}

\bib{Huang-thesis}{book}{
      author={Huang, Kai},
       title={K-stability of log {F}ano cone singularities},
        date={2022},
        note={Thesis (Ph.D.)--Massachusetts Institute of Technology},
}

\bib{Humphreys-alg-gp}{book}{
      author={Humphreys, James~E.},
       title={Linear algebraic groups},
      series={Graduate Texts in Mathematics},
   publisher={Springer-Verlag, New York-Heidelberg},
        date={1975},
      volume={No. 21},
      review={\MR{396773}},
}

\bib{JM-val-ideal-seq}{article}{
      author={Jonsson, Mattias},
      author={Musta\c{t}\u{a}, Mircea},
       title={Valuations and asymptotic invariants for sequences of ideals},
        date={2012},
        ISSN={0373-0956,1777-5310},
     journal={Ann. Inst. Fourier (Grenoble)},
      volume={62},
      number={6},
       pages={2145\ndash 2209},
      review={\MR{3060755}},
}

\bib{Kaledin-sym}{article}{
      author={Kaledin, D.},
       title={Symplectic singularities from the {P}oisson point of view},
        date={2006},
        ISSN={0075-4102,1435-5345},
     journal={J. Reine Angew. Math.},
      volume={600},
       pages={135\ndash 156},
         url={https://doi.org/10.1515/CRELLE.2006.089},
      review={\MR{2283801}},
}

\bib{Kaledin-survey}{incollection}{
      author={Kaledin, D.},
       title={Geometry and topology of symplectic resolutions},
        date={2009},
   booktitle={Algebraic geometry---{S}eattle 2005. {P}art 2},
      series={Proc. Sympos. Pure Math.},
      volume={80},
   publisher={Amer. Math. Soc., Providence, RI},
       pages={595\ndash 628},
         url={https://doi.org/10.1090/pspum/080.2/2483948},
      review={\MR{2483948}},
}

\bib{Kau-group-action}{article}{
      author={Kaup, Wilhelm},
       title={Infinitesimale {T}ransformationsgruppen komplexer {R}\"aume},
        date={1965},
        ISSN={0025-5831,1432-1807},
     journal={Math. Ann.},
      volume={160},
       pages={72\ndash 92},
      review={\MR{181761}},
}

\bib{KM98}{book}{
      author={Koll\'ar, J\'anos},
      author={Mori, Shigefumi},
       title={Birational geometry of algebraic varieties},
      series={Cambridge Tracts in Mathematics},
   publisher={Cambridge University Press, Cambridge},
        date={1998},
      volume={134},
        ISBN={0-521-63277-3},
        note={With the collaboration of C. H. Clemens and A. Corti, Translated from the 1998 Japanese original},
      review={\MR{1658959}},
}

\bib{Kol-Seifert-bundle}{article}{
      author={Koll\'ar, J\'anos},
       title={Seifert {$G_m$}-bundles},
        date={2004},
        note={\href{https://arxiv.org/abs/math/0404386}{\textsf{arXiv:0404386}}},
}

\bib{Kol13}{book}{
      author={Koll\'ar, J\'anos},
       title={Singularities of the minimal model program},
      series={Cambridge Tracts in Mathematics},
   publisher={Cambridge University Press, Cambridge},
        date={2013},
      volume={200},
        ISBN={978-1-107-03534-8},
        note={With a collaboration of S\'andor Kov\'acs},
      review={\MR{3057950}},
}

\bib{Kol-cone-auto}{article}{
      author={Koll\'ar, J\'anos},
       title={Automorphisms and twisted forms of rings of invariants},
        date={2024},
        ISSN={1424-9286,1424-9294},
     journal={Milan J. Math.},
      volume={92},
      number={2},
       pages={473\ndash 499},
         url={https://doi.org/10.1007/s00032-024-00403-x},
      review={\MR{4836147}},
}

\bib{Kronheimer-nilpotent}{article}{
      author={Kronheimer, P.~B.},
       title={Instantons and the geometry of the nilpotent variety},
        date={1990},
        ISSN={0022-040X,1945-743X},
     journal={J. Differential Geom.},
      volume={32},
      number={2},
       pages={473\ndash 490},
         url={http://projecteuclid.org/euclid.jdg/1214445316},
      review={\MR{1072915}},
}

\bib{KS-hyperkahler-potential}{article}{
      author={Kobak, Piotr},
      author={Swann, Andrew},
       title={Hyper{K}\"ahler potentials via finite-dimensional quotients},
        date={2001},
        ISSN={0046-5755,1572-9168},
     journal={Geom. Dedicata},
      volume={88},
      number={1-3},
       pages={1\ndash 19},
         url={https://doi.org/10.1023/A:1013174027792},
      review={\MR{1877205}},
}

\bib{KS-extend-forms}{article}{
      author={Kebekus, Stefan},
      author={Schnell, Christian},
       title={Extending holomorphic forms from the regular locus of a complex space to a resolution of singularities},
        date={2021},
        ISSN={0894-0347,1088-6834},
     journal={J. Amer. Math. Soc.},
      volume={34},
      number={2},
       pages={315\ndash 368},
         url={https://doi.org/10.1090/jams/962},
      review={\MR{4280862}},
}

\bib{KS-classical-nilpotent}{article}{
      author={Kobak, Piotr},
      author={Swann, Andrew},
       title={Classical nilpotent orbits as hyper-{K}\"ahler quotients},
        date={1996},
        ISSN={0129-167X,1793-6519},
     journal={Internat. J. Math.},
      volume={7},
      number={2},
       pages={193\ndash 210},
         url={https://doi.org/10.1142/S0129167X96000116},
      review={\MR{1382722}},
}

\bib{Li-valuative-criterion}{article}{
      author={Li, Chi},
       title={K-semistability is equivariant volume minimization},
        date={2017},
        ISSN={0012-7094,1547-7398},
     journal={Duke Math. J.},
      volume={166},
      number={16},
       pages={3147\ndash 3218},
      review={\MR{3715806}},
}

\bib{Li-nv}{article}{
      author={Li, Chi},
       title={Minimizing normalized volumes of valuations},
        date={2018},
        ISSN={0025-5874,1432-1823},
     journal={Math. Z.},
      volume={289},
      number={1-2},
       pages={491\ndash 513},
      review={\MR{3803800}},
}

\bib{Li-Fano-cone-YTD}{article}{
      author={Li, Chi},
       title={Notes on weighted {K}\"ahler-{R}icci solitons and application to {R}icci-flat {K}\"ahler cone metrics},
        date={2021},
        note={\href{https://sites.math.rutgers.edu/~cl1412/notes/soliton-SE.pdf}{link}},
}

\bib{Li-extension-stab}{article}{
      author={Li, Chi},
       title={On the stability of extensions of tangent sheaves on {K}\"ahler-{E}instein {F}ano/{C}alabi-{Y}au pairs},
        date={2021},
        ISSN={0025-5831,1432-1807},
     journal={Math. Ann.},
      volume={381},
      number={3-4},
       pages={1943\ndash 1977},
         url={https://doi.org/10.1007/s00208-020-02099-x},
      review={\MR{4333434}},
}

\bib{Li-YTD}{article}{
      author={Li, Chi},
       title={{$G$}-uniform stability and {K}\"ahler-{E}instein metrics on {F}ano varieties},
        date={2022},
        ISSN={0020-9910,1432-1297},
     journal={Invent. Math.},
      volume={227},
      number={2},
       pages={661\ndash 744},
      review={\MR{4372222}},
}

\bib{LLX-nvsurvey}{incollection}{
      author={Li, Chi},
      author={Liu, Yuchen},
      author={Xu, Chenyang},
       title={A guided tour to normalized volume},
        date={2020},
   booktitle={Geometric analysis---in honor of {G}ang {T}ian's 60th birthday},
      series={Progr. Math.},
      volume={333},
   publisher={Birkh\"auser/Springer, Cham},
       pages={167\ndash 219},
      review={\MR{4181002}},
}

\bib{LWX-tangent-cone}{article}{
      author={Li, Chi},
      author={Wang, Xiaowei},
      author={Xu, Chenyang},
       title={Algebraicity of the metric tangent cones and equivariant {K}-stability},
        date={2021},
        ISSN={0894-0347,1088-6834},
     journal={J. Amer. Math. Soc.},
      volume={34},
      number={4},
       pages={1175\ndash 1214},
      review={\MR{4301561}},
}

\bib{LX-higher-rank}{article}{
      author={Li, Chi},
      author={Xu, Chenyang},
       title={Stability of valuations: higher rational rank},
        date={2018},
        ISSN={2096-6075,2524-7182},
     journal={Peking Math. J.},
      volume={1},
      number={1},
       pages={1\ndash 79},
      review={\MR{4059992}},
}

\bib{LX-Kol-comp-stab}{article}{
      author={Li, Chi},
      author={Xu, Chenyang},
       title={Stability of valuations and {K}oll\'ar components},
        date={2020},
        ISSN={1435-9855,1435-9863},
     journal={J. Eur. Math. Soc. (JEMS)},
      volume={22},
      number={8},
       pages={2573\ndash 2627},
      review={\MR{4118616}},
}

\bib{LXZ-HRFG}{article}{
      author={Liu, Yuchen},
      author={Xu, Chenyang},
      author={Zhuang, Ziquan},
       title={Finite generation for valuations computing stability thresholds and applications to {K}-stability},
        date={2022},
        ISSN={0003-486X,1939-8980},
     journal={Ann. of Math. (2)},
      volume={196},
      number={2},
       pages={507\ndash 566},
      review={\MR{4445441}},
}

\bib{LZ-Tian-sharpness}{article}{
      author={Liu, Yuchen},
      author={Zhuang, Ziquan},
       title={On the sharpness of {T}ian's criterion for {K}-stability},
        date={2022},
        ISSN={0027-7630,2152-6842},
     journal={Nagoya Math. J.},
      volume={245},
       pages={41\ndash 73},
         url={https://doi.org/10.1017/nmj.2020.28},
      review={\MR{4413362}},
}

\bib{Moser-trick}{article}{
      author={Moser, J\"urgen},
       title={On the volume elements on a manifold},
        date={1965},
        ISSN={0002-9947,1088-6850},
     journal={Trans. Amer. Math. Soc.},
      volume={120},
       pages={286\ndash 294},
      review={\MR{182927}},
}

\bib{Namikawa-deformation-terminal}{article}{
      author={Namikawa, Yoshinori},
       title={Flops and {P}oisson deformations of symplectic varieties},
        date={2008},
        ISSN={0034-5318,1663-4926},
     journal={Publ. Res. Inst. Math. Sci.},
      volume={44},
      number={2},
       pages={259\ndash 314},
         url={https://doi.org/10.2977/prims/1210167328},
      review={\MR{2426349}},
}

\bib{Namikawa-deformation}{article}{
      author={Namikawa, Yoshinori},
       title={Poisson deformations of affine symplectic varieties},
        date={2011},
        ISSN={0012-7094,1547-7398},
     journal={Duke Math. J.},
      volume={156},
      number={1},
       pages={51\ndash 85},
         url={https://doi.org/10.1215/00127094-2010-066},
      review={\MR{2746388}},
}

\bib{Namikawa-equivalence-symp}{article}{
      author={Namikawa, Yoshinori},
       title={Equivalence of symplectic singularities},
        date={2013},
        ISSN={2156-2261,2154-3321},
     journal={Kyoto J. Math.},
      volume={53},
      number={2},
       pages={483\ndash 514},
      review={\MR{3079311}},
}

\bib{Namikawa-finite}{article}{
      author={Namikawa, Yoshinori},
       title={A finiteness theorem on symplectic singularities},
        date={2016},
        ISSN={0010-437X,1570-5846},
     journal={Compos. Math.},
      volume={152},
      number={6},
       pages={1225\ndash 1236},
      review={\MR{3518310}},
}

\bib{Namikawa-notes-on-deformation}{misc}{
      author={Namikawa, Yoshinori},
       title={Notes on {P}oisson deformations of symplectic varieties},
        date={2026},
         url={https://arxiv.org/abs/2601.01131},
        note={\href{https://arxiv.org/abs/2601.01131}{\textsf{arXiv:2601.01131}}},
}

\bib{Namikawa-torichyperkahler}{article}{
      author={Namikawa, Yoshinori},
       title={Towards a characterization of toric hyperk\"{a}hler varieties among symplectic singularities},
        date={2026},
        ISSN={1022-1824,1420-9020},
     journal={Selecta Math. (N.S.)},
      volume={32},
      number={1},
       pages={Paper No. 4, 49},
         url={https://doi.org/10.1007/s00029-025-01118-6},
      review={\MR{5002795}},
}

\bib{Namikawa-torichyperkahlerII}{article}{
      author={Namikawa, Yoshinori},
       title={Towards a characterization of toric hyperk\"{a}hler varieties among symplectic singularities {II}},
        date={2026},
        note={\href{ https://arxiv.org/abs/2603.27454}{\textsf{arXiv:2603.27454}}},
}

\bib{NO-symp}{article}{
      author={Namikawa, Yoshinori},
      author={Odaka, Yuji},
       title={Canonical torus action on symplectic singularities},
        date={2025},
        note={\href{ https://arxiv.org/abs/2503.15791}{\textsf{arXiv:2503.15791}}},
}

\bib{Pan-nil-orbit-symp}{article}{
      author={Panyushev, D.~I.},
       title={Rationality of singularities and the {G}orenstein property of nilpotent orbits},
        date={1991},
        ISSN={0374-1990},
     journal={Funktsional. Anal. i Prilozhen.},
      volume={25},
      number={3},
       pages={76\ndash 78},
         url={https://doi.org/10.1007/BF01085494},
      review={\MR{1139878}},
}

\bib{Proudfoot-survey}{incollection}{
      author={Proudfoot, Nicholas~J.},
       title={A survey of hypertoric geometry and topology},
        date={2008},
   booktitle={Toric topology},
      series={Contemp. Math.},
      volume={460},
   publisher={Amer. Math. Soc., Providence, RI},
       pages={323\ndash 338},
         url={https://doi.org/10.1090/conm/460/09027},
      review={\MR{2428365}},
}

\bib{stacks-project}{misc}{
      author={{Stacks Project Authors}, The},
       title={Stacks project},
         url={https://stacks.math.columbia.edu/},
        note={\href{https://stacks.math.columbia.edu/}{\textsf{Link}}},
}

\bib{Sumihiro}{article}{
      author={Sumihiro, Hideyasu},
       title={Equivariant completion},
        date={1974},
        ISSN={0023-608X},
     journal={J. Math. Kyoto Univ.},
      volume={14},
       pages={1\ndash 28},
      review={\MR{337963}},
}

\bib{Tian-extension}{article}{
      author={Tian, Gang},
       title={On stability of the tangent bundles of {F}ano varieties},
        date={1992},
        ISSN={0129-167X,1793-6519},
     journal={Internat. J. Math.},
      volume={3},
      number={3},
       pages={401\ndash 413},
         url={https://doi.org/10.1142/S0129167X92000175},
      review={\MR{1163733}},
}

\bib{Xu-pi_1-finite}{article}{
      author={Xu, Chenyang},
       title={Finiteness of algebraic fundamental groups},
        date={2014},
        ISSN={0010-437X,1570-5846},
     journal={Compos. Math.},
      volume={150},
      number={3},
       pages={409\ndash 414},
      review={\MR{3187625}},
}

\bib{Xu-quasimonomial}{article}{
      author={Xu, Chenyang},
       title={A minimizing valuation is quasi-monomial},
        date={2020},
        ISSN={0003-486X},
     journal={Ann. of Math. (2)},
      volume={191},
      number={3},
       pages={1003\ndash 1030},
         url={https://doi-org.libproxy.mit.edu/10.4007/annals.2020.191.3.6},
      review={\MR{4088355}},
}

\bib{Xu-book}{book}{
      author={Xu, Chenyang},
       title={K-stability of {F}ano varieties},
      series={New Mathematical Monographs},
   publisher={Cambridge University Press, Cambridge},
        date={2025},
      volume={50},
        ISBN={978-1-009-53877-0; [9781009538763]},
      review={\MR{4893062}},
}

\bib{XZ-uniqueness}{article}{
      author={Xu, Chenyang},
      author={Zhuang, Ziquan},
       title={Uniqueness of the minimizer of the normalized volume function},
        date={2021},
        ISSN={2168-0930,2168-0949},
     journal={Camb. J. Math.},
      volume={9},
      number={1},
       pages={149\ndash 176},
      review={\MR{4325260}},
}

\bib{XZ-SDC}{article}{
      author={Xu, Chenyang},
      author={Zhuang, Ziquan},
       title={Stable degenerations of singularities},
        date={2025},
        ISSN={0894-0347,1088-6834},
     journal={J. Amer. Math. Soc.},
      volume={38},
      number={3},
       pages={585\ndash 626},
      review={\MR{4890655}},
}

\bib{XZ-open}{article}{
      author={Xu, Chenyang},
      author={Zhuang, Ziquan},
       title={Open problems in {K}-stability of {F}ano varieties},
        date={2026},
        note={\href{ https://arxiv.org/abs/2601.15576}{\textsf{arXiv:2601.15576}}},
}

\bib{Z-product-K}{article}{
      author={Zhuang, Ziquan},
       title={Product theorem for {K}-stability},
        date={2020},
        ISSN={0001-8708,1090-2082},
     journal={Adv. Math.},
      volume={371},
       pages={107250, 18},
      review={\MR{4108221}},
}

\bib{Z-equivariant-K}{article}{
      author={Zhuang, Ziquan},
       title={Optimal destabilizing centers and equivariant {K}-stability},
        date={2021},
        ISSN={0020-9910,1432-1297},
     journal={Invent. Math.},
      volume={226},
      number={1},
       pages={195\ndash 223},
      review={\MR{4309493}},
}

\bib{Z-survey-klt-stab}{incollection}{
      author={Zhuang, Ziquan},
       title={Stability of klt singularities},
        date={2025},
   booktitle={Handbook of geometry and topology of singularities {VII}},
   publisher={Springer, Cham},
       pages={501\ndash 551},
      review={\MR{4890435}},
}

\end{biblist}
\end{bibdiv}

\end{document}